\documentclass[reqno, 12pt]{amsart}

\usepackage{mathtools}
\usepackage[T1]{fontenc}
\usepackage{amsmath, amsthm, amssymb}
\usepackage[ansinew]{inputenc}
\usepackage{xcolor}
\usepackage{bbm}
\usepackage{verbatim}
\usepackage{mathrsfs}
\usepackage{cite}
\usepackage{xparse}
\usepackage{fullpage}
\usepackage{pdflscape}
\usepackage{stmaryrd}
\usepackage{multirow}
\usepackage{lscape}
\usepackage{dsfont}
\usepackage{amsfonts}
\usepackage{scrextend} 
\usepackage{array} 
\usepackage{subfig}
\usepackage{yfonts}
\usepackage{enumitem}

% For matrices

% slashed metric on the sphere
\usepackage{slashed}

% Hyperlinks
\usepackage[colorlinks=true,allcolors=red]{hyperref}
\usepackage{graphicx} 
\setlength{\arrayrulewidth}{0.3mm}
\setlength{\tabcolsep}{2pt}
% New command 1

% New command 2
\usepackage{accents}

 % hodge operator 

 % Smaller space than \quad 
\newcommand{\Hquad}{\hspace{0.5em}} 
 
% Serif and fraktur of greek sumbols 
\DeclareSymbolFont{sfletters}{OML}{cmbrm}{m}{it}
\DeclareMathSymbol{\somega}{\mathord}{sfletters}{"21}
\DeclareSymbolFont{Letters} {U}{zeur}{m}{n} 
\DeclareMathSymbol{\ssomega}{\mathalpha}{Letters}{"21} 
\DeclareSymbolFont{Letters} {U}{zeur}{m}{n} 
\DeclareMathSymbol{\pPhi}{\mathalpha}{Letters}{"1E}

% Bold inner product
\makeatletter
\newcommand*\bigcdot{\mathpalette\bigcdot@{.5}}
\newcommand*\bigcdot@[2]{\mathbin{\vcenter{\hbox{\scalebox{#2}{$\m@th#1\bullet$}}}}}
\makeatother

% Floor function 

\newtheorem{theorem}{Theorem}[section]
\newtheorem{lemma}[theorem]{Lemma}
\newtheorem{prop}[theorem]{Proposition}

\newtheorem{assumpt}[theorem]{Assumptions}

\theoremstyle{definition}

\theoremstyle{remark}
\newtheorem{remark}[theorem]{Remark}

% Oi grammes twn eksiswsewn na kobontai se alli selida
\allowdisplaybreaks[4]

\numberwithin{equation}{section}

% Definition of \reallywidehat
\usepackage{scalerel,stackengine}
\stackMath
\newcommand\reallywidehat[1]{
\savestack{\tmpbox}{\stretchto{
  \scaleto{
    \scalerel*[\widthof{\ensuremath{#1}}]{\kern.1pt\mathchar"0362\kern.1pt}
    {\rule{0ex}{\textheight}}
  }{\textheight}
}{2.4ex}}
\stackon[-6.9pt]{#1}{\tmpbox}
}
\parskip 1ex

\makeatletter
\newcommand*{\rom}[1]{\expandafter\@slowromancap\romannumeral #1@}
\makeatother

\begin{document}

\title{Time periodic solutions and Nekhoroshev stability to non-linear massive Klein-Gordon equations in Anti-de Sitter}

%    author one information
\author{Athanasios Chatzikaleas}

\address{Westf\"alische Wilhelms-Universit\"at M\"unster, Mathematical Institute, Einsteinstrasse 62, 48149 M\"unster, Germany}
\email{achatzik@uni-muenster.de}

\author{Jacques Smulevici}

\address{Laboratory Jacques-Louis Lions (LJLL), Sorbonne Universit\'e, 4 place Jussieu, 75252 Paris, France}
\email{jacques.smulevici@ljll.math.upmc.fr}

\dedicatory{}

\begin{abstract}
 We prove the existence of time-periodic solutions to non-linear massive Klein-Gordon equations in Anti-de Sitter as well as their orbital stability over exponentially long times for certain values of the mass corresponding to completely resonant spectrum. We analyse the resonant system in the Fourier space by relying in particular on Zeilberger's algorithm which allows for a systematic way to derive recurrence formulae for the Fourier coefficients. We also show that the derivation and analysis of the Fourier systems easily extends to other semi-linear wave equations such as the co-rotational wave map equation. 
\end{abstract}

\maketitle
\tableofcontents 
\addtocontents{toc}{\protect\setcounter{tocdepth}{1}} 
 
\noindent

\section{Introduction}
This work concerns the construction of time-periodic solutions to non-linear massive Klein-Gordon equations in the Anti-de Sitter spacetime as well as their  orbital stability over exponentially long times, also called \emph{Nekhoroshev stability}. Our main motivation is to develop heuristics, intuition and analytical methods for the rigorous study of small data dynamics to non-linear wave equations, including the Einstein equations, in the vicinity of the Anti-de Sitter spacetime, a general problem whose context we briefly recall here. 

\subsection{(In-)stability of AdS}
The Anti-de Sitter (AdS) spacetime is the unique (up to quotients by isometry subgroups) maximally symmetric solution to the Einstein equations with a negative cosmological constant. In \cite{DafermosHolzegel,DafermosTalk}, Dafermos-Holzegel conjectured the instability of the AdS solution to the Einstein equations with reflective boundary conditions at the conformal infinity. However, the first result concerning the conjectured instability was the numerical study of Bizo\'{n}-Rostworowski \cite{PhysRevLett.107.031102} who studied small perturbations of AdS for the Einstein-massless-scalar field equations with spherical symmetry. On top of their numerical analysis, they also proposed resonant mode mixing as a mechanism responsible for the instability giving rise to transfer of energy from low to high frequencies. In the setting of the Einstein equations, this energy transfer should eventually lead to the formation of a small black hole. This important work then triggered a substantial amount of numerical and heuristic studies, such as the analysis of the resonant interactions outside from spherical symmetry in \cite{MR2978943}, or further numerical analysis of the instability in \cite{PhysRevLett.115.081103, PhysRevLett.114.071102, PhysRevLett.119.191103, PhysRevLett.111.041102, MR4016456,  MR3708611, MR3205859, MR3491620, MR3329398}. \\ \\
A different approach was developed by Moschidis \cite{MR4150259, MR3828498, MoschidisNew, 2018arXiv181204268M, 2017arXiv170408685M}, who rigorously proved the instability of AdS for the Einstein--massless Vlasov system and Einstein--null dust systems based on a physical space mechanism. In addition, Moschidis has further announced similar rigorous proofs of the AdS instability conjecture in the setting of the spherically symmetric Einstein-scalar field system \cite{gm:esf}. \\ \\
Most relevant for the present work, Rostworowski-Maliborski \cite{PhysRevLett.111.051102} enhanced the instability conjecture by numerically constructing time-periodic solutions to the Einstein-massless-scalar field equations with spherical symmetry bifurcating from initial data dominated by 1-modes. This indicates that, although AdS may be unstable for generic perturbations, there should exist special initial data that evolve into time-periodic solutions staying close to AdS for all times. Moreover, their analysis also suggests that these solutions should in fact enjoy better stability properties than the AdS spacetime itself. \\ \\
Following this work, a large class of oscillating solutions that extends the 1-mode periodic solutions of Bizo\'{n}-Rostoworowski-Maliborksi \cite{PhysRevLett.107.031102, PhysRevLett.111.051102} was numerically determined in \cite{PhysRevLett.113.071601} and their stability was analyzed in \cite{PhysRevD.92.084001}.

\subsection{Time-periodic solutions for the Einstein-Klein-Gordon} \label{SectionTimePeriodicRostworowskiMaliborski}
Following \cite{PhysRevLett.107.031102}, in the Fourier analysis of the spherically-symmetric Einstein-Klein-Gordon system, 
 \begin{align*} 
 		\begin{dcases}
 		\text{Ric}  -\frac{1}{2}\text{R} \text{g}    +  \Lambda  \text{g} = 8 \pi \left ( d  \phi \otimes d \phi - \frac{1}{2} |\partial \phi|_{\text{g}}^2    \text{g} \right ), \quad \Lambda<0,  \\
 			 \Box_{\text{g}}   \phi = 0,
 	\end{dcases}
 	\end{align*} 
Rostworowski-Maliborski \cite{PhysRevLett.111.051102} expanded the dynamical variables in terms of the eigenfunctions to the associated linearized operator. The mode couplings are then quantified via oscillatory integrals, the Fourier coefficients of the system, which are essentially integrals of products of the eigenfunctions, see for example \cite{2020arXiv200411049C,PhysRevLett.107.031102,PhysRevLett.111.051102}.   Rostworowski--Maliborski \cite{PhysRevLett.111.051102} numerically solved the spherically symmetric Einstein-Klein-Gordon system with negative cosmological constant and argued the existence of time-periodic solutions bifurcating from initial data dominated by 1-modes. Their method is based on a perturbative series expansion, where the initial data $\left( \phi(0,\cdot), \partial_t \phi(0,\cdot) \right) $ for the wave function are close to a fixed 1-mode. After projecting onto the eigenbasis, one obtains an infinite system of harmonic oscillators for which secular terms\footnote{These are terms of the form $\tau^{\alpha} \sin( \beta \tau)$ and $\tau^{\alpha} \cos( \beta \tau)$ for some $\alpha>0$ and $\beta \in \mathbb{R}$.} may appear and spoil the periodicity of the solutions. If these terms are not removed, they should eventually become responsible for an instability and possible breakdown of solutions, see \cite{PhysRevLett.107.031102}. Thus, in \cite{PhysRevLett.111.051102}, the initial data $\left( \phi(0,\cdot), \partial_t \phi(0,\cdot) \right) $ are prescribed so that the solutions to the infinite system of harmonic oscillators described above are free of secular terms. On the one hand, Rostworowski--Maliborski \cite{PhysRevLett.111.051102} needed to numerically compute the Fourier constants of the resonant system as well verify (a priori infinitely many) conditions for the Fourier coefficients which are necessary for the removal of the secular terms. On the other hand, their method does not allow a priori to understand in which sense the perturbative series expansion converges. Several research efforts have been focused on either explicitly computing the Fourier coefficients or deriving ultraviolet asymptotics \cite{2020arXiv200411049C, MR3435559,MR3435560,MR3512262,MR3270350,PhysRevLett.113.071601,PhysRevD.92.084001,zbMATH07227949}.  \\ \\ 
Having a robust method that indicates how one can treat the Fourier coefficients would not only be beneficial for numerical studies but also for understanding existence, stability and convergence of the solutions. In fact, both the existence and the exponential non-linear stability of the time-periodic solutions we construct here will depend on two, so called, non-degeneracy conditions (Sections \ref{SectionExistenceofTimePeriodicSolutions} and \ref{SectionStabilitySolutionstotheLinearWaveEquation}). These are systems of infinitely many conditions for the Fourier coefficients similar to the ones described above and hence our proof requires extensive amount of information for the Fourier coefficients.  \\ \\
In this paper, we propose a novel analysis for the Fourier coefficients (Section \ref{SectionFourierCoefficients}) that leads for instance to linear recurrence relations. This approach seems well-adapted for deriving effective formulas and asymptotic behaviour, especially in the case where explicit closed formulas for these integrals are a priori too complicated to handle. 
\subsection{Time-periodic solutions for toy models}
We illustrate the method above by rigorously constructing time-periodic solutions for toy models in AdS. This kind of study was initiated in \cite{SakisJacques}, where we considered the conformal cubic wave and the Yang-Mills equations with various ansatz and symmetries. The analysis of \cite{SakisJacques} relied on KAM tools for PDEs, especially those developed in the work of Bambusi-Paleari \cite{MR1819863}. These tools are all Fourier based\footnote{See for instance \cite{MR2021909} for an alternative method relying in particular on the mountain pass theorem.}  which is the main reason we need to compute and analyse the Fourier coefficients that appear in the resonant system. Although in other settings this is a relatively easy task, in the present work this is an important technical aspect of the problem due to the fact that the eigenfunctions are Jacobi polynomials with various extra weights in the integrals coming form the background geometry or the non-linearities. \\ \\
To resolve this issue, we propose a novel systematic way to compute the Fourier coefficients based in particular on Zeilberger's algorithm \cite{MR1379802, MR1644447}. Concerning Zeilberger's algorithm and its applications, we refer the reader to \cite{ANDREWS1993147, MR4490715, MR1448687, MR1395420, MR1766263, MR3390498, MR4355417, MR1973952} 
% MR4392347
and the references within. Moreover, motivated by \cite{PhysRevLett.111.051102}, where the time-periodic solutions near AdS for the Einstein equations (or at least the Einstein-scalar-field system in spherical symmetry) are also expected to have good stability properties, we  also study the orbital stability, over exponentially long times, of the time-periodic solutions we construct. In \cite{SakisJacques}, we relied on KAM tools developed in \cite{MR1819863}, while here the orbital stability is based on a work of Bambusi-Nekhoroskev \cite{BAMBUSI199873} where the stability proof essentially boils down to a coercivity estimate for the second differential of a modified Hamiltonian. \\ \\
First, we consider the spherically symmetric Klein-Gordon equation on the fixed $(1+3)$-dimensional AdS space,
%\begin{itemize}[leftmargin=*]
%	\item \textit{Klein-Gordon} on the fixed $(1+3)$-dimensional AdS space,
\begin{align*}
	-\partial_{t}^2 \psi (t,x)+ \frac{1}{\tan^2(x) } \partial_{x} \left(\tan^2 (x) \partial_{x} \psi(t,x) \right)= \frac{1}{\cos^2(x)} \left[m^2 \psi(t,x)+c \psi^3(t,x)   \right], 
\end{align*} 
for which we will prove both existence of time-periodic solutions and their orbital  stability over exponentially long times. Second, as an illustration of the robustness of the method and especially the analysis of the Fourier system, we also consider the co-rotational wave maps from the fixed $(1+2)$-dimensional AdS space into a $2$-dimensional warped product Riemannian manifold,  
\begin{align*}
	-\partial_{t}^2 \psi (t,x)+ \frac{1}{\tan(x) } \partial_{x} \left(\tan (x) \partial_{x} \psi(t,x) \right)= \frac{1}{\sin^2(x)}  g(\psi(t,x)) g^{\prime}(\psi(t,x)),
\end{align*}
where $g(\psi)= \delta \psi+ \frac{c}{4\delta} \psi^3 + \mathcal{O} (\psi^5)$. % has the Taylor expansion \eqref{TaylorAssumptionong}.
%\end{itemize}
In both cases, $m,\delta,c$ are constants with $c>0$ %In the case of the co-rotational wave maps, $\delta$ and $c$ also depend on the geometry of the target manifold\footnote{See Section \ref{se:crwm} below for a presentation of co-rotational wave maps and the derivation of the equation.}. 
and we assume a spherically symmetric evolution $\psi: \mathbb{R} \times (0,\pi/2) \rightarrow \mathbb{R}$ subject to Dirichlet boundary conditions at the conformal infinity $\{x=\pi/2\}$. 
%%%%%%%%%%%%
%\marginpar{The $m$ on the RHS of wave maps is the same as below ? }
%%%%%%%%%%%%
We underline the fact that both models we consider can be converted to non-linear wave equations (Section \ref{SectionMainResults}) of the form
\begin{align}\label{WaveEquqtioninFixedAdS}
	\left( \Box_{\text{g}_{\text{AdS}}} -m^2 \right)\phi = \pm  w\phi^3 + \mathcal{O} \left(\phi^5 \right),
\end{align} 
with different values of spatial dimension, mass $m$, and spatial weights $w$. The mass $m \in \mathbb{R} \cup i \mathbb{R}$ is restricted by the usual Breitenlohner-Freedman bound that ensures local well-posedness as well as by the requirement that the spectrum needs to be completely resonant for our purposes\footnote{For instance, in $3+1$ dimensions, this reads $m^2 \ge - (3/2)^2$ and $ 3/2+ \sqrt{ m^2 +(3/2)^2} \in \mathbb{N}$.}. \\ \\
Although the results and methods of this paper do not directly extend to the Einstein-Klein-Gordon system in spherical symmetry, essentially due to the quasi-linearity of the system, the models that we consider still have important common features with the Einstein-Klein-Gordon system in spherical symmetry in correspondence with the numerical evidence of Rostworowski--Maliborski \cite{PhysRevLett.111.051102}. These include the completely resonant linear spectrum and the fact that the eigenfunctions to the linearized operators are all weighted Jacobi polynomials (with parameters depending on the choice of mass and spatial dimension). In particular, we believe that the way we treat the Fourier coefficients and analyse their behaviour in this paper (as well as in our previous work \cite{SakisJacques}) will be useful for future studies focusing on the analogous Fourier coefficients of the Einstein-Klein-Gordon system itself. 
\section{Main results}\label{SectionMainResults}
In this section, we comment on the two models we consider and state our main results. Let $d \geq 2$ be an integer and consider the Anti-de Sitter space $(\mathcal{M}^{1+d}, \text{g}_{\text{AdS}})$ which is the simplest solution to the Einstein equations with negative cosmological constant
\begin{align*}
\text{Ric}(g)=\Lambda \text{g}, \quad \Lambda < 0.	
\end{align*}
For convenience, we fix $\Lambda=-3$ and recall that, in standard compactified coordinates, 
\begin{align*}
	 (t,x,\omega) \in \mathbb{R} \times (0, \pi/2) \times \mathbb{S}^{d-1},
\end{align*}
the $(1+d)$-dimensional AdS metric is given by
\begin{align*}
	\text{g}_{\text{AdS}}(t,x,\omega)= \cos^{-2}(x) \left(-dt^2 +dx^2 +\sin^2(x)d\omega^2  \right) ,  
\end{align*}
where $d\omega^2$ denotes the standard round metric on $\mathbb{S}^{d-1}$. Moreover, the wave operator in these coordinates reads
\begin{align}\label{DefinitionWaveOperator}
	 \Box_{\text{g}_{\text{AdS}}} \psi = \cos^2 (x) \left[- \partial_{t}^2 \psi + \frac{1}{\tan^{d-1}  (x)} \partial_{x} \left( \tan^{d-1} (x) \partial_{x}\psi   \right) +\frac{1}{ \sin ^2  (x)} \slashed{\Delta} \psi \right], 
\end{align}
for any scalar field $\psi:\mathbb{R} \times [0, \pi/2) \times \mathbb{S}^{d-1}  \rightarrow \mathbb{R}$, where $\slashed{\Delta}$ stands for the spherical Laplacian.

\subsection{Non-linear Klein-Gordon fields in AdS}\label{SubsectionModelKG}
Firstly, we consider the massive Klein-Gordon on $(1+3)$-dimensional fixed AdS with a cubic non-linearity,
\begin{align*}
	\left( \Box_{\text{g}_{\text{AdS}}}   -m^2  \right)\psi = c \psi^3 
\end{align*}
in spherical symmetry. Due to \eqref{DefinitionWaveOperator}, this can be written as
\begin{align}\label{ModelKGIntro}
	-\partial_{t}^2 \psi  + \frac{1}{\tan^2(x) } \partial_{x} \left(\tan^2 (x) \partial_{x} \psi \right)= \frac{1}{\cos^2(x)} \left[m^2 \psi +c \psi^3   \right],
\end{align}  
for a scalar field $\psi:\mathbb{R} \times [0,\pi/2)  \rightarrow \mathbb{R}$. The second order ordinary differential equation governing the radial part of the wave equation above has a regular singular point at infinity\footnote{See for instance \cite{MR3089665} for a detailed presentation of well-posedness and boundary conditions for general linear wave equations in AdS.} and hence Frobenius theorem yields the expansion, for a regular solution (at least of the underlying linear equation), 
\begin{align*}
	\psi (t,x,\omega)= \cos^{\delta_{+}}(x)  \left[ \psi^{+}(t,\omega)+ \mathcal{O} \left( \left(\frac{\pi }{2}-x\right)^2 \right) \right] +\cos^{\delta_{-}}(x)  \left[ \psi^{-}(t,\omega)+ \mathcal{O} \left( \left(\frac{\pi }{2}-x\right)^2 \right) \right] ,
\end{align*}
where
\begin{align*}
	\delta_{\pm  } = \frac{3}{2} \pm \sqrt{m^2+ \left( \frac{3}{2} \right)^2}.
\end{align*}
Due to the fact that AdS has a timelike conformal boundary $\mathcal{I}=\{x=\pi/2\}$, in order to determine of evolution of fields on AdS, one has to prescribe, in addition to initial data on the $\{t = 0\}$ hypersurface, suitable boundary conditions at $\mathcal{I}$. Equivalently, one has to place some constraints on the functions $ \psi^{\pm}$ in the expansion above. From now on, we choose $\psi^{-}=0$ that corresponds to Dirichlet boundary condition at $\mathcal{I}$. This is a case of reflecting boundary conditions, where $\mathcal{I}$ acts like a mirror at which perturbations propagating outwards bounce off and return to $\{0 \leq x< \pi/2\}$. Hence, we look for $\epsilon$-size amplitude time-periodic solutions and set
\begin{align*}
	\psi(t,x)= \frac{\epsilon}{\sqrt{c}}\cos^{\delta}(x) \phi(t,x), \quad \delta=\delta_{+},
\end{align*}
so that \eqref{ModelKGIntro} becomes the cubic (sub-critical) non-linear wave equation
\begin{align}\label{MainPDEIntroModelKGA}
  \partial_{t}^2 \phi (t,x)  +\mathsf{L} \phi(t,x)  = -\epsilon^2  \mathsf{W} \phi^3(t,x)  , 
\end{align}
where
\begin{align} \label{MainPDEIntroModelKGB}
     \begin{dcases}
     	~ \mathsf{W} = \cos^{2\delta-2} (x), \\ 
	~\mathsf{L} \phi = -\partial_{x}^2 \phi  -\frac{2-2 \delta  \sin ^2(x)}{\sin (x) \cos (x)} \partial_{x} \phi + \delta^2 \phi .
     \end{dcases}
\end{align}
With respect to an appropriate weighted $L^2 ((0,\pi/2);d\mu)$ space, $\mathsf{L}$ turns out to be self-adjoint (Section \ref{se:sa}) and hence the linear initial value problem obtained by setting $\epsilon=0$ on the right-hand-side of \eqref{MainPDEIntroModelKGA} is well-posed in the weighted $H^1 ((0,\pi/2);d\mu) \times L^2  ((0,\pi/2);d\mu)$ function space. If $\delta \ge 2$, by standard arguments and weighted Sobolev inequality (Lemma \ref{LemmaHardySolobelInequality}), the non-linear wave equation \eqref{MainPDEIntroModelKGA} is also globally well-posed in the same energy space. Finally, note that the Breitenlohner-Freedman bound $m^2 \geq -(3/2)^2$ is given in terms of the conformal mass $\delta=\delta_{+}$ by
\begin{align}\label{BreitenlohnerFreedmanBoundDeltaModelKG}
	\delta \geq \frac{3}{2}
\end{align}
 and that $\delta = 2$ corresponds to the conformal wave equation.

\subsection{Co-rotational wave maps on fixed AdS} \label{se:crwm}

Secondly, we consider a wave map from the $(1+2)$-dimensional fixed AdS into a $2$-dimensional\footnote{We decided to fix the spatial dimensions to $d=2$ because this choice corresponds to the energy critical case with $\mathbb{R}^{1+2}$ as the domain. The proof of the main result of this paper is based on the coercivity estimate \eqref{Assumption2BambusiNekhoroshev} and in the following we prove its validity only for $d=2$. We note that in higher dimensions, say $d=3$, the analogous mode solutions are saddle points of that energy functional. } warped product Riemannian manifold,
\begin{align*}
	N^2=(0,\infty ) \times  \mathbb{S}^{1} , \quad h(u,\Omega)= du^2 +g^2(u) d \Omega^2,
\end{align*}
 where $(u,\Omega)$ are the natural polar coordinates system on $N^2$ and $d\Omega^2$ stands for the standard round metric on $\mathbb{S}^1$. Here, we assume that 
 \begin{align}\label{Assumptionforg1}
 	g \in C^{\infty} (\mathbb{R}), \quad g \text{ odd}, \quad g^{\prime}(0 )>0, \quad g>0 \text{ on } (0,\infty).
 \end{align}
 In this setting, a wave map $U: (\mathcal{M}^{1+2},\text{g}_{\text{AdS}}) \longrightarrow (N^2,h)$ and be written as $U(t,x,\omega)=\left( u(t,x,\omega),\Omega(t,x,\omega) \right)$. We restrict our attention to the special subclass of so-called co-rotational maps where $u(t,x,\omega)=\psi(t,x)$ and $ \Omega(t,x,\omega) =\omega$. Under this ansatz, the wave map system for $U$,
 \begin{align*}
 	\Box_{\text{g}_{\text{AdS}}} U^a + \Gamma^{a }_{bc}(U) \partial_{\mu}U^b
 \partial ^{\mu} U^c = 0,
 \end{align*}
where $\Gamma^{a }_{bc}(U)$ denote the Christoffel symbols of the target manifold, reduces to the single semi-linear wave equation 
 \begin{align}\label{WaveMapsEquation}
 	-\partial_{t}^2 \psi + \frac{1}{\tan(x)} \partial_{x}\left(\tan(x) \partial_{x}\psi \right)  &= \frac{1}{\sin^2(x)}  g(\psi) g^{\prime}(\psi).
\end{align} 
Moreover, besides \eqref{Assumptionforg1}, we assume that
\begin{align}\label{TaylorAssumptionong}
	g(\psi) =\delta \psi+ \frac{c}{4\delta} \psi^3 + \mathcal{O} (\psi^5),\quad  \delta=g^{\prime}(0)>0,\quad c>0
\end{align}
and a straight-froward computation shows that the sectional curvatures of the target manifold behave like 
\begin{align*}
	-\frac{g^{\prime \prime }(\psi) }{g(\psi)} &= -  \frac{3c}{2\delta^2} + \mathcal{O}(\psi^2), \quad 
	 \frac{1-(g^{\prime }(\psi))^2 }{g(\psi)}= \frac{1-\delta^2}{\delta} \frac{1}{\psi} +\mathcal{O}(\psi),
\end{align*}
as $\psi \rightarrow 0$, hence the target manifold can be either positively or negatively curved depending on $c>0$ and $\delta >0$ provided that $\psi$ is sufficiently small. We note that the waves maps from AdS to the sphere $\mathbb{S}^{2}$, where $g(u)=\sin(u)$, $\delta=1$, $c=-2/3$, and to the hyperbolic space $\mathbb{H}^{2}$, where $g(u)=\sinh(u)$, $\delta=1$, $c=2/3$, are included in the setting above. According to \eqref{TaylorAssumptionong}, we also have that 
\begin{align}\label{TaylorAssumptionong2}
	g(\psi) g^{\prime}(\psi) &= \delta^2 \psi+   c    \psi^3 + \mathcal{O} (\psi^5),
\end{align}
hence the co-rotational wave maps equation \eqref{WaveMapsEquation} becomes the following Klein-Gordon equation
 \begin{align}\label{ModelWMIntro}
 	-\partial_{t}^2 \psi + \frac{1}{\tan (x)} \partial_{x}\left(\tan (x) \partial_{x}\psi \right)    &=\frac{1}{\sin^2 (x)} \left[  \delta^2  \psi +c   \psi^3 +  \mathcal{O} (\psi^5)  \right],
\end{align} 
for a scalar field $\psi:\mathbb{R} \times [0,\pi/2)  \rightarrow \mathbb{R}$ satisfying the boundary condition $\psi(t,0)=0$ for all times. Similarly to Section \ref{SubsectionModelKG}, we look for $\epsilon$-side amplitude time-periodic solutions with Dirichlet boundary conditions at the conformal boundary $\mathcal{I}=\{x=\pi/2\}$, set
\begin{align*}
	\psi(t,x)= \frac{\epsilon}{\sqrt{c}}\sin^{\delta}(x) \cos^2(x) \phi(t,x), \quad \delta=g^{\prime}(0)>0,
\end{align*}
and \eqref{ModelWMIntro} becomes the non-linear wave equation
\begin{align}\label{MainPDEIntroModelWMA}
  \partial_{t}^2 \phi (t,x)  + \mathfrak{L} \phi(t,x)  = -\epsilon^2  \mathfrak{W}  \phi^3(t,x) +  \epsilon^4 \mathfrak{E} (x,\phi(t,x) )  , 
\end{align}
where
\begin{align} \label{MainPDEIntroModelWMB}
     \begin{dcases}
     	~ \mathfrak{W}   = \sin^{2\delta-2}(x) \cos^4 (x), \\ 
	~\mathfrak{L} \phi = -\partial_{x}^2 \phi  -\frac{2 (\delta +2) \cos ^2(x)-3}{\sin (x) \cos (x)}\partial_{x}\phi + (\delta+2)^2 \phi ,\\  
	~\mathfrak{E} (x,\phi)  = \frac{\sqrt{c}}{\epsilon^5} \frac{1}{\sin^{\delta+2}(x)\cos^{2}(x)}  \left(  \delta^2 \psi+   c    \psi^3- g(\psi) g^{\prime}(\psi) \right).
     \end{dcases}
\end{align}
For regularity purposes at the center, we will assume that
%Finally, note that the Breitenlohner-Freedman bound is given in terms of $\delta $ by
\begin{align}\label{BreitenlohnerFreedmanBoundDeltaModelWM}
	\delta \geq 1.
\end{align} 
With respect to an appropriate weighted $L^2 ((0,\pi/2);d\mu)$ space, $\mathfrak{L}$ turns out to be self-adjoint (Section \ref{se:sa}) and hence the linear initial value problem obtained by setting $\epsilon=0$ on the right-hand-side of \eqref{MainPDEIntroModelWMA} is well-posed in the weighted $H^{1} ((0,\pi/2);d\mu) \times L^2  ((0,\pi/2);d\mu)$ function space. By standard arguments and weighted Sobolev inequality (Lemma \ref{LemmaHardySolobelInequality}), the non-linear wave equation \eqref{MainPDEIntroModelWMA} is also locally well-posed in a higher regularity space of type $H^2 \times H^1$. 

 \subsection{Weighted Sobolev spaces}\label{SectionWeightedSobolev}
 In both models, the linearized equations take the form $\partial_t^2 \phi + L \phi = 0$, for some operator $L$ with domain $D(L)$ which is self-adjoint for some weighted $L^2( ( 0, \pi/2); d\mu)$ scalar product, with pure point spectrum $\{\omega_n^2 > 0: n \in \mathbb{N} \}$ and complete eigenbasis $\{e_n :n \in \mathbb{N}  \}$. The $L$ operator will be associated to a positive quadratic form which defines in particular an energy space\footnote{Here, by ``energy space'', we mean the requirement of finite ``non-twisted'' energy imposed by the Dirichlet boundary condition at $x=\pi/2$ for the original problem, cf~\cite{MR3089665}. } $H^1 ( (0, \pi/2); d\mu)$ (Section \ref{SectionLineareigenvalueproblems}). For any $s \geq 2$, we define accordingly the higher regularity Sobolev space $H^s ( (0, \pi/2); d\mu)$ as the set consisting of elements $\phi \in \mathcal{D}(L)$ such that
 \begin{align*}
 	\left( \phi | L^s \phi \right)  := \int_0^{\pi/2} \phi L^s \phi d \mu = \sum_{n \in \mathbb{N}} \omega_n^{2s} |(\phi| e_n)_{}|^2   < + \infty,
 \end{align*} 
 where $L$ is the linear operator appearing in the equation and $(\cdot|\cdot)$ stands for the $L^2(  0, \pi/2; d\mu)$ inner product given by \eqref{DefinitionInnerProduct}. 
 % From now on, we simply write $H^s$ for the Sobolev space $H^s ( (0, \pi/2); d\mu)$ as well as  $| \cdot |_s$ for the $H^s$ norm. 
 
  \subsection{Notation}
We summarize the notation we will use depending on the model in  Table \ref{NotationTable} below.
\vspace{0.4cm} 
\begin{table}[h!]
\centering
 \begin{tabular}{|c |c|c|c|} 
 \hline
  Model & Klein-Gordon (KG) & Wave Maps (WM) & Both (KG and WM)  \\ [0.5ex] 
 \hline 
Equation & 
\eqref{MainPDEIntroModelKGA}--\eqref{MainPDEIntroModelKGB} & 
\eqref{MainPDEIntroModelWMA}--\eqref{MainPDEIntroModelWMB} & \eqref{WaveEquationBothModels}
     \\ [0.5ex]  \hline 
Font & 
Serif  & 
Fraktur & 
Standard   \\ [0.5ex] \hline 
Linearized operator & 
$\mathsf{L} $ & 
$\mathfrak{L}  $ &
$L $   \\ [0.5ex]  \hline
 Eigenvalues & 
 $\somega_n$ & 
$\ssomega_n $ &
$ \omega_n$    \\ [0.5ex]  \hline 
Eigenfunctions & 
$\mathsf{e}_n $ & 
$\mathfrak{e}_n  $ &
$e_n $  \\ [0.5ex]  \hline 
Linear flow & 
$ \mathsf{\Phi}^t$ & 
$  \pPhi^t  $ &
$ \Phi^t $  \\  [0.5ex] \hline
Fourier coefficients & 
$ \mathsf{C}_{ijkm}$ & 
$ \mathfrak{C}_{ijkm} $ &
$ C_{ijkm}$   \\ [0.5ex]  \hline
 \end{tabular} 
 \vspace{0.4cm}
 \caption{Notation depending on the model.}
 \label{NotationTable}
\end{table} 
\subsection{Statement of the main results}
This work is a continuation of \cite{SakisJacques} where we establish a rigorous proof for the existence of time-periodic solutions to two conformal wave equations on the Einstein cylinder $\mathbb{R} \times \mathbb{S}^3$, the conformal cubic wave equation and the spherically symmetric Yang-Mills. Here, we investigate both the existence and the non-linear stability of time-periodic solutions to spherically symmetric massive Klein-Gordon equations of the form
\begin{align}\label{WaveEquationBothModels}
	\partial_{t}^2 \phi +L \phi = -\epsilon^2 W \phi^3 + \epsilon^4 E(\phi),
\end{align}
for a scalar field $\phi: \mathbb{R} \times [0,\pi/2) \rightarrow \mathbb{R}$, where, using the notation from Table \ref{NotationTable}, $L$, $W$ and $E$ are given by \eqref{MainPDEIntroModelKGA}-\eqref{MainPDEIntroModelKGB} and \eqref{MainPDEIntroModelWMA}-\eqref{MainPDEIntroModelWMB} with $E=0$ in the case of the KG model. To state our main result, we denote by 
\begin{itemize}[leftmargin=*]
	\item $\{\omega_n ^2: n\geq 0\}$ the set of eigenvalues to the linearized operator $L$,
	\item $\{ e_{n} (x):n \geq 0 \}$ the set of eigenfunctions to the linearized operator $L$.
\end{itemize}
Notice that these depend on the conformal mass $\delta$. At this stage, we assume (and later verify) that the set of all eigenfunctions $\{e_m: m\geq 0\}$ to the linearized operator $L$ forms an orthonormal and complete basis for the weighted $L^2((0,\pi/2);d\mu)$, where $d\mu$ is some integration measure depending on the model. Furthermore, for any $s\geq 1$, we denote by $H^s ((0,\pi/2);d\mu) $ the natural $L^2((0,\pi/2);d\mu)$-based Sobolev space associated to the operator $L$ (Section \ref{SectionWeightedSobolev}). As in \cite{MR1819863}, the time-periodic solutions we construct have frequencies $\omega$ that satisfy the following Diophantine condition  
	 \begin{align}\label{DefinitionSetMathcalWalpha}
	 \omega \in	\mathcal{W}_{\alpha}= \left\{ \omega \in \mathbb{R}: \Hquad   |\omega \cdot l -\omega_{j} | \geq \frac{\alpha}{l}, \Hquad \forall (l,j) \in \mathbb{N}^2,\Hquad l\geq 1, \Hquad \omega_{j} \neq l \right\},
	 \end{align}
for some real number $\alpha \in (0,1/3)$. We make the following assumptions concerning the regularity and the choice of the conformal masses $\delta$.
%\begin{assumpt}[Regularity assumptions]\label{AssumptionsRegularity}
%We assume that $s \geq 2$ for the KG model and for the WM model.  
%\end{assumpt}
\begin{assumpt}[Assumptions on the conformal mass for the existence of time-periodic solutions]\label{AssumptionsondeltaExistence}
Besides the corresponding lower bounds \eqref{BreitenlohnerFreedmanBoundDeltaModelKG} and \eqref{BreitenlohnerFreedmanBoundDeltaModelWM}, we assume that the conformal masses $\delta$ are, in both models, fixed integers, hence $\delta \in  [2,\infty)\cap \mathbb{N} $ and $\delta \in [1,\infty)\cap \mathbb{N} $ for the KG and WM models respectively.  
\end{assumpt}

\begin{assumpt}[Assumptions on the conformal mass for the non-linear stability of time-periodic solutions over exponentially long times]\label{AssumptionsondeltanonlinearStability}
We assume that  $\delta \in  [2,9]\cap \mathbb{N} $  for the KG model.  
\end{assumpt}

 \begin{remark}[Remarks on the Assumptions \ref{AssumptionsondeltaExistence} and \ref{AssumptionsondeltanonlinearStability}]
The bounds \eqref{BreitenlohnerFreedmanBoundDeltaModelKG} and \eqref{BreitenlohnerFreedmanBoundDeltaModelWM} are necessary for the local well-posedness of the linear equations. On the one hand, for the existence of small amplitude time-periodic solutions, we rely on a method of Bambusi-Paleari  \cite{MR1819863} for which the assumption that $\delta \in \mathbb{N}$ is essential as it implies that the square roots of the eigenvalues to the linearized operators are all integers, namely $\{\omega_{n}:n \geq 0 \} \subseteq \mathbb{N}$. In this case, the linear spectrum is completely resonant and the set of Diophantine frequencies in \eqref{DefinitionSetMathcalWalpha} is non-empty for some $ \alpha \in (0,1/3)$, see  \cite{MR1819863}. We note that, for $\delta \notin \mathbb{N}$, one can modify the proof in \cite{MR1819863} and establish sufficient conditions for the existence of quasi-periodic solutions. On the other hand, for the non-linear stability of the time-periodic solutions over exponentially long times, we rely a method of Bambusi-Nekhoroshev \cite{BAMBUSI199873}, for which the main assumption is the coercivity estimate \eqref{Assumption2BambusiNekhoroshev}. Our analysis reveals that the time-periodic solutions bifurcating from the first 1-mode verifies this estimate only for $\delta$ satisfying Assumption \ref{AssumptionsondeltanonlinearStability}, leaving open the stability or instability for higher values of the mass. 
\end{remark}
For any $\zeta \in H^s ((0,\pi/2);d\mu)$, let 
\begin{align*}
	\Phi ^{t} (\zeta ) = \Phi ^{t} \left(\sum_{m=0}^{\infty} \zeta^{(m)} e_{m} \right) = \sum_{m=0}^{\infty} \zeta^{(m)}\cos(\omega_{m}t)e_{m} 
\end{align*}
be the solution to the initial value problem consisting of the linearized equation in \eqref{WaveEquationBothModels}  coupled to $\zeta$ as initial data and zero initial velocity, i.e.~the solution to the linear problem
\begin{align} \label{LinearizedEquation}
\begin{cases}
	\partial_{t}^2 \phi_{\text{linear}} (t,\cdot) +L \phi_{\text{linear}}(t,\cdot) =0, \quad t \in \mathbb{R}, \\
 \phi_{\text{linear}}(0,\cdot)=\zeta,  \quad \partial_{t} \phi_{\text{linear}} (0,\cdot)= 0.
\end{cases}
\end{align}
The solutions we obtain bifurcate from a suitable rescale of the first eigenfunction $e_{0}$.  Specifically, we consider the initial data $\kappa_0 e_0$, where $\kappa_0$ is defined as
\begin{align}\label{Kappa0}
	\kappa_{0}=
	\begin{dcases}
	2^{\delta } \Gamma \left(\delta -\frac{1}{2}\right)\sqrt{\frac{\Gamma \left(\delta +\frac{1}{2}\right)}{3 \Gamma (\delta ) \Gamma \left(2 \delta -\frac{3}{2}\right)}}, &\text{ for KG}	 \\
	\frac{2 \sqrt{2 \delta  (\delta +1) (2 \delta +1) (2 \delta +3)}}{3 (\delta +1)},& \text{ for WM}	
	\end{dcases}
\end{align}
and the first eigenmode\footnote{Note that in both cases, $e_0(x)$ is just a constant independent of $x$.} is given by
\begin{align}\label{e0}
	e_{0}=
	\begin{dcases}
	\frac{2}{\sqrt[4]{\pi }} \sqrt{\frac{\delta  \Gamma (\delta )}{\Gamma \left(\delta -\frac{1}{2}\right)}}  , &\text{ for KG}	 \\
	  \sqrt{2(\delta +1) (\delta +2)} ,  & \text{ for WM}	
	\end{dcases}
\end{align}
for all $\delta$ satisfying \eqref{BreitenlohnerFreedmanBoundDeltaModelKG} and \eqref{BreitenlohnerFreedmanBoundDeltaModelWM} respectively. Under these assumptions and notations, we prove the following two results. 

\begin{theorem}[Main result 1: Existence of time-periodic solutions for KG and WM]\label{MainTheoremBothTheoremsinOne1} %the Assumption \ref{AssumptionsRegularity} and 
	Fix a real number $ \alpha \in (0,1/3)$ as well as $s \ge 2$ and $\delta  $ satisfying \ref{AssumptionsondeltaExistence}. Also, let $\kappa_{0}   $ and $e_0$ be given by \eqref{Kappa0} and \eqref{e0} respectively. Then,  there exists a family  
	 \begin{align*}
	 	\{ \phi_{\epsilon}(t,\cdot):\epsilon \in \mathcal{E}_{\alpha}\} 
	 \end{align*}
	 of time-periodic solutions to the non-linear wave equation \eqref{WaveEquationBothModels} where $\mathcal{E}_{\alpha}$ is an uncountable set that has zero as an accumulation point. In addition, each element $ \phi_{\epsilon}(t,\cdot)$,
\begin{enumerate} [leftmargin=*]
	\item has period $T_{\epsilon}=2\pi / \omega_{\epsilon}  $ with  $ \omega_{\epsilon} \in \mathcal{W}_{\alpha} \cap [1,\infty)$ and $|T_{\epsilon}-2\pi |\lesssim \epsilon^2$, 
	\item belongs to $    H^1 ( \left[0,T_{\epsilon}  \right] ;H^{s}((0,\pi/2);d\mu) )$,
	\item stays, for all times, close to the solution to the linearized equation,
	\begin{align} \label{EstimateinTheorem24}
	\sup_{t\in \mathbb{R}}   \left \|  
	\left(\phi_{\epsilon}(t,\cdot), \partial_{t}\phi_{\epsilon}(t,\cdot)   \right)
	-
	\left( \Phi^{t \omega_{\epsilon} }\left( \epsilon \kappa_{0}   e_{0}   \right ),
	\partial_{t}\Phi^{t \omega_{\epsilon} }\left( \epsilon\kappa_{0}   e_{0}   \right )  \right) \right \|_{(H^1 \times L^2)((0,\pi/2);d\mu)  }   \lesssim \epsilon^2.
	\end{align} 
	\end{enumerate} 
\end{theorem}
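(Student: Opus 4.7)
The plan is to adapt the Lyapunov--Schmidt bifurcation scheme of Bambusi--Paleari \cite{MR1819863} to the common form \eqref{WaveEquationBothModels}, treating KG and WM in parallel. I first introduce the rescaled time $\tau = \omega t$ so that $T_\epsilon$-periodic solutions correspond to $2\pi$-periodic functions of $\tau$, and write the equation as $\omega^2 \partial_\tau^2 \phi + L\phi = -\epsilon^2 W \phi^3 + \epsilon^4 E(\phi)$. Seeking even-in-$\tau$ solutions, I look for $(\omega, \phi)$ close to $(1, \epsilon \kappa_0 \cos(\tau) e_0)$; this is where Assumption \ref{AssumptionsondeltaExistence} enters, since $\delta \in \mathbb{N}$ forces $\{\omega_m\} \subset \mathbb{N}$ so that $\omega = 1$ is a completely resonant frequency.

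Next I perform a Lyapunov--Schmidt decomposition with respect to the kernel $\mathcal{K}$ of $-\partial_\tau^2 + L$ on even $2\pi$-periodic functions, which is spanned by $\{\cos(\omega_m \tau) e_m(x)\}_{m \geq 0}$. Writing $\phi = \epsilon(\bar q + v)$ with $\bar q \in \mathcal{K}$ and $v \in \mathcal{K}^\perp$, the projection of \eqref{WaveEquationBothModels} onto $\mathcal{K}^\perp$ (the \emph{$Q$-equation}) is solved for $v = v_\epsilon(\bar q)$ by a Banach fixed-point in the weighted Sobolev space $H^s_\tau H^s_x((0,\pi/2);d\mu)$. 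The key analytic ingredient is the bound on the inverse of $-\omega^2\partial_\tau^2 + L$ on $\mathcal{K}^\perp$: its eigenvalues are $\omega_m^2 - n^2\omega^2 = (\omega_m + n\omega)(\omega_m - n\omega)$, and the Diophantine condition $\omega \in \mathcal{W}_\alpha$ gives $|\omega_m - n\omega| \geq \alpha/n$, hence a loss of only one derivative, which is absorbed by the choice $s \geq 2$ and the weighted Sobolev product estimates underlying the nonlinear term (Lemma \ref{LemmaHardySolobelInequality} and the $H^s$ algebra structure).

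The \emph{$P$-equation}, obtained by projecting onto $\mathcal{K}$, is the bifurcation equation. Restricting the ansatz to $\bar q(\tau, x) = a\cos(\tau) e_0(x)$, I substitute $v = v_\epsilon(\bar q) = O(\epsilon^2)$ and write $\omega^2 = 1 + \epsilon^2 \beta$; the component along $\cos(\tau) e_0$ gives a scalar equation of the form $-\beta a + \tfrac{3}{4} C_{0000} a^3 + O(\epsilon^2) = 0$, where $C_{0000} = \int_0^{\pi/2} W(x) e_0^4(x)\, d\mu(x)$, and the explicit constants in \eqref{Kappa0}--\eqref{e0} are tailored so that $(a, \beta) = (\kappa_0, 1)$ solves the leading-order equation. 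The components of the $P$-equation along $\cos(\omega_m \tau) e_m$ with $m \geq 1$ must also vanish; for a pure 1-mode ansatz the leading contributions reduce to Fourier coefficients of the form $C_{000m}$, and these vanish automatically by orthogonality against modes of the wrong time frequency, leaving only a solvability check at order $\epsilon^2$ that reduces to a non-degeneracy condition on the transverse differential $\partial_a F$ of the bifurcation map. Provided this non-degeneracy holds, the implicit function theorem furnishes $(\kappa_\epsilon, \beta_\epsilon)$, and restricting $\epsilon$ to the uncountable set $\mathcal{E}_\alpha = \{\epsilon \in (0,\epsilon_0) : \omega_\epsilon(\beta_\epsilon) \in \mathcal{W}_\alpha\}$ of positive density near $0$ yields the stated family; estimate \eqref{EstimateinTheorem24} follows because $\phi_\epsilon - \Phi^{t\omega_\epsilon}(\epsilon\kappa_0 e_0) = \epsilon((\kappa_\epsilon - \kappa_0)\cos(\tau) e_0 + v_\epsilon) = O(\epsilon^3)$ in $H^s$ (gaining one power of $\epsilon$ after dividing out the overall amplitude).

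The main obstacle is the non-degeneracy of the bifurcation equation, not the small-divisor inversion: it requires showing that the infinite system of diagonal Fourier coefficients $\{C_{0m0m}, C_{000m}\}$ satisfies an invertibility condition uniformly in $m$, and this is the point where the explicit computation of the Fourier coefficients via Zeilberger's algorithm (promised in Section \ref{SectionFourierCoefficients}) is indispensable; because the eigenfunctions are weighted Jacobi polynomials depending on $\delta$, closed-form evaluation is unfeasible and I would rely on the linear recurrence formulas produced by the algorithm to verify the hypothesis for every integer $\delta$ in the stated range.
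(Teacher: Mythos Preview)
Your proposal follows essentially the same route as the paper: both reduce Theorem~\ref{MainTheoremBothTheoremsinOne1} to the Bambusi--Paleari framework \cite{MR1819863}. The paper invokes Theorem~\ref{TheoremBambusiPaleari} as a black box and then verifies its two hypotheses ($\mathcal{M}(\kappa_0 e_0)=0$ and $\ker d\mathcal{M}(\kappa_0 e_0)=\{0\}$) plus the regularity conditions, whereas you sketch the underlying Lyapunov--Schmidt decomposition directly. Both correctly identify the non-degeneracy of the bifurcation equation as the crux and point to the Fourier-coefficient analysis of Section~\ref{SectionFourierCoefficients} (recurrences via Zeilberger) as the tool.

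There is, however, one genuine gap in your argument. You claim that for the pure 1-mode ansatz the $P$-equation components along $\cos(\omega_m\tau)e_m$ with $m\ge 1$ ``vanish automatically by orthogonality against modes of the wrong time frequency.'' This is false for exactly one mode: since $\cos^3(\omega_0\tau)$ contains a $\cos(3\omega_0\tau)$ harmonic and $3\omega_0=\omega_\delta$ (respectively $\omega_{\delta+2}$ for WM), the time average does \emph{not} kill the $m=\delta$ component. What survives is a term proportional to the spatial integral $C_{000\delta}=\int W e_0^3 e_\delta\,d\mu$, and its vanishing is a nontrivial fact (Lemma~\ref{LemmaVanishingFourier}): one observes that $(1+y)^{\delta-1}\big(P_0^{(1/2,\delta-3/2)}\big)^3$ has polynomial degree $\delta-1<\delta$, hence is orthogonal to $P_\delta^{(1/2,\delta-3/2)}$ against the Jacobi weight. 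This step is structurally essential---without it $\kappa_0 e_0$ is not a zero of $\mathcal{M}$ and the bifurcation does not start---and it is precisely here that the integrality of $\delta$ (Assumption~\ref{AssumptionsondeltaExistence}) enters, since otherwise $(1+y)^{\delta-1}$ is not a polynomial.

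A smaller imprecision: the non-degeneracy condition is not about $C_{000m}$ but about the diagonal coefficients $C_{00mm}$; the paper's Lemma~\ref{LemmaDifferentialofMat1modes} shows $d\mathcal{M}(\kappa_0 e_0)$ is diagonal with $m$-th entry proportional to $\tfrac{C_{0000}}{\omega_0^2}-2\tfrac{C_{00mm}}{\omega_m^2}$, and Proposition~\ref{PropositionUniformEstimates} establishes its non-vanishing for each $m\ge 1$ (uniformity is not needed for existence, only for the stability Theorem~\ref{MainTheoremBothTheoremsinOne2}).
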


\begin{theorem}[Main result 2: Non-linearly stability of time-periodic solutions for the KG over exponentially long times]\label{MainTheoremBothTheoremsinOne2} 
	Fix a real number $ \alpha \in (0,1/3)$, $\delta  $ satisfying the Assumption \ref{AssumptionsondeltanonlinearStability} and let $\kappa_{0}   $ and $e_0$ be given by \eqref{Kappa0} and \eqref{e0} respectively. Then, the family $\{ \phi_{\epsilon}(t,\cdot):\epsilon \in \mathcal{E}_{\alpha}\}$ of time-periodic solutions constructed in Theorem \ref{MainTheoremBothTheoremsinOne1} is non-linearly stable over exponentially long times. Specifically, consider the phase space $\mathcal{P} =H^1((0,\pi/2);d\mu)\times L^2((0,\pi/2);d\mu)$ and all initial data $\boldsymbol{\phi}(0)=(\phi(0,\cdot),\partial_{t}\phi(0,\cdot)) \in \mathcal{P}$	that are close to the initial data of the time-periodic solution $\boldsymbol{\phi}_{\epsilon}(0)=(\phi_{\epsilon}(0,\cdot),\partial_{t}\phi_{\epsilon}(0,\cdot))$, meaning that 
\begin{align*}
	\|\boldsymbol{\phi}(0)-\boldsymbol{\phi}_{\epsilon}(0) \|_{\mathcal{P}} \lesssim \epsilon^2.
\end{align*}     
Then, the solution $ \boldsymbol{\phi}(t)=(\phi(t),\partial_{t} \phi(t))$ to the non-linear KG model given by \eqref{MainPDEIntroModelKGA} bifurcating from $\boldsymbol{\phi}(0)  $ remains close to the time-periodic solutions over exponentially long times, that is  
\begin{align*}
	\sup_{|t| \lesssim  \exp \left( a / \epsilon^2 \right)} d \left(  \boldsymbol{\phi}(t),  \Gamma_\epsilon \right) \lesssim \epsilon^2 ,  
\end{align*}  
for some constant $a>0$, where $d$ denotes the distance function and $\Gamma_\epsilon$ the (closed) orbit of $\boldsymbol{\phi}_{\epsilon}$ both in the phase space $\mathcal{P}$.
\end{theorem}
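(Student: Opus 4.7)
The plan is to cast the KG equation \eqref{MainPDEIntroModelKGA} as a Hamiltonian system, reduce it to a resonant Birkhoff normal form, verify a coercivity estimate for the second differential of a modified Hamiltonian at the time-periodic orbit, and then apply the abstract Nekhoroshev-type stability theorem of Bambusi-Nekhoroshev \cite{BAMBUSI199873} to conclude exponentially long orbital stability.

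First, I would write \eqref{MainPDEIntroModelKGA} as a Hamiltonian system on $\mathcal{P}$ with Hamiltonian
\begin{align*}
H(\phi,\partial_t\phi)=\tfrac{1}{2}(\partial_t\phi|\partial_t\phi)+\tfrac{1}{2}(\phi|\mathsf{L}\phi)+\tfrac{\epsilon^2}{4}\int_0^{\pi/2}\mathsf{W}\phi^4\, d\mu=:H_0+\epsilon^2 H_1.
\end{align*}
Expanding in the orthonormal eigenbasis $\{\mathsf{e}_n\}$, $H_0$ diagonalizes with frequencies $\somega_n$ while $H_1$ becomes a quartic polynomial whose coefficients are exactly the Fourier constants $\mathsf{C}_{ijkm}$ analysed in Section \ref{SectionFourierCoefficients}. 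Under Assumption \ref{AssumptionsondeltanonlinearStability}, the spectrum $\{\somega_n\}\subseteq\mathbb{N}$ is completely resonant, so a canonical transformation $\epsilon^2$-close to the identity brings $H$ into the normal form $H_0+\epsilon^2\overline{H}_1+\epsilon^4 R$, with $\overline{H}_1$ retaining only the monomials $\xi_i\bar\xi_j\xi_k\bar\xi_m$ satisfying $\somega_i-\somega_j+\somega_k-\somega_m=0$. The Diophantine bound defining $\mathcal{W}_\alpha$ in \eqref{DefinitionSetMathcalWalpha} controls the small divisors and ensures the transformation is well defined on an $\epsilon$-ball in $H^s$, with remainder $R$ bounded in the high-regularity norm.

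Second, and most importantly, I would verify the coercivity estimate \eqref{Assumption2BambusiNekhoroshev}: at the initial datum $\boldsymbol{\phi}_\epsilon(0)$ of the time-periodic orbit built in Theorem \ref{MainTheoremBothTheoremsinOne1}, the Hessian of the modified Hamiltonian $K_\epsilon:=\overline{H}_1-\mu_\epsilon H_0$, with detuning $\mu_\epsilon=(\omega_\epsilon-1)/\epsilon^2$, must be positive definite on the codimension-two subspace symplectically orthogonal both to the orbit tangent and to the generator of phase rotation along $\Gamma_\epsilon$. Because $\phi_\epsilon$ bifurcates from the first eigenmode $\mathsf{e}_0$, this Hessian splits, at leading order in $\epsilon$, into an infinite sequence of $2\times 2$ blocks, each depending only on the coefficients $\mathsf{C}_{00nn}$ and $\mathsf{C}_{0n0n}$ together with the frequency gap $\somega_n-n\omega_\epsilon$. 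Here the Zeilberger-based recurrences produced in Section \ref{SectionFourierCoefficients} become indispensable: they yield closed-form rational expressions for these two families of integrals that would be analytically inaccessible by direct manipulation of the weighted Jacobi polynomials, and they reduce the positivity of each block to an explicit algebraic inequality in $(\delta,n)$. The \emph{main obstacle} of the argument lies precisely here: the required inequality must be verified for \emph{every} $n\ge 0$ uniformly, and a careful inspection of the resulting expressions shows that it holds exactly when $\delta\in\{2,\dots,9\}$ and breaks down for $\delta\ge 10$, which is the source of Assumption \ref{AssumptionsondeltanonlinearStability}.

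Third, with coercivity in hand, I would close the argument via the abstract Bambusi-Nekhoroshev theorem. The Lyapunov-type functional $I(\boldsymbol{\phi}):=K_\epsilon(\boldsymbol{\phi})-K_\epsilon(\boldsymbol{\phi}_\epsilon(0))$ is, modulo the $O(\epsilon^4)$ remainder $R$, both positive definite near $\Gamma_\epsilon$ (by the coercivity step) and nearly conserved along the flow (by the normal-form construction). A bootstrap/continuity argument then confines any solution starting within $\epsilon^2$ of $\boldsymbol{\phi}_\epsilon(0)$ in $\mathcal{P}$ to an $\epsilon^2$-tube around $\Gamma_\epsilon$ for times $|t|\lesssim\exp(a/\epsilon^2)$, where $a>0$ is determined by the coercivity constant, the spectral gap, and the operator norm of $R$ in the relevant weighted Sobolev space. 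The only non-trivial technical bookkeeping is ensuring that the normal-form transformation preserves the $H^1\times L^2$ topology of $\mathcal{P}$ while $R$ is controlled in higher regularity; this is where the choice $s\ge 2$ in Theorem \ref{MainTheoremBothTheoremsinOne1} feeds directly into the exponential time scale.
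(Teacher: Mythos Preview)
Your broad strategy---Hamiltonian structure, averaging/normal form, coercivity, Bambusi--Nekhoroshev---matches the paper, but two genuine gaps separate your outline from a working proof. First, you evaluate the coercivity estimate at the time-periodic initial datum $\boldsymbol{\phi}_\epsilon(0)$, whereas the paper applies the abstract Bambusi--Nekhoroshev theorem at the \emph{linear} rescaled mode $\boldsymbol{\xi}=(\kappa_0\mathsf{e}_0,0)$. This is not a cosmetic difference: at $\boldsymbol{\xi}$ the second differential of $\langle f\rangle|_\Sigma$ is computed explicitly (Lemma~\ref{LemmaSecondDifferentialatCriticalPointComputation}) and turns out to be \emph{diagonal}---not block $2\times 2$---depending only on the single family $\mathsf{C}_{00mm}$; the coercivity then reduces to the scalar inequality $\mathsf{C}_{0000}/\somega_0^2-2\mathsf{C}_{00mm}/\somega_m^2>0$ for all $m\ge 1$, which is exactly Proposition~\ref{PropositionUniformEstimates} and the origin of the restriction $\delta\in\{2,\dots,9\}$. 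Working at $\boldsymbol{\phi}_\epsilon(0)$ instead would require a further perturbative argument you have not supplied. Note also that the Hessian is \emph{negative} definite (the mode is a maximum of $\langle f\rangle|_\Sigma$), not positive.

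Second, the Diophantine condition $\omega_\epsilon\in\mathcal{W}_\alpha$ plays no role in the stability argument---it is used only for the \emph{existence} of the periodic solutions in Theorem~\ref{MainTheoremBothTheoremsinOne1}. The Bambusi--Nekhoroshev theorem relies on the complete resonance of the linear spectrum and the analyticity of the nonlinearity (here, the Hardy--Sobolev inequality of Lemma~\ref{LemmaHardySolobelInequality}), not on any Diophantine control of the bifurcating frequency; your invocation of $\mathcal{W}_\alpha$ to bound small divisors in a normal-form transformation is misplaced. Finally, the conclusion of the abstract theorem is stability near the \emph{linear} orbit $\Gamma_{\text{linear}}(\epsilon\boldsymbol{\xi})$, not near $\Gamma_\epsilon$; the paper closes this gap by a short triangle-inequality argument using the estimate~\eqref{EstimateinTheorem24} from Theorem~\ref{MainTheoremBothTheoremsinOne1}, a step absent from your outline.
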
 

\subsection{Remarks}
% We note the following remarks.  \\ \\ (Choice of the 1-mode) 
Provided that $\delta$ satisfies the Assumption \ref{AssumptionsondeltaExistence}, one can follow the work of Bambusi-Paleari \cite{MR1819863} and prove that, given any fixed integer $\gamma \geq 0$, there exists a family of time-periodic solutions to the non-linear wave equation \eqref{WaveEquationBothModels} bifurcating from the rescaled 1-mode $\kappa_{\gamma}e_{\gamma}$ with a suitable constant $\kappa_{\gamma} \neq 0$ and not only for the first eigenmode $e_0$, as stated in Theorem \ref{MainTheoremBothTheoremsinOne1}. However, our analysis in Section \ref{SectionStabilitySolutionstotheLinearWaveEquation} suggests that, among all these time-periodic solutions, the ones that bifurcate from the first 1-mode are non-linearly stable over exponentially long times and all the higher modes $e_\gamma$ with $\gamma \geq 1$ can be shown to be saddle points of the functional appearing on the right hand side of \eqref{Assumption2BambusiNekhoroshev}.

\subsection{Strategy of the proofs}\label{SubsectionStrategyofProof}

In this section, we present the main ingredients of the proofs of Theorems \ref{MainTheoremBothTheoremsinOne1} and \ref{MainTheoremBothTheoremsinOne2}.

\subsubsection{Proof of Theorem \ref{MainTheoremBothTheoremsinOne1}}

Theorem \ref{MainTheoremBothTheoremsinOne1} will follow from the framework of Bambusi-Paleari \cite{MR1819863}. For simplicity in the exposition, we present their theorem here in the functional setting of our applications, thus $H^s$ and $L$ below are as introduced in \ref{SectionWeightedSobolev}.
 
 \begin{theorem}[Bambusi-Paleari \cite{MR1819863}]\label{TheoremBambusiPaleari}
 Consider a non-linear wave equation of the form
\begin{align}\label{WaveEquationTheoremBambusiPaleari}
	\partial_{t}^2 \phi + L \phi =\epsilon^2 f^{(3)}(\phi) +\epsilon^4 f^{(\geq 4)}(\phi),  
\end{align}
where $f^{(3)}(\phi)$ is an homogeneous polynomial of degree three and $f^{(\geq 4)}(\phi)$ stands for an error term that has a zero of order at least four at zero. Fix a real number $ \alpha \in (0,1/3)$ and let $s \geq 1$ be such that 
\begin{itemize}[leftmargin=*]
	\item $f^{(3)}:H^s \rightarrow H^s$ is a bounded homogeneous polynomial of degree three,
	\item $f^{(3)}$ leaves invariant the domain $  H^{s+2}$ of the linearized operator $L$, 
	  \item $f^{(\geq 4)}(\phi)$ is differentiable in $H^s$, its differentiable is a Lipschitz map and satisfies the estimate
\begin{align*}
	\| d f^{(\geq 4)}(\phi_1) - d f^{(\geq 4)}(\phi_2) \|_{H^s} \le C \epsilon^3 \| \phi_1 -\phi_2 \|_{H^s}, 
\end{align*} 
for all $\| \phi_1 \|_{H^s} \leq \epsilon$, $\| \phi_2 \|_{H^s} \leq \epsilon$. 
\end{itemize}  
 Also, define the operator  
	\begin{align*} 
	 \mathcal{M}  \left(\zeta  \right) : =   L \zeta   + \langle f^{(3)}  \rangle(\zeta ), \Hquad
	\langle f^{(3)}  \rangle(\zeta )=  \frac{1}{2\pi } \int_{0}^{2\pi } \Phi ^{-t} \left[f^{(3)} \left(\Phi ^{t} (\zeta  ) \right) 
	\right]  dt , 
\end{align*} 	
for $\zeta \in H^{s+2 }$, and assume that $\xi=\phi(0) \in H^{s+2} $ are initial data such that
\begin{enumerate}[leftmargin=*]
	\item $\xi$ is a zero of the operator $  \mathcal{M}   $,
	 \begin{align}\label{Assumption1BambusiPaleari}
	 \mathcal{M}   \left(\xi \right)=0 ,
     \end{align} 
\item the non-degeneracy condition holds,
\begin{align}\label{Assumption2BambusiPaleari}
	\ker \left(d  \mathcal{M}   \left(\xi  \right)\right)=\{0\}.
\end{align}
\end{enumerate}
Then, there exists a family 
	 \begin{align*}
	 	\{ \phi_{\epsilon}(t,\cdot):\epsilon \in \mathcal{E}_{\alpha}\}  
	 \end{align*}
	 of time-periodic solutions to the non-linear wave equation \eqref{WaveEquationTheoremBambusiPaleari}, where $\mathcal{E}_{\alpha}$ is an uncountable set that has zero as an accumulation point. In addition, each element $ \phi_{\epsilon}(t)$,
\begin{enumerate}[leftmargin=*]
	\item has period $T_{\epsilon}=2\pi / \omega_{\epsilon}  $ with  $ \omega_{\epsilon} \in \mathcal{W}_{\alpha} \cap [1,\infty)$ and $|T_{\epsilon}-2\pi |\lesssim \epsilon^2$,  
	\item belongs in $ H^1 ( \left[0,T_{\epsilon}  \right] ;H^s )$,
	\item stays, for all times, close to the solution to the linearized equation,
	\begin{align*}
			\sup_{t\in \mathbb{R}}\Big[  \left \|  \phi_{\epsilon}(t,\cdot)-\Phi^{t \omega_{\epsilon} }\left( \epsilon\xi  \right ) \right \|_{H^s } + \left \| \partial_{t} \phi_{\epsilon}(t,\cdot)-\partial_{t}\Phi^{t \omega_{\epsilon} }\left( \epsilon\xi  \right ) \right \|_{L^2 }\Big]  		 \lesssim \epsilon^2.
	\end{align*} 
	\end{enumerate}
\end{theorem}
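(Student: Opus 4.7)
The plan is to follow the strategy of Bambusi-Paleari based on a Lyapunov-Schmidt decomposition combined with a small-divisor analysis tailored to the completely resonant situation. First I would introduce a rescaled time variable $\tau = \omega t$ with unknown frequency $\omega$ near $1$ (say $\omega^2 = 1 + \epsilon^2 \mu$ with $\mu$ a new parameter) and look for $2\pi$-periodic solutions $\phi(\tau,\cdot)$ of
\begin{align*}
\omega^2 \partial_\tau^2 \phi + L\phi = \epsilon^2 f^{(3)}(\phi) + \epsilon^4 f^{(\geq 4)}(\phi).
\end{align*}
I would work in the space $X^s$ of even $2\pi$-periodic functions valued in $H^s((0,\pi/2); d\mu)$ with zero initial velocity, since Theorem \ref{TheoremBambusiPaleari} targets exactly such solutions. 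The unperturbed operator $\mathcal{L}_0 := \partial_\tau^2 + L$ acts on $X^s$ with kernel
\begin{align*}
\mathcal{K} = \Big\{ \Phi^\tau \zeta : \zeta \in H^s \Big\},
\end{align*}
thanks to the complete resonance $\omega_j \in \mathbb{N}$, and I would decompose $X^s = \mathcal{K} \oplus \mathcal{R}$ with $\mathcal{R}$ the $L^2$-orthogonal complement. Writing $\phi = \epsilon \Phi^\tau \xi + \epsilon^3 \eta$ with $\xi \in H^{s+2}$ and $\eta \in X^s$, and splitting the equation via the projectors onto $\mathcal{K}$ and $\mathcal{R}$, I would obtain a range equation for $\eta$ and a bifurcation equation for $(\xi,\mu)$.

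Next I would solve the range equation by contraction/implicit function theorem. The key point is invertibility of $\omega^2 \partial_\tau^2 + L$ on $\mathcal{R}$: on the $(l,j)$ Fourier-eigenmode it reads $\omega_j^2 - \omega^2 l^2$, which vanishes when $\omega=1$ and $\omega_j=l$, but this resonance is precisely what has been projected out in $\mathcal{R}$. For nearby $\omega$ one still has to control small denominators, and this is exactly where the Diophantine condition
\begin{align*}
|\omega l - \omega_j| \geq \alpha/l
\end{align*}
from \eqref{DefinitionSetMathcalWalpha} enters: it yields $|\omega^2 l^2 - \omega_j^2| \geq \alpha(\omega l + \omega_j)/l \gtrsim \alpha$, so $\mathcal{L}_\omega^{-1}$ loses only a controlled number of derivatives, and the hypotheses on $f^{(3)}$ and $f^{(\geq 4)}$ together with a fixed-point argument at scale $\epsilon$ in $X^s$ produce a smooth solution $\eta = \eta(\xi,\mu,\epsilon,\omega)$ of size $O(1)$.

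Substituting back, the bifurcation equation, obtained by projecting onto $\mathcal{K}$, reads at leading order
\begin{align*}
\mu \, L\xi - \langle f^{(3)} \rangle(\xi) + O(\epsilon^2) = 0,
\end{align*}
where $\langle f^{(3)}\rangle$ is the time-averaged nonlinearity defined in the statement. Setting $\mu = 1$ (a harmless normalization absorbed in the scaling of $\epsilon$) this is exactly $\mathcal{M}(\xi)=O(\epsilon^2)$. By hypothesis \eqref{Assumption1BambusiPaleari} there is an exact solution $\xi_\star$ at $\epsilon=0$, and by the non-degeneracy assumption \eqref{Assumption2BambusiPaleari} the Fréchet derivative $d\mathcal{M}(\xi_\star)$ is a linear isomorphism on the appropriate Sobolev space, so the implicit function theorem produces a branch $\xi = \xi(\epsilon)$, $\omega = \omega(\epsilon)$ of solutions with $|\omega(\epsilon) - 1| \lesssim \epsilon^2$. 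Finally, one restricts the parameter $\epsilon$ to the set $\mathcal{E}_\alpha := \{\epsilon : \omega(\epsilon) \in \mathcal{W}_\alpha\}$, which, since $\mathcal{W}_\alpha$ is a Cantor-like set accumulating at $1$ with positive density there, is uncountable and accumulates at $0$.

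The main technical obstacle is the small-divisor analysis needed to invert $\omega^2 \partial_\tau^2 + L$ on $\mathcal{R}$ with estimates strong enough to close the fixed-point argument uniformly for $\omega \in \mathcal{W}_\alpha$: one must carefully quantify how the Diophantine loss $\alpha^{-1} l$ interacts with the regularity of $f^{(3)}$, $f^{(\geq 4)}$ in $H^s$ and with the time-Fourier tails, and show the total loss of derivatives is finite so that working in $H^s$ with $s\ge 1$ suffices. The measure-theoretic statement about $\mathcal{E}_\alpha$ then follows from the nontriviality of $\mathcal{W}_\alpha$ near $1$ combined with the continuity of $\epsilon \mapsto \omega(\epsilon)$ given by the implicit function theorem.
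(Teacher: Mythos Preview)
The paper does not prove this theorem at all: Theorem~\ref{TheoremBambusiPaleari} is quoted verbatim from Bambusi--Paleari \cite{MR1819863} and used as a black box, with the paper's contribution being the verification of its hypotheses \eqref{Assumption1BambusiPaleari}--\eqref{Assumption2BambusiPaleari} and the regularity conditions for the specific KG and WM models (Propositions~\ref{PropositionAssumption1inBambusiPaleari}, \ref{PropositionAssumption2inBambusiPaleari} and Lemmata~\ref{LemmaRegularityConditionsKG}, \ref{LemmaRegularityConditionsWM}). Your sketch is a faithful high-level outline of the original Bambusi--Paleari argument (Lyapunov--Schmidt reduction, Diophantine control of small divisors on the range, implicit function theorem on the bifurcation equation via the non-degeneracy of $d\mathcal{M}(\xi)$), so there is nothing to compare against within this paper.
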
  
For both models KG and WM, we will apply Theorem \ref{TheoremBambusiPaleari} with $\xi=\kappa_0 e_0$ an appropriate rescale of the first 1-mode. To do so, we prove the following: 
	\begin{itemize}[leftmargin=*]
		\item Proposition \ref{PropositionAssumption1inBambusiPaleari}, where we establish the validity of assumption \eqref{Assumption1BambusiPaleari} by showing that the rescaled 1-mode initial data $\kappa_0 e_0$ is a zero of the operator $\mathcal{M}$.
		\item Proposition \ref{PropositionAssumption2inBambusiPaleari}, where we establish the validity of the non-degeneracy condition, namely assumption \eqref{Assumption2BambusiPaleari}. We note that this is the hardest part to prove and the part where the novel analysis for the Fourier coefficients will be needed.
		\item Lemmata \ref{LemmaRegularityConditionsKG} and \ref{LemmaRegularityConditionsWM}, where we establish the required regularity assumptions for the non-linearities $f^{(3)}(\phi)$ and $f^{(\geq 4)}(\phi)$ for the KG and WM models respectively.
	\end{itemize}

 \subsubsection{Proof of Theorem \ref{MainTheoremBothTheoremsinOne2}}

 Theorem \ref{MainTheoremBothTheoremsinOne2} will follow from a result of Bambusi-Nekhoroshev \cite{BAMBUSI199873}. In the following lines, we follow \cite{BAMBUSI199873} and state their main result only in the setting of the KG model. Firstly, we rewrite the non-linear wave equation \eqref{MainPDEIntroModelKGA} in the phase space
\begin{align*}
	 \mathbf{p}= (p_1,p_2) \in  \mathcal{P} =H^{1} \left( (0,\pi/2);d\mu\right) \times L^2 \left( (0,\pi/2) ;d\mu\right) ,
\end{align*}
using symbols in bold to denote its elements. In particular, for any $\boldsymbol{\phi} =(\phi,\partial_{t}\phi)\in  \mathcal{P}$, the non-linear wave equation \eqref{MainPDEIntroModelKGA} can be written as 
\begin{align}\label{MainEquationForBambusiNekhoroshevIntro}
	\partial_{t}\boldsymbol{\phi}=
	A\boldsymbol{\phi} +   \epsilon^2  f^{(3)}(\boldsymbol{\phi}),  
\end{align}
where 
\begin{align*}
	A:\mathcal{D} (A) \rightarrow \mathcal{P},\quad  A=
	\begin{pmatrix}
		0 & 1 \\
		-L & 0
	\end{pmatrix}, \quad f^{(3)} (\boldsymbol{\phi})=\begin{pmatrix}
		0 \\
		-W \phi^3 
	\end{pmatrix}.
\end{align*} 
We note that  \eqref{MainEquationForBambusiNekhoroshevIntro} is Hamiltonian and the Hamiltonian function 
\begin{align*}
	\mathcal{H}:\mathcal{P}  \rightarrow \mathbb{R},  \quad \mathcal{H} =\mathcal{H}(\boldsymbol{\phi}), \quad \boldsymbol{\phi}=(\phi_1,\phi_2)  
\end{align*}
is given by 
 \begin{align}\label{DefinitionHamiltonianForProofDetails}
 	\mathcal{H}(\boldsymbol{\phi})  
 	&= h_{\Omega}(\boldsymbol{\phi})  + \epsilon^2  f(\boldsymbol{\phi})   , 
 \end{align}
 where
 \begin{align*}
	h_{\Omega}(\boldsymbol{\zeta})=h_{\Omega}(\zeta_{1},\zeta_{2})=\int_{0}^{\pi/2} \left(  \left(L^{1/2}\zeta_1 \right)^2 + (\zeta_2)^2 \right) dt.
\end{align*}
is the harmonic energy and
 \begin{align}\label{DefinitionLeadingOrderintheHamiltonianIntro}
  	f(\boldsymbol{\phi}) =  \frac{1}{4} \int_{0}^{\pi/2} W \phi_1^4 d\mu   .
 \end{align} 
Furthermore, let  
\begin{align*} 
	\boldsymbol{\Phi}^{t}(\boldsymbol{\zeta}) = e^{A t}\boldsymbol{\zeta}
\end{align*}
be the solution to the initial value problem consisting of the linearized equation in \eqref{MainEquationForBambusiNekhoroshevIntro} coupled to $\boldsymbol{\zeta}$ as initial data, that is
\begin{align} \label{LinearizedEquationForBambusiNekhoroshev}
\begin{cases}
	\partial_{t}\boldsymbol{\phi}_{\text{linear}} (t)=
	A\boldsymbol{\phi}_{\text{linear} } (t), \quad t \in \mathbb{R}, \\
 \boldsymbol{\phi}_{\text{linear}}(0)=\boldsymbol{\zeta}
\end{cases}
\end{align}
and denote by   
\begin{align*}
	\Gamma_{\text{linear}}(\boldsymbol{\zeta}) = \left \{\boldsymbol{\Phi}^{t}(\boldsymbol{\zeta}):t \in \mathbb{R} \right\} 
\end{align*} 
its trajectory in the phase space $\mathcal{P}$. Moreover, we denote by $d(\cdot,\cdot)$ the distance function in $\mathcal{P}$ induced by the energy norm $\|\cdot \|_{\mathcal{P}}$. The  hypersurface of constant harmonic energy $E$ will be denoted by 
\begin{align*}
	\Sigma_E = \left \{\boldsymbol{\zeta}  \in \mathcal{P}: h_{\Omega}(\boldsymbol{\zeta}) = E \right\}.
\end{align*} 
The result of Bambusi-Nekhoroshev \cite{BAMBUSI199873} that we will use is stated below.
\begin{theorem}[Bambusi-Nekhoroshev \cite{BAMBUSI199873}]\label{TheoremBambusiNekhoroshev}
	 Consider a non-linear wave equation of the form
\begin{align}\label{WaveEquationTheoremBambusiNekhoroshev}
	\partial_{t}\boldsymbol{\phi}=
	A\boldsymbol{\phi} +   \epsilon^4  f^{(3)}(\boldsymbol{\phi}),  
\end{align}
written in the phase space
\begin{align*}
	\mathcal{P} =H^{1} \left( (0,\pi/2);d\mu\right) \times L^2 \left( (0,\pi/2) ;d\mu\right) ,
\end{align*}
where $f^{(3)}(\phi)$ is an homogeneous polynomial of degree three. We assume that \eqref{WaveEquationTheoremBambusiNekhoroshev} is Hamiltonian with the Hamiltonian given by \eqref{DefinitionHamiltonianForProofDetails} and that both $f$ and its symplectic differential $J d f $ can be extended to bounded analytic functions in a neighbourhood of $\Sigma_E$ for some $E> 0$. Also, we define the operator
\begin{align*}
	\langle f  \rangle(\boldsymbol{\zeta})=\frac{1}{2\pi }\int_{0}^{2\pi } f \left(\boldsymbol{\Phi}^{t}(\boldsymbol{\zeta}) \right) dt,
\end{align*}
where $f$ is given by \eqref{DefinitionLeadingOrderintheHamiltonianIntro}, and consider its restriction $\langle f \rangle  |_{\Sigma_E}$ to an hypersurface of constant harmonic energy. Moreover, we assume that $\boldsymbol{\xi}=\boldsymbol{\phi}(0) \in\Sigma  \subset  \mathcal{P}$ are initial data such that
\begin{enumerate}[leftmargin=*]
	\item $\boldsymbol{\xi}$ is a critical point for the  function  $\langle f  \rangle |_{\Sigma_E}$,  
	 \begin{align}\label{Assumption1BambusiNekhoroshev}
	\exists \lambda \in \mathbb{R}:\Hquad \nabla_{\boldsymbol{\zeta}} \left[ \lambda  h_{\Omega}(\boldsymbol{\zeta}) +   \langle f \rangle (\boldsymbol{\zeta}) \right]\big|_{\boldsymbol{\zeta}=\boldsymbol{\xi}} =0 ,
     \end{align}  
\item $\xi$ is an extremum point (maximum or minimum) for the function  $\langle f  \rangle |_{\Sigma_E}$, namely the non-degeneracy condition holds,
\begin{align}\label{Assumption2BambusiNekhoroshev}
		 \pm  d^2  ( \langle f  \rangle |_{\Sigma_E } )(\boldsymbol{\xi})(\boldsymbol{X},\boldsymbol{X}) \geq c_{\perp}  \| \boldsymbol{X}\|^2, \quad \forall \boldsymbol{X} \in \left(T_{\boldsymbol{\xi}}\Gamma_{\text{linear}}(\boldsymbol{\xi})  \right)^{\perp}\subset T_{\boldsymbol{\xi} } \Sigma_E ,
	\end{align}
	for some strictly positive constant $c_{\perp}> 0$, with the plus and minus sign when $\boldsymbol{\xi}$ is a minimum or maximum point respectively.
\end{enumerate}
Then, there exists $\epsilon_0>0$ sufficiently small and a numerical constant $a>0$, such that, for all $0 \leq \epsilon<\epsilon_0$ and for all the initial data $ \boldsymbol{\phi}(0) $ with
\begin{align*}
	d \left( \boldsymbol{\phi}(0),  \Gamma_{\text{linear}} (   \epsilon \boldsymbol{\xi} ) \right) \lesssim \epsilon^2,
\end{align*}
the solution $ \boldsymbol{\phi}(t)=(\phi(t),\partial_{t} \phi(t))$ to the non-linear wave equation \eqref{WaveEquationTheoremBambusiNekhoroshev} bifurcating from the  initial data  $\boldsymbol{\phi}(0) $ remains, over exponentially long times,  close to the solution to the linearized equation, that is
\begin{align*}
	 \sup_{|t| \lesssim  \exp \left( a / \epsilon^2 \right)} d \left( \boldsymbol{\phi}(t), \Gamma_{\text{linear}} ( \epsilon \boldsymbol{\xi} ) \right)  \lesssim \epsilon^2.
\end{align*}     
\end{theorem}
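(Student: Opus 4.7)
The plan is to prove a Nekhoroshev-type orbital stability theorem via three steps: an analytic resonant Birkhoff normal form, approximate conservation of two functionals on exponentially long time scales, and a Morse-Bott Lyapunov estimate coming from the coercivity assumption \eqref{Assumption2BambusiNekhoroshev}. The essential structural feature I would exploit is that, in the intended applications, all eigenfrequencies $\omega_j$ are positive integers, so the linear flow $\boldsymbol{\Phi}^t = e^{At}$ is $2\pi$-periodic; this is precisely what makes the time-average $\langle f\rangle$, which Poisson-commutes with $h_\Omega$ by definition, a bona fide resonant normal form.

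First, after rescaling $\boldsymbol{\phi}=\epsilon\widetilde{\boldsymbol{\phi}}$ so that $\mathcal{H}_\epsilon = h_\Omega + \epsilon^2 f$ is considered on a fixed complex analyticity neighborhood of $\Sigma_E$ (provided by hypothesis), I would construct for each $N\geq 1$ a near-identity analytic symplectic map $T_\epsilon^{(N)}$ on a slightly smaller neighborhood such that
\begin{equation*}
\mathcal{H}_\epsilon \circ T_\epsilon^{(N)} = h_\Omega + \epsilon^2 \langle f\rangle + \epsilon^{2(N+1)} R_N.
\end{equation*}
Each step is produced by solving a cohomological equation $\{h_\Omega,\chi_k\}+(\text{known})=0$ against the periodic linear flow: by Fourier expansion in time this is explicitly invertible on the zero-time-average sector. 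The loss of analyticity is controlled step-by-step via Cauchy estimates on the shrinking domains, yielding remainder bounds of order $(CN)^N$; optimizing $N\sim 1/\epsilon^2$ produces an exponentially small remainder $\|R_N\|\lesssim \exp(-a/\epsilon^2)$ for some $a>0$.

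Second, the truncated Hamiltonian admits both $h_\Omega$ and $\langle f\rangle$ as conserved quantities (the latter because $\{h_\Omega,\langle f\rangle\}=0$). Hence under the full flow,
\begin{equation*}
\bigl|h_\Omega(\widetilde{\boldsymbol{\phi}}(t))-h_\Omega(\widetilde{\boldsymbol{\phi}}(0))\bigr|+\epsilon^2\bigl|\langle f\rangle(\widetilde{\boldsymbol{\phi}}(t))-\langle f\rangle(\widetilde{\boldsymbol{\phi}}(0))\bigr|\lesssim |t|\,\exp(-a/\epsilon^2),
\end{equation*}
so on the window $|t|\lesssim \exp(a'/\epsilon^2)$ with $0<a'<a$ both quantities are conserved up to $O(\epsilon^4)$. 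I would then upgrade \eqref{Assumption2BambusiNekhoroshev} into a Morse-Bott type lower bound: since $\langle f\rangle$ is $S^1$-invariant under the linear flow, the whole orbit $\Gamma_\epsilon=\Gamma_{\text{linear}}(\epsilon\boldsymbol{\xi})$ consists of critical points of $\langle f\rangle|_{\Sigma_E}$ with the same critical value, and the transverse Hessian is $\pm c_\perp$-coercive at every point of $\Gamma_\epsilon$ by equivariance. A tubular slice argument along $\Gamma_\epsilon$ then yields
\begin{equation*}
\pm\bigl(\langle f\rangle(\boldsymbol{\zeta})-\langle f\rangle(\boldsymbol{\xi})\bigr)\gtrsim d(\boldsymbol{\zeta},\Gamma_\epsilon)^2,\qquad \boldsymbol{\zeta}\in \Sigma_E \text{ close to } \Gamma_\epsilon.
\end{equation*}

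Finally, given initial data $\boldsymbol{\phi}(0)$ with $d(\boldsymbol{\phi}(0),\Gamma_\epsilon)\lesssim\epsilon^2$, I pass to normal-form coordinates (an $O(\epsilon^2)$ shift preserving the bound) and combine the two approximate conservation laws with the Morse-Bott inequality: conservation of $h_\Omega$ controls the component transverse to $\Sigma_E$, while the quadratic lower bound on $\langle f\rangle-\langle f\rangle(\boldsymbol{\xi})$ within $\Sigma_E$ controls the distance to $\Gamma_\epsilon$ along the energy hypersurface. Reverting $T_\epsilon^{(N)}$ produces the claimed $d(\boldsymbol{\phi}(t),\Gamma_\epsilon)\lesssim \epsilon^2$ for $|t|\lesssim \exp(a'/\epsilon^2)$. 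The main obstacle will be the analytic normal form construction in infinite dimensions with exponentially small remainder: one must carefully nest analyticity neighborhoods of $\Sigma_E$, verify that each generating Hamiltonian flow maps the next neighborhood into the previous one, and balance the factorial growth $(CN)^N$ against the geometric shrinkage of domains in order to reach the optimal $N\sim 1/\epsilon^2$ with a quantitative exponent $a$. Secondary technical issues are the $S^1$-equivariant Morse-Bott estimate near $\Gamma_\epsilon$ and the bookkeeping needed to guarantee that passing between original and transformed coordinates preserves the $O(\epsilon^2)$ accuracy throughout.
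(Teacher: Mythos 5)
The paper does not actually prove this statement: Theorem \ref{TheoremBambusiNekhoroshev} is quoted from Bambusi--Nekhoroshev \cite{BAMBUSI199873} and used as a black box, the paper's own contribution being the verification of its hypotheses for the KG model (Lemma \ref{LemmaHardySolobelInequality} for the analyticity assumption, Lemma \ref{LemmaAssumption1BambusiNekhoroshev} for \eqref{Assumption1BambusiNekhoroshev}, Lemma \ref{LemmaAssumption2BambusiNekhoroshev} for \eqref{Assumption2BambusiNekhoroshev}). So there is no in-paper proof to compare your proposal against.

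That said, your outline is a faithful reconstruction of the strategy of the cited reference: (i) a resonant normal form exploiting the $2\pi$-periodicity of $e^{At}$ (all $\omega_j\in\mathbb{N}$), with the homological equation inverted by time-averaging against the periodic linear flow and the remainder made exponentially small by optimizing $N\sim 1/\epsilon^2$ via Cauchy estimates on nested analyticity neighbourhoods of $\Sigma_E$; (ii) approximate conservation of the two Poisson-commuting functionals $h_\Omega$ and $\langle f\rangle$ over times $\exp(a/\epsilon^2)$; (iii) the upgrade of the transverse coercivity \eqref{Assumption2BambusiNekhoroshev}, propagated along the whole linear orbit by equivariance, to a quadratic Lyapunov lower bound that, combined with conservation of $h_\Omega$ (which controls the displacement off $\Sigma_E$ since $h_\Omega$ is equivalent to $\|\cdot\|_{\mathcal{P}}^2$), confines the solution to an $O(\epsilon^2)$ tube around $\Gamma_{\text{linear}}(\epsilon\boldsymbol{\xi})$. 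This is precisely the "coercivity estimate for the second differential of a modified Hamiltonian" mechanism the paper alludes to. The genuinely hard analytic content --- the infinite-dimensional iteration with quantitative domain shrinkage, the verification that each Lie-transform flow maps the smaller neighbourhood into the larger one, and the bookkeeping when passing between original and normalized coordinates --- is exactly what you flag as the main obstacle and is the substance of \cite{BAMBUSI199873}; your sketch does not carry it out but identifies all the right ingredients. One minor notational caution: the theorem's conclusion concerns the \emph{linear} orbit $\Gamma_{\text{linear}}(\epsilon\boldsymbol{\xi})$, whereas $\Gamma_\epsilon$ in the paper denotes the orbit of the nonlinear periodic solution; the passage from one to the other is a separate (easy) triangle-inequality step done in Section \ref{SubsectionStrategyofProof}, so you should keep the two objects distinct in your Morse--Bott step.
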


 We will apply Theorem \ref{TheoremBambusiNekhoroshev} only\footnote{For the WM model, one can check that, although the coercivity estimate \eqref{Assumption2BambusiNekhoroshev} holds true (only for $d=2$), the analyticity conditions for the corresponding non-linearity fail, so Theorem \ref{TheoremBambusiNekhoroshev} does not apply a priori.} for the KG model \eqref{MainPDEIntroModelKGA} using the same rescale of the first 1-mode $\kappa_0 e_0$, thus obtaining the non-linear stability of the time-periodic solutions as stated in Theorem \ref{MainTheoremBothTheoremsinOne2}.  To do so, we define
  \begin{align*}
	 \boldsymbol{\xi} =(\xi_1,\xi_2), \quad \xi_1=   \kappa_{0}   e_{0}  , \quad \xi_2=0.
	 \end{align*} 
and prove the following: 
	\begin{itemize}[leftmargin=*]
		\item Proposition \ref{LemmaAssumption1BambusiNekhoroshev}, where we establish the validity of assumption \eqref{Assumption1BambusiNekhoroshev} by showing that the rescaled 1-mode initial data $\boldsymbol{\xi}$ are critical points for the  function  $\langle f  \rangle |_{\Sigma_E}$ for some $E>0$.
		\item Proposition \ref{LemmaAssumption2BambusiNekhoroshev}, where we establish the validity of the non-degeneracy condition, namely assumption \eqref{Assumption2BambusiNekhoroshev}. We note that this is the hardest part to prove and the part where the novel analysis for the Fourier coefficients will be needed.
		\item Lemma \ref{LemmaHardySolobelInequality}, where we establish the required regularity assumptions for the non-linearity  $f (\boldsymbol{\phi})$ given in the Hamiltonian  \eqref{DefinitionHamiltonianForProofDetails} and its symplectic differential $J d f (\boldsymbol{\phi})$.
	\end{itemize} 
From these steps, the non-linear stability of the time-periodic solutions follows as stated in Theorem \ref{MainTheoremBothTheoremsinOne2}. Indeed, fix $\delta $ satisfying Assumption \ref{AssumptionsondeltanonlinearStability}, let $\kappa_{0}   $ and $e_0$ be given by \eqref{Kappa0} and \eqref{e0} respectively, and define the rescaled first 1-mode data $\boldsymbol{\xi} =( \kappa_{0}   e_{0},0) \in \mathcal{P} $. In addition, fix a real number $ \alpha \in (0,1/3)$ and let $\{ \phi_{\epsilon}(t,\cdot):\epsilon \in \mathcal{E}_{\alpha}\}$ be the family of time-periodic solutions constructed in Theorem \ref{MainTheoremBothTheoremsinOne1} with some frequency $\omega_{\epsilon} \in \mathcal{W}_{\alpha}$. Consider the phase space $\mathcal{P} =H^1((0,\pi/2);d\mu)\times L^2((0,\pi/2);d\mu)$ and all initial data $\boldsymbol{\phi}(0)=(\phi(0,\cdot),\partial_{t}\phi(0,\cdot)) \in \mathcal{P}$	that are close to the initial data of the time-periodic solution $\boldsymbol{\phi}_{\epsilon}(0)=(\phi_{\epsilon}(0,\cdot),\partial_{t}\phi_{\epsilon}(0,\cdot))$, meaning that 
\begin{align}\label{AssumptionSmallIntialDataforProof}
	\|\boldsymbol{\phi}(0)-\boldsymbol{\phi}_{\epsilon}(0) \|_{\mathcal{P}} \lesssim \epsilon^2.
\end{align}     
Let $\boldsymbol{\phi} (t)=(\phi (t,\cdot),\partial_{t}\phi (t,\cdot))$ be the solution to the non-linear KG equation \eqref{MainPDEIntroModelKGA} emanating from the data $\boldsymbol{\phi}(0)  $ and denote by $\Gamma_{\epsilon}$ the (closed) orbit of $\boldsymbol{\phi}_{\epsilon}$ in the phase space $\mathcal{P}$. Moreover, let  $d$ denote the distance function in $\mathcal{P}$. Now, the main assumption on the initial data of Theorem \ref{TheoremBambusiNekhoroshev} holds true since
\begin{align*}
	d \left( \boldsymbol{\phi}(0),  \Gamma_{\text{linear}} ( \epsilon \boldsymbol{\xi} ) \right)&=
	\inf_{t \in\mathbb{R}  } \left\| \boldsymbol{\phi}(0)- \boldsymbol{\Phi}^{t}  ( \epsilon \boldsymbol{\xi} ) \right\|_{\mathcal{P}} 
	 \leq  \left\| \boldsymbol{\phi}(0)- \boldsymbol{\Phi}^{0}  ( \epsilon \boldsymbol{\xi} ) \right\|_{\mathcal{P}}  \\ 
		&\leq 
		\|\boldsymbol{\phi}(0)-\boldsymbol{\phi}_{\epsilon}(0) \|_{\mathcal{P}}
		+
	  \left\| \boldsymbol{\phi}_{\epsilon}(0)- \boldsymbol{\Phi}^{0}  ( \epsilon \boldsymbol{\xi} ) \right\|_{\mathcal{P}} \\
	  &\leq 
		\|\boldsymbol{\phi}(0)-\boldsymbol{\phi}_{\epsilon}(0) \|_{\mathcal{P}}
		+\sup_{t\in \mathbb{R}}
	  \left\| \boldsymbol{\phi}_{\epsilon}(t)- \boldsymbol{\Phi}^{t \omega_{\epsilon}}  ( \epsilon \boldsymbol{\xi} ) \right\|_{\mathcal{P}} \lesssim 2 \epsilon^2,
	\end{align*}
due to the assumption \eqref{AssumptionSmallIntialDataforProof} and the  estimate \eqref{EstimateinTheorem24} that we get from Theorem \ref{MainTheoremBothTheoremsinOne1}. Hence, Theorem \ref{TheoremBambusiNekhoroshev} applies and yields
\begin{align}\label{AssumptionSupLinearProof}
	 \sup_{|t| \lesssim   \exp \left( a / \epsilon^2 \right)} d \left( \boldsymbol{\phi}(t), \Gamma_{\text{linear}} ( \epsilon \boldsymbol{\xi} ) \right)  \lesssim \epsilon^2,
\end{align}   
for some constant $a>0$. Finally, we claim that \eqref{AssumptionSupLinearProof} implies the main estimate of Theorem \ref{MainTheoremBothTheoremsinOne2}, namely 
\begin{align*}
	\sup_{|t| \lesssim  \exp \left( a / \epsilon^2 \right)} d \left(  \boldsymbol{\phi}(t),  \Gamma_\epsilon \right) \lesssim \epsilon^2 .  
\end{align*}  
Indeed, for all $t$ such that $|t| \lesssim \exp \left( a / \epsilon^2 \right)$,  fix $  \tau_{\star}(t) $ so that
\begin{align*}
	d \left( \boldsymbol{\phi}(t), \Gamma_{\text{linear}} ( \epsilon \boldsymbol{\xi} ) \right) = \inf_{\tau \in \mathbb{R}}  \left\| \boldsymbol{\phi}(t) - \boldsymbol{\Phi}^{\tau}  ( \epsilon \boldsymbol{\xi} ) \right\|_{\mathcal{P}} =
	  \left\| \boldsymbol{\phi}(t ) - \boldsymbol{\Phi}^{\tau_{\star}(t)\omega_{\epsilon} }  ( \epsilon \boldsymbol{\xi} ) \right\|_{\mathcal{P}}. 
\end{align*}
Notice that $\tau_{\star}(t)$ can be chosen so that $\tau_{\star}(t) \in [0,T_{\epsilon}]$, since $\boldsymbol{\Phi}^{t \omega_{\epsilon}} ( \epsilon \boldsymbol{\xi} )$ is $T_{\epsilon}$-periodic with  $T_{\epsilon}=2\pi/\omega_{\epsilon}$, and hence $|\tau_{\star}(t)| \lesssim  \exp \left( a / \epsilon^2 \right)$. Then, we can estimate
\begin{align*}
	d \left(  \boldsymbol{\phi}(t),  \Gamma_\epsilon \right) &= \inf_{\boldsymbol{\psi} \in \Gamma_\epsilon} \|   \boldsymbol{\phi}(t) - \boldsymbol{\psi}    \|_{\mathcal{P}}
	 \leq \left\| \boldsymbol{\phi}(t)-\boldsymbol{\phi}_{\epsilon}\left( \tau_{\star}(t)  \right) \right\|_{\mathcal{P}} \\
 	& \leq  
 	\left\| \boldsymbol{\phi}(t)- \boldsymbol{\Phi}^{\tau_{\star}(t)\omega_{\epsilon}  }  ( \epsilon \boldsymbol{\xi} ) \right\|_{\mathcal{P}} +
 	\left\|\boldsymbol{\Phi}^{\tau_{\star}(t)\omega_{\epsilon}  }  ( \epsilon \boldsymbol{\xi} ) - \boldsymbol{\phi}_{\epsilon}\left( \tau_{\star}(t)  \right)  \right\|_{\mathcal{P}}  \\
 	&=  d \left( \boldsymbol{\phi}(t), \Gamma_{\text{linear}} ( \epsilon \boldsymbol{\xi} ) \right)  +
 	\left\|\boldsymbol{\Phi}^{\tau_{\star}(t)\omega_{\epsilon}  }  ( \epsilon \boldsymbol{\xi} ) - \boldsymbol{\phi}_{\epsilon}\left( \tau_{\star}(t)  \right)  \right\|_{\mathcal{P}} \\ 
 	&\leq  d \left( \boldsymbol{\phi}(t), \Gamma_{\text{linear}} ( \epsilon \boldsymbol{\xi} ) \right)  +
 	\sup_{\tau \in \mathbb{R} }\left\| \boldsymbol{\Phi}^{\tau \omega_{\epsilon}  }  ( \epsilon \boldsymbol{\xi} ) - \boldsymbol{\phi}_{\epsilon}\left( \tau  \right)  \right\|_{\mathcal{P}}  \\ 
 &  \lesssim 2\epsilon^2,
 \end{align*}
 due to \eqref{AssumptionSupLinearProof} and the estimate \eqref{EstimateinTheorem24} that we get from Theorem \ref{MainTheoremBothTheoremsinOne1}.

 \subsection{Acknowledgments}

 The authors would like to thank Anxo Farina Biasi for helpful discussions as well as Professor Peter Paule at the Research Institute for Symbolic Computation in Linz, Austria for giving us access information to the Zeilberger package in Mathematica. All computations presented here can be easily verified using Mathematica \cite{Mathematica}, Fast Zeilberger Package \cite{MathematicaFastZeilbergerPackage} and MultiSum Package \cite{MathematicaMultiSumPackage}.

 \subsection{Organization of the paper}
We split the paper into the following sections. We begin with Section \ref{SectionLineareigenvalueproblems} where we study the linear eigenvalue problems, and, Section \ref{SectionFourierCoefficients}, where we define and study the Fourier coefficients associated to the non-linearities.  In particular,  we derive computationally efficient formulas for the Fourier coefficients on resonant indices. This allows us to prove novel recurrence relations that we use to study the monotonicity for the Fourier coefficients and establish uniform estimates. Section  \ref{SectionExistenceofTimePeriodicSolutions} is devoted to  the existence of time-periodic solutions to the non-linear wave equations KG and WM where we prove Theorem \ref{MainTheoremBothTheoremsinOne1}. To do so, we consider an appropriate rescale of the first 1-mode and apply Theorem \ref{TheoremBambusiPaleari}. After reducing both assumptions \eqref{Assumption1BambusiPaleari} and \eqref{Assumption2BambusiPaleari} of Theorem \ref{TheoremBambusiPaleari}  to a  set of conditions for the Fourier coefficients, we use the analysis from Section \ref{SectionFourierCoefficients} to rigorously prove their validity. Finally, we conclude with Section  \ref{SectionStabilitySolutionstotheLinearWaveEquation} where we establish the non-linear stability of the time-periodic solutions to the KG model by proving Theorem \ref{MainTheoremBothTheoremsinOne2}. To do so, we consider the same rescale of the first 1-mode and apply Theorem \ref{TheoremBambusiNekhoroshev}. After reducing both assumptions \eqref{Assumption1BambusiNekhoroshev} and \eqref{Assumption2BambusiNekhoroshev} of Theorem \ref{TheoremBambusiNekhoroshev} to a set of conditions for the Fourier coefficients, we use the analysis from Section \ref{SectionFourierCoefficients} to rigorously prove their validity.

\section{The linear eigenvalue problems}\label{SectionLineareigenvalueproblems}
We consider the non-linear wave equation
\begin{align*} 
	\partial_{t}^2 \phi +L \phi = -\epsilon^2 W \phi^3 + \epsilon^4 E(\phi),
\end{align*}
for a scalar field $\phi: \mathbb{R} \times (0,\pi/2) \rightarrow \mathbb{R}$, where, using the notation from Table \ref{NotationTable}, $L$, $W$ and $E$ are given by \eqref{MainPDEIntroModelKGA}-\eqref{MainPDEIntroModelKGB} and \eqref{MainPDEIntroModelWMA}-\eqref{MainPDEIntroModelWMB} with $E=0$ in the case of the KG model. The  linearized operators are given by
\begin{align*}
L \phi &=
\begin{dcases}
\mathsf{L} \phi =  -\partial_{x}^2 \phi  -\frac{2-2 \delta  \sin ^2(x)}{\sin (x) \cos (x)} \partial_{x} \phi + \delta^2 \phi, \quad \text{for the KG}\\  
	\mathfrak{L} \phi = -\partial_{x}^2 \phi  -\frac{2 (\delta +2) \cos ^2(x)-3}{\sin (x) \cos (x)}\partial_{x} \phi + (\delta+2)^2 \phi ,\quad \text{for the WM}.
\end{dcases}
\end{align*}
 In both cases, the $L$ operator can be written in the Sturm-Liouville form
 \begin{align*}
 	L \phi &= -\frac{1}{w} \partial_{x} \left(w \partial_{x}\phi \right)+
 	\begin{dcases}
 		\delta^2 \phi,\quad \text{for the KG}, \\
 		 (\delta+2)^2 \phi,\quad \text{for the WM}, \\
 	\end{dcases}
 \end{align*}
 where $\delta  $ stands for the conformal mass and $w\in C^{\infty}[0,\pi/2]$ is a spatial weight, 
\begin{align}\label{DefinitionWeightw}
w(x) &=
\begin{dcases}
\mathsf{w}(x)= \sin ^2(x) \cos ^{2 \delta -2}(x) , \quad \text{for the KG}\\  
	\mathfrak{w} (x)= \sin ^{2\delta+1}(x) \cos ^{3}(x),\quad \text{for the WM}.
\end{dcases}
\end{align}   
 Finally, we note that, for all $\delta$ satisfying Assumption \ref{AssumptionsondeltaExistence} and for all $s\geq  1$, standard weighted Sobolev space arguments (see for example \cite{MR802206} and Chapter 2 in \cite{MR1774162}) show that $H^s ((0,\pi/2);d\mu)$ endowed with the norm $\|\cdot\|_{H^s ((0,\pi/2);d\mu)}$  is a Banach space and that the inclusion  $H^s ((0,\pi/2);d\mu) \hookrightarrow L^2 ((0,\pi/2);d\mu)$ is compact.

 \subsection{Self-adjointness} \label{se:sa}
We consider the Hilbert space
\begin{align}\label{DefinitionHilbertSpace}
	L^2 \left( \left(0,\frac{\pi }{2} \right);d\mu \right), \quad d\mu(x)=w(x)dx,
\end{align}
associated with the inner product
 \begin{align}\label{DefinitionInnerProduct}
 	(f|g)=  \int_{0}^{\pi /2} f(x) g(x) d\mu(x).
 \end{align}
The linearized operator $L$ is generated by a closed sesquilinear form 
\begin{align*}
	\alpha :H^1 \left( \left(0,\frac{\pi }{2} \right);d\mu \right) \times H^1 \left( \left(0,\frac{\pi }{2} \right);d\mu \right) \rightarrow \mathbb{R}
\end{align*}
defined by
\begin{align*}
	\alpha (\phi,\psi) &= \int_{0}^{ \pi /2}\partial_{x} \phi (x)\partial_{x}\psi(x) d\mu(x) +
	\begin{dcases}
		\delta^2  \int_{0}^{ \pi /2}\phi (x) \psi(x)   d\mu(x), \quad \text{for the KG}, \\
		 (\delta+2)^2 \int_{0}^{ \pi /2}\phi (x) \psi(x)   d\mu(x), \quad \text{for the WM},
	\end{dcases} 
\end{align*}
Clearly there exists positive constants $c_1$ and $c_2$, depending only on $\delta$, such that
	\begin{align*}
		c_1 \|\phi \|_{H^1 ((0,\pi/2);d\mu)}^2 \leq \alpha (\phi,\phi) \leq c_2 \|\phi \|_{H^1 ((0,\pi/2);d\mu)}^2,  
	\end{align*}
	for all $\phi  \in H^1((0,\pi/2);d\mu)$, hence the sesquilinear form is elliptic and strictly positive. In particular, $L$ is self-adjoint on its domain $\mathcal{D}(L)$. Therefore, all the eigenvalues of $L$ are real and any two eigenfunctions corresponding to different eigenvalues are orthogonal to each other. Moreover, by compactness, the set of eigenfunctions forms a complete basis for $L^2 ((0,\pi/2);d\mu)$. 
	
%	We also recall the following estimate
%	
%	\begin{lemma} \label{lem:bse}
%		Let $\nu(x)= (\pi/2-x)^{2 \alpha+1}$, for $\alpha \in \mathbb{N}$. 
%		Let $u \in H^1 ((0,\pi/2);d\nu)$ so that $\int_0^{\pi/2} (\partial_x u)^2 (x- \pi/2)^{2\alpha+1}dx < +\infty$, then 
%		$$
%		| u(x) (\pi/2-x)^\alpha | \lesssim 1. 
%		$$
%	\end{lemma}
%\begin{proof}
%	Using a standard cut-off if necessary, we may assume that $u$ vanishes near $0$, so that, for some $a \in (0, \pi/2)$, 
%\begin{eqnarray*}	
%	u(x) &=& \int_a^x \partial_x u(s)ds 
%\end{eqnarray*}
%From Cauchy-Schwarz, it then follows 
%\begin{eqnarray*}	
%	|u(x)| &\le& \int_a^x (\pi/2-x)^{-\alpha-1/2} (\pi/2-x)^{\alpha+1/2}  \partial_x u(s)ds \\
%&\le& \left(\int_a^x (\pi/2-x)^{-2\alpha-1} dx\right)^{1/2}  \left(\int_0^{\pi/2} (\partial_x u)^2 (x- \pi/2)^{2\alpha+1}dx\right)^{1/2} \\
%&\le& (\pi/2-x)^{-\alpha} \left(\int_0^{\pi/2} (\partial_x u)^2 (x- \pi/2)^{2\alpha+1}dx\right)^{1/2}.
%\end{eqnarray*}
%
%\end{proof}	

\subsection{The linear eigenvalue problem} \label{SectionEigenvalueProblem}
Next, we study the linear eigenvalue problems 
\begin{align*}
	L \phi =\lambda \phi, \quad \phi \in \mathcal{D}(L).
\end{align*}
Setting $\phi(x)=  u(y)$ with $ y=\cos(2x)$, the linear eigenvalue problems transform into
\begin{align*}
	\left(1-y^2\right) u^{\prime \prime} (y)+ 
	(   (\delta-2) -(\delta+1)  y   ) 
	u^{\prime}(y)+\frac{1}{4}\left(\lambda -\delta ^2 \right) u(y)=0, \\
	\left(1-y^2\right) u^{\prime \prime} (y)+ 
	( (1-\delta)   -(\delta+3)  y  ) 
	u^{\prime}(y)+\frac{1}{4}\left(\lambda -(2+\delta) ^2 \right) u(y)=0,
\end{align*}
for the KG and WM respectively. Now, the eigenfunctions are given by the Jacobi polynomials $P_{n}^{(a,b)}(y)$ of degree $n$,
\begin{align*}
	e_n (x) = N_n P^{(a,b)}(\cos(2x)),
\end{align*}
with parameters $(a,b)=(1/2,\delta-3/2)$ for the KG and $(a,b)=(\delta,1)$ for the WM. Note that the  Breitenlohner-Freedman bounds, $\delta \geq  3/2$ for the KG and $\delta>0$ for the WM, ensure the integrability of the weights $ (1-y)^{a} (1+y)^{b}$ with respect to which the Jacobi polynomials $P_n^{(a,b)}(y)$ form an orthonormal and complete basis. We refer the reader to \cite{MR0372517, MR2723248} for a review on Jacobi polynomials. Consequently, the eigenfunctions and eigenvalues to the linearized operators $L$ are given respectively by
\begin{align*}
	e_{n}(x) &=
	\begin{dcases}
		\mathsf{e}_n(x)= \mathsf{N}_{n}    P_n^{\left(\frac{1}{2} ,\delta -\frac{3}{2}\right)}(\cos (2 x)), \Hquad \text{for the KG}, \\
		\mathfrak{e}_n(x)= \mathfrak{N}_{n}    P_n^{\left(\delta ,1\right)}(\cos (2 x))   , \Hquad \text{for the WM},
	\end{dcases} , \\
	\omega_{n}^2 &=
	\begin{dcases}
		 \somega_{n} ^2=(2n+\delta )^2  , \Hquad \text{for the KG}, \\
		 \ssomega_{n} ^2=(2n+\delta+2 )^2    , \Hquad \text{for the WM},
	\end{dcases}
\end{align*} 
for all integers $n \geq 0$. Moreover, the set of eigenfunctions $\{e_{n}  : n \geq 0 \}$ forms an orthonormal and complete basis for $L^2 ( (0,\pi/2) ;d\mu  ) $ given in \eqref{DefinitionHilbertSpace} endowed with the inner product \eqref{DefinitionInnerProduct}. Here, $N_{n}  $ stand for normalization constants,
\begin{align*}
N_{n}=
\begin{dcases}
	\mathsf{N}_{n}  = \sqrt{
	 \frac{2\Gamma (n+1)}{\Gamma \left(n+\frac{3}{2}\right)} \frac{( 2 n+\delta) \Gamma (n+\delta )}{ \Gamma \left(n+\delta -\frac{1}{2}\right)} }, \Hquad \text{for the KG},\\
	 \mathfrak{N}_{n}  =\sqrt{\frac{2 (\delta +n+1) (\delta +2 n+2)}{n+1}}, \Hquad \text{for the WM},
\end{dcases}
\end{align*}
for all integers $n \geq 0$, so that
\begin{align}\label{OrthogonalityoftheEigenfunctions1}
 	(e_{n}  |e_{m}  ) = \mathds{1}(n=m), 
 \end{align}
for all integers $n,m \geq 0$, where $(\cdot|\cdot)$ denotes the inner product defined in \eqref{DefinitionInnerProduct}. This orthogonality condition follow immediately from the orthogonality of the Jacobi polynomials.

\section{The Fourier coefficients}  \label{SectionFourierCoefficients}
 
In this section, we define and study the Fourier coefficients associated with the cubic non-linearity in the non-linear wave equation
\begin{align} \label{NonLinearWaveequationSectionFourier}
	\partial_{t}^2 \phi +L \phi = -\epsilon^2 W \phi^3 + \epsilon^4 E(\phi),
\end{align}
for a scalar field $\phi: \mathbb{R} \times (0,\pi/2) \rightarrow \mathbb{R}$, where, using the notation from Table \ref{NotationTable}, $L$, $W$ and $E$ are given by \eqref{MainPDEIntroModelKGA}-\eqref{MainPDEIntroModelKGB} and \eqref{MainPDEIntroModelWMA}-\eqref{MainPDEIntroModelWMB} with $E=0$ in the case of the KG model.    

  \subsection{Definition of the Fourier coefficients} 
  
 Let $\phi(t,\cdot)$ be a solution to the non-linear wave equation \eqref{NonLinearWaveequationSectionFourier} expanded in terms of the basis $\{e_{n}   : n\geq 0\}$ of $L^2 ((0,\pi/2);d\mu) $,
\begin{align*}
	\phi(t,\cdot) = \sum_{m=0}^{\infty} \phi^{(m)}(t) e_{m}  .
\end{align*}
We substitute the latter into the non-linear wave equation \eqref{NonLinearWaveequationSectionFourier} and use the expansion
\begin{align}\label{DefinitionFourierByExpantion}
	We_{i}   e_{j}    e_{k} =\sum_{m=0}^{\infty}C_{ijkm} e_m
\end{align}
to obtain 
\begin{align*}
   W\phi^3(t,\cdot)   =\sum_{i,j,k=0}^{\infty}  \phi^{(i)}(t) \phi^{(j)}(t) \phi^{(k)}(t) We_{i}   e_{j}    e_{k}     =  \sum_{m=0}^{\infty}  \left( \sum_{i,j,k=0}^{\infty}C_{ijkm}   \phi^{(i)}(t) \phi^{(j)}(t) \phi^{(k)}(t)\right)e_{m} .
\end{align*}
Then, \eqref{NonLinearWaveequationSectionFourier} can be rewritten in the Fourier space as an infinite system of non-linear harmonic oscillators, 
\begin{align*} 
	& \frac{d^2}{ dt^2} \phi^{(m)}(t) + (A \phi(t))^{(m)} = \left( f (  \phi(t) ) \right)^{(m)}, 
\end{align*}
for all integers $m \geq 0$. Here, $A$ represents the linearized operator $L$ in the Fourier space,
\begin{align*}
	& (A \phi(t) )^{(m)} =  \omega_{m}  ^2 \phi^{(m)}(t) ,
\end{align*}  
and the non-linearity splits into
\begin{align*}
	\left( f  (  \phi (t)) \right)^{(m)}=
	\epsilon^2 \left( f^{(3)} (  \phi(t) ) \right)^{(m)}+
	\epsilon^4 \left( f^{(\geq 4)}(  \phi (t)) \right)^{(m)}
\end{align*}
   where the leading order is an homogeneous polynomial of degree three,
  \begin{align}\label{DefinitionfToThe3}
  	\left( f^{(3)}(  \phi (t)) \right)^{(m)}= - \sum_{i,j,k=0}^{\infty}C_{ijkm}   \phi^{(i)}(t) \phi^{(j)}(t) \phi^{(k)}(t).
  \end{align}
  Equivalently, one can take the inner product defined in \eqref{DefinitionInnerProduct} to both sides of \eqref{DefinitionFourierByExpantion} to obtain
	\begin{align*} 
	 C_{ijkm}    &= \left(W e_{i}   e_{j}    e_{k}    | e_{m}    \right) 
	  = \int_{0}^{\pi /2} W(x)	  e_{i}   (x)e_{j}   (x) e_{k}   (x)e_{m}   (x) d\mu(x) .  
	\end{align*} 	 
Using the trigonometric identities
\begin{align*}
	\sin^2 (x) = \frac{1-y}{2}, \quad \cos^2 (x) = \frac{1+y}{2}, \quad y=\cos(2x),
\end{align*} 
we can express these with respect to $y \in [-1,1]$. According to the notation from Table \ref{NotationTable}, we find
\begin{align}
	\mathsf{ C}_{ijkm}   & =\frac{1}{4^{\delta }}\prod_{\lambda_1 \in \{i,j,k,m\}} \mathsf{ N}_{\lambda_1}    \int_{-1}^{1} 
	(1-y)^{1/2} (1+y)^{2\delta -5/2} 
	 \prod_{\lambda_2 \in \{i,j,k,m\}}	 P_{\lambda_2}^{\left(\frac{1}{2},\delta -\frac{3}{2}\right)}(y)  dy,\label{DefinitionOfFourierCoefficientsModelKG} \\
	 \mathfrak{ C}_{ijkm}   & =\frac{1}{4^{\delta +2}} \prod_{\lambda_1 \in \{i,j,k,m\}} \mathfrak{N}_{\lambda_1}  \int_{-1}^{1} 
	  (1-y)^{2 \delta -1} (y+1)^3 
	    \prod_{\lambda_2 \in \{i,j,k,m\}}   P_{\lambda_2}^{\left(\delta ,1\right)}(y) dy , \label{DefinitionOfFourierCoefficientsModelWM}
\end{align}
for the KG and WM respectively. 
 \subsection{Vanishing Fourier coefficients}

We call a quadruple $(i,j,k,m)$ \textit{resonant} if and only if at least one of the conditions
\begin{align}\label{DefinitionResonantIndices}
	\omega_{i}    \pm  \omega_{j}    \pm  \omega_{k}    \pm  \omega_{m}    = 0
\end{align}
 is satisfied (the plus and minus signs are independent). Firstly, we show that the Fourier coefficients vanish on resonant indices with only one minus sign.

\begin{lemma}[Vanishing Fourier coefficients on resonant indices with only one minus sign]\label{LemmaVanishingFourier}
Fix $\delta$ satisfying Assumption \ref{AssumptionsondeltaExistence}. Then, for any integers $i,j,k,m \geq 0$ such that \eqref{DefinitionResonantIndices} holds with only one minus sign, we have
\begin{align*}
	C_{ijkm}   =0.
\end{align*} 
\end{lemma}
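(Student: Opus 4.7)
The key observation is that each Fourier coefficient $C_{ijkm}$ is manifestly symmetric under permutations of the four indices, since it is defined as the integral of the symmetric product $W e_i e_j e_k e_m$ against the measure $d\mu$. Therefore, if \eqref{DefinitionResonantIndices} is satisfied with exactly one minus sign, by relabeling we may assume without loss of generality that the minus sign is carried by $\omega_m$, i.e.
\begin{align*}
\omega_i + \omega_j + \omega_k - \omega_m = 0.
\end{align*}
In the KG case this yields $m = i+j+k+\delta$, and in the WM case $m = i+j+k+\delta+2$.

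The strategy is then a pure degree count against Jacobi orthogonality. Recall that $P_n^{(a,b)}$ is orthogonal on $[-1,1]$ against the weight $(1-y)^a(1+y)^b$ to every polynomial of degree strictly less than $n$. The plan is to rewrite the integrand in \eqref{DefinitionOfFourierCoefficientsModelKG} (resp.\ \eqref{DefinitionOfFourierCoefficientsModelWM}) as the orthogonality weight times an auxiliary polynomial $Q(y)$ times $P_m^{(a,b)}(y)$, and to check that $\deg Q < m$ whenever the resonance with a single minus on $m$ holds.

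Concretely, for the KG model (with parameters $(a,b) = (1/2,\delta-3/2)$) one splits
\begin{align*}
(1-y)^{1/2}(1+y)^{2\delta-5/2} = (1-y)^{1/2}(1+y)^{\delta-3/2}\cdot (1+y)^{\delta-1},
\end{align*}
so that the auxiliary polynomial is $Q(y) = (1+y)^{\delta-1} P_i^{(1/2,\delta-3/2)}(y) P_j^{(1/2,\delta-3/2)}(y) P_k^{(1/2,\delta-3/2)}(y)$ of total degree $i+j+k+\delta-1$. By Assumption \ref{AssumptionsondeltaExistence}, $\delta \in \mathbb{N}$, so $Q$ is genuinely a polynomial. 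Since $m = i+j+k+\delta > \deg Q$, Jacobi orthogonality forces $\mathsf{C}_{ijkm}=0$. Exactly the same argument works for the WM model: one extracts the extra factor $(1-y)^{\delta-1}(1+y)^2$ from $(1-y)^{2\delta-1}(1+y)^3$ relative to the orthogonality weight $(1-y)^\delta(1+y)$, obtaining $Q$ of degree $i+j+k+\delta+1 < m = i+j+k+\delta+2$, so $\mathfrak{C}_{ijkm}=0$.

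The argument is essentially elementary; the only mild subtlety is that one must check the auxiliary factors $(1+y)^{\delta-1}$ and $(1-y)^{\delta-1}$ really are polynomials, which is precisely why the integrality assumption $\delta\in\mathbb{N}$ from Assumption \ref{AssumptionsondeltaExistence} is invoked at this point. No further computation of the Fourier constants is required for the vanishing statement.
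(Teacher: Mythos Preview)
Your proof is correct and follows essentially the same approach as the paper: reduce by symmetry to the case $\omega_i+\omega_j+\omega_k-\omega_m=0$, factor the integrand's weight into the Jacobi orthogonality weight times a polynomial of degree $\delta-1$ (KG) or $\delta+1$ (WM), and conclude by a degree count since the auxiliary polynomial has degree strictly less than $m$. Your explicit treatment of the WM case and the remark on why $\delta\in\mathbb{N}$ is needed are slightly more detailed than the paper, which only writes out the KG case and declares the WM case similar.
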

 \begin{proof}
 	We prove the result only for the KG model. Fix $\delta$ satisfying Assumption \ref{AssumptionsondeltaExistence} and pick integers $i,j,k,m \geq 0$ such that $\somega_{i}    + \somega_{j}    + \somega_{k}    - \somega_{m}    = 0$. Then\footnote{Notice that this is possible only in the case where $\delta$ is an integer.}, $m=\delta +i+j+k$ and according to \eqref{DefinitionOfFourierCoefficientsModelKG}, we have
\begin{align*}
		\mathsf{C}_{ijkm}    = \int_{-1}^{1} \mathsf{Q}_{N}(y) P_{m}^{\left(\frac{1}{2},\delta -\frac{3}{2}\right)}(y)  (1-y)^{\frac{1}{2}} (1+y)^{\delta  -\frac{3}{2}} d y	,
\end{align*} 
where   
\begin{align*} 
	\mathsf{Q}_{N}(y)&=  \frac{1}{4^{\delta }}\prod_{\lambda_1 \in \{i,j,k,m\}} \mathsf{N}_{\lambda_1}     
	 \prod_{\lambda_2 \in \{i,j,k\}}	 P_{\lambda_2}^{\left(\frac{1}{2},\delta -\frac{3}{2}\right)}(y)  (1+y)^{\delta -1}  ,
\end{align*}
is a polynomial of degree $N=\delta -1+i+j+k<\delta +i+j+k=m$. Consequently, the Fourier constant vanishes since the set $\{P_{m}^{\left(1/2,\delta -3/2\right)}(y): m\geq 0 \}$ forms an orthonormal and complete basis with respect to the weight $ (1-y)^{1/2} (1+y)^{\delta  -3/2} $. The other results now follow immediately using the symmetries of the Fourier coefficients with respect to $i,j,k,m$. For the WM model, the proof is similar.  
 \end{proof}

 \subsection{Non-vanishing Fourier coefficients} 
Next, we turn our attention to the the non--vanishing Fourier coefficients on resonant indices and aim towards proving the following uniform estimates as stated below.

\begin{prop}[Uniform estimates for $C_{00mm}$]\label{PropositionUniformEstimates}
	For all $\delta   $ satisfying the Assumption \ref{AssumptionsondeltaExistence}, we have that
	\begin{align}\label{FourierConditionExistence}
		\frac{ C_{00 00}}{\omega_{0}^2}-2\frac{ C_{00 mm}}{\omega_{m}^2} \neq 0,  
	\end{align}
	for all integers $m \geq 1$. Moreover, 
	\begin{align}\label{FourierConditionStability1}
	 \left \{\frac{ C_{00 00}}{\omega_{0}^2}-2\frac{ C_{00 mm}}{\omega_{m}^2} :m \geq 1 \right\} \subset (0,\infty )  
	\end{align}
	if and only if $\delta$ satisfies Assumption \ref{AssumptionsondeltanonlinearStability}.
	In this case, there exists a uniform positive constant $c_{\perp}=c_{\perp}(\delta)$, depending only on $\delta$, such that
	\begin{align}\label{FourierConditionStability2}
		\min_{\substack{ m \geq 1   }} \left(	\frac{ C_{00 00}}{\omega_{0}^2}-2\frac{ C_{00 mm}}{\omega_{m}^2} \right)  \geq c_{\perp}(\delta)
	\end{align}
\end{prop}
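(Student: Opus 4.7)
The plan is to reduce the claim to an explicit, closed-form analysis of the coefficients $\{C_{00mm}\}_{m\ge 1}$, exploiting the fact that (as noted below \eqref{e0}) $e_0(x)$ is a constant function of $x$. Consequently $C_{00mm}=e_0^2\int_0^{\pi/2}W(x)\,e_m(x)^2\,d\mu(x)$, and after the substitution $y=\cos(2x)$ this becomes a single integral of a squared Jacobi polynomial against a weight of the form $(1-y)^{a}(1+y)^{b}$ that differs from the natural orthogonality weight of $P_m^{(a,b')}$ by a factor $(1+y)^{k}$ or $(1-y)^{k}$ with $k$ a nonnegative integer depending on $\delta$. For integer $\delta$ satisfying Assumption \ref{AssumptionsondeltaExistence}, one may therefore apply Zeilberger's algorithm (as throughout Section \ref{SectionFourierCoefficients}) to obtain a linear recurrence in $m$ for $C_{00mm}$ that telescopes into a closed rational expression in $m$ and $\delta$ involving Gamma-function ratios.

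With this closed form in hand, together with the direct Beta-integral evaluation of $C_{0000}$, I would introduce
\[
D_m(\delta) \;:=\; \frac{C_{0000}}{\omega_0^2} - 2\,\frac{C_{00mm}}{\omega_m^2}
\]
and reduce each assertion of the proposition to a statement about this explicit rational function. The non-vanishing claim \eqref{FourierConditionExistence} would follow by exhibiting the numerator of $D_m(\delta)$ as a polynomial in $m$ (for each fixed integer $\delta$) whose zeros can be located explicitly and shown to miss the positive integers. For the positivity dichotomy \eqref{FourierConditionStability1}, the strategy is to extract the large-$m$ asymptotics of $D_m(\delta)$ via $\Gamma(m+\alpha)/\Gamma(m+\beta)\sim m^{\alpha-\beta}$ and identify the sign of the leading coefficient as a polynomial in $\delta$ whose sign flips precisely between $\delta=9$ and $\delta=10$. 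Positivity for small $m$ would then be verified directly from the closed form.

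The uniform lower bound \eqref{FourierConditionStability2} would be obtained by combining the strictly positive asymptotic limit $\lim_{m\to\infty}D_m(\delta)$ with an explicit check on a finite range of $m$; ideally a monotonicity statement for $D_m(\delta)$ in $m$ (readable off the sign of the recurrence coefficients) reduces this to evaluating $D_1(\delta)$. I expect the main obstacle to lie precisely in this last step: while the asymptotic regime $m\to\infty$ transparently exposes the critical value $\delta=9$, excluding intermediate sign changes of $D_m(\delta)$ on the full range $m\ge 1$ for each $\delta\in\{2,\dots,9\}$ requires a quantitative control of the rational expression, or equivalently of the recurrence, that is not a priori immediate. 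This is the computational heart of the argument and the place where the Zeilberger-produced recurrences, rather than the closed form alone, are likely to be essential.
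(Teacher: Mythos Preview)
Your overall toolkit (reduce to the integral with constant $e_0$, use Zeilberger's algorithm to produce a recurrence, then analyse $D_m(\delta)$) is the same as the paper's. However, there is a genuine conceptual error in where you locate the threshold $\delta=9$, and this propagates into the wrong strategy for both \eqref{FourierConditionStability1} and \eqref{FourierConditionStability2}.

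The point is that $C_{00mm}$ stays bounded as $m\to\infty$ (see Remark \ref{RemarkClosedFormulasMathsfC00mmModelKG} and the analogous limit for WM), while $\omega_m^2=(2m+\delta)^2\to\infty$. Hence $C_{00mm}/\omega_m^2\to 0$ and the asymptotic limit of $D_m(\delta)$ is simply $C_{0000}/\omega_0^2>0$, \emph{for every} admissible $\delta$. So the large-$m$ leading coefficient of $D_m$ does \emph{not} flip sign between $\delta=9$ and $\delta=10$; it is always positive. Your proposed mechanism for the dichotomy cannot work.

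What actually happens, and what the paper proves, is the opposite: the Zeilberger recurrence (Lemma \ref{LemmaRecurrenceRelationKG}) is used not to derive a closed form in $(m,\delta)$ but to show that $m\mapsto C_{00mm}/\omega_m^2$ is strictly \emph{decreasing} (Lemma \ref{LemmaMonotonicityC00mmKG}), via the ratio trick $\mathsf{x}_m=\mathsf{f}_{m+1}/\mathsf{f}_m$ and an inductive bound $\mathsf{x}_m<1$. Therefore $D_m(\delta)$ is strictly \emph{increasing} in $m$, its minimum is attained at $m=1$, and positivity of the whole sequence is equivalent to $D_1(\delta)>0$. Computing $D_1(\delta)$ explicitly gives (up to a positive factor) $-\delta^3+10\delta^2-2$, whose positivity on integers is exactly $2\le\delta\le 9$; this is the origin of Assumption \ref{AssumptionsondeltanonlinearStability}. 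The non-vanishing \eqref{FourierConditionExistence} is handled by the same monotonicity: for $m\ge 3$ one has $D_m\ge D_3>0$, and $D_1,D_2$ are checked to have no integer roots in $\delta$. So the ``computational heart'' you anticipate at intermediate $m$ evaporates once monotonicity is in hand; conversely, the monotonicity lemma is the step you are missing, not merely ``ideal'' but essential.
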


 \begin{remark}[Interpretation of the results in Proposition \ref{PropositionUniformEstimates}]\label{RemarkIndefiniteness}
 	The Fourier coefficients
\begin{align}\label{FourierForFigureReference}
	\left \{\frac{ C_{00 00}}{\omega_{0}^2}-2\frac{ C_{00 mm}}{\omega_{m}^2} :m \geq 1 \right\}
\end{align}
will later play a crucial role both in the existence and the non-linear stability of the time-periodic solutions to the non-linear wave equation \eqref{NonLinearWaveequationSectionFourier}. In fact, the existence of time-periodic solutions will follow from \eqref{FourierConditionExistence} and their non-linear stability from \eqref{FourierConditionStability1}-\eqref{FourierConditionStability2}. For any $\delta$ that does not satisfy Assumption \ref{AssumptionsondeltanonlinearStability}, namely $\delta \in [10,\infty) \cap \mathbb{N}$ for the KG, the sequences given by \eqref{FourierForFigureReference}, are neither uniformly positive nor uniformly negative but they change sign as $m$ increases. 
%Figure \ref{FigureChangeSign} illustrates the change of their sign. 
%\begin{figure}[h!]  
%\includegraphics[scale=0.6]{}
%\caption{The sequences given by \eqref{FourierForFigureReference} for $\delta=10$ (blue), $\delta=11$ (orange) and $\delta=12$ (green). They are all non-zero as $m$ increases from one to infinity, hence \eqref{FourierConditionExistence} still holds true. However, they are all strictly negative at $m=1$. This feature persists for all values of $\delta  $ that do not satisfy Assumption \ref{AssumptionsondeltanonlinearStability}.} \label{FigureChangeSign} 
%\end{figure}
\end{remark}

 \subsubsection{General strategy}  \label{SectionGeneralStrategy}
 
 In the following lines, we develop a general method that yields linear recurrence relations for the Fourier coefficients from which both their asymptotic behaviour and closed formulas follow easily (Section \ref{SectionTimePeriodicRostworowskiMaliborski}). In order to establish Proposition \ref{PropositionUniformEstimates}, we study the monotonicity of the sequences
\begin{align*}
 \left \{	f(m)=\frac{C_{00mm}}{\omega_{m}^2} :m \geq 1 \right\}
\end{align*} 
and proceed as follows. To begin with, we use several identities of orthogonal polynomials from \cite{MR2723248,MR1820610,MR0372517} to remove the integrals in \eqref{DefinitionOfFourierCoefficientsModelKG} and \eqref{DefinitionOfFourierCoefficientsModelWM} obtaining the so called ``computationally efficient formulas'',
\begin{align}\label{DefinitionSumfGegenralStrategy}
	f(m)=\frac{C_{00mm}}{\omega_{m}^2} = \sum_{k=0}^{K } F(m,k),
\end{align} 
for some integer $K$ (possibly depending on $m$) and a function $F(m,k)$, both given in \textit{closed} formulas depending on the model we consider. Most importantly, $F(m,k)$ turns out to be a hypergeometric function with respect to both arguments $m$ and $k$, meaning that both $F(m+1,k)/F(m,k)$ and $F(m,k+1)/F(m,k)$ are rational functions of $m$ and $k$. Next, we are interested in finding linear recurrence relations for the sums $f(m)$ in \eqref{DefinitionSumfGegenralStrategy}. We claim that it suffices to find a recurrence relation for the summand $F(m,k)$ with respect to $m$. In other words, given a hypergeometric function $F(m,k)$ and an integer $K$, we would like to find a function $G(m,k)$, as well as coefficients $\{\alpha_{j}(m):j =0,1,\dots,J\}$, for some (possibly large) integer $J$, all in closed forms, so that  
\begin{align}\label{RecurrenceRelationFGegenralStrategy}
	\sum_{j=0}^{J} \alpha_{j}(m)F(m+j,k)=G(m,k+1)-G(m,k).
\end{align}
Then, if \eqref{RecurrenceRelationFGegenralStrategy} is possible, a linear recurrence relation for $f(m)$, given by \eqref{DefinitionSumfGegenralStrategy}, follows immediately by summing both sides of \eqref{RecurrenceRelationFGegenralStrategy} with respect to $k$. Indeed, since the coefficients $\{\alpha_{j}(m):j =0,1,\dots,J\}$ are all independent of $k$, summing both sides of \eqref{RecurrenceRelationFGegenralStrategy} with respect to $k$ yields
\begin{align*} 
% \sum_{j=0}^{J} \alpha_{j}(m)F(m+j,k) & =G(m,k+1)-G(m,k) \Longrightarrow \nonumber \\
 \sum_{k=0}^{K} \left(	\sum_{j=0}^{J} \alpha_{j}(m)F(m+j,k) \right) &=\sum_{k=0}^{K} \left(G(m,k+1)-G(m,k) \right) \Longrightarrow \nonumber \\
  \sum_{j=0}^{J}\alpha_{j}(m)\left(	\sum_{k=0}^{K} F(m+j,k) \right) &= G(m,K+1)-G(m,0)  \Longrightarrow \nonumber \\
   \sum_{j=0}^{J}\alpha_{j}(m)f(m+j) &=G(m,K+1)-G(m,0),
\end{align*}
which is the desired recurrence relation for $f(m)$. The coefficients $\{\alpha_{j}(m):j =0,1,\dots,J\}$ and the function $G(m,k)$ can be found explicitly using Zeilberger's algorithm, see \cite{MR1644447} and Chapter 6 in \cite{MR1379802}.  The significance of the recurrence relation \eqref{RecurrenceRelationFGegenralStrategy} for the summand is twofold. On the one hand, for all proper hypergeometric functions $F(m,k)$, it always exists for a (possibly large) integer $J$ (Theorem 6.2.1, page 105 in \cite{MR1379802}). On the other hand, once the coefficients $\{\alpha_{j}(m):j =0,1,\dots,J\}$ and $G(m,k)$ are found in closed formulas, then one can rigorously prove the validity of \eqref{RecurrenceRelationFGegenralStrategy}  by a (possibly long but) straightforward computation. In Appendix \ref{AppendixZeilbergerAlgorithm}, we describe how this algorithm works in general (Lemma \ref{SectionAppendixalgorithm}) and we also run it through by hand (Section \ref{SectionAppendixalgorithmfortheLemma}) producing step-by-step the results that will follow later in this section.

\subsubsection{KG model} 

 Firstly, we focus on the KG model and derive a computationally efficient formula the Fourier coefficient
	\begin{align*} 
	 \mathsf{C}_{00 mm}     
	 &=\frac{1}{4^{\delta}} \left( \mathsf{ N}_{0}     \mathsf{N}_{m}      \right)^2 \int_{-1}^{1} 
	 	 \left( P_{m}^{\left(\frac{1}{2},\delta -\frac{3}{2}\right)}(y) \right)^2 (1-y)^{\frac{1}{2}} (1+y)^{2\delta -\frac{5}{2}}  dy,
	\end{align*}
for all integers $m \geq 1$. To do so, we will use the following identities:
\begin{itemize}  [leftmargin=*]
	\item The symmetries of the Jacobi polynomials \cite{MR0372517},
	\begin{align}\label{SymmetriesJacobi}
		P_{m}^{(\alpha,\beta)}(y)=(-1)^m P_{m}^{(\alpha,\beta)}(-y),
	\end{align}
	valid for all real $\alpha,\beta \geq -1$, real $y\in [-1,1]$ and integers $m \geq 0$.
	\item The relation between Jacobi and Gegenbauer polynomials (relation 18.7.16 in \cite{MR2723248}),
	\begin{align}\label{JacobiInTermsofGegenabuer}
	 xP^{(\delta-\frac{3}{2},\frac{1}{2})}_{n}\left(2x^{2}-1\right)=\frac{\left(\frac{1}{2}\right)_{n+1}}{\left(\delta-1\right)_{n+1}}	C^{(\delta-1)}_{2n+1}\left(x\right),
	\end{align}
	valid for all real $\delta \geq 3/2$, real $x\in [-1,1]$ and integers $n \geq 0$.
%	\item The fact that
%\begin{align}\label{GegenbauerClosedFormulas}
%	 \left(	x^{-1} C^{(\delta-1)}_{1}\left(x\right) \right)^2=4 (\delta -1)^2,
%\end{align}
%that follows from the definition of the Gegenbauer polynomials.
\item The combination of a connection and a linearization formula for Gegenbauer polynomials (relation (20) in  \cite{MR1820610}),  \begin{align}\label{CombinationConnectionLinearization}
	\left(	 C^{(\delta-1)}_{2m+1}\left(x\right) \right)^2= \frac{\Gamma \left(2 \delta -\frac{5}{2}\right)}{\sqrt{\pi } 4^{\delta -1} \Gamma (\delta -1)^2} \frac{\Gamma (2 m+2 \delta -1)}{\Gamma (2 m+2)} \sum_{k=0}^{2m+1} 	 l_{m}  (k) C^{\left(2 \delta -\frac{5}{2} \right)}_{2k}\left(x\right),  
\end{align}
valid for all real $\delta \geq 3/2$, real $x\in [-1,1]$ and integers $n \geq 0$,  where the coefficients are given by
\begin{align}\label{Coefficientlmdeltak}
	 l_{m}  (k)=  \frac{(4 \delta +4 k-5) \Gamma \left(k+\frac{1}{2}\right) \Gamma (k+\delta -1) \Gamma \left(-k+2 m+\frac{3}{2}\right) \Gamma (k+2 m+2 \delta -1)}{\Gamma \left(k+\delta -\frac{1}{2}\right) \Gamma (k+2 \delta -2) \Gamma (-k+2 m+2) \Gamma \left(k+2 m+2 \delta -\frac{1}{2}\right)}.
\end{align}  
\item The connection formula for Gegenbauer polynomials (relation 18.18.16 in \cite{MR2723248})
\begin{align}\label{ConnectionFormula}
	 C_{2 k}^{\left(2 \delta -\frac{5}{2}\right)}(x) = \sum _{p=0}^k \zeta_{k} (p) C_{2 p}^{(2 \delta -2)}(x),
\end{align}
valid for all real $\delta \geq 3/2$, real $x\in [-1,1]$ and integers $k \geq 0$, where the coefficient reads
\begin{align}\label{Coefficientszkdeltap}
\zeta_{k} (p) =	\frac{  \delta +p-1}{\delta -1}  \frac{  \left(-\frac{1}{2}\right)_{k-p} \left(2 \delta -\frac{5}{2}\right)_{k+p} }{ \Gamma (k-p+1) (2 \delta -1)_{k+p}}.
\end{align} 
\end{itemize}
The following result establishes a computationally efficient formula for $\mathsf{C}_{0 0 mm}  $.

\begin{lemma}[Computationally efficient formula for $\mathsf{C}_{00 mm}  $ ]\label{LemmaComputationallyEfficientFormulaforC00mmKG}
	Fix $\delta  $ satisfying Assumption \ref{AssumptionsondeltaExistence} and let $m \geq 0$ be any integer. Then,  
	 \begin{align*}
 	\mathsf{C}_{00 mm}     
	 &=  \sum_{k=0}^{2m+1} \mathsf{S}_{m}(k),
\end{align*}
where
\begin{align*} 
	\mathsf{S}_m (k)&=  \frac{\Gamma (\delta +1) \Gamma \left(2 \delta -\frac{3}{2}\right)}{\pi ^2 \Gamma \left(\delta -\frac{1}{2}\right)} \frac{( 5-4 \delta -4 k) \Gamma (k+\delta -1) \Gamma \left(k+2 \delta -\frac{5}{2}\right)}{\Gamma \left(k+\delta -\frac{1}{2}\right) \Gamma (k+2 \delta -2) \Gamma (k+2 \delta -1)} \frac{\Gamma \left(k-\frac{1}{2}\right) \Gamma \left(k+\frac{1}{2}\right)}{\Gamma (k+1)}\cdot \\
	 & \cdot \frac{(\delta +2 m) \Gamma \left(-k+2 m+\frac{3}{2}\right) \Gamma (k+2 m+2 \delta -1)}{\Gamma (-k+2 m+2) \Gamma \left(k+2 m+2 \delta -\frac{1}{2}\right)}.
\end{align*}
\end{lemma}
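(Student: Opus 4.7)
The plan is to reduce the integral defining $\mathsf{C}_{00mm}$ to a sum of beta integrals by a sequence of substitutions and classical polynomial identities, and then collect the resulting prefactors. Starting from
\[
\mathsf{C}_{00mm}=\frac{1}{4^{\delta}}(\mathsf{N}_{0}\mathsf{N}_{m})^{2}\int_{-1}^{1}\bigl(P_{m}^{(1/2,\delta-3/2)}(y)\bigr)^{2}(1-y)^{1/2}(1+y)^{2\delta-5/2}\,dy,
\]
I would first perform the change of variables $y=1-2x^{2}$, $x\in[0,1]$, and use the Jacobi symmetry \eqref{SymmetriesJacobi} to replace $P_{m}^{(1/2,\delta-3/2)}(y)$ by $(-1)^{m}P_{m}^{(\delta-3/2,1/2)}(2x^{2}-1)$, so that the squared polynomial is exactly of the form appearing on the left-hand side of \eqref{JacobiInTermsofGegenabuer}. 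Together with the elementary identities $1-y=2x^{2}$, $1+y=2(1-x^{2})$, $dy=-4x\,dx$, this recasts the integral, modulo explicit $2^{\alpha}$ factors and a factor $((1/2)_{m+1}/(\delta-1)_{m+1})^{2}$, as
\[
\int_{-1}^{1}\bigl(C_{2m+1}^{(\delta-1)}(x)\bigr)^{2}(1-x^{2})^{2\delta-5/2}\,dx,
\]
after using the evenness of the integrand to double the $[0,1]$ integral.

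Next, I would apply the linearization formula \eqref{CombinationConnectionLinearization} to replace $(C_{2m+1}^{(\delta-1)}(x))^{2}$ by the finite sum $\sum_{k=0}^{2m+1}l_{m}(k)C_{2k}^{(2\delta-5/2)}(x)$ (up to an overall gamma-function prefactor), and then invoke the connection formula \eqref{ConnectionFormula} to write each $C_{2k}^{(2\delta-5/2)}(x)=\sum_{p=0}^{k}\zeta_{k}(p)C_{2p}^{(2\delta-2)}(x)$. The essential observation is that the weight $(1-x^{2})^{2\delta-5/2}$ is precisely the orthogonality weight of the Gegenbauer family $\{C_{n}^{(2\delta-2)}\}$ (since $\lambda-1/2=2\delta-5/2$ forces $\lambda=2\delta-2$). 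Therefore only the $p=0$ term survives in the $k$-th piece, giving
\[
\int_{-1}^{1}C_{2k}^{(2\delta-5/2)}(x)(1-x^{2})^{2\delta-5/2}\,dx=\zeta_{k}(0)\,B\!\bigl(\tfrac{1}{2},2\delta-\tfrac{3}{2}\bigr)=\zeta_{k}(0)\,\frac{\sqrt{\pi}\,\Gamma(2\delta-3/2)}{\Gamma(2\delta-1)}.
\]

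The remaining task is bookkeeping: substitute the explicit expressions for $\mathsf{N}_{0}^{2}$, $\mathsf{N}_{m}^{2}$, $l_{m}(k)$, $\zeta_{k}(0)$, and the beta integral, together with the change-of-variable factor $2^{2\delta}$ and the prefactor $1/4^{\delta}$, then convert Pochhammer symbols into ratios of gamma functions (for instance $(1/2)_{m+1}=\Gamma(m+3/2)/\sqrt{\pi}$ and $(\delta-1)_{m+1}=\Gamma(m+\delta)/\Gamma(\delta-1)$). Grouping the $k$-dependent gamma factors from $l_{m}(k)$ and $\zeta_{k}(0)$ should produce exactly the expression $\mathsf{S}_{m}(k)$, with the overall constant $\Gamma(\delta+1)\Gamma(2\delta-3/2)/(\pi^{2}\Gamma(\delta-1/2))$ arising from the $m$-independent combination of prefactors.

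I expect the conceptual part of the argument to be the identification of the correct Gegenbauer parameter $\lambda=2\delta-2$ that makes orthogonality collapse the double sum to a single sum; once this is in place, everything else is mechanical. The main obstacle will therefore be the routine but lengthy simplification of the gamma-function prefactors into the exact form stated in the lemma, where small errors in powers of $2$ or in shifts of Pochhammer indices can easily creep in. A careful side-computation of $(\mathsf{N}_{0}\mathsf{N}_{m})^{2}/4^{\delta}$ and of the combination $l_{m}(k)\zeta_{k}(0)\Gamma(2\delta-3/2)/\Gamma(2\delta-1)$, each reduced separately to products of gamma functions before being multiplied, should handle this cleanly.
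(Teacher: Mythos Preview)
Your proposal is correct and follows essentially the same approach as the paper: the same chain of Jacobi symmetry, substitution $y=1-2x^{2}$, passage to Gegenbauer polynomials via \eqref{JacobiInTermsofGegenabuer}, linearization \eqref{CombinationConnectionLinearization}, connection formula \eqref{ConnectionFormula}, and orthogonality of $\{C_{n}^{(2\delta-2)}\}$ against the weight $(1-x^{2})^{2\delta-5/2}$ to collapse to a single sum. The only cosmetic difference is that the paper keeps all integrals on $[0,1]$ rather than extending to $[-1,1]$ by evenness, so the beta integral there carries an extra factor $1/2$; this is absorbed in the bookkeeping you already flagged.
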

\begin{proof}
	Fix $\delta  $ satisfying Assumption \ref{AssumptionsondeltaExistence} and let $m \geq 0$ be any integer. Then, we use the symmetries of the Jacobi polynomials \eqref{SymmetriesJacobi},   the change of variables $z=-y$ and $z=2x^2-1$, as well as  \eqref{JacobiInTermsofGegenabuer} to express the Jacobi polynomials in terms of the Gegenbauer polynomials to obtain 
\begin{align*} 
	 \mathsf{C}_{00 mm}    
	 	 &=\frac{1}{4^{\delta}} \left( \mathsf{N}_{0}   \mathsf{ N}_{m}     \right)^2 \int_{-1}^{1} 
	 	  \left( P_{m}^{\left(\frac{1}{2},\delta -\frac{3}{2}\right)}(y) \right)^2 (1-y)^{\frac{1}{2}} (1+y)^{2\delta -\frac{5}{2}}  dy \\
	 	  &=\frac{1}{4^{\delta}}  \left( \mathsf{N}_{0}   \mathsf{ N}_{m}     \right)^2 \int_{-1}^{1} 
	 	 \left((-1)^m P_{m}^{\left(\delta -\frac{3}{2},\frac{1}{2}\right)}(-y) \right)^2 (1-y)^{\frac{1}{2}} (1+y)^{2\delta -\frac{5}{2}}    dy \\
	 	 &=\frac{1}{4^{\delta}} \left( \mathsf{N}_{0}   \mathsf{ N}_{m}     \right)^2 \int_{-1}^{1} 
	 	  \left( P_{m}^{\left(\delta -\frac{3}{2},\frac{1}{2}\right)}(z) \right)^2 (1-z)^{2\delta -\frac{5}{2}}  (1+z)^{\frac{1}{2}}    dz \\
	 	 &=\frac{1}{4^{\delta}}  \left( \mathsf{N}_{0}   \mathsf{ N}_{m}     \right)^2 \int_{0}^{1} 
	 	  \left( P_{m}^{\left(\delta -\frac{3}{2},\frac{1}{2}\right)}(2x^2-1) \right)^2 (2(1-x^2))^{2\delta -\frac{5}{2} } (2x^2)^{ \frac{1}{2}}   4x dx \\
	 	 &=   \left( \mathsf{N}_{0}   \mathsf{ N}_{m}     \right)^2 \int_{0}^{1} 
	 	 \left(x P_{m}^{\left(\delta -\frac{3}{2},\frac{1}{2}\right)}(2x^2-1) \right)^2 (1-x^2)^{2\delta-\frac{5}{2} } dx \\
	 	 & = \left(\mathsf{N}_{0}   \mathsf{ N}_{m}    \frac{\left(\frac{1}{2}\right)_{m+1}}{\left(\delta-1\right)_{m+1}}  \right)^2 \int_{0}^{1} 
	 	\left( C^{(\delta-1)}_{2m+1}\left(x\right) \right)^2 (1-x^2)^{2\delta-\frac{5}{2} } dx .
	\end{align*}
Moreover,  \eqref{CombinationConnectionLinearization} yields that 
\begin{align*}
		\int_{0}^{1} 
	 	\left( C^{(\delta-1)}_{2m+1}\left(x\right) \right)^2 (1-x^2)^{2\delta-\frac{5}{2} } dx=
	 	 \frac{\Gamma \left(2 \delta -\frac{5}{2}\right)}{\sqrt{\pi } 4^{\delta -1} \Gamma (\delta -1)^2} \frac{\Gamma (2 m+2 \delta -1)}{\Gamma (2 m+2)} \sum_{k=0}^{2m+1} 	 l_{m}  (k)  \mathsf{J}_m(k)  ,
	\end{align*} 
	where we set
	\begin{align*}
		\mathsf{J}_m(k) =\int_{0}^{1} 
	 	 C^{\left(2 \delta -\frac{5}{2} \right)}_{2k}\left(x\right) (1-x^2)^{2\delta-\frac{5}{2} } dx.
	\end{align*}
To compute $\mathsf{J}_m(k) $, we use the connection formula \eqref{ConnectionFormula} and obtain
\begin{align*}
	\mathsf{J}_m(k) =
\sum _{p=0}^k \zeta_{k} (p) \int_{0}^{1} 
	 C_{2 p}^{(2 \delta -2)}\left(x\right)(1-x^2)^{2\delta-\frac{5}{2} } dx.
\end{align*}
Notice that, for $\lambda=2 \delta -2$, the integral above is of the form
\begin{align*}
	 \int_{0}^{1} 
	  C_{2 p}^{(\lambda)}(x) (1-x^2)^{\lambda-\frac{1}{2} } dx
\end{align*}
that vanishes unless $p =0$ due to the fact that the set of Gegenbauer polynomials $\{C_{2 p}^{(\lambda)}(x) : p \geq 0\}$ forms an orthonormal and complete basis with respect to the weight $(1-x^2)^{\lambda-1/2 }$. Therefore, using the fact that $C_{0}^{(2 \delta -2)}\left(x\right)=1$, we deduce
\begin{align*}
	\mathsf{J}_m(k)=     \zeta_{k} (0) \int_{0}^{1} 
	  (1-x^2)^{2\delta-\frac{5}{2} } dx
	  =   \zeta_{k} (0) \frac{\sqrt{\pi } \Gamma \left(2 \delta -\frac{3}{2}\right)}{2 \Gamma (2 \delta -1)}
\end{align*}
 and putting all together yields
\begin{align*}
		\mathsf{C}_{00mm}     
	 &= \left(\mathsf{N}_{0}   \mathsf{ N}_{m}    \frac{\left(\frac{1}{2}\right)_{m+1}}{\left(\delta-1\right)_{m+1}}  \right)^2 \frac{\Gamma \left(2 \delta -\frac{5}{2}\right)}{\sqrt{\pi } 4^{\delta -1} \Gamma (\delta -1)^2} \frac{\sqrt{\pi } \Gamma \left(2 \delta -\frac{3}{2}\right)}{2 \Gamma (2 \delta -1)} \frac{\Gamma (2 m+2 \delta -1)}{\Gamma (2 m+2)} \sum_{k=0}^{2m+1} 	 l_{m}  (k)  \zeta_{k} (0) .
	\end{align*} 
Finally,  a direct computation using the closed formulas for $ l_{m}  (k)  $ and $   \zeta_{k} (0)$ simplifies the result and yields the closed formula as stated above. 
	 \end{proof}

Next, Lemma \ref{LemmaComputationallyEfficientFormulaforC00mmKG} allows us to prove  a linear recurrence relation for the Fourier coefficients.

\begin{lemma}[Recurrence relation for $\mathsf{C}_{00mm}$]\label{LemmaRecurrenceRelationKG}
	Fix $\delta  $ satisfying Assumption \ref{AssumptionsondeltaExistence}. Then, the sequences  
	 \begin{align*}
 	\left \{  \mathsf{f}_m= \frac{ \mathsf{C}_{00 mm}}{\somega_{m}^2} : m \geq 1 \right\}      
\end{align*}
satisfy the 2-step recurrence relation
\begin{align*}
	\mathsf{P}_{m} \mathsf{f} _{m}- \mathsf{Q}_{m}  \mathsf{f} _{m+1}+\mathsf{R}_{m}  \mathsf{f} _{m+2}=0,
\end{align*}
where
\begin{align*}
	\mathsf{P}_m &=2 (m+1)^2 (2 m+1) (2 m+3) (\delta +m) (\delta +2 m) (\delta +2 m+3) (2 \delta +2 m-1) \big(9 \delta +4 m^2 \\
	&+4 \delta  m+12 m+7\big) , \\
	\mathsf{Q}_m & =(\delta +2 m+2)^2 \big(315 \delta ^5+728 \delta ^4+914 \delta ^3+418 \delta ^2-41 \delta +832 \delta ^2 m^6+704 \delta ^3 m^5 \\
	&+5760 \delta ^2 m^5+320 \delta ^4 m^4+4432 \delta ^3 m^4+15432 \delta ^2 m^4+64 \delta ^5 m^3+1824 \delta ^4 m^3+10352 \delta ^3 m^3 \\
	&+20224 \delta ^2 m^3 +336 \delta ^5 m^2+3620 \delta ^4 m^2+11064 \delta ^3 m^2+13402 \delta ^2 m^2+512 \delta  m^7+3840 \delta  m^6 \\
	&+11424 \delta  m^5+17168 \delta  m^4+13616 \delta  m^3+5280 \delta  m^2+128 m^8+1024 m^7+3296 m^6+5440 m^5 \\
	&+4792 m^4+2016 m^3+154 m^2+572 \delta ^5 m+2880 \delta ^4 m+5330 \delta ^3 m+4100 \delta ^2 m+698 \delta  m \\
	&-140 m-30\big) , \\
	\mathsf{R}_m &= 2 (m+2) (2 m+5) (\delta +m+1)^2 (\delta +2 m+1) (\delta +2 m+4) (2 \delta +2 m+1) (2 \delta +2 m+3)\cdot \\
	&~\cdot\big(5 \delta +4 m^2+4 \delta  m+4 m-1\big)  .
\end{align*}
\end{lemma}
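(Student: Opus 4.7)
Following the general strategy of Section \ref{SectionGeneralStrategy}, we set
\[
F(m,k) := \frac{\mathsf{S}_m(k)}{\somega_m^2},
\]
so that by Lemma \ref{LemmaComputationallyEfficientFormulaforC00mmKG} we have $\mathsf{f}_m = \sum_{k=0}^{2m+1} F(m,k)$. One readily checks that $F(m,k)$ is a proper hypergeometric term in $(m,k)$, to which the Wilf--Zeilberger machinery applies. In particular, the factor $1/\Gamma(-k+2m+2)$ appearing in $\mathsf{S}_m(k)$ forces $F(m,k) = 0$ for $k \geq 2m+2$, so that the summation range in the definition of $\mathsf{f}_m$ may equivalently be extended to all $k \geq 0$.

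The heart of the proof consists in running Zeilberger's algorithm to produce polynomials $\alpha_0(m), \alpha_1(m), \alpha_2(m)$ and a rational function $R(m,k)$ such that, setting $G(m,k) = R(m,k) F(m,k)$, the creative telescoping identity
\begin{equation}\label{CTplan}
\alpha_0(m)\, F(m,k) + \alpha_1(m)\, F(m+1,k) + \alpha_2(m)\, F(m+2,k) \;=\; G(m, k+1) - G(m, k)
\end{equation}
holds as an identity of rational functions in $k$. The existence of such a relation of order at most $J = 2$ is guaranteed in principle by Theorem 6.2.1 of \cite{MR1379802}, and the explicit closed forms of the $\alpha_j(m)$ and $R(m,k)$ are delivered by the Mathematica Fast Zeilberger Package; once they are in hand, \eqref{CTplan} becomes a single polynomial identity in $(m,k,\delta)$ that can be verified by direct symbolic manipulation.

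Summing \eqref{CTplan} over $k = 0, 1, \ldots, K$ for $K$ sufficiently large that all three sums $\sum_k F(m+j,k)$, $j = 0,1,2$, are complete, the right-hand side telescopes to $G(m, K+1) - G(m, 0)$. A direct inspection of $R(m,k) F(m,k)$ at the two endpoints shows that both boundary contributions vanish, which yields the homogeneous three-term recurrence
\[
\alpha_0(m)\, \mathsf{f}_m + \alpha_1(m)\, \mathsf{f}_{m+1} + \alpha_2(m)\, \mathsf{f}_{m+2} = 0.
\]
Identification of $(\alpha_0(m), \alpha_1(m), \alpha_2(m))$ with $(\mathsf{P}_m, -\mathsf{Q}_m, \mathsf{R}_m)$ then concludes the proof.

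The main obstacle is of a purely computational nature: the polynomials $\mathsf{P}_m, \mathsf{Q}_m, \mathsf{R}_m$ are of high total degree in $(m, \delta)$, the rational certificate $R(m,k)$ is correspondingly lengthy, and although Zeilberger's algorithm produces them in one go, the rigorous verification of \eqref{CTplan} and of the boundary cancellations must be organised as a sizeable (but elementary) symbolic computation. A detailed step-by-step walkthrough of this kind of argument, patterned on the present situation, is provided in Appendix \ref{AppendixZeilbergerAlgorithm}.
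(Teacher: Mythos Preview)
Your proposal is correct and follows essentially the same approach as the paper: both apply Zeilberger's algorithm to the summand $\mathsf{F}(m,k)=\mathsf{S}_m(k)/\somega_m^2$ from Lemma \ref{LemmaComputationallyEfficientFormulaforC00mmKG}, obtain the creative telescoping identity \eqref{CTplan} with certificate $G(m,k)$, verify that the boundary terms $G(m,2m+2)-G(m,0)$ vanish, and sum to conclude. The paper's proof is slightly more terse and defers the step-by-step execution to Appendix \ref{AppendixZeilbergerAlgorithm}, exactly as you suggest.
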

\begin{proof}
	Fix $\delta  $ satisfying Assumption \ref{AssumptionsondeltaExistence} and pick an integer $m \geq 1$. According to Lemma \ref{LemmaComputationallyEfficientFormulaforC00mmKG}, we have that 
	\begin{align*}
	\mathsf{f}_{m}=	 \frac{ \mathsf{C}_{00 mm}}{\somega_{m}^2} =\sum_{k=0}^{2m+1} \mathsf{F}(m,k),
	\end{align*}
	for some function $ \mathsf{F}(m,k)$ given by a closed formula. Now, Zeilberger's algorithm (Chapter 6 in \cite{MR1379802}) yields functions $ \mathsf{P}_m $, $\mathsf{Q}_m $ and $ \mathsf{R}_m $ as defined above, as well as a function $\mathsf{G}_m(k)$, so that  
	\begin{align}\label{ZeilbergerResultModelKG}
		 \mathsf{P}_m  \mathsf{F}(m,k) - \mathsf{Q}_m \mathsf{F}(m+1,k) +  \mathsf{R}_m \mathsf{F}(m+2,k)=\mathsf{G}(m,k+1)-\mathsf{G}(m,k).
	\end{align}
	We note that, since all the functions are given explicitly in closed formulas, one can rigorously prove the validity of \eqref{ZeilbergerResultModelKG} by a long but straight-forward computation. Finally, using the closed formula for $\mathsf{G}(m,k)$, we compute
	\begin{align*}
		\mathsf{G}(m,2m+2)-\mathsf{G}(m,0)=0,
	\end{align*}
	and, as explained in Section \ref{SectionGeneralStrategy}, summing \eqref{ZeilbergerResultModelKG} with with respect to $k$ proves the claim. In Appendix \ref{AppendixZeilbergerAlgorithm}, we describe how this algorithm works in general (Lemma \ref{SectionAppendixalgorithm}) and we also run it through by hand (Section \ref{SectionAppendixalgorithmfortheLemma}) producing step-by-step the results of this Lemma.  
\end{proof}

Now, with the recurrence relation of Lemma \ref{LemmaRecurrenceRelationKG} at hand,  the monotonicity of the Fourier coefficients follows easily.

\begin{lemma}[Monotonicity for $\mathsf{C}_{00mm}$]\label{LemmaMonotonicityC00mmKG}
	Fix $\delta  $ satisfying Assumption \ref{AssumptionsondeltaExistence}. Then, the sequences  
	 \begin{align*}
 	\left \{ \mathsf{f}_{m}= \frac{ \mathsf{C}_{00 mm}}{\somega_{m}^2} : m \geq 1 \right\}      
\end{align*}
are all strictly decreasing with respect to $m$.
\end{lemma}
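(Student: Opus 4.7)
The plan is to derive the monotonicity from the three-term recurrence of Lemma \ref{LemmaRecurrenceRelationKG} by induction on $m$. Setting $\mathsf{d}_m := \mathsf{f}_m - \mathsf{f}_{m+1}$ and rewriting $\mathsf{P}_m \mathsf{f}_m - \mathsf{Q}_m \mathsf{f}_{m+1} + \mathsf{R}_m \mathsf{f}_{m+2} = 0$ as
\[
\mathsf{R}_m \mathsf{d}_{m+1} = \mathsf{P}_m \mathsf{d}_m + (\mathsf{P}_m + \mathsf{R}_m - \mathsf{Q}_m)\,\mathsf{f}_{m+1},
\]
one sees that $\mathsf{d}_m > 0 \Rightarrow \mathsf{d}_{m+1} > 0$ provided that $\mathsf{P}_m>0$, $\mathsf{R}_m>0$, $\mathsf{P}_m + \mathsf{R}_m - \mathsf{Q}_m \ge 0$, and $\mathsf{f}_{m+1}>0$. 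The last positivity is immediate: by its integral representation, $\mathsf{C}_{00mm}$ is the integral of $(P_m^{(1/2,\delta-3/2)})^2$ against the strictly positive Jacobi weight, hence $\mathsf{f}_{m+1}>0$.

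Next I would verify $\mathsf{P}_m>0$ and $\mathsf{R}_m>0$ for all integers $m\ge 1$ and $\delta\ge 2$. This is immediate from the factorised forms displayed in Lemma \ref{LemmaRecurrenceRelationKG}: every linear factor is manifestly positive under these ranges, and the two remaining quadratic factors $9\delta+4m^2+4\delta m+12m+7$ and $5\delta+4m^2+4\delta m+4m-1$ are likewise positive (the second because $5\delta - 1 \ge 9$). The core of the argument is then the polynomial inequality
\[
\mathsf{P}_m + \mathsf{R}_m - \mathsf{Q}_m \ge 0, \qquad m\ge 1,\ \delta \in [2,\infty)\cap \mathbb{N}.
\]

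This polynomial inequality is the main obstacle: $\mathsf{P}_m, \mathsf{Q}_m, \mathsf{R}_m$ are each of joint degree $10$ in $(m,\delta)$, so a brute expansion is cumbersome. I would address it by expanding $\mathsf{P}_m+\mathsf{R}_m-\mathsf{Q}_m$ as a polynomial in the shifted variables $\widetilde{m}=m-1 \ge 0$ and $\widetilde{\delta}=\delta-2 \ge 0$, and verifying (with symbolic assistance, as for the recurrence itself) that all coefficients in $(\widetilde{m},\widetilde{\delta})$ are nonnegative. This is a routine, if tedious, \emph{positive-coefficient} check — the same philosophy that makes the Zeilberger output verifiable by hand.

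To close the induction, it remains to establish the base case $\mathsf{d}_1 = \mathsf{f}_1 - \mathsf{f}_2 > 0$. Using the computationally efficient formula of Lemma \ref{LemmaComputationallyEfficientFormulaforC00mmKG}, both $\mathsf{f}_1$ and $\mathsf{f}_2$ reduce to finite sums (of $4$ and $6$ terms respectively), producing explicit rational functions of $\delta$ whose difference can be written as a ratio of polynomials in $\delta$ that is positive for every integer $\delta \ge 2$. Combined with the inductive step above, this yields $\mathsf{d}_m>0$ for all $m\ge 1$, i.e.\ the claimed strict decrease of $\{\mathsf{f}_m\}_{m\ge 1}$.
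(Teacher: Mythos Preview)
Your proposal is correct and essentially identical to the paper's argument. The paper parametrises with the ratio $\mathsf{x}_m=\mathsf{f}_{m+1}/\mathsf{f}_m$ (so that the recurrence becomes $\mathsf{x}_{m+1}=\mathsf{A}_m-\mathsf{B}_m/\mathsf{x}_m$ and the induction hypothesis is $\mathsf{x}_m<1$), whereas you use the difference $\mathsf{d}_m=\mathsf{f}_m-\mathsf{f}_{m+1}$; but the resulting conditions coincide: your inequality $\mathsf{P}_m+\mathsf{R}_m-\mathsf{Q}_m\ge 0$ is exactly the paper's $\mathsf{A}_m-\mathsf{B}_m<1$ after clearing $\mathsf{R}_m>0$, and your base case $\mathsf{d}_1>0$ is exactly the paper's $\mathsf{x}_1<1$, both reduced (via Lemma~\ref{LemmaComputationallyEfficientFormulaforC00mmKG}) to an explicit polynomial inequality in $\delta$.
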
 
\begin{proof}
	Fix $\delta  $ satisfying Assumption \ref{AssumptionsondeltaExistence} and pick an integer $m \geq 1$. Then, we set
	\begin{align*}
		  \mathsf{x}_m= \frac{\mathsf{f}_{m+1}}{\mathsf{f}_m}, 
	\end{align*}
	and Lemma \ref{LemmaRecurrenceRelationKG} yields the recurrence relation
	\begin{align*}
		\mathsf{x}_{m+1}=\mathsf{A}_m - \frac{\mathsf{B}_m}{\mathsf{x}_m}, 
	\end{align*}
where
\begin{align*}
  \mathsf{A}_m=\frac{\mathsf{Q}_m}{\mathsf{R}_m}, \quad \mathsf{B}_m=\frac{\mathsf{P}_m}{\mathsf{R}_m}.
\end{align*}
In the Appendix (Lemma \ref{Lemma1AuxiliarycomputationsKGAppendix}), we show that
\begin{align*}
	\mathsf{R}_m > 0, \quad \mathsf{B}_m >0, \quad \mathsf{A}_m -\mathsf{B}_m <1,
\end{align*}  
for all integers $m\geq 1$. We claim that $\mathsf{x}_m <1$, for all integers $m \geq 1$. Indeed, one can use Lemma \ref{LemmaComputationallyEfficientFormulaforC00mmKG} to compute $\mathsf{C}_{0011}$ and $\mathsf{C}_{0022}$ so that
\begin{align*}
	\mathsf{x}_1=\frac{\mathsf{C}_{2}}{\mathsf{C}_1}=
	\frac{\somega_{1}^2\mathsf{C}_{0022}}{\somega_{2}^2 \mathsf{C}_{0011}} = \frac{63 \delta ^5+196 \delta ^4+111 \delta ^3-78 \delta ^2-52 \delta -24}{80 \delta ^5+448 \delta ^4+492 \delta ^3-136 \delta ^2-236 \delta -48}.
\end{align*}
In the Appendix (Lemma \ref{Lemma2AuxiliarycomputationsKGAppendix}), we show that $\mathsf{x}_1 <1$, for all integers $\delta \geq 2$. Next, we assume that there exists an integer $m \geq 1$ such that $\mathsf{x}_m <1$. Then, since $\mathsf{B}_m>0$ and $\mathsf{A}_m -\mathsf{B}_m <1$, we infer 
\begin{align*}
	\mathsf{x}_{m+1}=\mathsf{A}_m - \frac{\mathsf{B}_m}{\mathsf{x}_m} <\mathsf{A}_m - \mathsf{B}_m<1,
\end{align*}
that completes the proof.
\end{proof}

Finally, we prove Proposition \ref{PropositionUniformEstimates} for the KG. 

\begin{proof}[Proof of Proposition \ref{PropositionUniformEstimates} for the KG]
	Fix $\delta   $ satisfying the Assumption \ref{AssumptionsondeltaExistence} and pick an integer $m \geq 1$. Then, Lemma \ref{LemmaMonotonicityC00mmKG} yields that	the sequences
	\begin{align*}
		\left \{\frac{ \mathsf{C}_{00 00}}{\somega_{0}^2}-2\frac{ \mathsf{C}_{00 mm}}{\somega_{m}^2} :m \geq 1 \right\}
	\end{align*}
	are all strictly increasing. Firstly, we claim that
	\begin{align*}
		\frac{ \mathsf{C}_{00 00}}{\somega_{0}^2}-2\frac{ \mathsf{C}_{00 mm}}{\somega_{m}^2}  \neq 0,
	\end{align*}
for all $\delta \ge 2.$
	Indeed, we split the set $\{m\geq 1\}$ into $\{m=1\}$, $\{m=2\}$ and $\{m \geq 3\}$ and using Lemma \ref{LemmaComputationallyEfficientFormulaforC00mmKG} to compute $\mathsf{C}_{0000}$, $\mathsf{C}_{0011}$, $\mathsf{C}_{0022}$ and $\mathsf{C}_{0033}$, we infer
\begin{align*}  
	\frac{ \mathsf{C}_{00 00}}{\somega_{0}^2}-2\frac{ \mathsf{C}_{00 11}}{\somega_{1}^2}  &= \frac{ (2 \delta -1) U_1(\delta)}{(\delta +2) (2 \delta +1)}
		 \frac{ \pi  \Gamma (4 \delta -3)}{ 2^{8 \delta -7}\Gamma \left(\delta -\frac{1}{2}\right) \Gamma \left(\delta +\frac{1}{2}\right)^3}. \\
	\frac{ \mathsf{C}_{00 00}}{\somega_{0}^2}-2\frac{ \mathsf{C}_{00 22}}{\somega_{2}^2}
	 &= \frac{(2 \delta -1) (2 \delta +1) U_2(\delta)}{4^{\delta +3} (\delta +1)^2 (2 \delta +3)} \frac{\Gamma (\delta ) \Gamma \left(2 \delta -\frac{3}{2}\right)}{\Gamma \left(\delta +\frac{3}{2}\right)^3}, \\
	 \frac{ \mathsf{C}_{00 00}}{\somega_{0}^2}-2\frac{ \mathsf{C}_{00 mm}}{\somega_{m}^2}
	 & \geq \frac{ \mathsf{C}_{00 00}}{\somega_{0}^2}-2\frac{ \mathsf{C}_{00 33}}{\somega_{3}^2} 
	 = \frac{(2 \delta +5) U_3\left(\delta\right)}{2^{2 \delta +7} (2 \delta +3)^2}\frac{\Gamma (\delta ) \Gamma \left(2 \delta -\frac{3}{2}\right)}{\Gamma \left(\delta -\frac{1}{2}\right) \Gamma \left(\delta +\frac{7}{2}\right)^2},
	\end{align*} 
	for all integers $m \geq 3$, where
	\begin{align*}
	U_{1} (\delta) & = -\delta ^3+10 \delta ^2-2, \\
		U_2(\delta) &=\delta ^6-66 \delta ^5+101 \delta ^4+296 \delta ^3+48 \delta ^2-8 \delta -12, \\
		U_{3} (\delta) &=83 \delta ^8+54 \delta ^7+2072 \delta ^6+13644 \delta ^5+25529 \delta ^4+15930 \delta ^3-1008 \delta ^2-1944 \delta -810.
	\end{align*}
A direct computation shows that both $U_1$ and $U_2$ do not have integer roots whereas $U_3 (\delta)>0$, for all integers $\delta \geq 2$, that proves the claim. Secondly, we claim that 
\begin{align*}
	\left \{	\frac{ \mathsf{C}_{00 00}}{\somega_{0}^2}-2\frac{ \mathsf{C}_{00 mm}}{\somega_{m}^2}  > 0 : m \geq 1 \right \} \subset (0,\infty) 
	\end{align*}
if and only if $\delta$ satisfies Assumption \ref{AssumptionsondeltanonlinearStability}. Indeed, the claim is equivalent to
	\begin{align*}
		& 
		 \min_{m \geq 1} \left(	\frac{ \mathsf{C}_{00 00}}{\somega_{0}^2}-2\frac{ \mathsf{C}_{00 mm}}{\somega_{m}^2} \right) >0 
		  \Longleftrightarrow \frac{ \mathsf{C}_{00 00}}{\somega_{0}^2}-2\frac{ \mathsf{C}_{00 11}}{\somega_{1}^2}  >0 
 \Longleftrightarrow -\delta ^3+10 \delta ^2-2>0 
	\end{align*}
	which in turn holds true if and only if $\delta$ satisfies Assumption \ref{AssumptionsondeltanonlinearStability}, namely $2\leq  \delta \leq 9 $. Finally, in this case, we set
	\begin{align*}
		\mathsf{c}_{\perp}(\delta)= \min_{m \geq 1} \left(	\frac{ \mathsf{C}_{00 00}}{\somega_{0}^2}-2\frac{ \mathsf{C}_{00 mm}}{\somega_{m}^2} \right)= 	\frac{ \mathsf{C}_{00 00}}{\somega_{0}^2}-2\frac{ \mathsf{C}_{00 11}}{\somega_{1}^2}   >0,
	\end{align*}
	that completes the proof. 
\end{proof}

Finally, we note the following remark.

\begin{remark}[Closed formulas for $\mathsf{C}_{00mm}$]\label{RemarkClosedFormulasMathsfC00mmModelKG}
	One can solve the recurrence relation of Lemma \ref{LemmaRecurrenceRelationKG}  to find closed formulas for $\mathsf{C}_{00mm}$ provided that $\delta$ is fixed (and not too large). For example, for $\delta \in \{2,3,4,5\}$, we find
	\begin{align*}
		\mathsf{C}_{00mm} &= \frac{8}{\pi }, \\
		\mathsf{C}_{00mm} &=\frac{4 (3 m (m+3)+7)}{\pi  (m+1) (m+2)} , \\
		\mathsf{C}_{00mm} &=\frac{16 (m (m+4) (20 m (m+4)+153)+297)}{5 \pi  (m+1) (m+3) (2 m+3) (2 m+5)} , \\
		\mathsf{C}_{00mm} &= \frac{20 (m (m+5) (m (m+5) (28 m (m+5)+475)+2694)+5148)}{7 \pi  (m+1) (m+2) (m+3) (m+4) (2 m+3) (2 m+7)}, 
	\end{align*}
	respectively, for all integers $m\geq 1$.   Figure \ref{FigureMathsfC00mmFordelta345} illustrates the Fourier coefficients $\mathsf{C}_{00mm}$ and their limiting values.
	\begin{figure}[h]
\centering
\includegraphics[width=.55\textwidth]{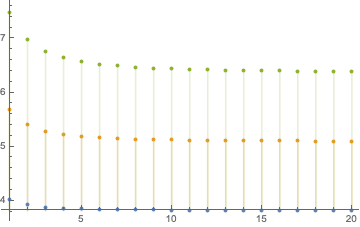}
\caption{The Fourier coefficients $\mathsf{C}_{00mm}$ for $\delta=3$ (blue), for $\delta=4$ (orange) and for $\delta=5$ (green). They are all decreasing with respect to $m$ and approach their limiting values $12/\pi$, $16/\pi $ and $20/\pi $ respectively. }\label{FigureMathsfC00mmFordelta345} 
\end{figure}
\end{remark}

 \subsubsection{WM model}

Secondly, we focus on the WM model and derive a computationally efficient formula the Fourier coefficient 
\begin{align*}  
	\mathfrak{ C}_{00 mm}      
	 &=\frac{1}{4^{ 2+\delta }}   \mathfrak{ N}_{0} ^2   \mathfrak{  N}_{m} ^2 \int_{-1}^{1}  
	\left( P_{m}^{\left(\delta,1 \right)}(y)\right)^2 (1-y)^{  2 \delta -1} (1+y)^{3}  dy  ,
\end{align*}
	for all integers $m \geq  0$. To do so, we will use the following identities:
	\begin{itemize}[leftmargin=*]
		\item The connection formula for Jacobi polynomials (relation 18.18.14 in \cite{MR2723248}),
		\begin{align}\label{ConnectionFormulaJacobi}
			P_m^{(\delta ,1)}(y)=\sum _{l=0}^m  L_m( l) P_l^{(2 \delta -1,1)}(y),
		\end{align}
		valid for all integers $m \geq 0$ and real $\delta\geq 1$, where the coefficient is given by
		\begin{align*}
			L_m( l)=\frac{(2 \delta +2 l+1) \Gamma (m+2) \Gamma (l+2 \delta +1) \Gamma (l+m+\delta +2) }{\Gamma (l+2) \Gamma (-l+m+1) \Gamma (m+\delta +2) \Gamma (l+m+2 \delta +2)}(-(\delta-1) )_{m-l}.
		\end{align*}
		Using the fact that $\delta$  satisfies Assumption \ref{AssumptionsondeltaExistence} together with relation 5.2.7 in \cite{MR2723248}, we infer
		\begin{align*}
			(-(\delta-1) )_{m-l}= \frac{(-1)^{m-l} \Gamma(\delta)}{\Gamma(\delta-m+l)}   \mathds{1}(0 \leq m-l \leq \delta-1),
		\end{align*}
		hence one can rewrite \eqref{ConnectionFormulaJacobi} as follows
		\begin{align*}
			P_m^{(\delta ,1)}(y)    
			=\sum _{k=0}^m  L_m( m-k) P_{m-k}^{(2 \delta -1,1)}(y) 
			= \sum _{k=0}^{\min (m,\delta -1)} \tilde{L}_m(k) P_{m-k}^{(2 \delta -1,1)}(y),
		\end{align*}
		valid for all integers $m \geq 0$ and integers $\delta\geq 1$, where the coefficient is given by
		\begin{align*}
			\tilde{L}_m(k) =\frac{(-1)^k \Gamma (\delta ) \Gamma (m+2) (2 \delta -2 k+2 m+1) \Gamma (-k+2 m+\delta +2) \Gamma (-k+m+2 \delta +1)}{\Gamma (k+1) \Gamma (\delta -k) \Gamma (-k+m+2) \Gamma (m+\delta +2) \Gamma (2 (m+\delta +1)-k)}.
		\end{align*}
		\item The recurrence relation for Jacobi polynomials
		\begin{align}
			y^2 P_n^{(2 \delta -1,1)}(y) &= U_n P_{n+2}^{(2 \delta -1,1)}(y)+V_n P_{n+1}^{(2 \delta -1,1)}(y)+W_n P_n^{(2 \delta -1,1)}(y)\nonumber  \\
			&+X_n P_{n-1}^{(2 \delta -1,1)}(y)+Y_n P_{n-2}^{(2 \delta -1,1)}(y),\label{RecurrenceRelation1}
		\end{align}
		valid for all integers $n \geq 0$ and real $y\in[-1,1]$, where the coefficients are given by
		\begin{align*}
			U_n &= P_n P_{n+1}, \quad V_n = P_n Q_{n+1}+Q_{n}P_{n}, \quad 
			W_n = P_n R_{n+1}+Q_{n}^2 +R_n P_{n-1},  \\
			X_{n} &= R_n(Q_n + Q_{n-1}), \quad Y_n=R_n R_{n-1}.
		\end{align*}
		This can be proved by applying twice the well-known recurrence relation \cite{MR0372517}
		\begin{align}\label{RecurrenceRelation2}
			y P_n^{(2 \delta -1,1)}(y) = P_n P_{n+1}^{(2 \delta -1,1)}(y)+Q_n P_n^{(2 \delta -1,1)}(y)+R_n P_{n-1}^{(2 \delta -1,1)}(y),
		\end{align}
		valid for all integers $n \geq 0$ and real $y\in[-1,1]$, where the coefficients are given by
		\begin{align*}
			P_n= \frac{(n+1) (2 \delta +n+1)}{(\delta +n+1) (2 \delta +2 n+1)}, \quad Q_n=\frac{(1-\delta ) \delta }{(\delta +n) (\delta +n+1)},\quad R_n=\frac{(n+1) (2 \delta +n-1)}{(\delta +n) (2 \delta +2 n+1)}.
		\end{align*}
		Here, the convention $P_{-n}^{(2\delta-1,1)}(y)=0$, for all real $y\in [-1,1]$ and integers $n \geq 0$, is used. 
		\item The orthogonality of the Jacobi polynomials \cite{MR0372517},
		\begin{align}\label{OrthogonolityOfJacobiPolynomials}
			\int_{-1}^{1}  P_n^{(2 \delta -1,1)}(y)P_m^{(2 \delta -1,1)}(y)  (1-y)^{2 \delta -1}(1+y)dy=	\xi_{n} \mathds{1}(n=m),
		\end{align}
		valid for all integers $n \geq 0$ and $m \geq 0$, where the coefficients are given by
		\begin{align*}
			\xi_{n}= \frac{2^{2 \delta +1} (n+1)}{(2 \delta +n) (2 \delta +2 n+1)}.
		\end{align*}
	\end{itemize}
The following result establishes a computationally efficient formula for $\mathfrak{ C}_{00mm}$.
 \begin{lemma}[Computationally efficient formula for $\mathfrak{ C}_{00mm}$]\label{LemmaComputationallyEfficientFormulaforC00mmWM}
	Fix $\delta  $ satisfying Assumption \ref{AssumptionsondeltaExistence} and let $m \geq 0$ be any integer. Then,
	\begin{align*}
		\mathfrak{ C}_{00mm} &=\sum_{k=0}^{\min \{ {\delta-1,m} \} } \mathfrak{ S}_{m}(k ), 
	\end{align*}
	where
	\begin{align*}
		\mathfrak{ S}_{m}(k ) &=\frac{2 (\delta +1) (\delta +2) \Gamma (\delta )^2}{\Gamma (k+1) \Gamma (k+3)} \frac{\Gamma (m+1) \Gamma (m+2)}{\Gamma (\delta -k) \Gamma (-k+\delta +2)}  \frac{(2 \delta -2 k+2 m+1) \mathfrak{ V}_{m}(k)}{\Gamma (-k+m+1) \Gamma (-k+m+2)}
		\cdot \\
		& \cdot \frac{\delta +2 m+2}{\Gamma (m+\delta +1) \Gamma (m+\delta +2)} \frac{\Gamma (-k+m+2 \delta ) \Gamma (-k+m+2 \delta +1)}{\Gamma (2 (m+\delta +1)-k)} \cdot \\
		& \cdot \frac{\Gamma (-k+2 m+\delta ) \Gamma (-k+2 m+\delta +2)}{\Gamma (2 (m+\delta +2)-k)}.
	\end{align*}
	Here, $\mathfrak{ V}_{m}(k)$ is a polynomial with respect to $k$ of degree $8$,
	\begin{align*}
		\mathfrak{ V}_{m}(k) &=k^8+\sum_{j=0}^{7} \mathfrak{ v}_{j}(m) k^j,
	\end{align*}
	where the coefficients are given by
	\begin{align*}
		\mathfrak{ v}_0 (m) &= 12 \delta ^4+24 \delta ^3+12 \delta ^2+\delta ^4 m^4-10 \delta ^3 m^4+11 \delta ^2 m^4-8 \delta ^4 m^3+80 \delta ^2 m^3-\delta ^4 m^2+70 \delta ^3 m^2 \\
		&+145 \delta ^2 m^2+22 \delta  m^4+72 \delta  m^3+74 \delta  m^2+20 \delta ^4 m+84 \delta ^3 m+88 \delta ^2 m+24 \delta  m, \\
		\mathfrak{ v}_1(m) &= -16 \delta ^4-72 \delta ^3-80 \delta ^2-24 \delta -8 \delta ^3 m^4+48 \delta ^2 m^4-8 \delta ^4 m^3+80 \delta ^3 m^3+92 \delta ^2 m^3+36 \delta ^4 m^2 \\
		&+122 \delta ^3 m^2-150 \delta ^2 m^2-8 \delta  m^4-156 \delta  m^3-316 \delta  m^2-32 m^4-96 m^3-88 m^2+28 \delta ^4 m \\
		&-70 \delta ^3 m -274 \delta ^2 m-184 \delta  m-24 m, \\
		\mathfrak{ v}_2(m) &= -40 \delta ^4-56 \delta ^3+46 \delta ^2+66 \delta +24 \delta ^2 m^4+52 \delta ^3 m^3-192 \delta ^2 m^3+24 \delta ^4 m^2 \\
		&-162 \delta ^3 m^2-528 \delta ^2 m^2-72 \delta  m^4-260 \delta  m^3-126 \delta  m^2-16 m^4+16 m^3+100 m^2-48 \delta ^4 m \\
		&-302 \delta ^3 m-210 \delta ^2 m+136 \delta  m+76 m+12  ,\\
		\mathfrak{ v}_3(m) &= 16 \delta ^4+128 \delta ^3+152 \delta ^2+14 \delta -120 \delta ^2 m^3-120 \delta ^3 m^2+156 \delta ^2 m^2-32 \delta  m^4+136 \delta  m^3 \\
		&+ 500 \delta  m^2+32 m^4+128 m^3+112 m^2-32 \delta ^4 m+80 \delta ^3 m+488 \delta ^2 m+312 \delta  m-4 m-16  ,\\
		\mathfrak{ v}_4(m) &= 16 \delta ^4+8 \delta ^3-106 \delta ^2-100 \delta +198 \delta ^2 m^2+112 \delta  m^3+6 \delta  m^2+16 m^4-16 m^3-124 m^2 \\
		&+112 \delta ^3 m+36 \delta ^2 m-244 \delta  m-96 m-11  ,\\
		\mathfrak{ v}_5(m) &= -32 \delta ^3-36 \delta ^2+30 \delta -120 \delta  m^2-32 m^3-24 m^2-120 \delta ^2 m-72 \delta  m+36 m+20  ,\\
		\mathfrak{ v}_6(m) &= 24 \delta ^2+22 \delta +24 m^2+52 \delta  m+20 m-2  ,\\
		\mathfrak{ v}_7(m) &= -8 \delta -8 m-4  .
	\end{align*}
\end{lemma}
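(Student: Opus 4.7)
The plan is to reduce the integral defining $\mathfrak{C}_{00mm}$ to a finite sum by first converting $P_m^{(\delta,1)}$ into the basis $\{P_n^{(2\delta-1,1)}\}$ that is orthogonal with respect to the weight $(1-y)^{2\delta-1}(1+y)$, and then handling the surplus factor $(1+y)^2$ through the three-term recurrences.

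First I would apply the connection formula \eqref{ConnectionFormulaJacobi}. Because $\delta$ is a positive integer under Assumption \ref{AssumptionsondeltaExistence}, the Pochhammer factor $(-(\delta-1))_{m-l}$ forces the expansion to truncate, yielding
\begin{align*}
	P_m^{(\delta,1)}(y) = \sum_{k=0}^{\min(m,\delta-1)} \tilde{L}_m(k)\, P_{m-k}^{(2\delta-1,1)}(y).
\end{align*}
Squaring and substituting into the definition of $\mathfrak{C}_{00mm}$ then produces the double sum
\begin{align*}
	\mathfrak{C}_{00mm} = \frac{\mathfrak{N}_0^2 \mathfrak{N}_m^2}{4^{\delta+2}} \sum_{k_1, k_2} \tilde{L}_m(k_1)\, \tilde{L}_m(k_2) \int_{-1}^{1} (1+y)^2\, P_{m-k_1}^{(2\delta-1,1)}(y)\, P_{m-k_2}^{(2\delta-1,1)}(y)\, (1-y)^{2\delta-1}(1+y)\, dy,
\end{align*}
which has absorbed one copy of $(1+y)$ into the orthogonality weight.

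Second, I would expand $(1+y)^2 = 1 + 2y + y^2$ and use \eqref{RecurrenceRelation1} and \eqref{RecurrenceRelation2} to rewrite $(1+y)^2 P_n^{(2\delta-1,1)}(y)$ as a linear combination of $P_{n+j}^{(2\delta-1,1)}(y)$ for $j \in \{-2,-1,0,1,2\}$, with explicit coefficients built from $U_n, V_n, W_n, X_n, Y_n, P_n, Q_n, R_n$. Applying the orthogonality relation \eqref{OrthogonolityOfJacobiPolynomials}, each term in the resulting triple sum vanishes unless $m-k_1 = (m-k_2)+j$, i.e.\ $k_1 - k_2 = j \in \{-2,-1,0,1,2\}$. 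The double sum therefore collapses onto five diagonals, each reducing to a single sum indexed by $k = \min(k_1,k_2)$.

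Finally, these five diagonal contributions must be combined into the single closed-form sum stated in the Lemma. After placing $\tilde{L}_m(k_1)\tilde{L}_m(k_2)$, the pentadiagonal coefficients, and $\xi_{m-\min(k_1,k_2)}$ over a common Gamma-function denominator, each diagonal produces a rational function in $k$ and $m$; summing the five of them and extracting a common denominator leaves a numerator polynomial in $k$. Matching this numerator with $\mathfrak{V}_m(k)$ of the stated degree-eight shape, and reconciling the remaining Gamma factors with the prefactor in $\mathfrak{S}_m(k)$, is a mechanical but extensive computer-algebra simplification, and this is where I expect the main obstacle to lie; verification would proceed in Mathematica, in line with the tools cited in the Acknowledgments.
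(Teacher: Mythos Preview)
Your proposal is correct and follows essentially the same route as the paper: apply the connection formula \eqref{ConnectionFormulaJacobi} (truncated via the vanishing Pochhammer symbol), expand $(1+y)^2=1+2y+y^2$, use the recurrences \eqref{RecurrenceRelation1}--\eqref{RecurrenceRelation2} together with the orthogonality \eqref{OrthogonolityOfJacobiPolynomials} to collapse the double sum onto the band $|k_1-k_2|\le 2$, and then simplify the resulting nine terms into the single summand $\mathfrak{S}_m(k)$. The only difference is bookkeeping: the paper fixes one index and lists the nine individual contributions from $\mathfrak{I},\mathfrak{J},\mathfrak{K}$, whereas you group them into five diagonals; the final algebraic simplification to extract $\mathfrak{V}_m(k)$ is left implicit in both.
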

\begin{proof}
Fix $\delta  $ satisfying Assumption \ref{AssumptionsondeltaExistence} and let $m \geq 0$ be any integer. Then, we use the connection formula	\eqref{ConnectionFormulaJacobi}, to obtain
	\begin{align*}  
	& \mathfrak{ C}_{00 mm}      
	 =\frac{1}{4^{ 2+\delta }}  \mathfrak{  N}_{0} ^2    \mathfrak{ N}_{m} ^2 \int_{-1}^{1}  
	\left( P_{m}^{\left(\delta,1 \right)}(y)\right)^2 (1-y)^{  2 \delta -1} (1+y)^{3}  dy \\
	&=\frac{1}{4^{ 2+\delta }}   \mathfrak{  N}_{0} ^2    \mathfrak{ N}_{m} ^2
	\sum_{k=0}^m L_m(k) \sum_{l=0}^m L_m(l)
	\int_{-1}^{1}  
	 P_{k}^{\left(2\delta-1,1 \right)}(y)
	 P_{l}^{\left(2\delta-1,1 \right)}(y)  (1-y)^{  2 \delta -1} (1+y)^{3}  dy \\
	 &=\frac{1}{4^{ 2+\delta }}  \mathfrak{  N}_{0} ^2    \mathfrak{ N}_{m} ^2 
	\sum_{k=0}^m L_m(k) \sum_{l=0}^m L_m(l)
	\int_{-1}^{1}  (y^2+2y+1)
	 P_{k}^{\left(2\delta-1,1 \right)}(y)
	 P_{l}^{\left(2\delta-1,1 \right)}(y)  (1-y)^{  2 \delta -1} (1+y)  dy \\
	 &=\frac{1}{4^{ 2+\delta }}  \mathfrak{  N}_{0} ^2    \mathfrak{ N}_{m} ^2 
	\sum_{k=0}^m L_m(k) \sum_{l=0}^m L_m(l)
	(\mathfrak{I}(k,l)+\mathfrak{J}(k,l)+\mathfrak{K}(k,l)) ,
\end{align*}
where we set
\begin{align*}
	\mathfrak{I}(k,l) &= \int_{-1}^{1}  y^2
	 P_{k}^{\left(2\delta-1,1 \right)}(y)
	 P_{l}^{\left(2\delta-1,1 \right)}(y)  (1-y)^{  2 \delta -1} (1+y)  dy   ,\\
	\mathfrak{J}(k,l) &= 2\int_{-1}^{1}  y
	 P_{k}^{\left(2\delta-1,1 \right)}(y)
	 P_{l}^{\left(2\delta-1,1 \right)}(y)  (1-y)^{  2 \delta -1} (1+y)  dy   ,\\
	\mathfrak{K}(k,l) &=  \int_{-1}^{1} 
	 P_{k}^{\left(2\delta-1,1 \right)}(y)
	 P_{l}^{\left(2\delta-1,1 \right)}(y)  (1-y)^{  2 \delta -1} (1+y)  dy  .
\end{align*}
The orthogonality of the Jacobi polynomials \eqref{OrthogonolityOfJacobiPolynomials} immediately yields
\begin{align*}
	\mathfrak{K}(k,l) &=\xi_{k} \mathds{1}(k=l).
\end{align*}
Now, we use the recurrence relation \eqref{RecurrenceRelation1} to obtain
\begin{align*}
	\mathfrak{J}(k,l) &= 2\int_{-1}^{1}  y
	 P_{k}^{\left(2\delta-1,1 \right)}(y)
	 P_{l}^{\left(2\delta-1,1 \right)}(y)  (1-y)^{  2 \delta -1} (1+y)  dy \\
	 &= 2P_k \int_{-1}^{1}  
	  P_{k+1}^{(2 \delta -1,1)}(y)
	 P_{l}^{\left(2\delta-1,1 \right)}(y)  (1-y)^{  2 \delta -1} (1+y)  dy \\
	 &+ 2Q_k \int_{-1}^{1}  
	  P_k^{(2 \delta -1,1)}(y)
	 P_{l}^{\left(2\delta-1,1 \right)}(y)  (1-y)^{  2 \delta -1} (1+y)  dy  \\
	 &+ 2R_k\int_{-1}^{1}  
	  P_{k-1}^{(2 \delta -1,1)}(y)
	 P_{l}^{\left(2\delta-1,1 \right)}(y)  (1-y)^{  2 \delta -1} (1+y)  dy \\
	 &= 2P_k \xi_{ k+1 }\mathds{1}( k+1=l )  
	 + 2Q_k \xi_{ k }\mathds{1}( k=l )  
	 + 2R_k \xi_{ k-1 }\mathds{1}( k-1=l ) .
\end{align*}
Similarly, we use the recurrence relation \eqref{RecurrenceRelation2} to obtain
\begin{align*}
	\mathfrak{I}(k,l) &= \int_{-1}^{1}  y^2
	 P_{k}^{\left(2\delta-1,1 \right)}(y)
	 P_{l}^{\left(2\delta-1,1 \right)}(y)  (1-y)^{  2 \delta -1} (1+y)  dy \\
	 &=U_k  \int_{-1}^{1}  P_{k+2}^{(2 \delta -1,1)}(y)
	 P_{l}^{\left(2\delta-1,1 \right)}(y)  (1-y)^{  2 \delta -1} (1+y)  dy \\
	 &+V_k \int_{-1}^{1}  P_{k+1}^{(2 \delta -1,1)}(y)
	 P_{l}^{\left(2\delta-1,1 \right)}(y)  (1-y)^{  2 \delta -1} (1+y)  dy \\
	 &+W_k \int_{-1}^{1}  P_k^{(2 \delta -1,1)}(y)
	 P_{l}^{\left(2\delta-1,1 \right)}(y)  (1-y)^{  2 \delta -1} (1+y)  dy \\
	 &+X_k \int_{-1}^{1}  P_{k-1}^{(2 \delta -1,1)}(y)
	 P_{l}^{\left(2\delta-1,1 \right)}(y)  (1-y)^{  2 \delta -1} (1+y)  dy\\
	 &+Y_k \int_{-1}^{1}   P_{k-2}^{(2 \delta -1,1)}(y)
	 P_{l}^{\left(2\delta-1,1 \right)}(y)  (1-y)^{  2 \delta -1} (1+y)  dy  \\
	 &=U_k \xi_{k+2} \mathds{1}( k+2=l ) 
	 +V_k \xi_{k+1} \mathds{1}( k+1=l )  
	 +W_k \xi_{k} \mathds{1}( k=l )   \\
	 &+X_k \xi_{k-1} \mathds{1}( k-1=l )  
	 +Y_k \xi_{k-2} \mathds{1}( k-2=l ) .  
\end{align*}
Finally, putting all together yields
\begin{align*}
	\mathfrak{C}_{00 mm}      
	 &=\frac{1}{4^{ 2+\delta }}  \mathfrak{ N}_{0} ^2    \mathfrak{N}_{m} ^2 
	\sum_{k=0}^m L_m(k) \sum_{l=0}^m L_m(l)
	(\mathfrak{I}(k,l)+\mathfrak{J}(k,l)+\mathfrak{K}(k,l)) \\
	&=\frac{1}{4^{ 2+\delta }}  \mathfrak{ N}_{0} ^2    \mathfrak{N}_{m} ^2 
	\sum_{k=0}^m L_m(k) \Bigg[  
	L_m(k+2) U_k \xi_{k+2}  
	 +L_m(k+1)V_k \xi_{k+1}    
	 +L_m(k)W_k \xi_{k}  \\  
	 &+L_m(k-1)X_k \xi_{k-1}    
	 +L_m(k-2)Y_k \xi_{k-2}
	+  2L_m(k+1)P_k \xi_{ k+1 }  \\
	 & + 2L_m(k)Q_k \xi_{ k }  
	 + 2L_m(k-1)R_k \xi_{ k-1 } 
	+L_m(k)\xi_{k}  
	\Bigg],
\end{align*}
that completes the proof.
\end{proof} 
Next, Lemma \ref{LemmaComputationallyEfficientFormulaforC00mmWM} allows us to prove  a linear recurrence relation for the Fourier coefficients.
\begin{lemma}[Recurrence relation for $\mathfrak{C}_{00mm}$]\label{LemmaRecurrenceRelationWM}
	Fix $\delta  $ satisfying Assumption \ref{AssumptionsondeltaExistence}. Then, the sequences
	\begin{align*}
		\left\{ \mathfrak{f}_m = \frac{\mathfrak{C}_{00mm}}{\ssomega_{m}^2} : m \geq 1 \right \}
	\end{align*}
	satisfy the 2-step recurrence relation
	\begin{align*}
		\mathfrak{P}_m  \mathfrak{f}_m - \mathfrak{Q}_m  \mathfrak{f}_{m+1} +\mathfrak{R}_m  \mathfrak{f}_{m+2} =0,
	\end{align*}
	where
	\begin{align*}
		\mathfrak{P}_m &= (m+1) (m+2) (2 m+1) (\delta +m+2) (\delta +2 m+2) (\delta +2 m+5) \big(3 \delta +2 m^2 \\
		&+2 \delta  m+10 m+11\big)  , \\
		\mathfrak{Q}_m &= (\delta +2 m+4)^2 \big(3 \delta ^4+29 \delta ^3+117 \delta ^2+255 \delta +28 \delta ^2 m^4+16 \delta ^3 m^3+188 \delta ^2 m^3+4 \delta ^4 m^2 \\
		&+70 \delta ^3 m^2 +439 \delta ^2 m^2+24 \delta  m^5+222 \delta  m^4+780 \delta  m^3+1286 \delta  m^2+8 m^6+96 m^5+462 m^4 \\
		&+1136 m^3+1493 m^2 +8 \delta ^4 m+89 \delta ^3 m+410 \delta ^2 m+973 \delta  m+980 m+244\big)  , \\
		\mathfrak{R}_m &=  (m+2) (\delta +m+2) (\delta +m+3) (\delta +2 m+3) (\delta +2 m+6) (2 \delta +2 m+7)\cdot \\
		& \cdot  \left(\delta +2 m^2+2 \delta  m+6 m+3\right) .
	\end{align*}
\end{lemma}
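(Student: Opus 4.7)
The strategy is to mirror the proof of Lemma \ref{LemmaRecurrenceRelationKG} but applied to the WM summand. First, I would use Lemma \ref{LemmaComputationallyEfficientFormulaforC00mmWM} to write
\begin{align*}
    \mathfrak{f}_m = \frac{\mathfrak{C}_{00mm}}{\ssomega_m^2} = \sum_{k\ge 0} \mathfrak{F}(m,k),\qquad \mathfrak{F}(m,k):=\frac{\mathfrak{S}_m(k)}{\ssomega_m^2},
\end{align*}
noting that the factors $1/\Gamma(\delta-k)$ and $1/\Gamma(-k+m+1)$ in $\mathfrak{S}_m(k)$ force the summand to vanish whenever $k>\min(\delta-1,m)$, so the sum is effectively finite and the upper limit may be taken arbitrarily large without changing the value. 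Since the $\Gamma$--function ratios make $\mathfrak{F}(m,k)$ a proper hypergeometric term in the pair $(m,k)$, the theoretical backbone (Theorem~6.2.1 in \cite{MR1379802}) guarantees that a two-step recurrence of the form
\begin{align*}
    \mathfrak{P}_m\,\mathfrak{F}(m,k)-\mathfrak{Q}_m\,\mathfrak{F}(m+1,k)+\mathfrak{R}_m\,\mathfrak{F}(m+2,k)=\mathfrak{G}(m,k+1)-\mathfrak{G}(m,k)
\end{align*}
must exist, where $\mathfrak{P}_m,\mathfrak{Q}_m,\mathfrak{R}_m$ are polynomials in $m$ independent of $k$ and $\mathfrak{G}(m,k)=\mathfrak{R}(m,k)\mathfrak{F}(m,k)$ with $\mathfrak{R}(m,k)$ a rational function of $(m,k)$.

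Next, I would run Zeilberger's algorithm (exactly as illustrated in Appendix \ref{AppendixZeilbergerAlgorithm} for the KG case) on the summand $\mathfrak{F}(m,k)$: compute the shift ratios $\mathfrak{F}(m+j,k)/\mathfrak{F}(m,k)$ for $j=0,1,2$ as rational functions of $(m,k)$, set up an ansatz for the rational certificate $\mathfrak{R}(m,k)$ with polynomial numerator and denominator whose degrees are bounded in terms of the orders of the Gamma factors, and solve the resulting linear system in the undetermined coefficients. This outputs the polynomials $\mathfrak{P}_m,\mathfrak{Q}_m,\mathfrak{R}_m$ displayed in the statement together with a closed-form certificate $\mathfrak{G}(m,k)$. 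Once these are in hand, the identity displayed above becomes a purely algebraic statement between rational functions of $(m,k,\delta)$ which can be verified by clearing denominators and comparing polynomial coefficients.

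Having established the identity at the summand level, I would sum both sides over $k$ from $0$ to some large $K\ge \min(\delta-1,m+2)$. Since the $\mathfrak{P}_m,\mathfrak{Q}_m,\mathfrak{R}_m$ do not depend on $k$, the left-hand side collapses to $\mathfrak{P}_m\mathfrak{f}_m-\mathfrak{Q}_m\mathfrak{f}_{m+1}+\mathfrak{R}_m\mathfrak{f}_{m+2}$, while the right-hand side telescopes to $\mathfrak{G}(m,K+1)-\mathfrak{G}(m,0)$. The final step is to check that this boundary contribution vanishes; by the explicit form of the certificate one finds $\mathfrak{G}(m,0)=0$ and $\mathfrak{G}(m,K+1)=0$ for $K+1$ beyond the support of $\mathfrak{F}(m,\cdot),\mathfrak{F}(m+1,\cdot),\mathfrak{F}(m+2,\cdot)$, as the same $\Gamma$-denominators responsible for the truncation of $\mathfrak{F}$ carry over into $\mathfrak{G}$.

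The main obstacle is the sheer size of the algebraic system produced by Zeilberger's algorithm: the polynomial $\mathfrak{V}_m(k)$ in the summand is already of degree $8$ in $k$ and the shift ratios in $m$ inject several further $\Gamma$-factors, so the ansatz for $\mathfrak{R}(m,k)$ is substantially larger than in the KG case. The order of the recurrence ($J=2$) is not obvious a priori and would need to be justified by showing that no first-order recurrence $\tilde{\mathfrak{P}}_m\mathfrak{F}(m,k)-\tilde{\mathfrak{Q}}_m\mathfrak{F}(m+1,k)=\tilde{\mathfrak{G}}(m,k+1)-\tilde{\mathfrak{G}}(m,k)$ exists (which Zeilberger's algorithm returns automatically when it fails at order one). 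Once the coefficients and the certificate $\mathfrak{G}$ are produced, however, the verification is a mechanical polynomial identity and the rest of the argument is the same telescoping step as in Lemma \ref{LemmaRecurrenceRelationKG}.
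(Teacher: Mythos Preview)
Your proposal is correct and follows essentially the same approach as the paper, whose proof of this lemma consists of the single sentence ``The proof is similar to the one of Lemma \ref{LemmaRecurrenceRelationKG}.'' Your write-up spells out in more detail the same mechanism (apply Zeilberger's algorithm to the summand of Lemma \ref{LemmaComputationallyEfficientFormulaforC00mmWM}, obtain the certificate $\mathfrak{G}$, verify the telescoped boundary terms vanish), which is exactly what the paper intends.
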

\begin{proof}
	The proof is similar to the one of Lemma \ref{LemmaRecurrenceRelationKG}. 
\end{proof}

Now, with the recurrence relation of Lemma \ref{LemmaRecurrenceRelationWM} at hand,  the monotonicity of the Fourier coefficients follows easily.

\begin{lemma}[Monotonicity for $\mathfrak{C}_{00mm}$]\label{LemmaMonotonicityC00mmWM}
	Fix $\delta  $ satisfying Assumption \ref{AssumptionsondeltaExistence}. Then, the sequences
	\begin{align*}
		\left\{ \mathfrak{f}_m = \frac{\mathfrak{C}_{00mm}}{\ssomega_{m}^2} : m \geq 1 \right \}
	\end{align*}
	are strictly decreasing with respect to $m$.
\end{lemma}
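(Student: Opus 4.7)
The plan is to mimic closely the proof of Lemma \ref{LemmaMonotonicityC00mmKG}, exploiting the fact that Lemma \ref{LemmaRecurrenceRelationWM} provides for $\mathfrak{f}_m$ an inhomogeneous-free 2-step linear recurrence of exactly the same shape as in the KG case. Concretely, I would introduce the ratio
\begin{align*}
	\mathfrak{x}_m = \frac{\mathfrak{f}_{m+1}}{\mathfrak{f}_m}, \qquad m \geq 1,
\end{align*}
and, dividing the recurrence $\mathfrak{P}_m \mathfrak{f}_m - \mathfrak{Q}_m \mathfrak{f}_{m+1} + \mathfrak{R}_m \mathfrak{f}_{m+2} = 0$ by $\mathfrak{R}_m \mathfrak{f}_{m+1}$, obtain
\begin{align*}
	\mathfrak{x}_{m+1} = \mathfrak{A}_m - \frac{\mathfrak{B}_m}{\mathfrak{x}_m}, \qquad \mathfrak{A}_m = \frac{\mathfrak{Q}_m}{\mathfrak{R}_m}, \qquad \mathfrak{B}_m = \frac{\mathfrak{P}_m}{\mathfrak{R}_m}.
\end{align*}
Strict monotonicity of $\{\mathfrak{f}_m\}$ is equivalent to $\mathfrak{x}_m < 1$ for all $m \geq 1$ (positivity of $\mathfrak{f}_m$ being immediate from the definition of $\mathfrak{C}_{00mm}$ as the $L^2$-norm with positive weight of $\mathfrak{e}_0 \mathfrak{e}_m$, up to a positive factor).

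The bulk of the argument then consists of three elementary facts about the polynomial coefficients $\mathfrak{P}_m, \mathfrak{Q}_m, \mathfrak{R}_m$ listed in Lemma \ref{LemmaRecurrenceRelationWM}, all for integer $m \geq 1$ and integer $\delta \geq 1$:
\begin{align*}
	\mathfrak{R}_m > 0, \qquad \mathfrak{B}_m > 0, \qquad \mathfrak{A}_m - \mathfrak{B}_m < 1.
\end{align*}
The first two are manifest since every factor of $\mathfrak{P}_m$ and $\mathfrak{R}_m$ is visibly positive for $m \geq 1$, $\delta \geq 1$ (products of linear factors with positive coefficients and positive constant terms). The third requires showing that the polynomial $\mathfrak{R}_m - \mathfrak{Q}_m + \mathfrak{P}_m$ (of degree $10$ in $m$ with coefficients that are polynomials in $\delta$) is strictly positive; I would record this as an auxiliary lemma in the Appendix, analogous to Lemma \ref{Lemma1AuxiliarycomputationsKGAppendix}, and verify it by collecting terms and exhibiting a manifestly positive decomposition, e.g.\ writing the polynomial as a sum of monomials in $m-1$ and $\delta-1$ with non-negative coefficients, which can be checked by Mathematica and then reproduced by hand.

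Having these three inequalities in place, the induction is automatic: given $\mathfrak{x}_m < 1$, positivity of $\mathfrak{B}_m$ and the bound $\mathfrak{A}_m - \mathfrak{B}_m < 1$ yield
\begin{align*}
	\mathfrak{x}_{m+1} = \mathfrak{A}_m - \frac{\mathfrak{B}_m}{\mathfrak{x}_m} < \mathfrak{A}_m - \mathfrak{B}_m < 1.
\end{align*}
The base case $\mathfrak{x}_1 < 1$ reduces to the numerical inequality $\ssomega_1^2 \mathfrak{C}_{0022} < \ssomega_2^2 \mathfrak{C}_{0011}$, which I would verify by computing $\mathfrak{C}_{0011}$ and $\mathfrak{C}_{0022}$ in closed form via Lemma \ref{LemmaComputationallyEfficientFormulaforC00mmWM} and exhibiting the resulting rational function of $\delta$ as a quotient of an explicit positive polynomial and an explicit positive polynomial, both checkable for integer $\delta \geq 1$. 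The main obstacle is thus purely computational: the polynomial identity $\mathfrak{R}_m - \mathfrak{Q}_m + \mathfrak{P}_m > 0$ is the only non-trivial step, and its verification amounts to a (long but mechanical) Sturm-type inspection of a bivariate polynomial with integer coefficients.
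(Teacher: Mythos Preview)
Your proposal is correct and matches the paper's own proof essentially line by line: the paper also sets $\mathfrak{x}_m=\mathfrak{f}_{m+1}/\mathfrak{f}_m$, derives the same one-step recursion $\mathfrak{x}_{m+1}=\mathfrak{A}_m-\mathfrak{B}_m/\mathfrak{x}_m$, defers the verifications $\mathfrak{R}_m>0$, $\mathfrak{B}_m>0$, $\mathfrak{A}_m-\mathfrak{B}_m<1$ and $\mathfrak{x}_1<1$ to auxiliary appendix lemmata (the WM analogues of Lemma~\ref{Lemma1AuxiliarycomputationsKGAppendix}), and then runs the identical induction. One small inaccuracy: the polynomial $\mathfrak{P}_m-\mathfrak{Q}_m+\mathfrak{R}_m$ has degree $8$ in $m$, not $10$ (the leading $m^8$-coefficients of $\mathfrak{P}_m,\mathfrak{Q}_m,\mathfrak{R}_m$ are $16,32,16$, so the top degree cancels and the relevant polynomial is of degree $\le 7$), but this does not affect the argument.
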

\begin{proof}
Fix $\delta  $ satisfying Assumption \ref{AssumptionsondeltaExistence} and pick an integer $m \geq 1$. Then, we set
	\begin{align*}
		\mathfrak{x}_m = \frac{\mathfrak{f}_{m+1}}{\mathfrak{f}_m},
	\end{align*}
	and Lemma \ref{LemmaRecurrenceRelationWM} yields the recurrence relation
	\begin{align*}
		\mathfrak{x}_{m+1} =\mathfrak{A}_m - \frac{\mathfrak{B}_m}{\mathfrak{x}_m},
	\end{align*}
	where
	\begin{align*}
		\mathfrak{A}_m = \frac{\mathfrak{Q}_m}{\mathfrak{R}_m}, \quad \mathfrak{B}_m = \frac{\mathfrak{P}_m}{\mathfrak{R}_m}.
	\end{align*}
In the Appendix (Lemmata \ref{Lemma1AuxiliarycomputationsWMAppendix} and \ref{Lemma2AuxiliarycomputationsWMAppendix}), we show that
\begin{align*}
	\mathfrak{R}_m >0, \quad \mathfrak{B}_m >0, \quad \mathfrak{A}_m- \mathfrak{B}_m <1, \quad \mathfrak{x}_1<1.
\end{align*}
for all integers $m \geq 1$ and $\delta \geq 1$. The rest of the proof is similar to the one of Lemma \ref{LemmaMonotonicityC00mmKG}.
\end{proof}

Finally, we prove Proposition \ref{PropositionUniformEstimates} for the WM. 

\begin{proof}[Proof of Proposition \ref{PropositionUniformEstimates} for the WM]
	Fix $\delta   $ satisfying the Assumption \ref{AssumptionsondeltaExistence} and pick an integer $m \geq 1$. Then, Lemma \ref{LemmaMonotonicityC00mmKG} yields that	the sequences
	\begin{align*}
		\left \{\frac{ \mathfrak{C}_{00 00}}{\ssomega_{0}^2}-2\frac{ \mathfrak{C}_{00 mm}}{\ssomega_{m}^2} :m \geq 1 \right\}
	\end{align*}
	are all strictly increasing and hence
	\begin{align*} 
		 \min_{m \geq 1} \left(	\frac{ \mathfrak{C}_{00 00}}{\ssomega_{0}^2}-2\frac{ \mathfrak{C}_{00 mm}}{\ssomega_{m}^2} \right)= 	\frac{ \mathfrak{C}_{00 00}}{\ssomega_{0}^2}-2\frac{ \mathfrak{C}_{00 11}}{\ssomega_{1}^2} .
	\end{align*}
	Using Lemma \ref{LemmaComputationallyEfficientFormulaforC00mmKG} to compute $\mathfrak{C}_{0000}$ and $\mathfrak{C}_{0011}$, we compute
	\begin{align*}  
	\frac{ \mathfrak{C}_{00 00}}{\ssomega_{0}^2}-2\frac{ \mathfrak{C}_{00 11}}{\ssomega_{1}^2}  = 	\frac{9 \delta +21}{8 \delta ^4+68 \delta ^3+190 \delta ^2+199 \delta +60}.
		\end{align*} 
	and observe that
	\begin{align*}
		& \left \{\frac{ \mathfrak{C}_{00 00}}{\ssomega_{0}^2}-2\frac{ \mathfrak{C}_{00 mm}}{\ssomega_{m}^2} :m \geq 1 \right\} \subset (0,\infty	)  \Longleftrightarrow
		 \min_{m \geq 1} \left(	\frac{ \mathfrak{C}_{00 00}}{\ssomega_{0}^2}-2\frac{ \mathfrak{C}_{00 mm}}{\ssomega_{m}^2} \right) >0  \\
& \Longleftrightarrow \frac{ \mathfrak{C}_{00 00}}{\ssomega_{0}^2}-2\frac{ \mathfrak{C}_{00 11}}{\ssomega_{1}^2}  >0 
 \Longleftrightarrow   1\leq  \delta < \infty.
	\end{align*}
	 Finally, in this case, we set
	\begin{align*}
		\mathfrak{c}_{\perp}(\delta)= 	\frac{ \mathfrak{C}_{00 00}}{\ssomega_{0}^2}-2\frac{ \mathfrak{C}_{00 11}}{\ssomega_{1}^2}   >0,
	\end{align*}
	that completes the proof.
\end{proof}

Finally, we note the following remarks.

\begin{remark}[Closed formulas for $\mathfrak{C}_{00mm}$]\label{RemarkClosedFormulasMathfrakC00mmModelWM}
	One can solve the recurrence relation of Lemma \ref{LemmaRecurrenceRelationWM}  to find closed formulas for $\mathfrak{C}_{00mm}$ provided that $\delta$ is fixed. For example, for $\delta \in \{1,2,3,4\}$, we find
	\begin{align*}
		\mathfrak{C}_{00mm} &= \frac{18 \left(m^2+3 m+1\right)}{4 m^2+12 m+5} , \\
		\mathfrak{C}_{00mm} &= \frac{12 (m (m+4) (2 m (m+4)+17)+18)}{(2 m+1) (2 m+3) (2 m+5) (2 m+7)} , \\
		\mathfrak{C}_{00mm} &= \frac{5 (m (m+5) (m (m+5) (3 m (m+5)+50)+312)+360)}{(m+2) (m+3) (2 m+1) (2 m+3) (2 m+7) (2 m+9)}, \\
		\mathfrak{C}_{00mm} &= \frac{45 (m (m+6) (m (m+6) (m (m+6) (2 m (m+6)+61)+703)+4004)+5040)}{2 (m+2) (m+4) (2 m+1) (2 m+3) (2 m+5) (2 m+7) (2 m+9) (2 m+11)}, 
	\end{align*}
	respectively, for all integers $m\geq \delta -1$.  
\end{remark}

\begin{remark}[Uniform asymptotic expansion for $\mathfrak{ C}_{00mm}$ for large $m$]
	We note that, due to the computationally efficient formula (Lemma \ref{LemmaComputationallyEfficientFormulaforC00mmWM}), one can easily derive the limit of $\mathfrak{ C}_{00mm} $ as $m \rightarrow \infty$ uniformly with respect to any fixed $\delta$. Indeed, for $m \geq  \delta-1$, we have that
	\begin{align*}
		\mathfrak{ C}_{00mm} &=\sum_{k=0}^{ \delta-1  } \mathfrak{ S}_{m}(k ).
	\end{align*}
	Using Stirling's formula, we expand in series the Gamma functions in $ \mathfrak{ S}_{m}(k )$ and obtain
	\begin{align*}
		 \mathfrak{ S}_{m}(k )=\frac{(\delta +1) (\delta +2) \Gamma (\delta )^2}{2^{2 \delta +1}} \frac{T(k)}{\Gamma (k+1) \Gamma (k+3) \Gamma (\delta -k) \Gamma (-k+\delta +2)} + \mathcal{O} \left(\frac{1}{m} \right),
	\end{align*}
	as $m \rightarrow \infty$. Here, $T(k)$ is a polynomial with respect to $k$ of degree 4, 
	\begin{align*}
		T(k)= 16k^4 + \sum_{j=0}^3 t_j k^j,
	\end{align*}
	where its coefficients are given by
	\begin{align*}
		t_0 &= \delta ^4-10 \delta ^3+11 \delta ^2+22 \delta,  \quad 
		t_1 = -8 \delta ^3+48 \delta ^2-8 \delta -32 , \\
		t_2 &= 24 \delta ^2-72 \delta -16 , \quad 
		t_3 = 32-32 \delta .
	\end{align*}
	Since $\delta \geq 1$ is fixed, we infer
	\begin{align*}
		\mathfrak{ C}_{00mm}  
		& = \mathfrak{ C}_{\infty}(\delta)+ \mathcal{O} \left(\frac{1}{m} \right),
	\end{align*}
	as $m \rightarrow \infty$, where the leading order term is given by
	\begin{align*}
		\mathfrak{ C}_{\infty}(\delta)= \frac{(\delta +1) (\delta +2) \Gamma (\delta )^2}{2^{2 \delta +1}}\sum_{k=0}^{ \delta-1  } \frac{T(k)}{\Gamma (k+1) \Gamma (k+3) \Gamma (\delta -k) \Gamma (-k+\delta +2)}.
	\end{align*}
	Now, we claim that this can be found in closed formula uniformly with respect to a fixed $\delta$. Indeed, Zeilberger's algorithm yields that it satisfies the 1-step recurrence relation 
	\begin{align*}
	\begin{dcases}
	\mathfrak{ C}_{\infty}(1)=\frac{9}{2}	, \\
	\mathfrak{ C}_{\infty}(\delta+1)= \frac{(\delta +3) (2 \delta -1)}{2 (\delta +1) (\delta +2)} \mathfrak{ C}_{\infty}(\delta), \quad \delta =1,2,\dots.
	\end{dcases}
	\end{align*}
	This follows by a recurrence relation for the summand in $\mathfrak{ C}_{\infty}(\delta)$ that can be rigorously proved as explained in Section \ref{SectionGeneralStrategy}. Finally, one can easily solve the recurrence relation above to deduce that
	\begin{align}\label{LimitValueMathfrakC00mm}
		\mathfrak{ C}_{\infty}(\delta)= \frac{9}{2} \prod_{j=1}^{\delta-1} \frac{(j+3) (2 j -1)}{2 (j+1) (j +2)} =\frac{3 (\delta +2) \Gamma \left(\delta -\frac{1}{2}\right)}{2 \sqrt{\pi } \Gamma (\delta +1)}.
	\end{align}
Figures \ref{Figure1MathfrakC00mm} and \ref{Figure2MathfrakC00mm} illustrate the Fourier coefficients $\mathfrak{ C}_{00mm} $ and their limiting values $\mathfrak{ C}_{\infty}(\delta)$ given by \eqref{LimitValueMathfrakC00mm}.
\begin{figure}[h]
\centering
\includegraphics[width=.55\textwidth]{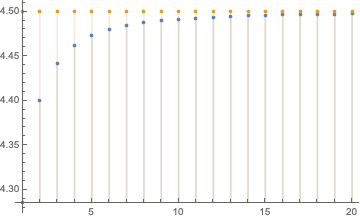}
\caption{The Fourier coefficient $\mathfrak{C}_{00mm}$ for $\delta=1$. It is strictly increasing with respect to $m$ (Lemma \ref{LemmaComputationallyEfficientFormulaforC00mmWM}) and approaches the limit value $\mathfrak{C}_{\infty}(1)=9/2$ given by \eqref{LimitValueMathfrakC00mm}. It is increasing only for $\delta=1$ (compare with Figure \ref{Figure2MathfrakC00mm}). This is not in contrast with Lemma \ref{LemmaMonotonicityC00mmWM} since the rescaled Fourier coefficient $\mathfrak{C}_{00mm}/\ssomega_{m}^2$ for $\delta=1$ is strictly decreasing with respect to $m$.  }\label{Figure1MathfrakC00mm}
\bigbreak
\includegraphics[width=.55\textwidth]{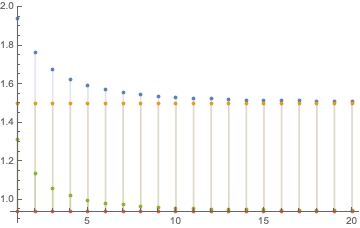}
\caption{The Fourier coefficients $\mathfrak{C}_{00mm}$ for $\delta=2$ (blue) and $\delta=3$ (green). They are both strictly decreasing with respect to $m$ (Lemma \ref{LemmaComputationallyEfficientFormulaforC00mmWM}) and approach the limit values $\mathfrak{C}_{\infty}(2)=3/2$ (orange) and $\mathfrak{C}_{\infty}(3)=15/16$ (red) respectively given by \eqref{LimitValueMathfrakC00mm}. Both $\mathfrak{C}_{00mm}$ and $\mathfrak{C}_{00mm}/\ssomega_{m}^2$ are strictly decreasing for all $\delta \geq 2$. (Lemma \ref{LemmaComputationallyEfficientFormulaforC00mmWM}).}\label{Figure2MathfrakC00mm}
\end{figure}
\end{remark}

 \section{Existence of time-periodic solutions to the non-linear wave equation} \label{SectionExistenceofTimePeriodicSolutions}

In this section, we establish the existence of time-periodic solutions to the non-linear wave equation  
\begin{align} \label{NonLinearWaveequationSectionExistenceofTimePeriodicSolutions}
	\partial_{t}^2 \phi +L \phi = -\epsilon^2 W \phi^3 + \epsilon^4 E(\phi),
\end{align}
for a scalar field $\phi: \mathbb{R} \times (0,\pi/2) \rightarrow \mathbb{R}$, where, using the notation from Table \ref{NotationTable}, $L$, $W$ and $E$ are given by \eqref{MainPDEIntroModelKGA}-\eqref{MainPDEIntroModelKGB} and \eqref{MainPDEIntroModelWMA}-\eqref{MainPDEIntroModelWMB} with $E=0$ in the case of the KG model. To do so, we will apply Theorem \ref{TheoremBambusiPaleari} to an appropriate rescale of the first 1-mode and verify the conditions \eqref{Assumption1BambusiPaleari} and \eqref{Assumption2BambusiPaleari} of Theorem \ref{TheoremBambusiPaleari}. For $\zeta=\phi(0) \in H^s ((0,\pi/2); d\mu)$,  we denote by 
\begin{align*}
	\Phi ^{t} (\zeta) = \Phi ^{t} \left(\sum_{m=0}^{\infty} \zeta^{(m)} e_{m} \right) = \sum_{m=0}^{\infty} \zeta^{(m)}\cos(\omega_{m}t)e_{m} 
\end{align*}
the solution to the initial value problem consisting of the linearized equation in \eqref{NonLinearWaveequationSectionExistenceofTimePeriodicSolutions} coupled to $\zeta$ as initial data and zero initial velocity,  
\begin{align*} 
\begin{cases}
	\partial_{t}^2 \phi_{\text{linear}} (t,\cdot) +L \phi_{\text{linear}}(t,\cdot) =0, \quad t \in \mathbb{R}, \\
 \phi_{\text{linear}}(0,\cdot)=\zeta,  \quad \partial_{t} \phi_{\text{linear}} (0,\cdot)= 0.
\end{cases}
\end{align*}
Furthermore, we define the operator $\mathcal{M}$ from Theorem \ref{TheoremBambusiPaleari}, 
\begin{align*} 
	 \mathcal{M}  \left(\zeta  \right) : =   L \zeta   + \langle f^{(3)}  \rangle(\zeta ), \quad \langle f^{(3)}  \rangle(\zeta )=  \frac{1}{2\pi } \int_{0}^{2\pi } \Phi ^{-t} \left[f^{(3)} \left(\Phi ^{t} (\zeta  ) \right) 
	\right]  dt ,
\end{align*} 	
where $f^{(3)}$ stands for the leading order non-linearity in \eqref{NonLinearWaveequationSectionExistenceofTimePeriodicSolutions}. Recall that this is given in the Fourier space by \eqref{DefinitionfToThe3},
\begin{align*}
	f^{(3)}(  \phi ) =\sum_{m=0}^{\infty} \left( f^{(3)}(  \phi ) \right)^{(m)} e_m, \quad \left( f^{(3)}(  \phi ) \right)^{(m)}= - \sum_{i,j,k=0}^{\infty}C_{ijkm}   \phi^{(i)}(t) \phi^{(j)}(t) \phi^{(k)}(t).
\end{align*} 
In addition, we define the rescaled first 1-mode by
\begin{align}\label{Definition1modeinitialdata}
	\xi=\sum_{m=0}^{\infty}\xi^{(m)} e_m, \quad  \xi^{(m)} =  \kappa_{0}      \mathds{1}(m=0), \quad 
	  \kappa_{0}      = \pm 2\omega_{0}   \sqrt{\frac{2}{3C_{00 0 0}  }} .
\end{align}
To begin with, we establish the condition \eqref{Assumption1BambusiPaleari} in Theorem \ref{TheoremBambusiPaleari}.

\begin{prop}[Condition \eqref{Assumption1BambusiPaleari} in Theorem \ref{TheoremBambusiPaleari}]\label{PropositionAssumption1inBambusiPaleari}
Fix $\delta$ satisfying Assumption \ref{AssumptionsondeltaExistence} and let $\xi $ be the rescaled initial data defined by \eqref{Definition1modeinitialdata}. Then, 
\begin{align*} 
	  \mathcal{M}(\xi)=0.
\end{align*} 
\end{prop}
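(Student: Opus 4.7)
The plan is to expand $\mathcal{M}(\xi)$ in the orthonormal eigenbasis $\{e_m : m \geq 0\}$ and show that each Fourier coefficient vanishes. Since $\xi = \kappa_0 e_0$ has only the zero-th mode, we get $\Phi^t(\xi) = \kappa_0 \cos(\omega_0 t)\, e_0$, so the formula \eqref{DefinitionfToThe3} reduces to
\[
\bigl(f^{(3)}(\Phi^t(\xi))\bigr)^{(m)} = -C_{000m}\, \kappa_0^3\, \cos^3(\omega_0 t).
\]
The operator $\Phi^{-t}$, which acts mode-wise by multiplication by $\cos(\omega_m t)$ (since $\cos$ is even), combined with the definition of $\langle f^{(3)}\rangle$ then yields
\[
\bigl(\langle f^{(3)}\rangle(\xi)\bigr)^{(m)} = -\frac{C_{000m}\, \kappa_0^3}{2\pi}\int_0^{2\pi}\cos(\omega_m t)\cos^3(\omega_0 t)\,dt.
\]

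Expanding $\cos^3(\omega_0 t) = \frac{3}{4}\cos(\omega_0 t)+\frac{1}{4}\cos(3\omega_0 t)$ and using Assumption \ref{AssumptionsondeltaExistence}, which forces $\omega_0, \omega_m \in \mathbb{N}$ so that the integrands are $2\pi$-periodic, the standard orthogonality relations imply that $(\langle f^{(3)}\rangle(\xi))^{(m)}$ is non-zero only when $\omega_m \in \{\omega_0, 3\omega_0\}$. Since $\{\omega_m : m\geq 0\}$ is strictly increasing, the only candidates are $m=0$ and the unique index $m_\star$ satisfying $\omega_{m_\star} = 3\omega_0$, namely $m_\star = \delta$ for the KG model and $m_\star = \delta+2$ for the WM model. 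For all other $m$, both $(L\xi)^{(m)}=0$ and $(\langle f^{(3)}\rangle(\xi))^{(m)}=0$, so $\mathcal{M}(\xi)^{(m)}=0$ trivially.

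The case $m = m_\star$ is precisely where the vanishing result of Section 4 enters: the quadruple $(0,0,0,m_\star)$ is resonant with exactly one minus sign (since $\omega_0+\omega_0+\omega_0-\omega_{m_\star}=0$), so Lemma \ref{LemmaVanishingFourier} gives $C_{000m_\star}=0$, and hence $\mathcal{M}(\xi)^{(m_\star)}=0$. This is the crux of the argument and the main obstacle to overcome; without the vanishing of $C_{000m_\star}$ a secular-type resonance would obstruct the existence of periodic solutions bifurcating from $e_0$.

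Finally, for $m=0$, a direct computation using $\frac{1}{2\pi}\int_0^{2\pi}\cos^4(\omega_0 t)\,dt = \frac{3}{8}$ gives
\[
(L\xi)^{(0)} = \omega_0^2\, \kappa_0, \qquad \bigl(\langle f^{(3)}\rangle(\xi)\bigr)^{(0)} = -\frac{3}{8}\,C_{0000}\,\kappa_0^3,
\]
and the rescaling $\kappa_0 = \pm 2\omega_0\sqrt{2/(3\,C_{0000})}$ defined in \eqref{Definition1modeinitialdata} is chosen precisely so that $\omega_0^2\kappa_0 - \frac{3}{8}C_{0000}\kappa_0^3 = 0$. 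Collecting the three cases, every Fourier coefficient of $\mathcal{M}(\xi)$ vanishes, which concludes the argument.
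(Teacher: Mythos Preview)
Your proof is correct and follows essentially the same approach as the paper's: both expand $\mathcal{M}(\xi)$ mode-by-mode, compute the time average of $\cos(\omega_m t)\cos^3(\omega_0 t)$, identify the only surviving indices $m=0$ and $m=m_\star$ (with $\omega_{m_\star}=3\omega_0$), invoke Lemma~\ref{LemmaVanishingFourier} to kill the $m_\star$ contribution, and use the definition of $\kappa_0$ to cancel the $m=0$ term. The only cosmetic difference is that you expand $\cos^3(\omega_0 t)$ first and then integrate against $\cos(\omega_m t)$, whereas the paper expands the full product $\cos^3(\omega_0 t)\cos(\omega_m t)$ into four cosines; the outcome is identical.
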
 
	\begin{proof}
Fix $\delta$ satisfying Assumption \ref{AssumptionsondeltaExistence}, let $\xi $ be the rescaled initial data defined by \eqref{Definition1modeinitialdata} and pick any integer $m \geq 0$. Then, we compute
\begin{align*}
(  L \xi  )^{(m)} &=  \omega_{m}    ^2 \xi ^{(m)}  =  \kappa_{0}      \omega_{0}   ^2 \mathds{1}(m=0), \\
	( \Phi ^{t} (\xi  ) )^{(m)} &=\xi ^{(m)}\cos \left( \omega_{m}  t \right)=
	  \kappa_{0}     \cos \left( \omega_{0}   t \right) \mathds{1}(m=0), \\
\left ( f^{(3)}\left(\Phi ^{t} (\xi ) \right) \right )^{(m)} & = 
	 - \sum_{i,j,k}  C_{ijkm}  ( \Phi ^{t} (\xi  ) )^i 
	 ( \Phi ^{t} (\xi  ) )^j ( \Phi ^{t} (\xi  ) )^k  
	 = -  \kappa_{0}  ^3 C_{0 0 0 m}     \cos^3  (  \omega_{0}  t )  , \\
 \left( \Phi ^{t} \left[f^{(3)} \left(\Phi ^{t} (\xi ) \right) 
	\right] \right)^{(m)} &=  ( f\left(\Phi ^{t} (\xi ) \right) )^{(m)}  \cos (\omega_{m}  t ) 
	 = -  \kappa_{0}  ^3 C_{0 0 0 m}     \cos^3  (  \omega_{0}  t )\cos (\omega_{m}  t )   , \\
( \langle f^{(3)} \rangle )^{(m)}  &= -  \kappa_{0}  ^3 C_{0 0 0 m}      \frac{1}{2\pi } \int_{0}^{2\pi }  \cos^3  (  \omega_{0}  t )\cos (\omega_{m}  t ) dt \\
&=-  \kappa_{0}  ^3 C_{0 0 0 m}      \frac{1}{2\pi } \int_{0}^{2\pi }  \Big[ \frac{1}{8} \cos((\omega_{m}  -3\omega_{0}  )t)+\frac{3}{8} \cos((\omega_{m}  -\omega_{0}  )t) \\
&+ \frac{3}{8} \cos((\omega_{m}  +\omega_{0}  )t)
+\frac{1}{8} \cos((\omega_{m}  +3\omega_{0}  )t) \Big] dt \\
&=-  \kappa_{0}  ^3 C_{0 0 0 m}    \Big[    \frac{1}{8} \mathds{1}(m=\delta )+\frac{3}{8}  \mathds{1}(m=0) \Big] \\
&=  - \frac{1}{8}  \kappa_{0}  ^3 C_{000\delta }      \mathds{1}(m=\delta )- \frac{3}{8}   \kappa_{0}  ^3 C_{0 0 0 0}    \mathds{1}(m=0)  ,
\end{align*}
where we used the fact that 
\begin{align*}
	\frac{1}{2\pi }\int_{0}^{2\pi} \cos(\Omega t) dt = \mathds{1}(\Omega=0)
\end{align*}
as well as
\begin{align*}
	& \omega_{m}  -3\omega_{0}  =0 \Longleftrightarrow  m=
	\begin{dcases}
		\delta , \quad \text{for the KG},\\
		\delta+2 , \quad \text{for the WM}
	\end{dcases},  \\
	& \omega_{m}  -\omega_{0}  =0 \Longleftrightarrow   m=0, \quad  \omega_{m}  +\omega_{0}   \neq 0, \Hquad  \omega_{m}  +3\omega_{0}   \neq 0.
\end{align*}
Notice that the resonant condition $\omega_{m}  -3\omega_{0}  =0 $ has only 1 minus sign, hence Lemma \ref{LemmaVanishingFourier} yields that $C_{000m } =0$ with $m=\delta$ for the KG and $m=\delta+2$ for the WM. Therefore,
\begin{align*}
	(\mathcal{M}\left(\xi \right) )^ m  & =  ( A \xi)^{(m)}  + (\langle f^{(3)} \rangle )^{(m)} =
	  \kappa_{0}     \Big[ \omega_{0}  ^2  -  \frac{3}{8}  \kappa_{0}  ^2 C_{0 0 0 0}       \Big] \mathds{1}(m=0)=0,
\end{align*}
that completes the proof. 
\end{proof}
 
Next, we derive the differential of $\mathcal{M}$ at the rescaled first 1-mode.

\begin{lemma}[Differential of $\mathcal{M}$]\label{LemmaDifferentialofMat1modes}
Fix $\delta$ satisfying Assumption \ref{AssumptionsondeltaExistence} and let $\xi $ be the rescaled initial data defined by \eqref{Definition1modeinitialdata}. Then, for all $h \in H^2 ((0,\pi/2); d\mu)$,  we have that
\begin{align*} 
  (	d \mathcal{M}( \xi )[h] )^{(m)} =  
	 - 2\omega_{0}^2  h^{(0)} \mathds{1}( m =0   ) +\frac{\omega_{0}^2\omega_{m} ^2}{C_{00 0 0}}\left(   \frac{C_{00 0 0}}{\omega_{0}^2} 
     -2    \frac{  C_{0 0 mm}  }{\omega_{m} ^2}      
     \right)  h^{(m)} \mathds{1}( m \geq 1   ) .
 \end{align*} 

\end{lemma}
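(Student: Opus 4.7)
The plan is to differentiate $\mathcal{M}(\zeta) = L\zeta + \langle f^{(3)}\rangle(\zeta)$ at $\zeta = \xi$ by treating the two summands separately. The linear part gives $(Lh)^{(m)} = \omega_m^2 h^{(m)}$ trivially, so the whole task reduces to computing the differential of $\langle f^{(3)}\rangle$ at $\xi$ applied to $h$. Since $f^{(3)}$ is symmetric in its Fourier indices (because $C_{ijkm}$ is symmetric in $i,j,k,m$), the Fréchet derivative reads
\begin{align*}
\bigl(df^{(3)}(\phi)[h]\bigr)^{(m)} = -3\sum_{i,j,k\geq 0} C_{ijkm}\,\phi^{(i)}\phi^{(j)}h^{(k)}.
\end{align*}
Applied at $\phi = \Phi^t(\xi)$, only $i=j=0$ survives, so substituting $(\Phi^t(\xi))^{(0)} = \kappa_0 \cos(\omega_0 t)$ and $(\Phi^t(h))^{(k)} = h^{(k)}\cos(\omega_k t)$, and then reading off the $m$-th Fourier coefficient of $\Phi^{-t}$ (which multiplies by $\cos(\omega_m t)$), yields
\begin{align*}
\bigl(d\langle f^{(3)}\rangle(\xi)[h]\bigr)^{(m)} = -3\kappa_0^2 \sum_{k\geq 0} C_{00km}\,h^{(k)} \,I(k,m),
\end{align*}
where $I(k,m) = \frac{1}{2\pi}\int_0^{2\pi} \cos^2(\omega_0 t)\cos(\omega_k t)\cos(\omega_m t)\,dt$.

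The second step is to evaluate $I(k,m)$. Writing $\cos^2(\omega_0 t) = \frac12 + \frac12\cos(2\omega_0 t)$ and expanding all products into sums of cosines, the only nonvanishing time-averages come from the frequencies being exactly zero. The resulting combinations are: $\omega_k = \omega_m$ (always nonresonant, gives $\frac{1}{4}$); $\omega_k + \omega_m = 2\omega_0$ (only at $k=m=0$, gives $\frac{1}{8}$); and two "resonant" cases $\omega_m = 2\omega_0 + \omega_k$ or $\omega_k = 2\omega_0 + \omega_m$. The key observation is that each of these last two is exactly a resonance relation $\omega_0 + \omega_0 + \omega_k - \omega_m = 0$ (or the symmetric one) with a single minus sign. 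By Lemma \ref{LemmaVanishingFourier}, the associated Fourier coefficients $C_{00km}$ vanish, so these contributions disappear after summation against $C_{00km}$. Hence effectively
\begin{align*}
\sum_k C_{00km}\,h^{(k)} I(k,m) = \tfrac{1}{4}\,C_{00mm}\,h^{(m)} + \tfrac{1}{8}\,C_{0000}\,h^{(0)}\mathds{1}(m=0).
\end{align*}

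The last step is algebraic simplification. Using $\kappa_0^2 = \dfrac{8\omega_0^2}{3\,C_{0000}}$ from \eqref{Definition1modeinitialdata}, we compute $\tfrac{3}{4}\kappa_0^2 = \tfrac{2\omega_0^2}{C_{0000}}$. For $m \geq 1$, combining with $(Lh)^{(m)} = \omega_m^2 h^{(m)}$ gives
\begin{align*}
\bigl(d\mathcal{M}(\xi)[h]\bigr)^{(m)} = \left(\omega_m^2 - \tfrac{2\omega_0^2\,C_{00mm}}{C_{0000}}\right) h^{(m)} = \tfrac{\omega_0^2\omega_m^2}{C_{0000}}\!\left(\tfrac{C_{0000}}{\omega_0^2} - \tfrac{2\,C_{00mm}}{\omega_m^2}\right) h^{(m)},
\end{align*}
while for $m = 0$ the two time-average contributions combine to $-\tfrac{9}{8}\kappa_0^2 C_{0000} h^{(0)} = -3\omega_0^2 h^{(0)}$, which added to $\omega_0^2 h^{(0)}$ produces $-2\omega_0^2 h^{(0)}$, matching the claimed formula. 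The only mildly delicate point is keeping track of which resonant time-frequencies coincide with the single-minus-sign pattern of Lemma \ref{LemmaVanishingFourier}; everything else is an elementary but careful bookkeeping computation.
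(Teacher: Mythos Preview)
Your proposal is correct and follows essentially the same route as the paper: differentiate the two pieces of $\mathcal{M}$, expand $d\langle f^{(3)}\rangle(\xi)[h]$ in Fourier, reduce the time average of $\cos^2(\omega_0 t)\cos(\omega_k t)\cos(\omega_m t)$ to a finite list of resonance conditions, kill the single-minus-sign cases via Lemma~\ref{LemmaVanishingFourier}, and then substitute $\kappa_0^2 = 8\omega_0^2/(3C_{0000})$. The only cosmetic difference is that you use the identity $\cos^2(\omega_0 t)=\tfrac12+\tfrac12\cos(2\omega_0 t)$ directly, whereas the paper expands the full product as $\tfrac18\sum_{\pm}\cos((\omega_k\pm\omega_0\pm\omega_0\pm\omega_m)t)$; the resulting enumeration of surviving terms and the final algebra are identical.
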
 
\begin{proof}
Fix $\delta$ satisfying Assumption \ref{AssumptionsondeltaExistence} and let $\xi $ be the rescaled initial data defined by \eqref{Definition1modeinitialdata}. Also, let $\epsilon >0$, $h \in H^2 ((0,\pi/2); d\mu)$ and pick any integer $m\geq 0$. Then,  we compute 
\begin{align*}
 (  L (\xi +\epsilon h)  )^{(m)} &= \omega_{m} ^2 (\xi^{(m)} +\epsilon h^{(m)}) =   (  L \xi  )^{(m)}  +\epsilon   \omega_{m} ^2 h^{(m)} , \\
	( \Phi ^{t} (\xi +\epsilon h  ) )^{(m)} &=(\xi^{(m)} +\epsilon h^{(m)})\cos(\omega_{m}  t)=( \Phi ^{t} (\xi  ) )^{(m)} + \epsilon h^{(m)} \cos(\omega_{m}  t), \\
 \left ( f^{(3)} \left(\Phi ^{t} (\xi +\epsilon h ) \right) \right )^{(m)} & = 
	 - \sum_{i,j,k}  C_{ijkm}    ( \Phi ^{t} (\xi +\epsilon h  ) )^i 
	 ( \Phi ^{t} (\xi  ) )^j ( \Phi ^{t} (\xi +\epsilon h  ) )^k \\ 
%& = \left ( f\left(\Phi ^{t} (\xi   ) \right) \right )^{(m)}   -\epsilon \sum_{i,j,k}  C_{ijkm}\Bigg[ h^{(k)} \xi^i \xi^j  \cos(\omega_{i} t) \cos(\omega_{j} t)  \cos ( \omega_{k} t)  + h^{(i)} \xi^j \xi^k  \cos ( \omega_{i} t) \cos(\omega_{j} t) \cos(\omega_{k} t) \\
%&+ \xi^i h^{(j)}  \xi^k \cos (\omega_{i} t)\cos (\omega_{j} t)\cos (\omega_{k} t) \Bigg] + \mathcal{O}(\epsilon^2) , \\
& = \left ( f^{(3)} \left(\Phi ^{t} (\xi   ) \right) \right )^{(m)}   -3\epsilon  \kappa_{0}  ^2   \sum_{ k}  C_{0 0 km}    h^{(k)}   \cos^2(\omega_{0}  t)    \cos ( \omega_{k}  t)   + \mathcal{O}(\epsilon^2) , \\
 \left( \Phi ^{t} \left[f^{(3)} \left(\Phi ^{t} (\xi+\epsilon h ) \right) 
	\right] \right)^{(m)} &=   
	 \left( \Phi ^{t} \left[f^{(3)} \left(\Phi ^{t} (\xi  ) \right) 
	\right] \right)^{(m)} \\
	& -3\epsilon  \kappa_{0}  ^2   \sum_{ i}  C_{i0 0 m}    h^{(i)}   \cos^2(\omega_{0}  t)    \cos ( \omega_{i}  t) \cos(\omega_{m}  t)   + \mathcal{O}(\epsilon^2)  , \\
  \left(\langle f^{(3)} \rangle(\xi+\epsilon h) \right)^{(m)} & =  \left(\langle f^{(3)} \rangle(\xi ) \right)^{(m)}  \\
  &- \frac{3\epsilon  \kappa_{0}  ^2 }{2\pi }  \sum_{ i}  C_{i0 0 m}    h^{(i)} \int_{0}^{2\pi }  \cos^2(\omega_{0}  t)    \cos ( \omega_{i}  t) \cos(\omega_{m}  t) dt  + \mathcal{O}(\epsilon^2),
 \end{align*}
where we also used the symmetries of the Fourier coefficients. Therefore, we infer
\begin{align*}
	& \left( d \langle f^{(3)} \rangle(\xi )  [h]\right)^{(m)} = - \frac{3   \kappa_{0}  ^2 }{2\pi }  \sum_{ i}  C_{i0 0 m}    h^{(i)} \int_{0}^{2\pi }  \cos^2(\omega_{0}  t)    \cos ( \omega_{i}  t) \cos(\omega_{m}  t) dt \\
& =- \frac{3   \kappa_{0}  ^2 }{16\pi }  \sum_{ i}  C_{i0 0 m}    h^{(i)} \sum_{\pm} \int_{0}^{2\pi } \cos(( \omega_{i}  \pm\omega_{0}  \pm \omega_{0}  \pm \omega_{m}  )t) dt \\
& = - \frac{3   \kappa_{0}  ^2 }{8 }   \sum_{ i}  C_{i0 0 m}    h^{(i)} \sum_{\pm} \mathds{1}(\omega_{i}  \pm \omega_{0} \pm \omega_{0}   \pm \omega_{m} =0) .  
\end{align*}
Firstly, we consider pairs $(i,0,0,m)$ such that $\omega_{i}  \pm\omega_{0}  \pm\omega_{0}  \pm\omega_{m}  =0$ with only one minus sign.  For all these, Lemma \ref{LemmaVanishingFourier} implies that $C_{i00 m}  =0$.  Secondly, we consider pairs $(i,0,0,m)$ such that   such that $\omega_{i} \pm\omega_{0} \pm\omega_{0} \pm\omega_{m} =0$ with only two minus signs. For both KG and WM, these are given by
\begin{align*}
\begin{dcases}
	\omega_{i}  +\omega_{0}  - \omega_{0}  - \omega_{m}   =0 \\
	\omega_{i}  -\omega_{0}  + \omega_{0}  - \omega_{m}   =0 \\
	\omega_{i}   -\omega_{0}  - \omega_{0}  + \omega_{m}   =0 \\
	\end{dcases} \Longleftrightarrow
	\begin{dcases}
	i=m \\
	i=m \\
	i= -m    \geq 0
	\end{dcases}
\end{align*}
Therefore, we obtain
\begin{align*}
	&\left( d \langle f^{(3)} \rangle(\xi )  [h]\right)^{(m)}=  - \frac{3   \kappa_{0}  ^2 }{8 }   \sum_{ i}  C_{i0 0 m}    h^{(i)} \sum_{\pm} \mathds{1}(\omega_{i}  \pm \omega_{0} \pm \omega_{0}   \pm \omega_{m} =0)\\ 
& =- \frac{3   \kappa_{0}  ^2 }{8 } \left[  
\sum_{ i}  C_{i0 0 m}    h^{(i)} \mathds{1}( i=m ) +
\sum_{ i}  C_{i0 0 m}    h^{(i)} \mathds{1}( i=m   ) +
\sum_{ i}  C_{i0 0 m}    h^{(i)} \mathds{1}( i=-m \geq0   )  \right]  \\
& =- \frac{3   \kappa_{0}  ^2 }{8 } \left[  
2\sum_{ i}  C_{i0 0 m}    h^{(i)} \mathds{1}( i=m ) + 
\sum_{ i}  C_{i0 0 m}    h^{(i)} \mathds{1}( i=m =0   )  \right]  \\
& =- \frac{3   \kappa_{0}  ^2 }{8 } \left[  
2  C_{m0 0 m}    h^{(m)}  + 
   C_{m0 0 m}    h^{(m)} \mathds{1}( m =0   )  \right]\\
& =   -2\omega_{0}^2  \frac{   C_{0 0 mm}}{C_{00 0 0}  }    
   h^{(m)}  -\omega_{0}^2     h^{(0)} \mathds{1}( m =0   )  ,
\end{align*}
where we also used the definition of the constant $\kappa_0$ from  \eqref{Definition1modeinitialdata}. Finally, we obtain
\begin{align*}
	(	d \mathcal{M}( \xi )[h] )^{(m)} &=  \omega_{m} ^2 h^{(m)} +\left( d \langle f ^{(3)} \rangle(\xi )  [h] \right)^{(m)}\\
	&=  \omega_{m} ^2 h^{(m)}  -2\omega_{0}^2  \frac{   C_{0 0 mm}}{C_{00 0 0}  }    
   h^{(m)}  -\omega_{0}^2     h^{(0)} \mathds{1}( m =0   )  \\
    &=  \left[\omega_{0} ^2  - 2\omega_{0}^2  - \omega_{0}^2     
    \right] h^{(0)} \mathds{1}( m =0   ) +\left[ \omega_{m} ^2   -2\omega_{0}^2  \frac{   C_{0 0 mm}}{C_{00 0 0}  }    
     \right]h^{(m)} \mathds{1}( m \geq 1   ) \\
     &=   - 2\omega_{0}^2  h^{(0)} \mathds{1}( m =0   ) +\frac{\omega_{0}^2\omega_{m} ^2}{C_{00 0 0}}\left(   \frac{C_{00 0 0}}{\omega_{0}^2} 
     -2    \frac{  C_{0 0 mm}  }{\omega_{m} ^2}      
     \right)  h^{(m)} \mathds{1}( m \geq 1   ) ,
 \end{align*} 
that completes the proof. 
\end{proof}

  Next, we establish the condition \eqref{Assumption2BambusiPaleari} in Theorem \ref{TheoremBambusiPaleari}.

 \begin{prop}[Condition \eqref{Assumption2BambusiPaleari} in Theorem \ref{TheoremBambusiPaleari}]\label{PropositionAssumption2inBambusiPaleari}
Fix $\delta$ satisfying Assumption \ref{AssumptionsondeltaExistence} and let $\xi $ be the rescaled initial data defined by \eqref{Definition1modeinitialdata}.  Then,  
\begin{align*}
		 \ker \left( d \mathcal{M}( \xi ) \right) =\{ 0 \}.
	\end{align*} 
\end{prop}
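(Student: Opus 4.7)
The plan is to reduce the claim directly to the non-degeneracy estimate \eqref{FourierConditionExistence} established in Proposition \ref{PropositionUniformEstimates} by using the explicit diagonal formula for $d\mathcal{M}(\xi)$ from Lemma \ref{LemmaDifferentialofMat1modes}. Since $d\mathcal{M}(\xi)$ acts on each Fourier mode separately, the kernel condition decouples into a countable family of scalar equations, one per mode, and injectivity of each scalar multiplication is precisely what the preceding analysis of the Fourier coefficients guarantees.

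More concretely, I would pick $h \in H^2((0,\pi/2);d\mu)$ with $d\mathcal{M}(\xi)[h]=0$, expand $h = \sum_{m\geq 0} h^{(m)} e_m$ in the eigenbasis, and use Lemma \ref{LemmaDifferentialofMat1modes} to write, for every integer $m \geq 0$,
\begin{align*}
0 = (d\mathcal{M}(\xi)[h])^{(m)} = -2\omega_0^2 h^{(0)} \mathds{1}(m=0) + \frac{\omega_0^2 \omega_m^2}{C_{0000}} \left( \frac{C_{0000}}{\omega_0^2} - 2\frac{C_{00mm}}{\omega_m^2} \right) h^{(m)} \mathds{1}(m \geq 1).
\end{align*}
For $m=0$, this reads $-2\omega_0^2 h^{(0)} = 0$, and since $\omega_0 \neq 0$ (as $\delta \geq 2$ for KG and $\delta \geq 1$ for WM, so $\omega_0 \in \{\delta, \delta+2\}$) we immediately conclude $h^{(0)}=0$. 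For $m \geq 1$, the scalar equation becomes
\begin{align*}
\frac{\omega_0^2 \omega_m^2}{C_{0000}} \left( \frac{C_{0000}}{\omega_0^2} - 2\frac{C_{00mm}}{\omega_m^2} \right) h^{(m)} = 0,
\end{align*}
and the prefactor $\omega_0^2 \omega_m^2 / C_{0000}$ is manifestly nonzero (all eigenvalues are strictly positive and $C_{0000}>0$ by its integral definition).

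Thus the claim reduces to showing that the bracketed quantity does not vanish for any $m \geq 1$, and this is precisely the content of \eqref{FourierConditionExistence} in Proposition \ref{PropositionUniformEstimates}, established in both KG and WM models. Applying that estimate forces $h^{(m)}=0$ for every $m \geq 1$, so combined with $h^{(0)}=0$ we obtain $h=0$ in $H^2((0,\pi/2);d\mu)$, proving $\ker(d\mathcal{M}(\xi))=\{0\}$.

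All the genuine work has already been done in Section \ref{SectionFourierCoefficients}: the main obstacle is not in the present proposition but in securing the non-vanishing \eqref{FourierConditionExistence}, which required the computationally efficient formulas (Lemmata \ref{LemmaComputationallyEfficientFormulaforC00mmKG} and \ref{LemmaComputationallyEfficientFormulaforC00mmWM}), the Zeilberger-derived recurrences (Lemmata \ref{LemmaRecurrenceRelationKG} and \ref{LemmaRecurrenceRelationWM}) and the monotonicity arguments (Lemmata \ref{LemmaMonotonicityC00mmKG} and \ref{LemmaMonotonicityC00mmWM}). Once those are in hand, the proof of Proposition \ref{PropositionAssumption2inBambusiPaleari} is a short two-line mode-by-mode argument.
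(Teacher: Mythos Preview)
Your proposal is correct and follows essentially the same approach as the paper's own proof: both pick $h$ in the kernel, invoke Lemma \ref{LemmaDifferentialofMat1modes} to diagonalize $d\mathcal{M}(\xi)$ in the eigenbasis, use $\omega_0\neq 0$ to kill the $m=0$ mode, and then appeal to Proposition \ref{PropositionUniformEstimates} (specifically \eqref{FourierConditionExistence}) to conclude $h^{(m)}=0$ for all $m\geq 1$. The only cosmetic difference is that the paper takes $h\in H^4$ rather than $H^2$, but the argument is otherwise identical.
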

\begin{proof}
	Fix $\delta$ satisfying Assumption \ref{AssumptionsondeltaExistence} and let $\xi $ be the rescaled initial data defined by \eqref{Definition1modeinitialdata}. Also, let $h \in H^4 ((0,\pi/2);d\mu) $ such that $d \mathcal{M}( \xi )[h]=0$ and pick an  integer $m \geq 0 $. Then, according to Lemma \ref{LemmaDifferentialofMat1modes}, we have that  
\begin{align}\label{Condition111Existence}
	\left( \frac{C_{00 0 0}}{\omega_{0}^2} 
     -2    \frac{  C_{0 0 mm}  }{\omega_{m} ^2} \right)h^{(m)}=0,
\end{align}
for all integers $m \geq 1$, and 
\begin{align}\label{Condition222Existence}
	 \omega_{0} ^2  h^{(0)} & = 0, 
\end{align}
for $m =0$. On the one hand, using the fact that $\omega_{0} \neq   0$, \eqref{Condition222Existence} yields $h^{(0)}=0$. On the other hand, using Proposition \ref{PropositionUniformEstimates}, \eqref{Condition111Existence} yields  $h^{(m)}=0$, for all integers $m \geq 1$. Consequently, the condition $\ker \left( d \mathcal{M}( \xi ) \right) =0$ implies $h=0$ and completes the proof.
\end{proof} 

Finally, we conclude this section by establishing the regularity conditions required in Theorem \ref{TheoremBambusiPaleari} for both models. %Specifically, we show that these will follow provided that $s$ satisfies Assumption \ref{AssumptionsRegularity}. 
To begin with, we consider the KG model given by \eqref{MainPDEIntroModelKGA}.

\begin{lemma}[Regularity conditions in Theorem \ref{TheoremBambusiPaleari} for the KG model]\label{LemmaRegularityConditionsKG}
	Fix $\delta$ satisfying Assumption \ref{AssumptionsondeltaExistence} and let
	\begin{align*} 
  f^{(3)} (\phi)=-  \mathsf{W} \phi^3  , \quad  f^{(4)} (\phi)=0,
\end{align*}
for all $\phi \in H^s ( (0,\pi/2);d\mu )$, with $d\mu=\mathsf{w} dx$, where $\mathsf{w}$ is given by \eqref{DefinitionWeightw} and $\mathsf{W}$ is given by \eqref{MainPDEIntroModelKGB}. Then, for all $s \ge 2$, we have that 
\begin{itemize}[leftmargin=*]
	\item $f^{(3)}:H^s ( (0,\pi/2);d\mu ) \rightarrow H^s ( (0,\pi/2);d\mu )$ is a bounded homogeneous polynomial of degree three,
	\item $f^{(3)}$ leaves invariant the domain $  H^{s+2} ( (0,\pi/2);d\mu )$. 
\end{itemize}  
\end{lemma}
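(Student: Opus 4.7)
The plan is to work from the Fourier characterisation
\[
\|\phi\|_{H^s}^2 = (\phi\,|\,\mathsf{L}^s\phi) = \sum_{n\geq 0} \somega_n^{2s}\,|\phi^{(n)}|^2,
\]
identifying $H^s((0,\pi/2);d\mu)$ with the domain of the unbounded operator $\mathsf{L}^{s/2}$. The boundedness claim then reduces to estimating $\|\mathsf{L}^{s/2}(\mathsf{W}\phi^3)\|_{L^2(d\mu)}$ in terms of $\|\phi\|_{H^s}^3$, and the invariance claim is the same estimate with $s$ replaced by $s+2$. Throughout, I will use the weighted Sobolev inequality (Lemma \ref{LemmaHardySolobelInequality}), which for $\delta \geq 2$ yields the continuous embedding $H^2((0,\pi/2);d\mu) \hookrightarrow L^\infty((0,\pi/2))$.

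First I would treat the base case $s = 2$. Since $\mathsf{W} = \cos^{2\delta-2}(x)$ is smooth on $[0,\pi/2]$ with uniformly bounded derivatives, and $\mathsf{L}$ is the second-order operator given in \eqref{MainPDEIntroModelKGB}, expanding $\mathsf{L}(\mathsf{W}\phi^3)$ by the Leibniz rule produces a finite sum of pointwise products of derivatives of $\mathsf{W}$ (of order at most two) with monomials in $\phi$, $\partial_x\phi$, $\partial_x^2\phi$, together with a contribution from the first-order coefficient $(2-2\delta\sin^2(x))/(\sin(x)\cos(x))$ of $\mathsf{L}$ acting on $\mathsf{W}\phi^3$. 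Placing two of the three $\phi$-factors in $L^\infty$ via $H^2 \hookrightarrow L^\infty$ and the remaining factor in $L^2(d\mu)$ gives $\|\mathsf{L}(\mathsf{W}\phi^3)\|_{L^2(d\mu)} \lesssim \|\phi\|_{L^\infty}^2\,\|\phi\|_{H^2} \lesssim \|\phi\|_{H^2}^3$. The potentially singular coefficient $1/(\sin(x)\cos(x))$ is absorbed by the weight $d\mu = \sin^2(x)\cos^{2\delta-2}(x)\,dx$, which vanishes to order $2$ at $x = 0$ and to order $2\delta - 2 \geq 2$ at $x = \pi/2$.

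For higher integer $s = 2k$ with $k \geq 2$, I would proceed by induction. Applying $\mathsf{L}^k$ to $\mathsf{W}\phi^3$ yields a finite sum of terms of the form $(\text{bounded weight})\cdot(\partial_x^{a_1}\phi)(\partial_x^{a_2}\phi)(\partial_x^{a_3}\phi)$ with $a_1 + a_2 + a_3 \leq 2k$. Sorting so that $a_1 \leq a_2 \leq a_3$, the embedding $H^{s'} \hookrightarrow L^\infty$ (available for any $s' \geq 2$) places the two lowest-order factors in $L^\infty$ while the highest-order factor remains in $L^2(d\mu)$, so $\|\mathsf{L}^k(\mathsf{W}\phi^3)\|_{L^2(d\mu)} \lesssim \|\phi\|_{H^{2k}}^3$. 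Odd integer and fractional values of $s$ are handled by interpolation between adjacent integer levels. The invariance of $H^{s+2}$ follows by the very same estimate with $s$ replaced by $s+2$, since the argument depends on $s$ only through the embedding $H^{s'} \hookrightarrow L^\infty$, which is already available at $s' = 2$.

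The main obstacle, in my view, is verifying that $\mathsf{W}\phi^3$ actually lies in $\mathcal{D}(\mathsf{L}^{s/2})$, i.e., that iterating $\mathsf{L}$ does not generate boundary-induced singularities. At the conformal boundary $x = \pi/2$, the vanishing $\mathsf{W} = O((\pi/2 - x)^{2\delta-2})$ with $\delta \geq 2$ supplies the Dirichlet-type cancellation needed after each application of $\mathsf{L}$. At $x = 0$, the even-parity extension of $\mathsf{W}$ and of elements of $\mathcal{D}(\mathsf{L})$ across the origin (the eigenfunctions being polynomials in $\cos(2x)$) ensures that only the regular part of the first-order coefficient of $\mathsf{L}$ contributes. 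Once these boundary compatibility checks are settled at the level of a single application of $\mathsf{L}$, they propagate through the induction and yield the two desired statements.
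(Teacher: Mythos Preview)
Your approach is genuinely different from the paper's. The paper never expands $\mathsf{L}^k(\mathsf{W}\phi^3)$ by Leibniz at all: for the conformal case $\delta=2$ it observes that the KG equation on AdS is conformally equivalent to a cubic Klein--Gordon equation on the round $\mathbb{S}^3$ (via $u=\cos^{-1}(x)\,\psi$), under which the weighted $H^s((0,\pi/2);d\mu)$ norm becomes the standard $H^s(\mathbb{S}^3)$ norm on radial functions. The desired bound then reduces to $\|u^3\|_{H^s(\mathbb{S}^3)}\lesssim\|u\|_{H^s(\mathbb{S}^3)}^3$, which is the algebra property for $s>3/2$. For $\delta>2$ the paper simply notes that the weighted norms are stronger than in the $\delta=2$ case, so the same inequality transfers. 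The upshot is that the singular first-order coefficient in $\mathsf{L}$ never has to be confronted directly: the conformal map absorbs it into the geometry of a compact manifold without boundary.

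Your direct route has a real gap precisely at the point you flag as ``the main obstacle.'' The claim that $\mathsf{L}^k(\mathsf{W}\phi^3)$ expands into terms of the form $(\text{bounded weight})\cdot(\partial_x^{a_1}\phi)(\partial_x^{a_2}\phi)(\partial_x^{a_3}\phi)$ is false as stated: each application of $\mathsf{L}$ introduces one more power of the singular coefficient $c(x)=(2-2\delta\sin^2 x)/(\sin x\cos x)$, so after $k$ iterations the formal expansion contains weights behaving like $\sin^{-k}(x)\cos^{-k}(x)$, which the fixed-order vanishing of $d\mu$ cannot absorb once $k\geq 2$. What makes $\mathsf{L}^k(\mathsf{W}\phi^3)$ regular is that these singular terms \emph{cancel} for functions with the correct even parities at $x=0$ and $x=\pi/2$; but your $L^\infty$--$L^2$ splitting estimates each term separately and therefore throws away exactly those cancellations. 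Your parity remark in the last paragraph is the right diagnosis, but it is not compatible with the term-by-term estimation strategy you propose above it. To salvage the direct approach you would need either a commutator scheme that keeps $\mathsf{L}$ acting on a single factor, or an equivalence between the spectral $H^s$ norm and a weighted-derivative norm in which the singular coefficients are already resolved---both of which amount to redoing, by hand, what the paper's conformal transformation accomplishes in one line.
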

\begin{proof}
Recall that $H^s( \mathbb{S}^3)$ is an algebra, for $s \ge 2 > 3/2$ and that $
\| u \|_{L^\infty(\mathbb{S}^3)} \lesssim \| u \|_{H^2(\mathbb{S}^3)}$, for all $u \in H^2(\mathbb{S}^3)$, by the Sobolev inequality. First, we consider the conformal case $\delta=2$. In this case, the original KG model on AdS for a scalar field $\psi$ is conformally equivalent to a KG equation on  $\mathbb{S}^3$ for the scalar field $u=\cos^{-1}(x) \psi$, with the conformal map preserving the $H^s$ regularity. The KG equation \eqref{MainPDEIntroModelKGA}-\eqref{MainPDEIntroModelKGB} is obtained by rescaling the unknown $\phi$ related to $u$ by $\phi=\cos^{-1}(x)u$. Then, for all $s \geq 2$, we use the algebra property to estimate  
\begin{align*}
	\| f^{(3)}(\phi) \|_{H^s ( (0,\pi/2);d\mu )} &= 
	\| \mathsf{W} \phi^3 \|_{H^s ( (0,\pi/2);d\mu )}  \simeq  
	\|  \cos(x) \mathsf{W} \phi^3 \|_{H^s(\mathbb{S}^3)} \\
	&= 
	\|  u^3 \|_{H^s(\mathbb{S}^3)} \lesssim 
	\|  u \|^3_{H^s(\mathbb{S}^3)}.
\end{align*}
Similarly, we can transfer the well-known bounds for $H^s(\mathbb{S}^3)$ to their AdS counterparts which completes the proof for the case $\delta=2$. Finally, for all $\delta \geq 2$, the $H^s ( (0,\pi/2);d\mu )$ norms are stronger than in the conformal case $\delta=2$, so the estimate above still holds true. We shall omit the details here since we explain both the conformal transformation and the comparison of the norms more precisely below in the proof of Hardy-Sobolev inequality for the KG model, see Lemma \ref{LemmaHardySolobelInequality}.
\end{proof}
Next, we consider the WM model given by \eqref{MainPDEIntroModelWMA}. 
\begin{lemma}[Regularity conditions in Theorem \ref{TheoremBambusiPaleari} for the WM model]\label{LemmaRegularityConditionsWM}
	Fix $\delta$ satisfying Assumption \ref{AssumptionsondeltaExistence} and let
	\begin{align*} 
  f^{(3)} (\phi)=-  \mathfrak{W} \phi^3  ,\quad  f^{(4)} (\phi)= \mathfrak{E} (\cdot,\phi)
\end{align*}
for all $\phi \in H^s ( (0,\pi/2);d\mu )$, with $d\mu= \mathfrak{w} dx$, where $\mathfrak{w}$ is given by \eqref{DefinitionWeightw} and both $\mathfrak{W}$ and $\mathfrak{E}$ are given by \eqref{MainPDEIntroModelWMB}. Then, for all $s > \delta+1$%satisfying Assumption \ref{AssumptionsRegularity}
, we have that 
\begin{itemize}[leftmargin=*]
	\item $f^{(3)}:H^s ( (0,\pi/2);d\mu ) \rightarrow H^s ( (0,\pi/2);d\mu )$ is a bounded homogeneous polynomial of degree three,
	\item $f^{(3)}$ leaves invariant the domain $  H^{s+2} ( (0,\pi/2);d\mu )$, 
	  \item $f^{(\geq 4)}(\phi)$ is differentiable in $H^s ( (0,\pi/2);d\mu )$, its differentiable is a Lipschitz map and satisfies the estimate
\begin{align*}
	\| d f^{(\geq 4)}(\phi_1) - d f^{(\geq 4)}(\phi_2) \|_{H^s ( (0,\pi/2);d\mu )} \lesssim \epsilon^3 \| \phi_1 -\phi_2 \|_{H^s ( (0,\pi/2);d\mu )}, 
\end{align*} 
for all $\| \phi_1 \|_{H^s ( (0,\pi/2);d\mu )} \lesssim \epsilon$, $\| \phi_2 \|_{H^s ( (0,\pi/2);d\mu )} \lesssim \epsilon$. 
\end{itemize}  
\end{lemma}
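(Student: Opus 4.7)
The strategy parallels the KG argument of Lemma \ref{LemmaRegularityConditionsKG}, but must be adapted to two new difficulties: the WM equation is not conformally equivalent to a standard one on a closed manifold, and the remainder $f^{(\geq 4)}=\mathfrak{E}(\cdot,\phi)$ is only smooth rather than polynomial. The plan is to identify the weighted space $H^s((0,\pi/2);d\mu)$, with $d\mu = \mathfrak{w}\,dx$ and $\mathfrak{w}(x)=\sin^{2\delta+1}(x)\cos^3(x)$, with a closed subspace of a standard $H^s$-space on an ambient manifold on which both the algebra property and the Sobolev embedding $H^s\hookrightarrow L^\infty$ hold precisely for $s>\delta+1$. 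This identification comes from viewing $\phi(x)$ as the profile of a radial function of the angular variable $x$ on a product of round spheres whose dimensions are chosen so that $\mathfrak{w}(x)\,dx$ becomes (a constant multiple of) the induced volume element; morally the effective dimension is $2(\delta+1)+2=2\delta+4$, so the critical Sobolev exponent is $\delta+1$, precisely the hypothesis. The rigorous ingredient is the weighted Hardy--Sobolev inequality of Lemma \ref{LemmaHardySolobelInequality}.

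Given this identification, the first two bullets follow directly. The weight $\mathfrak{W}(x)=\sin^{2\delta-2}(x)\cos^4(x)$ extends to a smooth function on the ambient manifold (the exponents $2\delta-2$ and $4$ being of the correct parity and size after the change of variables), so multiplication by $\mathfrak{W}$ is bounded $H^s\to H^s$; together with the algebra property this shows $\phi\mapsto -\mathfrak{W}\phi^3$ maps $H^s$ boundedly to itself. The same reasoning at the higher regularity level proves the invariance of $\mathcal{D}(\mathfrak{L})\subset H^{s+2}$. For the third bullet, I would begin by Taylor expanding: by \eqref{TaylorAssumptionong2}, the quantity $\delta^2\psi + c\psi^3 - g(\psi)g'(\psi)$ is a smooth odd function of $\psi$ vanishing to order five, so it equals $\psi^5\tilde h(\psi)$ for some smooth $\tilde h$. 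Substituting $\psi=\tfrac{\epsilon}{\sqrt c}\sin^{\delta}(x)\cos^2(x)\phi$ into \eqref{MainPDEIntroModelWMB} yields
\begin{align*}
\mathfrak{E}(x,\phi)=\frac{1}{c^2}\,\sin^{4\delta-2}(x)\cos^{8}(x)\,\phi^{5}\,\tilde h\!\bigl(\tfrac{\epsilon}{\sqrt c}\sin^{\delta}(x)\cos^{2}(x)\,\phi\bigr),
\end{align*}
whose prefactor is smooth on the ambient manifold since $\delta\geq 1$.

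Expanding $\tilde h$ as a convergent power series, $\mathfrak{E}(x,\phi)=\sum_{k\geq 5}\epsilon^{k-5}a_{k}(x)\phi^{k}$ with smooth coefficients $a_{k}$ and geometrically decaying $H^s$-multiplier norms (for $\|\phi\|_{H^s}\lesssim\epsilon$). Differentiating once in $\phi$ and forming the difference $df^{(\geq 4)}(\phi_1)-df^{(\geq 4)}(\phi_2)$ produces, at leading order, a term proportional to $(\phi_{1}^{4}-\phi_{2}^{4})$, which factors as $(\phi_1-\phi_2)(\phi_1^3+\phi_1^2\phi_2+\phi_1\phi_2^2+\phi_2^3)$; the algebra property together with $\|\phi_i\|_{H^s}\lesssim\epsilon$ then yields the desired bound $\epsilon^{3}\|\phi_1-\phi_2\|_{H^s}$, while the higher-order terms in the series are absorbed uniformly because $\epsilon$ is small. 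The main obstacle I foresee is the very first step: carefully matching the weighted space $H^s((0,\pi/2);d\mu)$ with a subspace of standard $H^s$ on a smooth closed manifold so that the algebra property and Sobolev embedding are genuinely at our disposal. The two coordinate singularities, at the center $x=0$ and the conformal boundary $x=\pi/2$ (where Dirichlet conditions impose a definite vanishing order on $\phi$), must each be treated separately because $2\delta+1$ and $3$ do not independently correspond to integer sphere dimensions for arbitrary $\delta$; this is exactly the role played by Lemma \ref{LemmaHardySolobelInequality}, which accounts for the appearance of the sharp threshold $s>\delta+1$.
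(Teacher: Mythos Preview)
Your overall strategy---reduce all three bullets to an algebra property for $H^s((0,\pi/2);d\mu)$, obtain that property by identifying the weighted space with radial functions on a smooth ambient manifold, and localize near the two endpoints $x=0$ and $x=\pi/2$---is exactly the paper's. Your handling of $\mathfrak{E}$ via the Taylor factorization $\delta^2\psi+c\psi^3-g(\psi)g'(\psi)=\psi^5\tilde h(\psi)$ is correct and in fact more explicit than the paper, which simply asserts that everything follows once the algebra property is in hand.

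There is, however, a slip in your dimension count that matters for the stated threshold. A product-of-spheres (or join-of-spheres) model for the weight $\sin^{2\delta+1}(x)\cos^3(x)$ places you on a manifold of dimension $2\delta+5$ (or $2\delta+4$ in your arithmetic), and the resulting Sobolev threshold $s>\delta+5/2$ (or $s>\delta+2$) is strictly worse than the $s>\delta+1$ claimed in the lemma. The paper avoids this by working, after localization near $x=0$, with the warped product $(0,\pi/2)\times\mathbb{S}^{2\delta+1}$ equipped with the metric $dx^2+\sin^2(x)\cos^{6/(2\delta+1)}(x)\,\sigma_{\mathbb{S}^{2\delta+1}}$: this is a $(2\delta+2)$-dimensional manifold whose volume form is exactly $\mathfrak{w}\,dx$ and whose radial Laplace--Beltrami operator is $-\mathfrak{L}+(\delta+2)^2$. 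A direct Cartesian-coordinate computation shows it is smooth at $x=0$, and standard Sobolev on the ambient manifold then gives the algebra property precisely for $s>(2\delta+2)/2=\delta+1$. The point is that the $\cos^3$ factor is absorbed into the warping function rather than into an extra sphere factor, which keeps the dimension---and hence the threshold---sharp. Two smaller remarks: Lemma~\ref{LemmaHardySolobelInequality} is a Hardy--Sobolev inequality for the KG model at energy regularity and plays no role in the present proof; and since $\delta\in\mathbb{N}$ under Assumption~\ref{AssumptionsondeltaExistence}, both $2\delta+1$ and $3$ are integers, so your worry about non-integer sphere dimensions does not arise.
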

\begin{proof}
Fix $\delta$ satisfying Assumption \ref{AssumptionsondeltaExistence} and let $s > \delta+1$. All claims as stated above follow directly from the algebra property
\begin{align}\label{AlgebraPropertyStatement}
	\| \phi \psi\| _{ H^s ( (0,\pi/2);d\mu )} \lesssim 
	\| \phi \| _{ H^s ( (0,\pi/2);d\mu )}
	\| \psi\| _{ H^s ( (0,\pi/2);d\mu )},
\end{align} 
for all $\phi,\psi \in H^s ( (0,\pi/2);d\mu )$ with $d\mu=\mathfrak{w}dx$ where $\mathfrak{w}$ is given by \eqref{DefinitionWeightw}. To prove the latter, recall that $\mathfrak{w}(x)=\sin^{2\delta +1}(x) \cos^{3}(x)$ as well as, for any $\phi$, 
\begin{align*}
	\| \phi \|_{H^s( (0, \pi/2);d\mu)  }= 
	\begin{dcases}
		\| L^k \phi \|_{L^2( (0, \pi/2);d\mu)  }, & \text{ for }s=2k,,\quad k \in \mathbb{N}, \\
		 \| L^k \phi \|_{H^1( (0, \pi/2);d\mu)  }, & \text{ for }s=2k+1 ,\quad k \in \mathbb{N}.
	\end{dcases}
\end{align*} 
Let $\chi_1$ and $\chi_2$ be two smooth cut-off functions such that 
\begin{align*}
	\chi_1(x)= \begin{dcases}
1,  &x\in \left[0, \frac{\pi }{4}  \right], \\
 	0, &x\in  \left[\frac{3\pi }{8}, \frac{\pi }{2}  \right], 
 \end{dcases} \quad 
 \chi_2(x)= \begin{dcases}
 	0, &x\in  \left[0,\frac{\pi }{8}   \right], \\
 	1,  &x\in \left[\frac{\pi }{4} ,\frac{\pi }{2}  \right].
 	\end{dcases}
\end{align*}
Since only local operators are involved in the definition of the $H^s( (0, \pi/2);d\mu) $ norm, it follows that 
\begin{align*}
	\| \phi \|_{H^s( (0, \pi/2);d\mu) } \lesssim \| \chi_1 \phi \|_{H^s( (0, \pi/2);d\mu) } + \|  \chi_2 \phi \|_{H^s( (0, \pi/2);d\mu) }, 
\end{align*}
where now $\chi_i \phi$ vanish in neighbourhoods of the two endpoints of $(0, \pi/2)$.  We only focus on a neighbourhood of zero, since the other case can be treated similarly. For this, consider the warped product manifold $(0, \pi/2) \times \mathbb{S}^{2\delta+1}$ endowed with the metric 
\begin{align*}
	g(x,\omega)= dx^2 + \sin^2(x) \cos^{\frac{6}{2\delta+1}} (x) \sigma_{\mathbb{S}^{2\delta+1}}(\omega),
\end{align*}
where $\sigma_{\mathbb{S}^{2\delta+1}}$ is the standard round metric on the sphere $\mathbb{S}^{2\delta+1}$ of dimension $2\delta+1$. Its volume form is given by $d\text{vol}=\sqrt{|g| } dx=\sqrt{\det (g) } dx= \sin^{2\delta+1} \cos^3(x)dx=\mathfrak{w}dx=d\mu$. Let $\Delta_{g}$ be the Laplace-Beltrami operator associated to the metric $g$. Formally, when restricted to radial functions $\phi =\phi (x)$, it follows that 
\begin{align*}
	\Delta_{g} \phi =  
	 \frac{1}{\sqrt{|g|}} \partial_{x} \left(\sqrt{|g|} g^{xx}\partial_x \phi   \right)=
	  \frac{1}{\mathfrak{w}} \partial_{x} \left(\mathfrak{w} \partial_x \phi   \right) =
	  -\mathfrak{L} \phi +(\delta+2)^2 \phi ,
\end{align*}
where the definition of $\mathfrak{L}$ is given in Section \ref{SectionLineareigenvalueproblems}. Moreover, we claim that the warped product manifold above is regular at $x=0$. Indeed, introduce Cartesian coordinates $(x^1,\dots,x^{2\delta+2}) \in \mathbb{R}^{2\delta+2} \simeq \mathbb{R} \times \mathbb{S}^{2\delta+1}$ such that
\begin{align*}
\begin{dcases}
	x^1 &= x \cos (\omega_1), \\ 
	x^2 &= x \sin (\omega_1) \cos (\omega_2), \\
	x^3 &= x \sin (\omega_1)\sin (\omega_2) \cos (\omega_3), \\
	 \vdots &\quad  \quad \quad  \vdots  \quad\quad \quad  \vdots \quad\quad \quad  \vdots \\
	 x^{2\delta+1} &= x \sin (\omega_1)\sin (\omega_2) \sin (\omega_3) \dots \sin (\omega_{2\delta+1})\cos (\omega_{2\delta+1}), \\
	 x^{2\delta+2} &= x \sin (\omega_1)\sin (\omega_2) \sin (\omega_3) \dots \sin (\omega_{2\delta+1})\sin (\omega_{2\delta+1}).
\end{dcases}
\end{align*} 
We know that the standard Euclidean metric $\delta_{\mathbb{E}}$ is given by $\delta_{\mathbb{E}}(x,\omega)= dx^2 + x^2 \sigma_{\mathbb{S}^{2\delta+1}}(\omega)$, thus we have
\begin{align*}
	g(x,\omega) &= \delta_{\mathbb{E}}(x,\omega) + \left( \sin^2(x) \cos^{\frac{6}{2\delta+1}}(x) -x^2\right)\sigma_{\mathbb{S}^{2\delta+1}}(\omega) 
	\\
	&= \delta_{\mathbb{E}}(x,\omega) + x^4 f(x
^2)\sigma_{\mathbb{S}^{2\delta+1}}(\omega) \\
&=\delta_{\mathbb{E}}(x,\omega) +   f(x
^2)\left(x^2 \delta_{\mathbb{E}}(x,\omega)- x^2 dx^2 \right),
\end{align*} 
for some smooth function $f$. Obviously, $x^2$ is a smooth function with respect to the Cartesian coordinates $x=(x^i)$. Furthermore, 
\begin{align*}
	dx= \frac{x_i}{x} dx^i, \quad  dx^2= \frac{x_i x_j}{x^2} dx^i dx^j,
\end{align*}
hence $x^2 dx^2= x_i x_j dx^i dx^j $ is also smooth at $x=0$ with respect to the Cartesian coordinates $x=(x^i)$. In conclusion, the warped product manifold $(0, \pi/2) \times \mathbb{S}^{2\delta+1}$ is smooth and the weighted Sobolev space $H^s ( (0,\pi/2);d\mu )$ we consider coincides with the Sobolev space $H_{\text{rad}}^s ( (0,\pi/2)\times \mathbb{S}^{2\delta+1}  )$ of radial functions on the manifold. Then, by standard Sobolev regularity, the algebra property \eqref{AlgebraPropertyStatement} holds for any $\phi, \psi \in H^s ( (0,\pi/2);d\mu )  $ with compact support in $[0, 3\pi/8]$ provided that $s > (2\delta+2)/2=\delta+1$, that completes the proof. 
\end{proof}

\section{Stability of the solutions to the linear wave equation} \label{SectionStabilitySolutionstotheLinearWaveEquation}

 In this section, we establish the non-linear stability over exponentially long times to solutions to the KG equation
 \begin{align}\label{NonlinearWaveEquationSectionNonLinearStability}
	\partial_{t}^2 \phi +L \phi = -\epsilon^2 W \phi^3  ,
\end{align}
for a scalar field $\phi: \mathbb{R} \times (0,\pi/2) \rightarrow \mathbb{R}$, where $L$ and $W$ are given by \eqref{MainPDEIntroModelKGB}. To do so, we will apply Theorem \ref{TheoremBambusiNekhoroshev} to the same rescaled first 1-mode given by \eqref{Definition1modeinitialdata} and verify the conditions \eqref{Assumption1BambusiNekhoroshev} and \eqref{Assumption2BambusiNekhoroshev} of Theorem \ref{TheoremBambusiNekhoroshev}. We follow the setting in \cite{BAMBUSI199873} and rewrite  \eqref{NonlinearWaveEquationSectionNonLinearStability} in the phase space 
\begin{align*}
	\mathbf{p}= (p_1,p_2) \in  \mathcal{P} =H^{1} \left( (0,\pi/2);d\mu\right) \times L^2 \left( (0,\pi/2) ;d\mu\right) 
\end{align*}
using symbols in bold to denote its elements. In particular, for any $\boldsymbol{\phi} =(\phi,\partial_{t}\phi)\in  \mathcal{P}$, the non-linear wave equation \eqref{NonlinearWaveEquationSectionNonLinearStability} can be written as 
\begin{align}\label{MainEquationForBambusiNekhoroshev}
	\partial_{t}\boldsymbol{\phi}=
	A\boldsymbol{\phi} +   \epsilon^2  f^{(3)}(\boldsymbol{\phi}),  
\end{align}
where $A$ is a linear operator defined by
\begin{align*}
	A:\mathcal{D} (A)\subset \mathcal{P} \rightarrow \mathcal{P},\quad  A=
	\begin{pmatrix}
		0 & 1 \\
		-L & 0
	\end{pmatrix}
\end{align*}
and 
\begin{align}\label{NonLinearityf3StabilityKG}
	f^{(3)} (\boldsymbol{\phi})=\begin{pmatrix}
		0 \\
		-W \phi^3 
	\end{pmatrix} .
\end{align}
Furthermore, for any $\boldsymbol{\zeta}= (\zeta_1,\zeta_2) \in  \mathcal{P}$, let 
\begin{align*} 
	\boldsymbol{\Phi}^{t}(\boldsymbol{\zeta}) := e^{At}\boldsymbol{\zeta}
\end{align*}
be the  solution to the initial value problem consisting of the linearized equation in \eqref{MainEquationForBambusiNekhoroshev} coupled to $\boldsymbol{\zeta} $ as initial data,
\begin{align} \label{LinearizedEquationForBambusiNekhoroshev}
\begin{cases}
	\partial_{t}\boldsymbol{\phi}_{\text{linear}}(t)=
	A\boldsymbol{\phi}_{\text{linear}}(t), \quad t \in \mathbb{R}, \\
 \boldsymbol{\phi}_{\text{linear}}(0)=\boldsymbol{\zeta} 
\end{cases}
\end{align}
and denote by   
\begin{align*}
	\Gamma_{\text{linear}}(\boldsymbol{\zeta}) = \left \{\boldsymbol{\Phi}^{t}(\boldsymbol{\zeta}):t \in \mathbb{R} \right\} 
\end{align*} 
its trajectory in the phase space $\mathcal{P}$. In addition, let $\xi_1$ be the same rescale of the first 1-mode as in \eqref{Definition1modeinitialdata}, and set
\begin{align}  \label{Definition1modesInitialdataBambusiNekhoroshev}
	\boldsymbol{\xi}=(\xi_1,\xi_2), \quad \xi_1 =  \kappa_{0}e_{0}, \quad \xi_2 =0, \quad \kappa_{0}       = \pm \omega_{0}    \sqrt{\frac{8}{3C_{00 0 0}   }}.
\end{align}
To begin with, we endow the phase space $\mathcal{P}$ with an inner product $\langle \cdot,\cdot \rangle :  \mathcal{P}  \times \mathcal{P} \rightarrow \mathbb{R}$,
\begin{align*}
	\langle \mathbf{p},\tilde{\mathbf{p}} \rangle &=
	\langle p_1,\tilde{p}_1 \rangle_{H^1(0,\pi/2)}+
	\langle p_2,\tilde{p}_2 \rangle_{L^2(0,\pi/2)} 
	= \int_{0}^{\pi/2} \left(
	\partial_{x} p_1  \partial_{x}\tilde{p}_1 +
	 p_1  \tilde{p}_1 \right)d\mu +
	\int_{0}^{\pi/2}
	  p_2   \tilde{p}_2  d\mu,
\end{align*}
for all $\mathbf{p}=(p_1,p_2)\in \mathcal{P}$ and $\tilde{\mathbf{p}}=(\tilde{p}_1,\tilde{p}_2)\in \mathcal{P}$,  and a symplectic form $\Omega:  \mathcal{P}  \times \mathcal{P} \rightarrow \mathbb{R}$,
\begin{align*}
	 \Omega( \mathbf{p},\tilde{\mathbf{p}} ) = \int_{0}^{\pi/2}\left( p_1  \tilde{p}_2 - p_{2} \tilde{p}_1 \right)  d\mu,
\end{align*} 
for all $\mathbf{p}=(p_1,p_2)\in \mathcal{P}$ and $\tilde{\mathbf{p}}=(\tilde{p}_1,\tilde{p}_2)\in \mathcal{P}$. Notice that, since $L$ is self-adjoint in  $L^2((0,\pi/2);d\mu)$ (Section \ref{SectionLineareigenvalueproblems}), the operator $A$ is skew-symmetric with respect to $\Omega$, meaning that
\begin{align*}
	\Omega(A\mathbf{p},\tilde{\mathbf{p}})=-\Omega(\mathbf{p},A\tilde{\mathbf{p}}),
\end{align*}
for all $\mathbf{p}=(p_1,p_2)\in \mathcal{P}$ and $\tilde{\mathbf{p}}=(\tilde{p}_1,\tilde{p}_2)\in \mathcal{P}$ with $p_1,\tilde{p}_1 \in \mathcal{D}(L)$.
%Indeed, 
%\begin{align*}
%	\Omega(A\mathbf{p},\tilde{\mathbf{p}}) &=
%	\Omega \left(
%	(p_2,-Lp_1)
%	,(\tilde{p}_1,\tilde{p}_2) \right) \\
%	& = \int_{0}^{\pi/2}\left( p_2 (x) \tilde{p}_2(x) +Lp_{1}(x) \tilde{p}_1(x)  \right)  d\mu(x) \\
%	& =\langle  p_2,\tilde{p}_2\rangle_{L^2(0,\pi/2)}+\langle  Lp_1,\tilde{p}_1\rangle_{L^2(0,\pi/2)} \\
%	& =\langle  p_2,\tilde{p}_2\rangle_{L^2(0,\pi/2)}+\langle  p_1,L\tilde{p}_1\rangle_{L^2(0,\pi/2)} \\
%	& = \int_{0}^{\pi/2}\left( p_2 (x) \tilde{p}_2(x) +p_{1}(x)L \tilde{p}_1(x)  \right)  d\mu(x) \\
%	& = -\int_{0}^{\pi/2}\left( - p_{1}(x)L \tilde{p}_1(x) -p_2 (x) \tilde{p}_2(x) \right)  d\mu(x) \\
%	&= -\Omega \left(
%	(p_1,p_2),(\tilde{p}_2,-L\tilde{p}_1) \right) \\
%	&=-\Omega(\mathbf{p},A\tilde{\mathbf{p}}) .
%\end{align*}  
Moreover, we note that
\begin{align*}
	\Omega(A\mathbf{p},\mathbf{p}) &=\Omega \left(
	(p_2,-Lp_1)	,(p_1,p_2) \right) 
	 = \int_{0}^{\pi/2}\left(  p_2 ^2 +p_{1} Lp_{1} \right)  d\mu 
%	 &=\langle p_2,p_2 \rangle_{L^2(0,\pi/2)} +\langle p_1,Lp_1 \rangle_{L^2(0,\pi/2)} \\
%	 &=\langle p_2,p_2 \rangle_{L^2(0,\pi/2)} +\langle p_1,L^{1/2}L^{1/2}p_1 \rangle_{L^2(0,\pi/2)} \\
%	 &=\langle p_2,p_2 \rangle_{L^2(0,\pi/2)} +\langle L^{1/2}p_1,L^{1/2}p_1 \rangle_{L^2(0,\pi/2)} \\
	 =\int_{0}^{\pi/2}\left( 
	 p_2 ^2 + \left(L^{1/2}p_{1}\right)^2 \right)  d\mu.
\end{align*}
The Hamiltonian function 
\begin{align*}
	\mathcal{H}:\mathcal{P}  \rightarrow \mathbb{R},  \quad \mathcal{H} =\mathcal{H}(\boldsymbol{\phi}), \quad \boldsymbol{\phi}=(\phi_1,\phi_2)  
\end{align*}
is given by 
 \begin{align*}
 	\mathcal{H}(\boldsymbol{\phi})  
 	&= h_{\Omega}(\boldsymbol{\phi})  + \epsilon^2  f(\boldsymbol{\phi})     , 
 \end{align*}
 where
 \begin{align}\label{LeadingOrderNonlinearityHamiltonianSectionStability}
 h_{\Omega}(\boldsymbol{\phi}) = \frac{1}{2} \Omega(A\boldsymbol{\phi},\boldsymbol{\phi}), \quad  	f(\boldsymbol{\phi}) =  \frac{1}{4} \int_{0}^{\pi/2} W \phi_1^4 d\mu    .
 \end{align}
 We call $ h_{\Omega}(\boldsymbol{\phi})$ the harmonic energy of $ \boldsymbol{\phi}$. Then, the partial differential equation  \eqref{NonlinearWaveEquationSectionNonLinearStability} is Hamiltonian and formally one has
 \begin{align*}
 	\frac{d}{dt}\mathcal{H}(\phi,\partial_{t}\phi) =0.
 \end{align*} 
% Indeed, since $L$ is self adjoint in $L^2((0,\pi/2);d\mu)$, we have that
% \begin{align*}
% 	\frac{d}{dt}\mathcal{H}(\phi,\partial_{t}\phi) 
% 	&=\frac{1}{2} \frac{d}{dt} \int_{0}^{\pi/2}\left( 
%	 (\partial_{t}\phi) ^2 + \left(L^{1/2}\phi \right)^2 \right)  d\mu + \frac{\epsilon^2}{4} \frac{d}{dt}   \int_{0}^{\pi/2} W \phi^4   d\mu    \\
% 	&
% 	= \int_{0}^{\pi/2}\left( \partial_{t}\phi \partial_{t}^2\phi + L^{1/2}\phi  L^{1/2}\partial_{t}\phi  \right)  d\mu + \epsilon^2    \int_{0}^{\pi/2} W \phi^3 \partial_{t}\phi  d\mu   \\
% 	 &
% 	= \int_{0}^{\pi/2}\left( \partial_{t}\phi \partial_{t}^2\phi + \partial_{t}\phi L\phi     \right)  d\mu + \epsilon^2   \int_{0}^{\pi/2} W \phi^3 \partial_{t}\phi  d\mu    \\
% 	 &
% 	= \int_{0}^{\pi/2}\left[ \partial_{t}^2\phi + L\phi        +\epsilon^2   W \phi^3    \right]\partial_{t}\phi  d\mu  =0.
% \end{align*}
In fact, Theorem \ref{TheoremBambusiNekhoroshev} is based a perturbation analysis on the surface of constant harmonic energy
 \begin{align*}
	\Sigma= \left \{\boldsymbol{\zeta}\in \mathcal{P}:~  h_{\Omega}(\boldsymbol{\zeta})=\kappa_{0}^2 \omega_{0}^2 \right\}.
\end{align*}
As usual in perturbation theory, one also needs an analyticity assumption for the perturbation. In the case of the KG model, this is a consequence of the following Hardy-Sobolev inequality. The following lemma guarantees the analyticity conditions of Theorem \ref{TheoremBambusiNekhoroshev}. 
 \begin{lemma}[Hardy-Sobolev inequality]\label{LemmaHardySolobelInequality}
	Fix $\delta$ satisfying Assumption \ref{AssumptionsondeltaExistence} and define $f^{(3)}(\boldsymbol{\phi})$ and $f(\boldsymbol{\phi}) $ by \eqref{NonLinearityf3StabilityKG} and \eqref{LeadingOrderNonlinearityHamiltonianSectionStability} respectively. Then, there exists a positive constant $c$ so that 
	\begin{align*}
		f(\boldsymbol{\phi})  \leq c \left( h_{\Omega}(\boldsymbol{\phi}) \right)^2, \quad \| f^{(3)} (\boldsymbol{\phi}) \|_{\mathcal{P}} \leq c \| \boldsymbol{\phi} \|_{\mathcal{P}}^3,
	\end{align*}
	for all $\boldsymbol{\phi} \in \mathcal{P} $. 
\end{lemma}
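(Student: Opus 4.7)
The plan is to interpret the weighted Sobolev spaces on $(0, \pi/2)$ as spaces of symmetric Sobolev functions on a closed Riemannian manifold and then reduce both inequalities to classical Caffarelli--Kohn--Nirenberg (CKN) type estimates, in the spirit of the proof of Lemma \ref{LemmaRegularityConditionsWM}. Since Assumption \ref{AssumptionsondeltaExistence} forces $\delta \in \mathbb{N}$ with $\delta \geq 2$, the weight $d\mu = \sin^2(x) \cos^{2\delta-2}(x) dx$ coincides, up to a multiplicative constant, with the radial volume form of the warped product $M = (0, \pi/2) \times \mathbb{S}^2 \times \mathbb{S}^{2\delta-2}$ endowed with the metric $g = dx^2 + \sin^2(x) g_{\mathbb{S}^2} + \cos^2(x) g_{\mathbb{S}^{2\delta-2}}$. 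The map $(x, \omega_1, \omega_2) \mapsto (\sin(x)\omega_1, \cos(x)\omega_2) \in \mathbb{R}^3 \times \mathbb{R}^{2\delta-1}$ identifies $M$ isometrically with the round sphere $\mathbb{S}^{2\delta+1}$, so that $M$ is in particular a smooth closed manifold. A function $\phi \in H^s((0, \pi/2); d\mu)$ (for $s \in \{0,1\}$) then lifts to an $SO(3) \times SO(2\delta-1)$-invariant function $\tilde\phi \in H^s(\mathbb{S}^{2\delta+1})$ with equivalent norms.

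Under this identification, the first inequality reduces to $\int_{\mathbb{S}^{2\delta+1}} \cos^{2\delta-2}(x)\, \tilde\phi_1^4\, d\mathrm{vol}_g \lesssim \|\tilde\phi_1\|_{H^1(\mathbb{S}^{2\delta+1})}^4$ and the second to $\int_{\mathbb{S}^{2\delta+1}} \cos^{4\delta-4}(x)\, \tilde\phi_1^6\, d\mathrm{vol}_g \lesssim \|\tilde\phi_1\|_{H^1(\mathbb{S}^{2\delta+1})}^6$. The weights vanish on the codimension-$(2\delta-1)$ submanifold $\Sigma = \{x = \pi/2\} \cong \mathbb{S}^2$ and satisfy $\cos(x) \asymp \mathrm{dist}_g(\cdot, \Sigma)$ in a tubular neighbourhood of $\Sigma$. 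A scaling analysis transverse to $\Sigma$ identifies the critical weight exponent as $\alpha_c(p) = (2\delta-1)p/2 - (2\delta+1)$: the case $p = 4$ gives $\alpha_c = 2\delta-3 < 2\delta-2$ and is thus strictly subcritical, while for $p = 6$ one finds $\alpha_c = 4\delta-4$, exactly matching our weight, i.e.\ the critical case. I would use a partition of unity to split each integral into a region near $\Sigma$, handled by the CKN/Hardy--Sobolev inequality in the transverse $\mathbb{R}^{2\delta-1}$ fibre (subcritical for $p=4$, critical for $p=6$), and a region near $x=0$, handled by the standard Sobolev embedding $H^1_{\mathrm{rad}}(\mathbb{R}^3) \hookrightarrow L^6(\mathbb{R}^3)$ arising from the warped product collapse of $\mathbb{S}^2$ at $x=0$. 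Combined with the coercivity $h_\Omega(\boldsymbol\phi) \gtrsim \|\phi_1\|_{H^1((0, \pi/2); d\mu)}^2 + \|\phi_2\|_{L^2((0, \pi/2); d\mu)}^2$ established in Section \ref{se:sa}, this converts the $H^1$-bound on $\tilde\phi_1$ into one on $h_\Omega$ for the first inequality, and into $\|\boldsymbol\phi\|_\mathcal{P}$ for the second, yielding both stated estimates.

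The hard part will be the second bound, which sits exactly at the Sobolev endpoint: the weight exponent $4\delta-4$ coincides with the critical CKN scaling for the pair $(p,n) = (6, 2\delta+1)$ with codimension $2\delta-1$. Although the Euclidean critical CKN inequality is classical, transplanting it onto the compact manifold $\mathbb{S}^{2\delta+1}$ across the codimension-$(2\delta-1)$ submanifold $\Sigma$ requires careful tracking of the constants through the tubular neighbourhood and the partition of unity, in particular verifying that the radial/symmetric reduction preserves the critical constant. The first bound, by contrast, is strictly subcritical, the weight enjoying one full unit of extra decay $2\delta-2-\alpha_c(4) = 1$ beyond criticality, and admits a softer proof via a subcritical CKN estimate (with an additional $L^2$ factor absorbed through Young's inequality).
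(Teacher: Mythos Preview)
Your proposal is correct in outline but takes a genuinely different and more elaborate route than the paper. You lift $\phi$ to a symmetric function on $\mathbb{S}^{2\delta+1}$ and plan to establish weighted Hardy--Sobolev/CKN inequalities with weights vanishing along a codimension-$(2\delta-1)$ submanifold, correctly flagging the $p=6$ case as sitting at the critical scaling for the full sphere. The paper instead makes the pointwise substitution $\psi=\cos^{\delta-1}(x)\,\phi$, which absorbs all the $\delta$-dependent weights at once: one checks directly that $\int_0^{\pi/2}\mathsf{W}\phi^4\,d\mu=\int_0^{\pi/2}\psi^4\sin^2 x\,dx$ and $\int_0^{\pi/2}\mathsf{W}^2\phi^6\,d\mu=\int_0^{\pi/2}\psi^6\sin^2 x\,dx$, while an explicit expansion of $\int\phi\,\mathsf{L}\phi\,d\mu$ (using $\delta^2-3(\delta-1)\ge 1$ and $(\delta-1)(\delta-2)\ge 0$ for integer $\delta\ge 2$) shows it dominates $\int[\psi^2+(\partial_x\psi)^2]\sin^2 x\,dx$. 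After an odd reflection of $\psi$ across $x=\pi/2$, this is exactly the radial $H^1$ norm on $\mathbb{S}^3$, so both bounds reduce to the single classical embedding $H^1(\mathbb{S}^3)\hookrightarrow L^6(\mathbb{S}^3)$ (together with $L^6\hookrightarrow L^4$ on the compact sphere for the first). The upshot: the paper collapses the problem to three dimensions independently of $\delta$, and what you identify as the delicate critical endpoint becomes just the standard $\mathbb{S}^3$ Sobolev exponent, with no CKN machinery needed. Your geometric lift to $\mathbb{S}^{2\delta+1}$ makes the warped-product structure transparent and would adapt more readily to other weights or nonlinearities, at the cost of working in a $\delta$-dependent ambient dimension and confronting genuinely weighted estimates.
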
 
\begin{proof} 
Fix $\delta$ satisfying Assumption \ref{AssumptionsondeltaExistence} and define $f^{(3)}(\boldsymbol{\phi})$ and $f(\boldsymbol{\phi}) $ by \eqref{NonLinearityf3StabilityKG} and \eqref{LeadingOrderNonlinearityHamiltonianSectionStability} respectively for all $\boldsymbol{\phi} =(\phi,\partial_{t}\phi) \in \mathcal{P}$. By density (Section \ref{se:sa}), we can assume that $\boldsymbol{\phi} \in C^\infty( 0, \pi/2)$. Firstly, we focus on $f(\boldsymbol{\phi})  \leq c \left( h_{\Omega}(\boldsymbol{\phi}) \right)^2$ and prove that
	\begin{align} \label{InequalitySobolevHardyGoalKG}
		\left( \int_{0}^{\pi/2} \mathsf{W} \phi^4 d\mu  \right)^{1/4}  \lesssim \left( \int_{0}^{\pi/2}   \phi \mathsf{L}\phi  d\mu  \right)^{1/2},
\end{align}
where
\begin{align*}
	 \mathsf{W}(x) &= \cos^{2(\delta-1)}(x), \quad  \mathsf{w}(x)=\sin^2(x) \cos^{2(\delta-1)}(x), \\ d\mu(x)&=\mathsf{w}(x) dx, \quad \mathsf{L} \phi   =-\frac{1}{\mathsf{w}} \partial_{x} \left(\mathsf{w} \partial_{x}\phi \right)+\delta^2 \phi.
\end{align*}  
We set $\psi=\cos^{\delta-1}(x) \phi $ so that $\psi(\pi/2)=0$ and compute
\begin{align*} 
	  \mathsf{w} \partial_{x} \phi &=   \mathsf{w} \partial_{x} ( \cos^{-(\delta-1)}(x) \psi) \\
	 &=  \sin^2(x) \cos^{2\delta-2}(x)  [ (\delta-1)\cos^{-\delta}(x) \sin(x) \psi + \cos^{-\delta+1}(x) \partial_{x}\psi] \\
	 &=   (\delta-1)   \sin^3(x) \cos^{\delta-2}(x) \psi 
	 +  \sin^2(x) \cos^{\delta-1}(x) \partial_{x}\psi, \\
	 \partial_{x} \left(  \mathsf{w} \partial_{x} \phi \right)&= 
	 (\delta-1)  \partial_{x} \left( \sin^3(x) \cos^{\delta-2}(x) \psi \right)\\
	 &+  \cos^{\delta-1}(x) \partial_{x} \left( \sin^2(x)\partial_{x}\psi \right)
	 +  \sin^2(x)\partial_{x}\psi \partial_{x} \left( \cos^{\delta-1}(x) \right) \\
	 &= 
	3 (\delta-1)     \sin^2(x) \cos^{\delta-1}(x) \psi  -
	  (\delta-1) (\delta-2)     \sin^4(x) \cos^{\delta-3}(x) \psi   \\
	 & +
	   (\delta-1)     \sin^3(x) \cos^{\delta-2}(x) \partial_{x}\psi+  \cos^{\delta-1}(x) \partial_{x} \left( \sin^2(x)\partial_{x}\psi \right)\\
	 &- (\delta-1) \sin^3(x)  \cos^{\delta-2}(x) \partial_{x}\psi   \\
	 &= 
	3 (\delta-1)     \sin^2(x) \cos^{\delta-1}(x) \psi  -
	  (\delta-1) (\delta-2)     \sin^4(x) \cos^{\delta-3}(x) \psi   \\
	 & +  \cos^{\delta-1}(x) \partial_{x} \left( \sin^2(x)\partial_{x}\psi \right),   \\
	\frac{1}{ \mathsf{w}} \partial_{x} \left(  \mathsf{w} \partial_{x} \phi \right)&=  
 3 (\delta-1)   \cos^{-\delta+1}(x)     \psi  -
  (\delta-1) (\delta-2)  \cos^{-\delta-1}(x)
	     \sin^2(x)   \psi   \\
	 & +  \cos^{-\delta+1  }(x) \sin^{-2}(x) \partial_{x} \left( \sin^2(x)\partial_{x}\psi \right),   \\
	 \mathsf{L} \phi &= -3 (\delta-1)   \cos^{-\delta+1}(x)     \psi  +
  (\delta-1) (\delta-2)  \cos^{-\delta-1}(x)
	     \sin^2(x)   \psi \\
	     & + \delta^2 \cos^{-\delta+1}(x) \psi 
	      -  \cos^{-\delta+1  }(x) \sin^{-2}(x) \partial_{x} \left( \sin^2(x)\partial_{x}\psi \right), \\
	       \mathsf{w}  \mathsf{L} \phi &= -3 (\delta-1)  \sin^2(x) \cos^{\delta-1}(x)      \psi  +
  (\delta-1) (\delta-2) \sin^4(x) \cos^{\delta-3}(x)    \psi \\
	     & + \delta^2  \sin^2(x) \cos^{\delta-1}(x) \psi 
	      -  \cos^{\delta-1}(x)  \partial_{x} \left( \sin^2(x)\partial_{x}\psi \right)
\end{align*} 
and hence
\begin{align*}
	 \mathsf{w} \phi \mathsf{L} \phi  &= -3 (\delta-1)  \sin^2(x)     \psi  ^2    +
  (\delta-1) (\delta-2) \sin^4(x) \cos^{ -2}(x)    \psi^2  \\
	     & + \delta^2  \sin^2(x)  \psi   ^2
	      -     \psi  \partial_{x} \left( \sin^2(x)\partial_{x}\psi \right)   .
\end{align*}
Now, using the Dirichlet boundary condition $\psi(\pi/2)=0$, integration by parts yields 
\begin{align*}
	    \int_{0}^{\pi/2} \psi  \partial_{x} \left( \sin^2(x)\partial_{x}\psi \right)       dx=   - \int_{0}^{\pi/2}     \left( \partial_{x}\psi \right) ^2   \sin^2(x)   dx
\end{align*}
and hence we deduce
\begin{align*}
		\int_{0}^{\pi/2} \phi \mathsf{L} \phi d \mu &=
		\int_{0}^{\pi/2} \mathsf{w} \phi \mathsf{L} \phi  d x  \\
		&= -3 (\delta-1) \int_{0}^{\pi/2} \sin^2(x)     \psi  ^2  dx  +
  (\delta-1) (\delta-2)\int_{0}^{\pi/2} \sin^4(x) \cos^{ -2}(x)    \psi^2  dx \\
	     & + \delta^2  \int_{0}^{\pi/2} \sin^2(x) \psi   ^2dx
	      -    \int_{0}^{\pi/2} \psi  \partial_{x} \left( \sin^2(x)\partial_{x}\psi \right)       dx \\
	      &=\left[  \delta^2 -3 (\delta-1) \right] \int_{0}^{\pi/2}     \psi  ^2  \sin^2(x) dx  +
  (\delta-1) (\delta-2)\int_{0}^{\pi/2}   \tan^2(x)  \psi^2  \sin^2(x)  dx  \\
	  & + \int_{0}^{\pi/2}    \left( \partial_{x}\psi \right) ^2   \sin^2(x)    dx.
\end{align*}
Notice that 
\begin{align*}
	\delta^2 -3 (\delta-1) \geq 1, \quad (\delta-1) (\delta-2) \geq 0,
\end{align*}
for all integers $\delta \geq 2$, and consequently we arrive at
\begin{align*} 
		\int_{0}^{\pi/2} \phi  \mathsf{L} \phi d \mu  &  \geq   \int_{0}^{\pi/2}     \psi  ^2  \sin^2(x) dx   + \int_{0}^{\pi/2}    \left( \partial_{x}\psi \right) ^2   \sin^2(x)    dx  .
\end{align*}
We use once again the Dirichlet boundary condition $\psi(\pi/2)=0$ to extend $\psi:(0,\pi/2) \rightarrow \mathbb{R}$ to a smooth function $\psi:(0,\pi)\rightarrow \mathbb{R}$ imposing the reflection symmetry $\psi(x)=-\psi(\pi-x)$, for all $x \in (0,\pi)$. Furthermore, let $f: \mathbb{S}^3= (0,\pi) \times (0,\pi) \times (0,2\pi)\rightarrow \mathbb{R}$ defined by $f(x,\vartheta,\varphi)=\psi(x)$ and note that $d\text{vol}_{\mathbb{S}^3}(x,\vartheta,\varphi) = \sin^2 (x) \sin(\vartheta) dx d\vartheta d\varphi$ is the volume form on the entire $3$-sphere. On the one hand, we have that
\begin{align} \label{Hardy1}
	\left( 	\int_{0}^{\pi/2} \phi  \mathsf{L} \phi d \mu \right)^{1/2} &  \geq \left(  \int_{0}^{\pi/2}   \left[  \psi  ^2 +\left( \partial_{x}\psi \right) ^2 \right] \sin^2(x) dx \right)^{1/2}   \simeq \left(  \int_{0}^{\pi}   \left[  \psi  ^2 +\left( \partial_{x}\psi \right) ^2 \right] \sin^2(x) dx \right)^{1/2}  \nonumber \\
	 & \simeq \left(  \int_{\mathbb{S}^3}  \left[   f ^2 +   \left( \nabla_{\mathbb{S}^3}f\right) ^2 \right] d\text{vol}_{\mathbb{S}^3}  \right)^{1/2}   =  \| f\|_{H^1(\mathbb{S}^3)}.
\end{align}
On the other hand, we also have that
\begin{align} \label{Hardy2}
\left(	\int_{0}^{\pi/2}  \mathsf{W} \phi^4 d\mu \right)^{1/4} & =  
	\left( \int_{0}^{\pi/2}  \psi^4 \sin^2(x) dx \right)^{1/4}  \simeq
\left( 	\int_{0}^{\pi}  \psi^4 \sin^2(x) dx   \right)^{1/4} \nonumber \\
& \simeq \left( \int_{\mathbb{S}^3}  f^4   d\text{vol}_{\mathbb{S}^3} \right)^{1/4}    \simeq  \| f\|_{L^4(\mathbb{S}^3)} .
\end{align}
In view of \eqref{Hardy1} and \eqref{Hardy2}, in order to prove \eqref{InequalitySobolevHardyGoalKG}, it suffices to show that there exists a positive constant so that
\begin{align*}
	\| f\|_{L^4(\mathbb{S}^3)} \lesssim  \| f\|_{H^1(\mathbb{S}^3)}.
\end{align*}
The latter is a direct consequence of the standard Sobolev inequality on the 3-sphere (Theorem 5.1, page 121 in \cite{MR1688256}),
\begin{align*} 
	\| f\|_{L^6(\mathbb{S}^3)} \lesssim  \| f\|_{H^1(\mathbb{S}^3)},
\end{align*}
 together with the embedding $L^6(\mathbb{S}^3) \hookrightarrow L^4(\mathbb{S}^3) $ since $ \mathbb{S}^3  $ is compact. Finally, we focus on the second estimate and prove that $\| f^{(3)} (\boldsymbol{\phi}) \|_{\mathcal{P}} \lesssim \| \boldsymbol{\phi} \|_{\mathcal{P}}^3$. To this end, we proceed as above and obtain
\begin{align*}
	\| f^{(3)} (\boldsymbol{\phi}) \|^{2}_{\mathcal{P}} &= 
	\|   \mathsf{W} \phi^3  \|^{2}_{L^2 (  (0,\pi/2);d\mu  )}  
	=  \int_{0}^{\pi/2}  \mathsf{W}^2(x) \phi^6(x)   d\mu(x)   =  \int_{0}^{\pi/2} 	\psi^6 (x) \sin^2(x)   dx \\
	&
	 \simeq 	\int_{0}^{\pi}  \psi^6 \sin^2(x) dx    
 \simeq  \int_{\mathbb{S}^3}  f^6   d\text{vol}_{\mathbb{S}^3}     \simeq  \| f\|_{L^6(\mathbb{S}^3)}^6 \lesssim  \| f\|_{H^1(\mathbb{S}^3)}^6 \\
	& \lesssim \|\phi\|_{H^1 ( (0,\pi/2);d\mu )}^6 \leq \|  \boldsymbol{\phi}  \| _{\mathcal{P}}^6 ,
\end{align*}
that completes the proof.  
 \end{proof}
Next, we compute the harmonic energy in the Fourier space.
\begin{lemma}[Harmonic energy]\label{LemmaHarmonicFlowWrtTheBasisem}
	Let $\boldsymbol{\phi}=(\phi_1,\phi_2) \in \mathcal{P}$ be expanded in terms of the eigenfunctions $\{e_m:m\geq 0 \}$ to the linearized operator $L$,
	 \begin{align*}
	 	\phi_{ \alpha} (t)=\sum_{m=0}^{\infty} \phi_{ \alpha}^{(m)} (t)e_m, \quad 
	   \alpha \in \{1,2\}.
	 \end{align*}
	 Then, 
	 \begin{align*}
	 h_{\Omega}(\boldsymbol{\phi}) &= \sum_{m=0}^{\infty} 
	 ( \phi_2^{(m)} )^2  +\sum_{m=0}^{\infty} \omega_{m}^2  ( \phi_1^{(m)} )^2  .
\end{align*}
\end{lemma}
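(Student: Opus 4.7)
The plan is to identify the harmonic energy with the functional
\[
h_\Omega(\boldsymbol{\phi}) = \int_0^{\pi/2}\bigl((L^{1/2}\phi_1)^2 + \phi_2^2\bigr)\,d\mu,
\]
already derived in the computation preceding the statement (where $\Omega(A\mathbf{p},\mathbf{p})$ was evaluated), and then to invoke Parseval's identity in the orthonormal eigenbasis $\{e_m : m\geq 0\}$ of $L^2((0,\pi/2);d\mu)$ supplied by Section~\ref{SectionEigenvalueProblem}.

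Concretely, I would first write both components in their Fourier expansions $\phi_\alpha = \sum_m \phi_\alpha^{(m)} e_m$, and use the eigenvalue relation $L e_m = \omega_m^2 e_m$, which yields $L^{1/2} e_m = \omega_m e_m$ (recall $\omega_m > 0$ so the square root is unambiguous). Substituting in the expansion of $\phi_1$ gives
\[
L^{1/2}\phi_1 = \sum_{m=0}^\infty \omega_m\,\phi_1^{(m)}\,e_m.
\]
Then by the orthonormality relation \eqref{OrthogonalityoftheEigenfunctions1}, namely $(e_m|e_n) = \mathds{1}(m=n)$ with the inner product \eqref{DefinitionInnerProduct}, one has
\[
\int_0^{\pi/2}(L^{1/2}\phi_1)^2\,d\mu = \sum_{m,n=0}^\infty \omega_m\omega_n\,\phi_1^{(m)}\phi_1^{(n)}\,(e_m|e_n) = \sum_{m=0}^\infty \omega_m^2\,(\phi_1^{(m)})^2,
\]
and analogously $\int_0^{\pi/2}\phi_2^2\,d\mu = \sum_m (\phi_2^{(m)})^2$. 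Summing the two contributions produces exactly the claimed identity.

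There is no real obstacle here: all convergence issues are handled by the fact that $\boldsymbol{\phi} \in \mathcal{P} = H^1 \times L^2$ with $H^1$ defined precisely so that $\sum_m \omega_m^2 (\phi_1^{(m)})^2 < \infty$ (Section~\ref{SectionWeightedSobolev}), and the interchange of summation and integration is justified by the completeness of $\{e_m\}$ together with dominated convergence applied to partial sums. The only point worth remarking is that the identification $h_\Omega(\boldsymbol{\phi}) = \int_0^{\pi/2}((L^{1/2}\phi_1)^2 + \phi_2^2)\,d\mu$, initially derived under the assumption $\phi_1 \in \mathcal{D}(L)$ via the skew-symmetry of $A$, extends to all of $\mathcal{P}$ by density of $\mathcal{D}(L^{1/2}) \cap H^1 \hookrightarrow H^1$ and the continuity of both sides with respect to the $\mathcal{P}$-norm.
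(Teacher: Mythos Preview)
Your proposal is correct and follows essentially the same approach as the paper: identify $h_\Omega(\boldsymbol{\phi})$ with $\int_0^{\pi/2}\bigl((L^{1/2}\phi_1)^2+\phi_2^2\bigr)\,d\mu$, expand both components in the orthonormal eigenbasis $\{e_m\}$, use $L^{1/2}e_m=\omega_m e_m$, and collapse the double sum via the orthonormality relation \eqref{OrthogonalityoftheEigenfunctions1}. Your added remarks on convergence and the density extension from $\mathcal{D}(L)$ to all of $\mathcal{P}$ are not in the paper's proof but are harmless (and arguably more careful).
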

\begin{proof}
Let $\boldsymbol{\phi}=(\phi_1,\phi_2) \in \mathcal{P}$ and expand its coefficients   in terms of the eigenfunctions $\{e_m:m\geq 0 \}$ to the linearized operator $L$,
	 \begin{align*}
	 	\phi_{ \alpha} (t)=\sum_{m=0}^{\infty} \phi_{ \alpha}^{(m)} (t)e_m, \quad 
	   \alpha \in \{1,2\}.
	 \end{align*}
Then, we use the fact that the set of eigenfunctions $\{e_m:m\geq 0\}$ is orthonormal with respect to the inner product in $L^2((0,\pi/2);d\mu)$, to compute that
\begin{align*} 
	 h_{\Omega}(\boldsymbol{\phi}) &= \frac{1}{2} \Omega(A\boldsymbol{\phi},\boldsymbol{\phi}) 
	 =\int_{0}^{\pi/2}\left( 
	 \phi_2 ^2 + \left(L^{1/2}\phi_{1}\right)^2 \right)  d\mu  \\
	 & =\sum_{i,j=0}^{\infty}  \int_{0}^{\pi/2}\left( 
	 \phi_2^{(i)}\phi_2^{(j)} e_i (x)e_j(x) +  \phi_1^{(i)}\phi_1^{(j)} L^{1/2}e_i (x)L^{1/2}e_j(x) \right)  d\mu (x)  \\
	  & =\sum_{i,j=0}^{\infty}  \left( 
	 \phi_2^{(i)}\phi_2^{(j)}   + \omega_{i}\omega_{j} \phi_1^{(i)}\phi_1^{(j)}   \right)\int_{0}^{\pi/2}e_i (x) e_j(x)  d\mu (x)  \\
	 & =\sum_{i,j=0}^{\infty}  \left( 
	 \phi_2^{(i)}\phi_2^{(j)}   + \omega_{i}\omega_{j} \phi_1^{(i)}\phi_1^{(j)}   \right) \delta_{ij}  = \sum_{m=0}^{\infty} 
	 ( \phi_2^{(m)} )^2  +\sum_{m=0}^{\infty} \omega_{m}^2  ( \phi_1^{(m)})^2  ,
\end{align*}
that completes the proof.
\end{proof}
Next, we compute the linear flow in the Fourier space. To do so, we denote by $ e^{At}$ the unitary group generated by $A$. 

\begin{lemma}[Linear Flow]\label{LemmaLinearFlowClosedFormula}
	Let $\boldsymbol{\zeta}=(\zeta_1,\zeta_2) \in \mathcal{P}$ be expanded in terms of the eigenfunctions $	\{ e_m: m\geq 0 \}$ to the linearized operator $L$,
	\begin{align*}
		\zeta_\alpha =\sum_{m=0}^{\infty} \zeta_\alpha^{(m)} e_m,\quad  \alpha \in \{1,2\}.
	\end{align*}
Then,  
\begin{align*} 
	\boldsymbol{\Phi}^{t}(\boldsymbol{\zeta}) := e^{At}\boldsymbol{\zeta}= 
	 \begin{pmatrix}
		\sum_{m=0}^{\infty} \left(\zeta_{1}^{(m)}  \cos(\omega_{m}t) + \frac{\zeta_2^{(m)}}{\omega_{m}} \sin (\omega_{m}t) \right) e_m \\
		\sum_{m=0}^{\infty} \left(-\omega_{m}\zeta_{1}^{(m)}  \sin(\omega_{m}t) +  \zeta_2^{(m)}  \cos (\omega_{m}t) \right) e_m 
	\end{pmatrix}.
\end{align*}
\end{lemma}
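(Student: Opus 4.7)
The plan is to verify the formula by decomposing the linear evolution $\partial_t \boldsymbol{\phi} = A\boldsymbol{\phi}$ into uncoupled scalar oscillator equations, one for each eigenmode of $L$, and then to solve each of these explicitly. Since $\{e_m : m \geq 0\}$ forms an orthonormal and complete basis of $L^2((0,\pi/2); d\mu)$ (Section~\ref{SectionLineareigenvalueproblems}), and $L$ acts on this basis via $L e_m = \omega_m^2 e_m$, the operator $A$ preserves the two-dimensional subspaces $V_m := \mathrm{span}\{(e_m,0), (0,e_m)\}$. First, I would project the candidate solution onto the basis: write $\boldsymbol{\Phi}^t(\boldsymbol{\zeta}) = \sum_{m=0}^\infty \bigl(\phi_1^{(m)}(t) e_m, \phi_2^{(m)}(t) e_m\bigr)$ and insert the ansatz into the linear system $\partial_t \boldsymbol{\phi}_{\text{linear}} = A\boldsymbol{\phi}_{\text{linear}}$ from \eqref{LinearizedEquationForBambusiNekhoroshev}, which on each $V_m$ reduces to the $2\times 2$ ODE
\begin{equation*}
\frac{d}{dt}\begin{pmatrix} \phi_1^{(m)} \\ \phi_2^{(m)} \end{pmatrix} = \begin{pmatrix} 0 & 1 \\ -\omega_m^2 & 0 \end{pmatrix}\begin{pmatrix} \phi_1^{(m)} \\ \phi_2^{(m)} \end{pmatrix}, \qquad \begin{pmatrix} \phi_1^{(m)}(0) \\ \phi_2^{(m)}(0) \end{pmatrix} = \begin{pmatrix} \zeta_1^{(m)} \\ \zeta_2^{(m)} \end{pmatrix}.
\end{equation*}
This is the standard harmonic oscillator with frequency $\omega_m > 0$, whose unique solution is exactly the pair of coefficients appearing in the statement of the lemma.

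Second, I would check that the series so obtained converges in $\mathcal{P} = H^1((0,\pi/2);d\mu) \times L^2((0,\pi/2);d\mu)$ for every $t \in \mathbb{R}$ and defines a $C^1$ curve in $\mathcal{P}$. Using Lemma~\ref{LemmaHarmonicFlowWrtTheBasisem} to compute the harmonic energy of the candidate solution on each mode, one finds
\begin{equation*}
h_{\Omega}\bigl(\boldsymbol{\Phi}^t(\boldsymbol{\zeta})\bigr) = \sum_{m=0}^\infty \bigl[(\phi_2^{(m)}(t))^2 + \omega_m^2 (\phi_1^{(m)}(t))^2\bigr] = \sum_{m=0}^\infty \bigl[(\zeta_2^{(m)})^2 + \omega_m^2 (\zeta_1^{(m)})^2\bigr] = h_{\Omega}(\boldsymbol{\zeta}),
\end{equation*}
since on each mode $(\omega_m \phi_1^{(m)}, \phi_2^{(m)})$ is just a rotation of $(\omega_m \zeta_1^{(m)}, \zeta_2^{(m)})$ by angle $\omega_m t$. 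This conservation law, together with the equivalence between $h_{\Omega}(\cdot)$ and the norm $\|\cdot\|_{\mathcal{P}}$ (which follows from the ellipticity of the sesquilinear form $\alpha$ established in Section~\ref{se:sa}), gives both the convergence of the series in $\mathcal{P}$ and the unitarity of $e^{At}$ with respect to $h_\Omega$.

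Third, I would justify differentiating the series term by term in $t$, which is immediate since on the finite-dimensional eigenspace $V_m$ the exponential $e^{At}\big|_{V_m}$ is simply the matrix exponential of $\begin{pmatrix} 0 & 1 \\ -\omega_m^2 & 0 \end{pmatrix} t$, and then extract uniqueness from the fact that $A$ generates a strongly continuous (in fact unitary) group on $\mathcal{P}$. In this way the candidate series both solves the initial value problem \eqref{LinearizedEquationForBambusiNekhoroshev} and coincides with the abstract semigroup $e^{At}$ applied to $\boldsymbol{\zeta}$.

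I expect no serious obstacle: the lemma is a direct Fourier-space diagonalization once the spectral theory of Section~\ref{SectionLineareigenvalueproblems} is in hand. The only mildly delicate point is the convergence of the series in $\mathcal{P}$ for initial data of minimal regularity $(\zeta_1, \zeta_2) \in H^1 \times L^2$, but this is immediately resolved by the conservation of $h_\Omega$ and its equivalence to $\|\cdot\|_{\mathcal{P}}^2$.
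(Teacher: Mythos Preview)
Your proposal is correct and follows essentially the same route as the paper: both diagonalize the linear evolution in the eigenbasis $\{e_m\}$ and solve the resulting harmonic oscillator on each mode, the only cosmetic difference being that the paper eliminates $\phi_2^{(m)}$ and writes a second-order scalar ODE for $\phi_1^{(m)}$ while you keep the first-order $2\times 2$ system. Your additional remarks on convergence of the series in $\mathcal{P}$ via conservation of $h_\Omega$ and on unitarity are details the paper's proof leaves implicit.
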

\begin{proof}
Let $\boldsymbol{\zeta}=(\zeta_1,\zeta_2) \in \mathcal{P}$ and recall that $\boldsymbol{\phi}_{\text{linear}}(t)=\boldsymbol{\Phi}^{t}(\boldsymbol{\zeta})=e^{tA}\boldsymbol{\zeta}$  is the solution to the initial value problem \eqref{LinearizedEquationForBambusiNekhoroshev}. For $\boldsymbol{\phi}_{\text{linear}}(t)=(\phi_{\text{linear},1}(t),\phi_{\text{linear},2}(t)) \in \mathcal{P}$, we compute
	\begin{align*}
	\begin{pmatrix}
		\partial_{t}\phi_{\text{linear},1}(t) \\
		\partial_{t}\phi_{\text{linear},2}(t)
	\end{pmatrix} 
	= 
		\begin{pmatrix}
		0 & 1 \\
		-L & 0
	\end{pmatrix}
	\begin{pmatrix}
		\phi_{\text{linear},1}(t) \\
		\phi_{\text{linear},2}(t)
	\end{pmatrix}=
	\begin{pmatrix}
		\phi_{\text{linear},2}(t) \\
		-L \phi_{\text{linear},1}(t)
	\end{pmatrix}  
	\end{align*} 
	that leads to
	\begin{align}\label{AuxiliarlySecondOrderODE}
		\partial_{t}^2 \phi_{\text{linear},1}(t) +L  \phi_{\text{linear},1}(t) =0.
	\end{align}
	 Expanding $\phi_{\text{linear},1}(t)$, $\phi_{\text{linear},2}(t)$, $\zeta_1$ and $\zeta_2$ in terms of the eigenfunctions of $L$,
	 \begin{align*}
	 	\phi_{\text{linear},\alpha} (t)=\sum_{m=0}^{\infty} \phi_{\text{linear},\alpha}^{(m)} (t)e_m, \quad 
	 	\zeta_\alpha =\sum_{m=0}^{\infty} \zeta_\alpha^{(m)} e_m,\quad  \alpha \in \{1,2\},
	 \end{align*}
	 we infer
	 \begin{align*}
	\phi_{\text{linear},1}^{(m)}(0) &= \left( \phi_{\text{linear},1}(0) | e_m \right)  = \left( \zeta_{1} | e_m \right)= \zeta_1^{(m)}, \\ 
	\partial_{t}  \phi_{\text{linear,1}}^{(m)}(0)&= \left(\partial_{t}\phi_{\text{linear,1}}(0) | e_m \right)  = \left( (A\zeta)_{1} | e_m \right)=\left( \zeta_{2} | e_m \right)= \zeta_2^{(m)},
	 \end{align*}
	 for all integers $m\geq0$, and hence \eqref{AuxiliarlySecondOrderODE} yields
	 \begin{align*}
	 	 \frac{d^2}{dt^2} \ddot{\phi}_{\text{linear,1}}^{(m)}(t)+\omega_{m}^2 \phi_{\text{linear},1}^{(m)}(t)=0 
	 	  \Longrightarrow   \phi_{\text{linear},1}^{(m)}(t)= \zeta_{1}^{(m)}  \cos(\omega_{m}t) + \frac{\zeta_2^{(m)}}{\omega_{m}} \sin (\omega_{m}t),
	 \end{align*}
	 for all integers $m\geq0$. Consequently, we obtain 
\begin{align*} 
	\boldsymbol{\Phi}^{t}(\boldsymbol{\zeta}) = e^{At}\boldsymbol{\zeta}= 
	 \begin{pmatrix}
		\sum_{m=0}^{\infty} \left(\zeta_{1}^{(m)}  \cos(\omega_{m}t) + \frac{\zeta_2^{(m)}}{\omega_{m}} \sin (\omega_{m}t) \right) e_m \\
		\sum_{m=0}^{\infty} \left(-\omega_{m}\zeta_{1}^{(m)}  \sin(\omega_{m}t) +  \zeta_2^{(m)}  \cos (\omega_{m}t) \right) e_m 
	\end{pmatrix},
\end{align*}
that completes the proof.
\end{proof}
Notice that $\boldsymbol{\Phi}^{t} (\boldsymbol{\zeta})$ is $2\pi$-periodic in time. The leading order term $f(\boldsymbol{\phi})$ in the Hamiltonian is given by \eqref{LeadingOrderNonlinearityHamiltonianSectionStability} and we define its time-average over the linear flow  by
 \begin{align*}
 	\langle f  \rangle (\boldsymbol{\zeta})= \frac{1}{2\pi} \int_{0}^{2\pi} f (\boldsymbol{\Phi}^{t}(\boldsymbol{\zeta}) )	dt,
 \end{align*}
where $\boldsymbol{\Phi}^{t}(\boldsymbol{\zeta}) $ stands for the linear flow, given by Lemma \ref{LemmaLinearFlowClosedFormula}, associated with $\boldsymbol{\zeta}$. Next, we compute this time-averaging operator in the Fourier space.

\begin{lemma}[Time-averaging]\label{LemmaTimeAveraging}
	Let $\boldsymbol{\zeta}=(\zeta_1,\zeta_2) \in \mathcal{P}$ be expanded in terms of the eigenfunctions $\{e_m:m\geq 0 \}$ to the linearized operator $L$,
	 \begin{align*} 
	 	\zeta_{ \alpha} =\sum_{m=0}^{\infty} \zeta_{ \alpha}^{(m)} e_m,
	 	 \quad 
	   \alpha \in \{1,2\}.
	 \end{align*} 
	 Then,
	 \begin{align*}
	 	\langle f \rangle (\boldsymbol{\zeta}) 
	&=\frac{3}{32}\sum_{i,j,k,m=0}^{\infty} C_{ijkm}   
	P_{ijkm}(\boldsymbol{\zeta})  \mathds{1}(m=i+j-k \geq 0) ,
\end{align*} 
where
\begin{align*}
	P_{ijkm}(\boldsymbol{\zeta})  &= -\frac{\zeta_2^{  ( i )} \zeta_2^{  ( j )} \zeta_1^{  ( k )} \zeta_1^{  ( m )}}{\omega_{ i } \omega_{ j }}+\frac{\zeta_2^{  ( i )} \zeta_1^{  ( j )} \zeta_2^{  ( k )} \zeta_1^{  ( m )}}{\omega_{ i } \omega_{ k }}+\frac{\zeta_1^{  ( i )} \zeta_2^{  ( j )} \zeta_2^{  ( k )} \zeta_1^{  ( m )}}{\omega_{ j } \omega_{ k }} +\frac{\zeta_2^{  ( i )} \zeta_1^{  ( j )} \zeta_1^{  ( k )} \zeta_2^{  ( m )}}{\omega_{ i } \omega_{ m }} \\
	&+\frac{\zeta_1^{  ( i )} \zeta_2^{  ( j )} \zeta_1^{  ( k )} \zeta_2^{  ( m )}}{\omega_{ j } \omega_{ m }}-\frac{\zeta_1^{  ( i )} \zeta_1^{  ( j )} \zeta_2^{  ( k )} \zeta_2^{  ( m )}}{\omega_{ k } \omega_{ m }}+\zeta_1^{  ( i )} \zeta_1^{  ( j )} \zeta_1^{  ( k )} \zeta_1^{  ( m )}+\frac{\zeta_2^{  ( i )} \zeta_2^{  ( j )} \zeta_2^{  ( k )} \zeta_2^{  ( m )}}{\omega_{ i } \omega_{ j } \omega_{ k } \omega_{ m }}.
\end{align*}
\end{lemma}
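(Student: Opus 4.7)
The plan is to substitute the explicit form of the linear flow from Lemma~\ref{LemmaLinearFlowClosedFormula} into the definition of $f$ given by \eqref{LeadingOrderNonlinearityHamiltonianSectionStability}, expand everything in the eigenbasis and compute the time-average mode by mode. Concretely, Lemma~\ref{LemmaLinearFlowClosedFormula} yields $\phi_1(t)=\sum_n\phi^{(n)}(t)e_n$ with $\phi^{(n)}(t)=\zeta_1^{(n)}\cos(\omega_n t)+(\zeta_2^{(n)}/\omega_n)\sin(\omega_n t)$; inserting this into $f$ and expanding $W\phi_1^4$ via \eqref{DefinitionFourierByExpantion} gives
\begin{align*}
    f(\boldsymbol{\Phi}^t(\boldsymbol{\zeta})) = \tfrac{1}{4}\sum_{i,j,k,m=0}^\infty C_{ijkm}\,\phi^{(i)}(t)\phi^{(j)}(t)\phi^{(k)}(t)\phi^{(m)}(t).
\end{align*}

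Next, I would rewrite each factor as $\phi^{(n)}(t)=\tfrac{1}{2}\bigl(A_n e^{i\omega_n t}+\overline{A_n}\,e^{-i\omega_n t}\bigr)$ with $A_n:=\zeta_1^{(n)}-i\zeta_2^{(n)}/\omega_n$, expand the product into $2^4=16$ complex exponentials, and take the time average using $\tfrac{1}{2\pi}\int_0^{2\pi}e^{i\Omega t}\,dt=\mathds{1}(\Omega=0)$. Only sign configurations $(\sigma_1,\sigma_2,\sigma_3,\sigma_4)\in\{\pm 1\}^4$ satisfying the resonance $\sigma_1\omega_i+\sigma_2\omega_j+\sigma_3\omega_k+\sigma_4\omega_m=0$ contribute. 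The two configurations with all signs equal never resonate since $\omega_n>0$, and by Lemma~\ref{LemmaVanishingFourier} the eight configurations with exactly one dissenting sign annihilate $C_{ijkm}$. This leaves the $\binom{4}{2}=6$ two-plus/two-minus configurations, which split into three complex-conjugate pairs associated with the three distinct resonance conditions $\omega_i+\omega_j=\omega_k+\omega_m$, $\omega_i+\omega_k=\omega_j+\omega_m$ and $\omega_i+\omega_m=\omega_j+\omega_k$.

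I would then expand $\operatorname{Re}\bigl(A_i A_j\overline{A_k A_m}\bigr)$ directly in terms of $\zeta_1^{(\cdot)}$ and $\zeta_2^{(\cdot)}$: the eight resulting real monomials reproduce exactly the polynomial $P_{ijkm}(\boldsymbol{\zeta})$, with the two $\zeta_2\zeta_2$ terms whose index-pair sits entirely inside the $+$-block or entirely inside the $-$-block carrying a minus sign, and the four mixed $\zeta_2\zeta_2$ terms being positive. The full permutation symmetry of $C_{ijkm}$ then lets me relabel dummy indices so that the three conjugate pairs coalesce into three copies of the same sum against the single resonance $\omega_i+\omega_j=\omega_k+\omega_m$, which is equivalent to $m=i+j-k\geq 0$ since $\omega_n$ is affine in $n$ in both models. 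Collecting the overall prefactor $\tfrac{1}{4}\cdot\tfrac{1}{16}\cdot 2\cdot 3=\tfrac{3}{32}$ produces the claimed formula.

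The main obstacle is the combined sign and combinatorial bookkeeping in the last paragraph: one has to verify that the three conjugate pairs do coalesce into three copies of a single sum under the relabeling permitted by the $S_4$-symmetry of $C_{ijkm}$, and that the $\pm$ pattern of the six cross terms in $P_{ijkm}$ is matched correctly by the real part of the relevant monomial in the $A_n$'s. These steps are routine but error-prone, and constitute the only non-trivial content beyond the basic time-averaging of complex exponentials.
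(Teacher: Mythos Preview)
Your proposal is correct and follows essentially the same approach as the paper: expand $f$ in Fourier modes, insert the linear flow, time-average, discard the all-plus and one-minus resonance configurations (the latter via Lemma~\ref{LemmaVanishingFourier}), and then use the $S_4$-symmetry of $C_{ijkm}$ to collapse the three two-minus resonance conditions into a single one with an extra factor of $3$. The only cosmetic difference is that the paper carries out the computation with real trigonometric identities---obtaining three explicit polynomials $P_{ijkm}$, $Q_{ijkm}$, $R_{ijkm}$ and then checking $Q_{ijkm}=P_{ikjm}$, $R_{ijkm}=P_{kjim}$---whereas you package the same computation in complex exponentials via $A_n=\zeta_1^{(n)}-i\zeta_2^{(n)}/\omega_n$, which makes the sign pattern in $P_{ijkm}$ and the conjugate-pair structure more transparent.
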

\begin{proof} 
Let $\boldsymbol{\phi} =(\phi_{ 1},\phi_{ 2}) \in \mathcal{P}$ and expand its component in terms of the eigenfunctions to the lienarized operator $L$, namely
	 \begin{align*}
	 	\phi_{ \alpha} (t)=\sum_{m=0}^{\infty} \phi_\alpha^{(m)} (t)e_m, \quad  \alpha \in \{1,2\}.
	 \end{align*} 
Then, we also expand
\begin{align}\label{fphi}
	 	f(\boldsymbol{\phi}) &=  \frac{1}{4} \int_{0}^{\pi/2} W \phi_1^4 d\mu \nonumber \\
	 	&=  \frac{1}{4}\sum_{i,j,k,m=0}^{\infty} \phi_{1}^{(i)} (t)\phi_{1}^{(j)} (t)\phi_{1}^{(k)} (t)\phi_{1}^{(m)} (t) \int_{0}^{\pi/2} W(x)e_i(x)e_j(x)e_k(x)e_m(x)d\mu (x)\nonumber \\
	 	&=  \frac{1}{4}\sum_{i,j,k,m=0}^{\infty} C_{ijkm}   \phi_{1}^{(i)} (t)\phi_{1}^{(j)} (t)\phi_{1}^{(k)} (t)\phi_{1}^{(m)} (t) ,
\end{align}
where we also used the definition of the Fourier coefficient from \eqref{DefinitionFourierByExpantion}. Let $\boldsymbol{\zeta}=(\zeta_1,\zeta_2) \in \mathcal{P}$ and consider the linear flow $\boldsymbol{\phi}_{\text{linear}}(t)=\boldsymbol{\Phi}^{t}(\boldsymbol{\zeta})=(\phi_{\text{linear},1}(t),\phi_{\text{linear},2}(t)) \in \mathcal{P}$ associated with $\boldsymbol{\zeta}$ given by Lemma \ref{LemmaLinearFlowClosedFormula}. In particular, we have that
\begin{align}\label{philinear1}
 \phi_{\text{linear},1}^{(m)}(t) = \zeta_{1}^{(m)}  \cos(\omega_{m}t) + \frac{\zeta_2^{(m)}}{\omega_{m}} \sin (\omega_{m}t) ,
\end{align}
for all integers $m \geq 0$. To compute the time average
\begin{align*}
\langle f  \rangle (\boldsymbol{\zeta})= \frac{1}{2\pi} \int_{0}^{2\pi} f (\boldsymbol{\Phi}^{t}(\boldsymbol{\zeta}))	dt, 
\end{align*}
 we firstly plug \eqref{philinear1} into \eqref{fphi} and use trigonometric identities to expand all products $\cos(\omega_{l_1}t)\sin(\omega_{l_2}t)$, $\sin(\omega_{l_1}t)\sin(\omega_{l_2}t)$ and $\cos(\omega_{l_1}t)\cos(\omega_{l_2}t)$ with $l_1,l_2\in \{i,j,k,m\}$ in terms of $\cos((\omega_{i}\pm \omega_{j}\pm \omega_{k}\pm \omega_{m})t)$ and $\sin((\omega_{i}\pm \omega_{j}\pm \omega_{k}\pm \omega_{m})t)$. We obtain a long formula of the form
\begin{align}\label{ComplicatedResult}
	f(\boldsymbol{\Phi}^{t}(\boldsymbol{\zeta}))	=\sum_{T\in \{\sin,\cos\}} \sum_{(i,j,k,m)\in I_0 \cup I_1 \cup I_2}Z_{ijkm} C_{ijkm} T ( (\omega_{i}\pm \omega_{j}\pm \omega_{k}\pm \omega_{m})t ),  
\end{align}
where $Z_{ijkm}$ depends on $\zeta_1^{(l)}$ and $\zeta_2^{(l)}$ for all $l\in\{i,j,k,m \}$ and 
\begin{align*}
	I_0 &= \left\{(i,j,k,m):~ \text{no minus signs in } \omega_{i}\pm \omega_{j}\pm \omega_{k}\pm \omega_{m} \right\}, \\
	I_1 &= \left\{(i,j,k,m):~ \text{only one minus sign in } \omega_{i}\pm \omega_{j}\pm \omega_{k}\pm \omega_{m} \right\}, \\
	I_2 &= \left\{(i,j,k,m):~ \text{only two minus signs in } \omega_{i}\pm \omega_{j}\pm \omega_{k}\pm \omega_{m} \right\}.
\end{align*}
According to Lemma \ref{LemmaVanishingFourier}, we have that $ C_{`ijkm} =0$, for all $(i,j,k,m) \in I_1$, hence we only need to compute the sum with respect to $(i,j,k,m) \in I_0 \cup I_2$. Then, we apply the integral with respect to $t\in(0,2\pi)$ to each of the summands in \eqref{ComplicatedResult} and use the fact that
\begin{align*}
	\frac{1}{2\pi }\int_{0}^{2\pi } \cos(\omega t)dt = \mathds{1}(\omega=0), \quad 
	\frac{1}{2\pi }\int_{0}^{2\pi } \sin(\omega t)dt=0 
\end{align*}  
to compute each integral obtaining 
 \begin{align*}
 \langle f  \rangle (\boldsymbol{\zeta})=	\frac{1}{2\pi }\int_{0}^{2\pi }f(\boldsymbol{\Phi}^{t}(\boldsymbol{\zeta}))dt=\sum_{(i,j,k,m)\in  I_1 \cup I_2}Z_{ijkm} C_{ijkm} \mathds{1} ( \omega_{i}\pm \omega_{j}\pm \omega_{k}\pm \omega_{m} =0 ).	
 \end{align*}
In conclusion, we infer
\begin{align*} 
	\langle f  \rangle (\boldsymbol{\zeta})  
	&=\frac{1}{32}\sum_{i,j,k,m=0}^{\infty} C_{ijkm}  \Big[
	P_{ijkm}(\boldsymbol{\zeta})  \mathds{1}(m=i+j-k \geq 0)\nonumber \\
	&+Q_{ijkm}(\boldsymbol{\zeta})  \mathds{1}(m=i-j+k \geq 0)
	+R_{ijkm}(\boldsymbol{\zeta}) \mathds{1}(m=-i+j+k \geq 0)
	\Big],
\end{align*}
where
\begin{align*}
	P_{ijkm}(\boldsymbol{\zeta})  &= -\frac{\zeta_2^{  ( i )} \zeta_2^{  ( j )} \zeta_1^{  ( k )} \zeta_1^{  ( m )}}{\omega_{ i } \omega_{ j }}+\frac{\zeta_2^{  ( i )} \zeta_1^{  ( j )} \zeta_2^{  ( k )} \zeta_1^{  ( m )}}{\omega_{ i } \omega_{ k }}+\frac{\zeta_1^{  ( i )} \zeta_2^{  ( j )} \zeta_2^{  ( k )} \zeta_1^{  ( m )}}{\omega_{ j } \omega_{ k }} +\frac{\zeta_2^{  ( i )} \zeta_1^{  ( j )} \zeta_1^{  ( k )} \zeta_2^{  ( m )}}{\omega_{ i } \omega_{ m }} \\
	&+\frac{\zeta_1^{  ( i )} \zeta_2^{  ( j )} \zeta_1^{  ( k )} \zeta_2^{  ( m )}}{\omega_{ j } \omega_{ m }}-\frac{\zeta_1^{  ( i )} \zeta_1^{  ( j )} \zeta_2^{  ( k )} \zeta_2^{  ( m )}}{\omega_{ k } \omega_{ m }}+\zeta_1^{  ( i )} \zeta_1^{  ( j )} \zeta_1^{  ( k )} \zeta_1^{  ( m )}+\frac{\zeta_2^{  ( i )} \zeta_2^{  ( j )} \zeta_2^{  ( k )} \zeta_2^{  ( m )}}{\omega_{ i } \omega_{ j } \omega_{ k } \omega_{ m }}, \\
	Q_{ijkm}(\boldsymbol{\zeta})  &=  \frac{\zeta_2^{  ( i )} \zeta_2^{  ( j )} \zeta_1^{  ( k )} \zeta_1^{  ( m )}}{\omega_{ i } \omega_{ j }}-\frac{\zeta_2^{  ( i )} \zeta_1^{  ( j )} \zeta_2^{  ( k )} \zeta_1^{  ( m )}}{\omega_{ i } \omega_{ k }}+\frac{\zeta_1^{  ( i )} \zeta_2^{  ( j )} \zeta_2^{  ( k )} \zeta_1^{  ( m )}}{\omega_{ j } \omega_{ k }}+\frac{\zeta_2^{  ( i )} \zeta_1^{  ( j )} \zeta_1^{  ( k )} \zeta_2^{  ( m )}}{\omega_{ i } \omega_{ m }} \\
	&+\zeta_1^{  ( i )} \zeta_1^{  ( j )} \zeta_1^{  ( k )} \zeta_1^{  ( m )} -\frac{\zeta_1^{  ( i )} \zeta_2^{  ( j )} \zeta_1^{  ( k )} \zeta_2^{  ( m )}}{\omega_{ j } \omega_{ m }}+\frac{\zeta_1^{  ( i )} \zeta_1^{  ( j )} \zeta_2^{  ( k )} \zeta_2^{  ( m )}}{\omega_{ k } \omega_{ m }}+\frac{\zeta_2^{  ( i )} \zeta_2^{  ( j )} \zeta_2^{  ( k )} \zeta_2^{  ( m )}}{\omega_{ i } \omega_{ j } \omega_{ k } \omega_{ m }} , \\
	R_{ijkm}(\boldsymbol{\zeta})  &=  \frac{\zeta_2^{  ( i )} \zeta_2^{  ( j )} \zeta_1^{  ( k )} \zeta_1^{  ( m )}}{\omega_{ i } \omega_{ j }}+\frac{\zeta_2^{  ( i )} \zeta_1^{  ( j )} \zeta_2^{  ( k )} \zeta_1^{  ( m )}}{\omega_{ i } \omega_{ k }}-\frac{\zeta_1^{  ( i )} \zeta_2^{  ( j )} \zeta_2^{  ( k )} \zeta_1^{  ( m )}}{\omega_{ j } \omega_{ k }}-\frac{\zeta_2^{  ( i )} \zeta_1^{  ( j )} \zeta_1^{  ( k )} \zeta_2^{  ( m )}}{\omega_{ i } \omega_{ m }} \\
	&+\zeta_1^{  ( i )} \zeta_1^{  ( j )} \zeta_1^{  ( k )} \zeta_1^{  ( m )} +\frac{\zeta_1^{  ( i )} \zeta_1^{  ( j )} \zeta_2^{  ( k )} \zeta_2^{  ( m )}}{\omega_{ k } \omega_{ m }}+\frac{\zeta_1^{  ( i )} \zeta_2^{  ( j )} \zeta_1^{  ( k )} \zeta_2^{  ( m )}}{\omega_{ j } \omega_{ m }}+\frac{\zeta_2^{  ( i )} \zeta_2^{  ( j )} \zeta_2^{  ( k )} \zeta_2^{  ( m )}}{\omega_{ i } \omega_{ j } \omega_{ k } \omega_{ m }} .
\end{align*}
Finally, notice that from the formulas above one can immediately verify that
\begin{align*}
	Q_{ijkm}(\boldsymbol{\zeta})=P_{ikjm}(\boldsymbol{\zeta}), \quad 
	R_{ijkm}(\boldsymbol{\zeta})=P_{kjim}(\boldsymbol{\zeta}),
\end{align*}
for all integers $i,j,k,m \geq 0$. Moreover, we also have the symmetries of the Fourier coefficients $C_{ijkm}=C_{ikjm}=C_{kjim}$, for all integers $i,j,k,m \geq 0$, from \eqref{DefinitionOfFourierCoefficientsModelKG} and \eqref{DefinitionOfFourierCoefficientsModelWM}. Putting these together, we exchange $k$ with $j$ in the second sum,
\begin{align*}
	 \sum_{i,j,k,m=0}^{\infty} C_{ijkm}Q_{ijkm}(\boldsymbol{\zeta})  \mathds{1}(m=i-j+k \geq 0)  &=  
	 \sum_{i,j,k,m=0}^{\infty} C_{ikjm}Q_{ikjm}(\boldsymbol{\zeta})  \mathds{1}(m=i-k+j \geq 0) \\
	  &=
	 \sum_{i,j,k,m=0}^{\infty} C_{ijkm}P_{ijkm}(\boldsymbol{\zeta})  \mathds{1}(m=i-k+j \geq 0),
\end{align*}
and $k$ with $i$ in the third sum,
\begin{align*}
	 \sum_{i,j,k,m=0}^{\infty} C_{ijkm}R_{ijkm}(\boldsymbol{\zeta}) \mathds{1}(m=-i+j+k \geq 0) &=  
	 \sum_{i,j,k,m=0}^{\infty} C_{kjim}R_{kjim}(\boldsymbol{\zeta}) \mathds{1}(m=-k+j+i \geq 0) \\
	  &=
	 \sum_{i,j,k,m=0}^{\infty} C_{ijkm}P_{ijkm}(\boldsymbol{\zeta}) \mathds{1}(m=-k+j+i \geq 0),
\end{align*}
 to infer 
\begin{align*} 
	\langle f  \rangle (\boldsymbol{\zeta}) 
		&=\frac{3}{32}\sum_{i,j,k,m=0}^{\infty} C_{ijkm}   
	P_{ijkm}(\boldsymbol{\zeta})  \mathds{1}(m=i+j-k \geq 0) ,
\end{align*}
that completes the proof.
\end{proof}

Next, we verify Condition \eqref{Assumption1BambusiNekhoroshev} in Theorem \ref{TheoremBambusiNekhoroshev}. To do so, we use the method of Lagrange multipliers and show that the rescaled first 1-mode defined in \eqref{Definition1modesInitialdataBambusiNekhoroshev} is a critical point for the  function  $\langle f\rangle $ restricted on a hypersurface on constant harmonic energy.

\begin{lemma}[Condition \eqref{Assumption1BambusiNekhoroshev} in Theorem \ref{TheoremBambusiNekhoroshev}]\label{LemmaAssumption1BambusiNekhoroshev}
Let $\boldsymbol{\xi}=(\xi_1,\xi_2)$ be the rescaled first 1-mode  given by \eqref{Definition1modesInitialdataBambusiNekhoroshev} and let
\begin{align*}
	\Sigma= \left \{\boldsymbol{\zeta}\in \mathcal{P}:~  h_{\Omega}(\boldsymbol{\zeta})=\kappa_{0}^2 \omega_{0}^2 \right\}.
\end{align*}
be the hypersurface of constant harmonic energy $\kappa_{0}^2 \omega_{0}^2$. Then, $\boldsymbol{\xi} \in \Sigma$ is a critical point for the function $\langle f\rangle |_{\Sigma}$.
\end{lemma}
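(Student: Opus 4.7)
The plan is to apply the method of Lagrange multipliers: I need to exhibit a scalar $\lambda \in \mathbb{R}$ such that $\nabla_{\boldsymbol{\zeta}}\left[\lambda h_{\Omega}(\boldsymbol{\zeta}) + \langle f \rangle(\boldsymbol{\zeta})\right]\big|_{\boldsymbol{\zeta}=\boldsymbol{\xi}} = 0$, which is exactly the assumption \eqref{Assumption1BambusiNekhoroshev} of Theorem \ref{TheoremBambusiNekhoroshev}. First I would record the obvious check that $\boldsymbol{\xi} \in \Sigma$. Since the Fourier coefficients of $\boldsymbol{\xi}=(\kappa_0 e_0,0)$ are $\xi_1^{(m)} = \kappa_0 \mathds{1}(m=0)$ and $\xi_2^{(m)}=0$, Lemma \ref{LemmaHarmonicFlowWrtTheBasisem} immediately yields $h_{\Omega}(\boldsymbol{\xi}) = \omega_0^2 \kappa_0^2$, as required.

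Next I would compute the differentials in the Fourier space against arbitrary test directions $\boldsymbol{h}=(h_1,h_2)\in \mathcal{P}$ expanded with coordinates $h_1^{(\ell)}$, $h_2^{(\ell)}$. From Lemma \ref{LemmaHarmonicFlowWrtTheBasisem}, the partial derivatives of the harmonic energy are trivial:
\begin{align*}
\partial_{\zeta_1^{(\ell)}} h_{\Omega}\big|_{\boldsymbol{\xi}} = 2\omega_0^2 \kappa_0 \, \delta_{\ell,0},\qquad \partial_{\zeta_2^{(\ell)}} h_{\Omega}\big|_{\boldsymbol{\xi}} = 0.
\end{align*}
For $\langle f\rangle$, I would exploit the key structural fact that, in the explicit formula for $P_{ijkm}(\boldsymbol{\zeta})$ from Lemma \ref{LemmaTimeAveraging}, every summand contains an \emph{even} number (either $0$, $2$, or $4$) of factors drawn from $\zeta_2^{(\cdot)}$. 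Therefore, after a single differentiation with respect to $\zeta_2^{(\ell)}$, each resulting monomial still carries at least one factor $\zeta_2$, which vanishes upon evaluation at $\boldsymbol{\xi}$. Hence $\partial_{\zeta_2^{(\ell)}} \langle f\rangle|_{\boldsymbol{\xi}} = 0$ for all $\ell$, and the $\zeta_2$-component of the Lagrange condition is satisfied identically.

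For the $\zeta_1$-direction, the same evenness argument shows that only the pure monomial $\zeta_1^{(i)}\zeta_1^{(j)}\zeta_1^{(k)}\zeta_1^{(m)}$ inside $P_{ijkm}$ contributes after differentiation and evaluation at $\boldsymbol{\xi}$. This produces four Kronecker-delta terms; combined with the constraint $m=i+j-k\geq 0$ and the collapse $\xi_1^{(p)}=\kappa_0 \delta_{p,0}$, each of the four terms forces $\ell=0$, and collectively they give
\begin{align*}
\partial_{\zeta_1^{(\ell)}} \langle f\rangle\big|_{\boldsymbol{\xi}} = \frac{3}{32}\cdot 4\, C_{0000}\, \kappa_0^3\, \delta_{\ell,0} = \frac{3}{8} C_{0000}\kappa_0^3\,\delta_{\ell,0}.
\end{align*}
The Lagrange equation then reduces to the single scalar relation $2\lambda \omega_0^2 \kappa_0 + \tfrac{3}{8} C_{0000}\kappa_0^3 = 0$, and using the definition $\kappa_0^2 = 8\omega_0^2/(3 C_{0000})$ from \eqref{Definition1modesInitialdataBambusiNekhoroshev}, the choice $\lambda = -\tfrac{1}{2}$ solves it. This completes the proof. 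The main obstacle is not analytical depth but rather careful bookkeeping: tracking which of the eight terms in $P_{ijkm}$ survive differentiation, applying the resonance constraint $m=i+j-k\geq 0$ correctly, and matching the normalization of $\kappa_0$; everything else follows directly from Lemmata \ref{LemmaHarmonicFlowWrtTheBasisem} and \ref{LemmaTimeAveraging}.
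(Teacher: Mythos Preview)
Your proof is correct and follows the same Lagrange-multiplier strategy as the paper, arriving at the identical value $\lambda=-\tfrac12$. The only difference is stylistic: the paper writes out all sixteen-plus terms of $\partial_{\zeta_\alpha^{(l)}}P_{ijkm}(\boldsymbol{\zeta})$ explicitly before evaluating at $\boldsymbol{\xi}$, whereas your parity observation (each monomial in $P_{ijkm}$ carries an even number of $\zeta_2$-factors) cleanly explains in one stroke why the $\zeta_2$-gradient vanishes and why only the pure $\zeta_1^{(i)}\zeta_1^{(j)}\zeta_1^{(k)}\zeta_1^{(m)}$ term contributes to the $\zeta_1$-gradient---a nice shortcut that the paper's brute-force computation obscures.
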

\begin{proof}
Pick any $\boldsymbol{\zeta}=(\zeta_1,\zeta_2) \in \mathcal{P}$, expand its coefficients in terms of the eigenfunctions $\{e_m:m\geq 0 \}$ to the linearized operator $L$,  
\begin{align*}
	\zeta_\alpha =\sum_{m=0}^{\infty} \zeta_\alpha^{(m)} e_m,\quad  \alpha \in \{1,2\},
\end{align*}
and consider the functions 
\begin{align*}
	h_{\Omega}(\boldsymbol{\zeta}) &=
	h_{\Omega}(\zeta_1,\zeta_2)=
	h_{\Omega} (\zeta_1^{(0)},\zeta_1^{(1)},\zeta_1^{(2)},\dots, \zeta_2^{(0)}, \zeta_2^{(1)}, \zeta_2^{(2)},\dots), \\
	\langle f  \rangle (\boldsymbol{\zeta})&=
	\langle f  \rangle (\zeta_1,\zeta_2)=
	\langle f  \rangle (\zeta_1^{(0)},\zeta_1^{(1)},\zeta_1^{(2)},\dots, \zeta_2^{(0)}, \zeta_2^{(1)}, \zeta_2^{(2)},\dots).
\end{align*}
These are given in the Fourier space by Lemmata \ref{LemmaHarmonicFlowWrtTheBasisem} and \ref{LemmaTimeAveraging} respectively. According to the method of Lagrange multipliers, the point $\boldsymbol{\zeta}$ is a critical point for $\langle f\rangle |_{\Sigma}$ if it satisfies the equations
\begin{align}\label{LagrangeMultipliers}
 \nabla_{\boldsymbol{\zeta}} \left[ \lambda  h_{\Omega}(\boldsymbol{\zeta}) +   \langle f  \rangle (\boldsymbol{\zeta}) \right] =0 \Longleftrightarrow
	\begin{dcases}
		\lambda \frac{\partial}{\partial \zeta_1^{(l)}} h_{\Omega}(\boldsymbol{\zeta})+
	\frac{\partial}{\partial \zeta_1^{(l)}}  \langle f  \rangle (\boldsymbol{\zeta})=0, \\
	\lambda \frac{\partial}{\partial \zeta_2^{(l)}}  h_{\Omega}(\boldsymbol{\zeta})+
	\frac{\partial}{\partial \zeta_2^{(l)}}  \langle f  \rangle (\boldsymbol{\zeta})=0,
	\end{dcases}
\end{align}
for all integers $l \geq 0$ and some $\lambda \in \mathbb{R}$.  On the one hand, we use Lemma \ref{LemmaHarmonicFlowWrtTheBasisem} to compute
\begin{align*}
	\frac{\partial}{\partial \zeta_1^{(l)}} h_{\Omega}(\boldsymbol{\zeta}) = 2\omega_{l}^2  \zeta_1^{(l)}, \quad 
	\frac{\partial}{\partial \zeta_2^{(l)}} h_{\Omega}(\boldsymbol{\zeta}) = 2   \zeta_2^{(l)},
\end{align*}
for all integers $l \geq 0$. On the other hand, we use Lemma \ref{LemmaTimeAveraging} together with the differentiation rule
\begin{align*}
	\frac{\partial}{\partial \zeta_{\alpha}^{(l)}} (\zeta_{\alpha}^{(i)} \zeta_{\alpha}^{(j)} ) =  
	\zeta_{\alpha}^{(i)}\frac{\partial}{\partial \zeta_{\alpha}^{(l)}} \zeta_{\alpha}^{(j)}+
	 \zeta_{\alpha}^{(j)}\frac{\partial}{\partial \zeta_{\alpha}^{(l)}}\zeta_{\alpha}^{(i)} =  
	\zeta_{\alpha}^{(i)} \mathds{1}(l=j) +
	 \zeta_{\alpha}^{(j)}\mathds{1}(l=i),
\end{align*}
for all integers $\alpha \in \{1,2\}$ and $i,j,l \geq 0$, to obtain that
\begin{align*}
	 \frac{\partial}{\partial \zeta_1^{(l)}} 	P_{ijkm}(\boldsymbol{\zeta}) 
\end{align*}
is given by
\begin{align*}
%
%
%
%
%
%
%
%
%
%
% \frac{\partial}{\partial \zeta_1^{(l)}} 	P_{ijkm}(\boldsymbol{\zeta})  &=  
%%
%%
% -\frac{\zeta_2^{  ( i )} \zeta_2^{  ( j )}  }{\omega_{ i } \omega_{ j }}
%  \frac{\partial}{\partial \zeta_1^{(l)}} \left[ \zeta_1^{  ( k )} \zeta_1^{  ( m )}\right]
% %
% %
% +\frac{\zeta_2^{  ( i )}  \zeta_2^{  ( k )}  }{\omega_{ i } \omega_{ k }}
%  \frac{\partial}{\partial \zeta_1^{(l)}} \left[\zeta_1^{  ( j )}\zeta_1^{  ( m )} \right]
% %
% %
% +\frac{ \zeta_2^{  ( j )} \zeta_2^{  ( k )} }{\omega_{ j } \omega_{ k }} 
%  \frac{\partial}{\partial \zeta_1^{(l)}} \left[ \zeta_1^{  ( i )}\zeta_1^{  ( m )}\right]\\
% %
% %
% & +\frac{\zeta_2^{  ( i )}   \zeta_2^{  ( m )}}{\omega_{ i } \omega_{ m }} 
%  \frac{\partial}{\partial \zeta_1^{(l)}} \left[\zeta_1^{  ( j )}\zeta_1^{  ( k )} \right]
%%
%%
%+\frac{ \zeta_2^{  ( j )}  \zeta_2^{  ( m )}}{\omega_{ j } \omega_{ m }}
% \frac{\partial}{\partial \zeta_1^{(l)}} \left[\zeta_1^{  ( i )} \zeta_1^{  ( k )} \right]
%%
%%
%-\frac{ \zeta_2^{  ( k )} \zeta_2^{  ( m )}}{\omega_{ k } \omega_{ m }}
% \frac{\partial}{\partial \zeta_1^{(l)}} \left[\zeta_1^{  ( i )} \zeta_1^{  ( j )} \right] \\
%%
%%
%& +
% \frac{\partial}{\partial \zeta_1^{(l)}} \left[ \zeta_1^{  ( i )} \zeta_1^{  ( j )} \zeta_1^{  ( k )} \zeta_1^{  ( m )}\right]  \\
 %%%%%%%%% 
%
%
& -\frac{\zeta_2^{  ( i )} \zeta_2^{  ( j )}  }{\omega_{ i } \omega_{ j }}
    \zeta_1^{  ( m )}\mathds{1}(l=k)
 -\frac{\zeta_2^{  ( i )} \zeta_2^{  ( j )}  }{\omega_{ i } \omega_{ j }} \zeta_1^{  ( k )} \mathds{1}(l=m) 
 +\frac{\zeta_2^{  ( i )}  \zeta_2^{  ( k )}  }{\omega_{ i } \omega_{ k }}
    \zeta_1^{  ( m )}\mathds{1}(l=j)
  +\frac{\zeta_2^{  ( i )}  \zeta_2^{  ( k )}  }{\omega_{ i } \omega_{ k }}\zeta_1^{  ( j )} \mathds{1}(l=m)   \\
 & +\frac{ \zeta_2^{  ( j )} \zeta_2^{  ( k )} }{\omega_{ j } \omega_{ k }} 
  \zeta_1^{  ( m )}  \mathds{1}(l=i)
  +\frac{ \zeta_2^{  ( j )} \zeta_2^{  ( k )} }{\omega_{ j } \omega_{ k }}   \zeta_1^{  ( i )} \mathds{1}(l=m)   
  +\frac{\zeta_2^{  ( i )}   \zeta_2^{  ( m )}}{\omega_{ i } \omega_{ m }} 
  \zeta_1^{  ( k )} \mathds{1}(l=j) 
  +\frac{\zeta_2^{  ( i )}   \zeta_2^{  ( m )}}{\omega_{ i } \omega_{ m }}   \zeta_1^{  ( j )} \mathds{1}(l=k)   \\
& +\frac{ \zeta_2^{  ( j )}  \zeta_2^{  ( m )}}{\omega_{ j } \omega_{ m }}
   \zeta_1^{  ( k )} \mathds{1}(l=i) 
  +\frac{ \zeta_2^{  ( j )}  \zeta_2^{  ( m )}}{\omega_{ j } \omega_{ m }} \zeta_1^{  ( i )}  \mathds{1}(l=k)  
-\frac{ \zeta_2^{  ( k )} \zeta_2^{  ( m )}}{\omega_{ k } \omega_{ m }}
  \zeta_1^{  ( j )}  \mathds{1}(l=i)
 -\frac{ \zeta_2^{  ( k )} \zeta_2^{  ( m )}}{\omega_{ k } \omega_{ m }}  \zeta_1^{  ( i )} \mathds{1}(l=j)    \\
& +   
  \zeta_1^{  ( j )} \zeta_1^{  ( k )} \zeta_1^{  ( m )}\mathds{1}(l=i)
 +\zeta_1^{  ( i )}  \zeta_1^{  ( k )} \zeta_1^{  ( m )}\mathds{1}(l=j)
 +\zeta_1^{  ( i )} \zeta_1^{  ( j )} \zeta_1^{  ( m )}\mathds{1}(l=k)
 +\zeta_1^{  ( i )} \zeta_1^{  ( j )} \zeta_1^{  ( k )} \mathds{1}(l=m) , \\
\end{align*}
and
\begin{align*}
	\frac{\partial}{\partial \zeta_2^{(l)}} 	P_{ijkm}(\boldsymbol{\zeta})  
\end{align*}
is given by
\begin{align*}
% \frac{\partial}{\partial \zeta_2^{(l)}} 	P_{ijkm}(\boldsymbol{\zeta})  &=
%%
%%
%-\frac{ \zeta_1^{  ( k )} \zeta_1^{  ( m )}}{\omega_{ i } \omega_{ j }}
% \frac{\partial}{\partial \zeta_2^{(l)}}  \left[ \zeta_2^{  ( i )} \zeta_2^{  ( j )}\right]
%%
%%
%+\frac{ \zeta_1^{  ( j )}  \zeta_1^{  ( m )}}{\omega_{ i } \omega_{ k }}
%\frac{\partial}{\partial \zeta_2^{(l)}}  \left[\zeta_2^{  ( i )} \zeta_2^{  ( k )}\right]
%%
%%
%+\frac{\zeta_1^{  ( i )}  \zeta_1^{  ( m )}}{\omega_{ j } \omega_{ k }} 
%\frac{\partial}{\partial \zeta_2^{(l)}}  \left[\zeta_2^{  ( j )} \zeta_2^{  ( k )} \right] \\
%%
%%
%& +\frac{ \zeta_1^{  ( j )} \zeta_1^{  ( k )} }{\omega_{ i } \omega_{ m }}
%\frac{\partial}{\partial \zeta_2^{(l)}}  \left[\zeta_2^{  ( i )}\zeta_2^{  ( m )} \right] 
%%
%%
%+\frac{\zeta_1^{  ( i )}  \zeta_1^{  ( k )} }{\omega_{ j } \omega_{ m }}
%\frac{\partial}{\partial \zeta_2^{(l)}}  \left[\zeta_2^{  ( j )}\zeta_2^{  ( m )} \right]
%%
%%
%-\frac{\zeta_1^{  ( i )} \zeta_1^{  ( j )} }{\omega_{ k } \omega_{ m }}
%\frac{\partial}{\partial \zeta_2^{(l)}}  \left[ \zeta_2^{  ( k )} \zeta_2^{  ( m )} \right]\\
%%
%%
%&+\frac{1}{\omega_{ i } \omega_{ j } \omega_{ k } \omega_{ m }}
%\frac{\partial}{\partial \zeta_2^{(l)}}  \left[\zeta_2^{  ( i )} \zeta_2^{  ( j )} \zeta_2^{  ( k )} \zeta_2^{  ( m )} \right] \\
% %%%%%%%%%
% &=
%
%
& -\frac{ \zeta_1^{  ( k )} \zeta_1^{  ( m )}}{\omega_{ i } \omega_{ j }}
  \zeta_2^{  ( j )} \mathds{1}(l=i)
 -\frac{ \zeta_1^{  ( k )} \zeta_1^{  ( m )}}{\omega_{ i } \omega_{ j }} \zeta_2^{  ( i )}\mathds{1}(l=j) 
+\frac{ \zeta_1^{  ( j )}  \zeta_1^{  ( m )}}{\omega_{ i } \omega_{ k }} 
  \zeta_2^{  ( k )}  \mathds{1}(l=i)
+\frac{ \zeta_1^{  ( j )}  \zeta_1^{  ( m )}}{\omega_{ i } \omega_{ k }}  \zeta_2^{  ( i )}\mathds{1}(l=k)   \\
&+\frac{\zeta_1^{  ( i )}  \zeta_1^{  ( m )}}{\omega_{ j } \omega_{ k }} 
   \zeta_2^{  ( k )}  \mathds{1}(l=j) 
 +\frac{\zeta_1^{  ( i )}  \zeta_1^{  ( m )}}{\omega_{ j } \omega_{ k }}   \zeta_2^{  ( j )} \mathds{1}(l=k)  
 +\frac{ \zeta_1^{  ( j )} \zeta_1^{  ( k )} }{\omega_{ i } \omega_{ m }}
 \zeta_2^{  ( m )}  \mathds{1}(l=i)
 +\frac{ \zeta_1^{  ( j )} \zeta_1^{  ( k )} }{\omega_{ i } \omega_{ m }} \zeta_2^{  ( i )}\mathds{1}(l=m)    \\ 
&+\frac{\zeta_1^{  ( i )}  \zeta_1^{  ( k )} }{\omega_{ j } \omega_{ m }}
   \zeta_2^{  ( m )}  \mathds{1}(l=j)
  +\frac{\zeta_1^{  ( i )}  \zeta_1^{  ( k )} }{\omega_{ j } \omega_{ m }}  \zeta_2^{  ( j )}\mathds{1}(l=m)  
-\frac{\zeta_1^{  ( i )} \zeta_1^{  ( j )} }{\omega_{ k } \omega_{ m }}
   \zeta_2^{  ( m )}  \mathds{1}(l=k)
 -\frac{\zeta_1^{  ( i )} \zeta_1^{  ( j )} }{\omega_{ k } \omega_{ m }}  \zeta_2^{  ( k )} \mathds{1}(l=m)   \\
&+\frac{  \zeta_2^{  ( j )} \zeta_2^{  ( k )} \zeta_2^{  ( m )}}{\omega_{ i } \omega_{ j } \omega_{ k } \omega_{ m }}\mathds{1}(l=i)
+\frac{\zeta_2^{  ( i )}  \zeta_2^{  ( k )} \zeta_2^{  ( m )}}{\omega_{ i } \omega_{ j } \omega_{ k } \omega_{ m }}\mathds{1}(l=j) 
+\frac{ \zeta_2^{  ( i )} \zeta_2^{  ( j )}  \zeta_2^{  ( m )}}{\omega_{ i } \omega_{ j } \omega_{ k } \omega_{ m }} \mathds{1}(l=k)
+\frac{\zeta_2^{  ( i )} \zeta_2^{  ( j )} \zeta_2^{  ( k )}}{\omega_{ i } \omega_{ j } \omega_{ k } \omega_{ m }}  \mathds{1}(l=m),
\end{align*}
for all integers $i,j,k,m,l \geq 0$. Next, we restrict our attention to the rescaled first 1-mode $\boldsymbol{\xi}=(\xi_1,\xi_2)$ defined by \eqref{Definition1modesInitialdataBambusiNekhoroshev}. In this case, the latter yields
\begin{align*}
	 \frac{\partial}{\partial \zeta_1^{(l)}} 	P_{ijkm}(\boldsymbol{\xi})  &= \kappa_{0}^3 \Big[
 \mathds{1}(k=j=m=0)  \mathds{1}(l=i)
 + \mathds{1}(i=k=m=0)\mathds{1}(l=j) \\
 &+ \mathds{1}(i=j=m=0)\mathds{1}(l=k)
 + \mathds{1}(i=j=k=0) \mathds{1}(l=m) \Big] , \\
  \frac{\partial}{\partial \zeta_2^{(l)}} 	P_{ijkm}(\boldsymbol{\xi})  &=
 0,
\end{align*}
for all integers $i,j,k,m,l \geq 0$. Now, one can see that there exists a choice of $\lambda$ such that both equations in \eqref{LagrangeMultipliers} hold true for the rescaled first 1-mode. Indeed, the second equation in \eqref{LagrangeMultipliers} is trivially satisfied for all $\lambda$ %(giving $0=0$)
 whereas the first equation in \eqref{LagrangeMultipliers} is satisfied provided that 
\begin{align}\label{ValueLagrangeMultiplierLambda}
	\lambda=  - \frac{1}{2}.
\end{align}
Indeed, for all integers $l \geq 0$, we have that 
\begin{align*}
   -2\lambda \omega_{0}^2 \kappa_{0} \mathds{1}(l=0) &=	-2\lambda \omega_{l}^2  \xi_1^{(l)} =\frac{\partial  \langle f  \rangle (\boldsymbol{\xi}) }{\partial \zeta_1^{(l)}}  \\
	&=\frac{3}{32}\sum_{i,j,k,m=0}^{\infty} C_{ijkm}   
	\frac{\partial  P_{ijkm}(\boldsymbol{\xi}) }{\partial \zeta_1^{(l)}} \mathds{1}(m=i+j-k \geq 0) \\
	&=\frac{3}{32}\kappa_{0} ^3 \sum_{i=0}^{\infty} C_{i0 0 0}   
	   \mathds{1}(l=i =0) +
	   \frac{3}{32}\kappa_{0} ^3\sum_{j=0}^{\infty} C_{0 j 0 0}   
	   \mathds{1}( l= j=0  )\\
	&+\frac{3}{32}\kappa_{0} ^3\sum_{k=0}^{\infty} C_{0 0 k 0}   
	   \mathds{1}(l=k=0 ) 
	 +\frac{3}{32}\kappa_{0} ^3\sum_{ m=0}^{\infty} C_{0 0 0 m}   
	  \mathds{1}(l=m=0  ) \\
	 &= \frac{3}{8} \kappa_{0} ^3\mathds{1}(l =0  )   C_{0 0 0 0} \Longleftrightarrow  \\
	 -2\lambda \omega_{0}^2  \mathds{1}(l=0) &= \frac{3}{8} \kappa_{0} ^2 \mathds{1}(l =0  )   C_{0 0 0 0} =\frac{3}{8}     \frac{8\omega_{0}^2}{3C_{00 0 0}   }\mathds{1}(l=0) C_{0 0 0 0} \Longleftrightarrow \lambda=-\frac{1}{2},
\end{align*}
where we also used the definition of the rescaled constant $\kappa_{0}$ from \eqref{Definition1modeinitialdata}. Finally, we use Lemma \ref{LemmaHarmonicFlowWrtTheBasisem} to compute the harmonic energy of the rescaled 1--modes and show that $\boldsymbol{\xi} \in \Sigma$. We obtain
\begin{align*} 
	 h_{\Omega}(\boldsymbol{\xi})
	 & = \sum_{m=0}^{\infty} 
	 ( \xi_2^{(m)} )^2  +\sum_{m=0}^{\infty} \omega_{m}^2  ( \xi_1^{(m)})^2 
	  = \sum_{m=0}^{\infty} \omega_{m}^2  \left( \kappa_{0}\mathds{1}(m=0) \right)^2 = \kappa_{0}^2 \omega_{0}^2,
\end{align*}
that completes the proof. 
\end{proof}

Next, we show that the rescaled first 1-mode generates a curve of critical points.

\begin{lemma}[Critical curve]\label{LemmaCriticalCurve}
Let $\boldsymbol{\xi}=(\xi_1,\xi_2)$ be the rescaled first 1-mode  given by \eqref{Definition1modesInitialdataBambusiNekhoroshev}, 
\begin{align*}
	\Sigma= \left \{\boldsymbol{\zeta}\in \mathcal{P}:~  h_{\Omega}(\boldsymbol{\zeta})=\kappa_{0}^2 \omega_{0}^2 \right\}
\end{align*}
 the hypersurface of constant harmonic energy and 
\begin{align*}
	\Gamma_{\text{linear}}(\boldsymbol{\xi}) = \left \{\boldsymbol{\Phi}^{t}(\boldsymbol{\xi}):t \in \mathbb{R} \right\}
	=\left \{  
	\begin{pmatrix}
		 \kappa_{0}  \cos(\omega_{0}t)   e_{0} \\
		- \omega_{0}\kappa_{0}  \sin(\omega_{0}t) e_{0} 
	\end{pmatrix}
	:t \in \mathbb{R} \right\}
\end{align*} 
 the trajectory of the solution to the linearized equation with $\boldsymbol{\xi} $ as initial data. Then,  $ \Gamma_{\text{linear}}(\boldsymbol{\xi}) \subset \Sigma$ and every point $\boldsymbol{\eta}\in \Gamma_{\text{linear}}(\boldsymbol{\xi}) $ is a critical point for the function $\langle f\rangle |_{\Sigma}$.
\end{lemma}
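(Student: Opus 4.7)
The plan is to reduce everything to two invariance properties of the linear flow: that both $h_{\Omega}$ and $\langle f \rangle$ are constant along orbits of $\boldsymbol{\Phi}^{t}$. Once these are in hand, the statement $\Gamma_{\text{linear}}(\boldsymbol{\xi})\subset \Sigma$ is immediate and the critical-point property transfers from $\boldsymbol{\xi}$ to any point on the orbit by the chain rule.

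First, I would verify $\Gamma_{\text{linear}}(\boldsymbol{\xi})\subset \Sigma$ by direct computation. Using the closed formula of Lemma \ref{LemmaLinearFlowClosedFormula} and the Fourier expression of Lemma \ref{LemmaHarmonicFlowWrtTheBasisem}, the only non-zero Fourier mode of $\boldsymbol{\Phi}^{t}(\boldsymbol{\xi})$ is the $m=0$ one, equal to $\kappa_{0}\cos(\omega_{0}t)e_{0}$ in the first slot and $-\omega_{0}\kappa_{0}\sin(\omega_{0}t)e_{0}$ in the second. A one-line computation then gives
\begin{align*}
h_{\Omega}(\boldsymbol{\Phi}^{t}(\boldsymbol{\xi}))=\omega_{0}^{2}\kappa_{0}^{2}\cos^{2}(\omega_{0}t)+\omega_{0}^{2}\kappa_{0}^{2}\sin^{2}(\omega_{0}t)=\kappa_{0}^{2}\omega_{0}^{2},
\end{align*}
which is the defining value of $\Sigma$.

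Second, I would establish the invariance of $\langle f \rangle$ and $h_{\Omega}$ under $\boldsymbol{\Phi}^{t}$. For $h_{\Omega}$ this follows either from the explicit Fourier formula of Lemma \ref{LemmaHarmonicFlowWrtTheBasisem} applied to $\boldsymbol{\Phi}^{t}(\boldsymbol{\zeta})$ (each squared modulus $(\zeta_{1}^{(m)})^{2}$ and $(\zeta_{2}^{(m)})^{2}$ re-combines via $\cos^{2}+\sin^{2}=1$), or from the skew-symmetry of $A$ with respect to $\Omega$. For $\langle f \rangle$ the invariance comes from the semigroup property $\boldsymbol{\Phi}^{s}\circ\boldsymbol{\Phi}^{t}=\boldsymbol{\Phi}^{s+t}$ and $2\pi$-periodicity of $s\mapsto\boldsymbol{\Phi}^{s}(\boldsymbol{\zeta})$ when all $\omega_{m}\in\mathbb{N}$ (a consequence of Assumption \ref{AssumptionsondeltaExistence}): shifting the integration variable gives
\begin{align*}
\langle f \rangle (\boldsymbol{\Phi}^{t}(\boldsymbol{\zeta}))=\frac{1}{2\pi}\int_{0}^{2\pi}f(\boldsymbol{\Phi}^{s+t}(\boldsymbol{\zeta}))\,ds=\frac{1}{2\pi}\int_{0}^{2\pi}f(\boldsymbol{\Phi}^{s}(\boldsymbol{\zeta}))\,ds=\langle f \rangle(\boldsymbol{\zeta}).
\end{align*}

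Finally, I would transfer the Lagrange multiplier identity along the orbit. Differentiating the two invariance identities yields
\begin{align*}
d h_{\Omega}(\boldsymbol{\Phi}^{t}(\boldsymbol{\zeta}))\circ d\boldsymbol{\Phi}^{t}(\boldsymbol{\zeta})=dh_{\Omega}(\boldsymbol{\zeta}), \qquad d\langle f \rangle(\boldsymbol{\Phi}^{t}(\boldsymbol{\zeta}))\circ d\boldsymbol{\Phi}^{t}(\boldsymbol{\zeta})=d\langle f \rangle(\boldsymbol{\zeta}).
\end{align*}
Since $d\boldsymbol{\Phi}^{t}(\boldsymbol{\xi})=e^{tA}$ is a linear isomorphism of $\mathcal{P}$, any test vector $v$ can be written as $v=d\boldsymbol{\Phi}^{t}(\boldsymbol{\xi})w$, and the two identities above combined with Lemma \ref{LemmaAssumption1BambusiNekhoroshev} and the same Lagrange multiplier $\lambda=-1/2$ from \eqref{ValueLagrangeMultiplierLambda} give
\begin{align*}
\bigl(\lambda\, dh_{\Omega}+d\langle f \rangle\bigr)\!\bigl(\boldsymbol{\Phi}^{t}(\boldsymbol{\xi})\bigr)\cdot v=\bigl(\lambda\, dh_{\Omega}+d\langle f \rangle\bigr)(\boldsymbol{\xi})\cdot w=0.
\end{align*}
This proves that every $\boldsymbol{\eta}\in\Gamma_{\text{linear}}(\boldsymbol{\xi})$ is a critical point of $\langle f \rangle|_{\Sigma}$. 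The only point requiring a touch of care is ensuring that the Lagrange multiplier formalism applies at each $\boldsymbol{\eta}$, but this is guaranteed by the non-vanishing of $dh_{\Omega}(\boldsymbol{\eta})$ (by invariance, $dh_{\Omega}(\boldsymbol{\eta})\neq 0$ since $dh_{\Omega}(\boldsymbol{\xi})\neq 0$), so $\Sigma$ is a smooth submanifold near each $\boldsymbol{\eta}$.
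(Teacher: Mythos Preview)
Your proposal is correct and follows essentially the same approach as the paper, which simply states that the result follows from the invariance of both $h_{\Omega}$ and $\langle f\rangle$ under the linear flow. You have spelled out in detail precisely the invariance arguments and the chain-rule transfer that the paper leaves implicit.
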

\begin{proof}
It follows from the fact that both $h_{\Omega}$ and $\langle f\rangle |_{\Sigma}$ are invariant under the linear flow.  
\end{proof}

Next, we study the tangent space of the hypersurface of constant  harmonic energy.

\begin{lemma}[Tangent space of $\Sigma$]\label{LemmaTangentSpaceofSigma}
Let $\boldsymbol{\xi}=(\xi_1,\xi_2)$ be the rescaled first 1-mode  given by \eqref{Definition1modesInitialdataBambusiNekhoroshev}, 
\begin{align*}
	\Sigma= \left \{\boldsymbol{\zeta}\in \mathcal{P}:~  h_{\Omega}(\boldsymbol{\zeta})=\kappa_{0}^2 \omega_{0}^2 \right\}
\end{align*}
 the hypersurface of constant  harmonic energy and 
\begin{align*}
	\Gamma_{\text{linear}}(\boldsymbol{\xi}) = \left \{\boldsymbol{\Phi}^{t}(\boldsymbol{\xi}):t \in \mathbb{R} \right\}
	=\left \{  
	\begin{pmatrix}
		 \kappa_{0}  \cos(\omega_{0}t)   e_{0} \\
		- \omega_{0}\kappa_{0}  \sin(\omega_{0}t) e_{0} 
	\end{pmatrix}
	:t \in  \mathbb{R}  \right\} \subset \Sigma
\end{align*} 
 the trajectory of the solution to the linearized equation with $\boldsymbol{\xi} $ as initial data. Then, for any point $\boldsymbol{\eta}\in \Gamma_{\text{linear}}(\boldsymbol{\xi})$, the tangent space of $  \Sigma$ is given by
\begin{align*}
	 T_{\boldsymbol{\eta}}\Sigma   =
	\left \{\boldsymbol{X} = 
	\begin{pmatrix}
		\sum_{\substack{ m=0 \ }}^{\infty} X_{1}^{(m)} e_{m} \\
		\sum_{\substack{ m=0  }}^{\infty} X_{2}^{(m)} e_{m}
	\end{pmatrix} :~ \sin (\omega_{0}t)
 	  X_{2}^{(0)}  =\omega_{0} \cos (\omega_{0}t)X_{1}^{(0)} \right \}.
\end{align*}
Moreover, it splits into
\begin{align*} 
	T_{\boldsymbol{\eta}}\Sigma =T_{\boldsymbol{\eta}}\Gamma_{\text{linear}}(\boldsymbol{\xi}) \oplus \left(T_{\boldsymbol{\eta}}\Gamma_{\text{linear}}  (\boldsymbol{\xi})\right)^{\perp},
\end{align*}
where 
\begin{align*} 
	T_{\boldsymbol{\eta}}\Gamma_{\text{linear}}(\boldsymbol{\xi}) & = \left \{\boldsymbol{X} \in T_{\boldsymbol{\eta}} \Sigma:~\boldsymbol{X}=
	\mu_{0}
	\begin{pmatrix}
		  \sin(\omega_{0}t)   e_{0} \\
		  \omega_{0} \cos(\omega_{0}t) e_{0} 
	\end{pmatrix},~\mu_{0} \in \mathbb{R}
	  \right\}, \\
	\left(T_{\boldsymbol{\eta}}\Gamma_{\text{linear}} (\boldsymbol{\xi}) \right)^{\perp}  &  =\left \{\boldsymbol{X} \in T_{\boldsymbol{\eta}} \Sigma:~ X_{1}^{(0)}=X_{2}^{(0)}=0 \right \}  .
\end{align*}
\end{lemma}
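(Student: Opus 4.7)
The plan is to realise $T_{\boldsymbol{\eta}}\Sigma$ as $\ker\bigl(d h_\Omega(\boldsymbol{\eta})\bigr)$ and then split off the one-dimensional subspace generated by the linear flow. First I would work in Fourier coordinates: writing $\boldsymbol{\eta} = \boldsymbol{\Phi}^{t}(\boldsymbol{\xi})$ we have $\eta_1^{(m)} = \kappa_0 \cos(\omega_0 t)\,\mathds{1}(m=0)$ and $\eta_2^{(m)} = -\kappa_0 \omega_0 \sin(\omega_0 t)\,\mathds{1}(m=0)$. Differentiating the Fourier expression for $h_\Omega$ supplied by Lemma \ref{LemmaHarmonicFlowWrtTheBasisem},
\begin{align*}
d h_\Omega(\boldsymbol{\eta})[\boldsymbol{X}] = 2\sum_{m=0}^{\infty}\omega_m^2\,\eta_1^{(m)}X_1^{(m)} + 2\sum_{m=0}^{\infty} \eta_2^{(m)} X_2^{(m)} = 2\kappa_0\omega_0\bigl(\omega_0\cos(\omega_0 t) X_1^{(0)} - \sin(\omega_0 t)X_2^{(0)}\bigr).
\end{align*}
Since $\kappa_0\omega_0\ne 0$, the vanishing of this differential is exactly the stated relation $\sin(\omega_0 t)X_2^{(0)} = \omega_0\cos(\omega_0 t) X_1^{(0)}$, proving the first assertion.

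Next I would identify $T_{\boldsymbol{\eta}}\Gamma_{\mathrm{linear}}(\boldsymbol{\xi})$ as the real line spanned by the velocity $\partial_{t}\boldsymbol{\Phi}^{t}(\boldsymbol{\xi}) = -\kappa_0\omega_0\bigl(\sin(\omega_0 t) e_0,\,\omega_0\cos(\omega_0 t) e_0\bigr)$, which indeed satisfies the tangency constraint (and is consistent with $\Gamma_{\mathrm{linear}}(\boldsymbol{\xi})\subset\Sigma$, since $h_\Omega$ is constant along the linear flow). For the orthogonal complement inside $T_{\boldsymbol{\eta}}\Sigma$: any $\boldsymbol{X}$ with $X_1^{(0)}=X_2^{(0)}=0$ trivially belongs to $T_{\boldsymbol{\eta}}\Sigma$, and its $\mathcal{P}$-inner product against $\bigl(\sin(\omega_0 t) e_0,\,\omega_0\cos(\omega_0 t) e_0\bigr)$ reduces, using that $e_0$ is constant (so $\partial_x e_0 = 0$) and that $\{e_m\}$ is orthonormal in $L^2((0,\pi/2);d\mu)$, to $\sin(\omega_0 t) X_1^{(0)} + \omega_0\cos(\omega_0 t) X_2^{(0)} = 0$.

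It remains to show this perpendicular set together with $T_{\boldsymbol{\eta}}\Gamma_{\mathrm{linear}}(\boldsymbol{\xi})$ spans all of $T_{\boldsymbol{\eta}}\Sigma$. Given any $\boldsymbol{X}\in T_{\boldsymbol{\eta}}\Sigma$, the tangency relation is a single linear constraint on the pair $(X_1^{(0)},X_2^{(0)})\in\mathbb{R}^2$, whose solution set is precisely the line $\mathbb{R}\cdot\bigl(\sin(\omega_0 t),\omega_0\cos(\omega_0 t)\bigr)$ (a case distinction on whether $\sin(\omega_0 t)$ or $\cos(\omega_0 t)$ vanishes handles the degeneracies). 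Choosing the unique $\mu_0\in\mathbb{R}$ with $(X_1^{(0)},X_2^{(0)})=\mu_0(\sin(\omega_0 t),\omega_0\cos(\omega_0 t))$, the decomposition
\begin{align*}
\boldsymbol{X} = \mu_0\bigl(\sin(\omega_0 t) e_0,\,\omega_0\cos(\omega_0 t) e_0\bigr) + \Bigl(\textstyle\sum_{m\ge 1} X_1^{(m)} e_m,\,\sum_{m\ge 1} X_2^{(m)} e_m\Bigr)
\end{align*}
exhibits the required splitting into $T_{\boldsymbol{\eta}}\Gamma_{\mathrm{linear}}(\boldsymbol{\xi})\oplus(T_{\boldsymbol{\eta}}\Gamma_{\mathrm{linear}}(\boldsymbol{\xi}))^{\perp}$. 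There is no genuine obstacle here: the only subtlety is being careful about the weighted $H^1\times L^2$ inner product when verifying orthogonality, but this is painless because $e_0$ is a constant and therefore the $H^1$-pairing against $e_0$ coincides with the $L^2$-pairing, which extracts exactly the zeroth Fourier coefficient.
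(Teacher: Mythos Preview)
Your proposal is correct and follows essentially the same approach as the paper: both compute $dh_\Omega(\boldsymbol{\eta})[\boldsymbol{X}]$ (you via the Fourier expression of Lemma~\ref{LemmaHarmonicFlowWrtTheBasisem}, the paper via the integral form), identify $T_{\boldsymbol{\eta}}\Gamma_{\text{linear}}(\boldsymbol{\xi})$ as the span of $\partial_t\boldsymbol{\Phi}^t(\boldsymbol{\xi})$, and then use the constancy of $e_0$ to reduce the $H^1$-pairing to the $L^2$-pairing when computing the orthogonal complement. The only cosmetic difference is that the paper verifies $(T_{\boldsymbol{\eta}}\Gamma_{\text{linear}})^{\perp}=\{X_1^{(0)}=X_2^{(0)}=0\}$ by solving the $2\times 2$ system of constraints (tangency plus orthogonality) and checking its determinant $\omega_0^2\cos^2(\omega_0 t)+\sin^2(\omega_0 t)\neq 0$, whereas you obtain the same conclusion implicitly from your direct-sum decomposition.
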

\begin{proof}
Let $t \in \mathbb{R} $, set  $\boldsymbol{\eta}=\boldsymbol{\Phi}^{t}(\boldsymbol{\xi}) $ and consider the function
\begin{align*}
	h_{\Omega}(\boldsymbol{\eta})=h_{\Omega}(\eta_1,\eta_2)  
	 =\int_{0}^{\pi/2}\left( 
	 \eta_2 ^2 + \left(L^{1/2}\eta_{1}\right)^2 \right)  d\mu
\end{align*}
studied in Lemma \ref{LemmaHarmonicFlowWrtTheBasisem}. Lemma \ref{LemmaCriticalCurve} yields that $\boldsymbol{\eta}=\boldsymbol{\Phi}^{t}(\boldsymbol{\xi})\in \Gamma_{\text{linear}}(\boldsymbol{\xi}) \subset \Sigma$. A normal vector to $\Sigma$ at the point $\boldsymbol{\eta}$ is given by the gradient $\nabla_{\boldsymbol{\eta}} h_{\Omega}(\boldsymbol{\eta}) $ and hence
\begin{align*}
	 T_{\boldsymbol{\eta}}\Sigma   &= 
	\left \{\boldsymbol{X} \in T_{\boldsymbol{\eta}} \mathcal{P}:~ \langle \nabla_{\boldsymbol{\eta}} h_{\Omega}(\boldsymbol{\eta}),\boldsymbol{X} \rangle =0 \right \} 
	=
	\left \{\boldsymbol{X} \in T_{\boldsymbol{\eta}} \mathcal{P}:~ d h_{\Omega}(\boldsymbol{\eta})(\boldsymbol{X})   =0 \right \}.
\end{align*}
 Let $\boldsymbol{X}=(X_1,X_2) \in T_{\boldsymbol{\eta}} \mathcal{P}$ be expanded in terms of the eigenfunctions $\{e_m: m \geq 0\}$ to the linearization operator $L$, 
 \begin{align*}
 	X_{\alpha}=\sum_{m=0}^{\infty} X_{\alpha}^{(m)} e_{m}, \quad \alpha \in \{1,2\}.
 \end{align*}
 We compute
 \begin{align*}
 	d h_{\Omega}(\boldsymbol{\eta})(\boldsymbol{X}) &=
 	\int_{0}^{\pi/2}\left( 
	 2\eta_2 X_2  +2  L^{1/2}\eta_{1}  L^{1/2}X_{1}  \right)  d\mu \\
	& =2
 	\int_{0}^{\pi/2}\left( 
	 \eta_2 X_2  +  X_{1} L \eta_{1}   \right)  d\mu \\
	&= -2\omega_{0}\kappa_{0}\sin (\omega_{0}t)
 	\int_{0}^{\pi/2}  
	 e_{0} X_2 d\mu +2\kappa_{0}\cos (\omega_{0}t)  \int_{0}^{\pi/2}  X_{1} L  e_{0}     d\mu \\
	 &= -2\omega_{0}\kappa_{0}\sin (\omega_{0}t)
 	\sum_{m=0}^{\infty} X_{2}^{(m)}\int_{0}^{\pi/2}  
	 e_{0}  e_{m} d\mu +2\kappa_{0}\cos (\omega_{0}t)\sum_{m=0}^{\infty} X_{1}^{(m)}  \int_{0}^{\pi/2} e_{m} L  e_{0}     d\mu \\
	 &= -2\omega_{0}\kappa_{0}\sin (\omega_{0}t)
 	\sum_{m=0}^{\infty} X_{2}^{(m)} \mathds{1}(m=0) +2\omega_{0}^2 \kappa_{0}\cos (\omega_{0}t)\sum_{m=0}^{\infty} X_{1}^{(m)}  \mathds{1}(m=0) \\
 	&= -2\omega_{0}\kappa_{0}\sin (\omega_{0}t)
 	  X_{2}^{(0)}  +2\omega_{0}^2\kappa_{0}\cos (\omega_{0}t)X_{1}^{(0)} \\
 	  &=2\omega_{0}\kappa_{0} \left[ -\sin (\omega_{0}t)
 	  X_{2}^{(0)}  +\omega_{0} \cos (\omega_{0}t)X_{1}^{(0)} \right]
 \end{align*} 
and hence
\begin{align*}
	 T_{\boldsymbol{\eta}}\Sigma   =
	\left \{\boldsymbol{X} = 
	\begin{pmatrix}
		\sum_{\substack{ m=0 \ }}^{\infty} X_{1}^{(m)} e_{m} \\
		\sum_{\substack{ m=0  }}^{\infty} X_{2}^{(m)} e_{m}
	\end{pmatrix} :~ \sin (\omega_{0}t)
 	  X_{2}^{(0)}  =\omega_{0} \cos (\omega_{0}t)X_{1}^{(0)} \right \}.
\end{align*}
Moreover, the vectors tangent to the curve $\Gamma_{\text{linear}}(\boldsymbol{\xi}) = \left \{\boldsymbol{\Phi}^{t}(\boldsymbol{\xi}):t \in \mathbb{R} \right\}$ are proportional to
\begin{align*}
	\partial_{t}\boldsymbol{\Phi}^{t}(\boldsymbol{\xi})=\partial_{t}\begin{pmatrix}
		 \kappa_{0}  \cos(\omega_{0}t)   e_{0} \\
		- \omega_{0}\kappa_{0}  \sin(\omega_{0}t) e_{0} 
	\end{pmatrix}=
	\begin{pmatrix}
		- \kappa_{0} \omega_{0} \sin(\omega_{0}t)   e_{0} \\
		- \kappa_{0} \omega_{0}^2 \cos(\omega_{0}t) e_{0} 
	\end{pmatrix} \in T_{\boldsymbol{\eta}}\Gamma_{\text{linear}} (\boldsymbol{\xi}) \subset   T_{\boldsymbol{\eta}}\Sigma
\end{align*} 
and hence
\begin{align*}
	T_{\boldsymbol{\eta}}\Gamma_{\text{linear}}(\boldsymbol{\xi}) = \left \{\boldsymbol{X} \in T_{\boldsymbol{\eta}} \Sigma:~\boldsymbol{X}=
	\mu_{0}
	\begin{pmatrix}
		  \sin(\omega_{0}t)   e_{0} \\
		  \omega_{0} \cos(\omega_{0}t) e_{0} 
	\end{pmatrix},~\mu_{0} \in \mathbb{R}
	  \right\}.
\end{align*}
 Finally, we compute
 \begin{align*}
 	\left(T_{\boldsymbol{\eta}}\Gamma_{\text{linear}} (\boldsymbol{\xi}) \right)^{\perp}  
 	&= \left\{\boldsymbol{X} \in T_{\boldsymbol{\eta}} \Sigma:~ \langle \boldsymbol{X},\boldsymbol{Y}\rangle =0,~\forall \boldsymbol{Y} \in T_{\boldsymbol{\eta}}\Gamma_{\text{linear}}(\boldsymbol{\xi}) \subset   T_{\boldsymbol{\eta}}\Sigma
 	\right \} \\
 	&= \left\{\boldsymbol{X} \in T_{\boldsymbol{\eta}} \Sigma:~ \langle \boldsymbol{X},\partial_{t}\boldsymbol{\Phi}^{t}(\boldsymbol{\xi})\rangle =0 \right \} \\
 	&= \left\{\boldsymbol{X} \in T_{\boldsymbol{\eta}} \Sigma:~\left  \langle \boldsymbol{X},
 	\begin{pmatrix}
		  \sin(\omega_{0}t)   e_{0} \\
		  \omega_{0} \cos(\omega_{0}t) e_{0} 
	\end{pmatrix} \right \rangle =0 \right \}.
 \end{align*}
For every $\boldsymbol{X}=(X_1,X_2) \in T_{\boldsymbol{\eta}} \Sigma$, we use the orthogonality condition
\begin{align*}
	\left  \langle e_m,   e_{0}\right \rangle_{L^2 ((0,\pi/2);d\mu )} &=(e _{m}  |e _{0}  )=  \mathds{1}(m=0),
\end{align*}
for all integers $n,m \geq 0$, derived in \eqref{OrthogonalityoftheEigenfunctions1}, as well as the fact that 
\begin{align*}
	\left  \langle e_m,   e_{0}\right \rangle_{H^1((0,\pi/2);d\mu )} &=(e^{\prime}_{m}  |e^{\prime}_{0}  )+(e _{m}  |e _{0}  )=  \mathds{1}(m=0),
\end{align*}
since $e_0$ is constant in the KG model, to compute  
\begin{align*}
	& \left \langle \boldsymbol{X},
 	\begin{pmatrix}
		  \sin(\omega_{0}t)   e_{0} \\
		  \omega_{0} \cos(\omega_{0}t) e_{0} 
	\end{pmatrix} \right \rangle =\sin(\omega_{0}t) 
	 \left  \langle X_1,   e_{0}\right \rangle_{H^1((0,\pi/2);d\mu )}+
	  \omega_{0} \cos(\omega_{0}t) 
	  \left  \langle X_2,e_{0} \right \rangle_{L^2((0,\pi/2);d\mu )} \\
	 & =\sin(\omega_{0}t) 
	\sum_{m=0}^{\infty}X_{1}^{(m)} \left  \langle e_m,   e_{0}\right \rangle_{H^1((0,\pi/2);d\mu )}+
	  \omega_{0} \cos(\omega_{0}t) 
	  \sum_{m=0}^{\infty}X_{2}^{(m)} 
	  \left  \langle e_m ,e_{0} \right \rangle_{L^2((0,\pi/2);d\mu )} \\
	  &= \sin(\omega_{0}t) 
	 X_{1}^{(0)} +
	  \omega_{0} \cos(\omega_{0}t) 
	 X_{2}^{(0)}  .
\end{align*}
Hence, the tangent vectors  $\boldsymbol{X}  \in   T_{\boldsymbol{\eta}} \Sigma \cap \left(T_{\boldsymbol{\eta}}\Gamma_{\text{linear}}(\boldsymbol{\xi})  \right)^{\perp} $ are given by the linear system
\begin{align*}
	\begin{pmatrix}
		 \omega_{0} \cos(\omega_{0}t)  &
	- \sin(\omega_{0}t) \\
	   \sin  (\omega_{0}t) &
	   \omega_{0} \cos(\omega_{0}t)  
	\end{pmatrix}
	\begin{pmatrix}
		X_{1}^{(0)} \\ X_{2}^{(0)}
	\end{pmatrix}=
	\begin{pmatrix}
		0 \\ 0
	\end{pmatrix}.
\end{align*} 
The determinant is given by $\omega_{0}^2 \cos^2 (\omega_{0}t)+\sin^2 (\omega_{0}t) $ and it is non-zero for all $t \in \mathbb{R}$ that leads us to the unique solution $X_{1}^{(0)}=X_{2}^{(0)}=0$. We conclude that
 \begin{align*}
 	\left(T_{\boldsymbol{\zeta}}\Gamma_{\text{linear}} (\boldsymbol{\xi}) \right)^{\perp} =\left \{\boldsymbol{X} = 
	\begin{pmatrix}
		\sum_{\substack{ m=0 \ }}^{\infty} X_{1}^{(m)} e_{m} \\
		\sum_{\substack{ m=0  }}^{\infty} X_{2}^{(m)} e_{m}
	\end{pmatrix} :~ X_{1}^{(0)}=X_{2}^{(0)}=0 \right \},
 \end{align*}
that completes the proof. 
\end{proof}

 Next, we compute the second differential of $ \langle f\rangle $ restricted on a hypersurface of constant hormonic energy and evaluated at the rescaled first 1--mode.
 
 \begin{lemma}[Second differential at the critical points]\label{LemmaSecondDifferentialatCriticalPointComputation}
Let $\boldsymbol{\xi}=(\xi_1,\xi_2)$ be the rescaled first 1-mode  given by \eqref{Definition1modesInitialdataBambusiNekhoroshev} and let
\begin{align*}
	\Sigma= \left \{\boldsymbol{\zeta}\in \mathcal{P}:~  h_{\Omega}(\boldsymbol{\zeta})=\kappa_{0}^2 \omega_{0}^2 \right\}
\end{align*}
be the hypersurface of constant  harmonic energy. Then, for any tangent vector 
\begin{align*}
	\boldsymbol{X}=(V,W)=\left(\sum_{m=0}^{\infty}V^{(m)}e_{m},\sum_{m=0}^{\infty}W^{(m)}e_{m} \right)  \in T_{\boldsymbol{\xi}}\Sigma,
\end{align*}
we have that
\begin{align*}
	  d^2  ( \langle f\rangle |_{\Sigma} )(\boldsymbol{\xi})(\boldsymbol{X},\boldsymbol{X}) & =  - \frac{\omega_{0}^2}{C_{0 0 0 0}} \Bigg[  \sum_{l=0}^{\infty} \Bigg(  \frac{C_{0 0 0 0}}{\omega_{0}^2} - 2 \frac{ C_{l l0   0       } }{ \omega_{l}^2} \Bigg) ( \omega_{l}V^{(l)})^2 
   +\sum_{l=0}^{\infty} \Bigg( \frac{C_{0 0 0 0}}{\omega_{0}^2} -  2\frac{  C_{ l l 0   0  } }{\omega_{l}^2     }    \Bigg)(W^{(l)})^2   \Bigg] \\
    &+\omega_0^2 (V^{(0)})^2
	-     \left(   W^{(0)}  \right)^2.
\end{align*} 
\end{lemma}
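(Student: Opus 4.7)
The plan is to exploit the fact that $\boldsymbol{\xi}$ is a critical point of the constrained functional. By the standard bordered-Hessian formula, since Lemma~\ref{LemmaAssumption1BambusiNekhoroshev} establishes that $\nabla[\lambda h_\Omega + \langle f\rangle](\boldsymbol{\xi}) = 0$ with $\lambda = -1/2$, the restricted Hessian on $T_{\boldsymbol{\xi}}\Sigma$ reduces to the unconstrained one:
\begin{align*}
d^2(\langle f\rangle|_\Sigma)(\boldsymbol{\xi})(\boldsymbol{X},\boldsymbol{X}) = d^2 \langle f\rangle(\boldsymbol{\xi})(\boldsymbol{X},\boldsymbol{X}) - \tfrac{1}{2}\, d^2 h_\Omega(\boldsymbol{\xi})(\boldsymbol{X},\boldsymbol{X}),
\end{align*}
for every $\boldsymbol{X} \in T_{\boldsymbol{\xi}}\Sigma$. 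Since the harmonic energy $h_\Omega$ is itself a quadratic form by Lemma~\ref{LemmaHarmonicFlowWrtTheBasisem}, its Hessian is immediate: $d^2 h_\Omega(\boldsymbol{\xi})(\boldsymbol{X},\boldsymbol{X}) = 2\sum_{l\geq 0}[\omega_l^2 (V^{(l)})^2 + (W^{(l)})^2]$, so the task reduces to computing $d^2 \langle f\rangle(\boldsymbol{\xi})$.

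For this, I would substitute $\zeta_1^{(m)} = \kappa_0 \mathds{1}(m=0) + V^{(m)}$ and $\zeta_2^{(m)} = W^{(m)}$ into the closed-form expression for $\langle f\rangle$ from Lemma~\ref{LemmaTimeAveraging}, and extract the homogeneous quadratic-in-$\boldsymbol{X}$ part of the resulting Taylor expansion (the Hessian equals twice this). Each $P_{ijkm}$ is a sum of eight monomials of degree four in the Fourier components: six mixed monomials carrying two $\zeta_2$-factors and two $\zeta_1$-factors, one pure $\zeta_1^{(i)}\zeta_1^{(j)}\zeta_1^{(k)}\zeta_1^{(m)}$ monomial, and one pure $\zeta_2^{(i)}\zeta_2^{(j)}\zeta_2^{(k)}\zeta_2^{(m)}$ monomial. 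Because $\xi_2 = 0$ and $\xi_1^{(m)} = \kappa_0\mathds{1}(m=0)$, the pure-$\zeta_2$ monomial contributes only at quartic order and is discarded; in each of the six mixed monomials both $\zeta_1$-factors must be replaced by $\xi_1$, pinning two of the four indices to $0$ and producing a $W$-bilinear term; and the pure-$\zeta_1$ monomial contributes through six sub-cases, one for each choice of the two factors that are $V$'s (the remaining two being pinned to index $0$).

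In each of these twelve contributions I would then impose the resonance constraint $m = i+j-k \geq 0$ from Lemma~\ref{LemmaTimeAveraging}. When three indices are pinned to $0$, the resonance forces the fourth to be $0$ as well, yielding contributions proportional to $C_{0000}(V^{(0)})^2$ or $C_{0000}(W^{(0)})^2$; when only two indices are pinned to $0$, the resonance leaves one free summation index $l$, producing sums of the form $\sum_l C_{\ast\ast ll}(W^{(l)})^2/\omega_l^2$ or $\sum_l C_{\ast\ast ll}(V^{(l)})^2$ with $(\ast,\ast)$ a permutation of $(0,0)$ within the quadruple. At this point I would invoke the full $S_4$-symmetry of $C_{ijkm}$, transparent from the integral representations \eqref{DefinitionOfFourierCoefficientsModelKG}--\eqref{DefinitionOfFourierCoefficientsModelWM}, to identify all such permutations with the canonical $C_{00ll} = C_{ll00}$. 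Collecting the six mixed-term contributions then yields a multiple of $\sum_{l} (C_{00ll}/\omega_l^2)(W^{(l)})^2$ together with a residual $(W^{(0)})^2$ correction, and analogously the pure-$\zeta_1$ contributions yield the corresponding sum for $V^{(l)}$ with a residual $(V^{(0)})^2$ correction.

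Finally, multiplying by the overall prefactor $3/32$ from Lemma~\ref{LemmaTimeAveraging} and the factor of $2$ converting a quadratic coefficient to a Hessian, adding the piece $-\tfrac{1}{2}d^2 h_\Omega$, and using the defining identity $\kappa_0^2 = 8\omega_0^2/(3 C_{0000})$ from \eqref{Definition1modesInitialdataBambusiNekhoroshev} to eliminate the rescaling constant, one recovers the stated formula after regrouping. The main obstacle is entirely combinatorial: the twelve sub-cases mix $l=0$ and $l \geq 1$ terms in a subtle way, and it is the partial cancellation between them that produces the residual additive terms $\omega_0^2 (V^{(0)})^2 - (W^{(0)})^2$ isolated in the final formula; a careless enumeration will either double-count or miss the $l=0$ pieces that break the otherwise symmetric sum $\sum_{l \geq 0}$.
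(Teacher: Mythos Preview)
Your proposal is correct and follows essentially the same approach as the paper: both use the Lagrange-multiplier identity $d^2(\langle f\rangle|_\Sigma)(\boldsymbol{\xi}) = d^2\langle f\rangle(\boldsymbol{\xi}) - \tfrac{1}{2}d^2 h_\Omega(\boldsymbol{\xi})$, compute $d^2 h_\Omega$ directly from its quadratic form, extract $d^2\langle f\rangle(\boldsymbol{\xi})$ from the formula of Lemma~\ref{LemmaTimeAveraging} by isolating the quadratic-in-$\boldsymbol{X}$ part of each $P_{ijkm}$, and then simplify using the $S_4$-symmetry of $C_{ijkm}$ together with $\kappa_0^2 = 8\omega_0^2/(3C_{0000})$. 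The only difference is cosmetic: the paper writes out the second partial derivatives of $P_{ijkm}$ explicitly before evaluating at $\boldsymbol{\xi}$, whereas you describe the same extraction via Taylor expansion, but the underlying bookkeeping (twelve sub-cases, resonance pinning indices to zero or to a diagonal $l$) is identical.
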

\begin{proof}
	Pick any $\boldsymbol{\zeta}=(\zeta_1,\zeta_2) \in \mathcal{P}$, expand its coefficients in terms of the eigenfunctions $\{e_m:m\geq 0 \}$ to the linearized operator $L$,  
\begin{align*}
	\zeta_\alpha =\sum_{m=0}^{\infty} \zeta_\alpha^{(m)} e_m,\quad  \alpha \in \{1,2\},
\end{align*}
and consider the functions 
\begin{align*}
	h_{\Omega}(\boldsymbol{\zeta}) &=
	h_{\Omega}(\zeta_1,\zeta_2)=
	h_{\Omega} (\zeta_1^{(0)},\zeta_1^{(1)},\zeta_1^{(2)},\dots, \zeta_2^{(0)}, \zeta_2^{(1)}, \zeta_2^{(2)},\dots), \\
	\langle f  \rangle (\boldsymbol{\zeta})&=
	\langle f  \rangle (\zeta_1,\zeta_2)=
	\langle f  \rangle (\zeta_1^{(0)},\zeta_1^{(1)},\zeta_1^{(2)},\dots, \zeta_2^{(0)}, \zeta_2^{(1)}, \zeta_2^{(2)},\dots),
\end{align*}
given by Lemma \ref{LemmaHarmonicFlowWrtTheBasisem} and \ref{LemmaTimeAveraging} respectively. Firstly, we use the computations for
\begin{align*}
	\frac{\partial}{\partial \zeta_1^{(l)}} 	P_{ijkm}(\boldsymbol{\zeta}) , \quad \frac{\partial}{\partial \zeta_2^{(l)}} 	P_{ijkm}(\boldsymbol{\zeta}) 
\end{align*}
for all integers $l \geq 0$, derived in Lemma \ref{LemmaAssumption1BambusiNekhoroshev}, to compute
\begin{align*} 
	  \frac{\partial^2}{\partial \zeta_1^{(l)} \zeta_1^{(n)}  } 	P_{ijkm}(\boldsymbol{\zeta}) , \quad  
	  \frac{\partial^2}{\partial \zeta_1^{(l)} \zeta_2^{(n)}   } 	P_{ijkm}(\boldsymbol{\zeta}) , \quad  
	  \frac{\partial^2}{\partial \zeta_2^{(l)} \zeta_2^{(n)}  } 	P_{ijkm}(\boldsymbol{\zeta}), 
\end{align*}
for all integers $l,n \geq 0$, and for generic $\boldsymbol{\zeta}=(\zeta_1,\zeta_2)$. We obtain that 
\begin{align*}
	\frac{\partial^2}{\partial \zeta_1^{(l)}\zeta_1^{(n)} } 	P_{ijkm}(\boldsymbol{\zeta})  
\end{align*}
is given by
\begin{align*}
 &-\frac{\zeta_2^{  ( i )} \zeta_2^{  ( j )}  }{\omega_{ i } \omega_{ j }}
    \mathds{1}(n=m) \mathds{1}(l=k) 
 -\frac{\zeta_2^{  ( i )} \zeta_2^{  ( j )}  }{\omega_{ i } \omega_{ j }}  \mathds{1}(n=k) \mathds{1}(l=m) 
 +\frac{\zeta_2^{  ( i )}  \zeta_2^{  ( k )}  }{\omega_{ i } \omega_{ k }}
    \mathds{1}(n=m)\mathds{1}(l=j) \\
 & +\frac{\zeta_2^{  ( i )}  \zeta_2^{  ( k )}  }{\omega_{ i } \omega_{ k }} \mathds{1}(n=j) \mathds{1}(l=m) 
  +\frac{ \zeta_2^{  ( j )} \zeta_2^{  ( k )} }{\omega_{ j } \omega_{ k }} 
   \mathds{1}(n=m) \mathds{1}(l=i)
  +\frac{ \zeta_2^{  ( j )} \zeta_2^{  ( k )} }{\omega_{ j } \omega_{ k }}    \mathds{1}(n=i) \mathds{1}(l=m)   \\
&  +\frac{\zeta_2^{  ( i )}   \zeta_2^{  ( m )}}{\omega_{ i } \omega_{ m }} 
  \mathds{1}(n=k) \mathds{1}(l=j) 
  +\frac{\zeta_2^{  ( i )}   \zeta_2^{  ( m )}}{\omega_{ i } \omega_{ m }}    \mathds{1}(n= j) \mathds{1}(l=k) 
  +\frac{ \zeta_2^{  ( j )}  \zeta_2^{  ( m )}}{\omega_{ j } \omega_{ m }}
   \mathds{1}(n= k) \mathds{1}(l=i)  \\
&  +\frac{ \zeta_2^{  ( j )}  \zeta_2^{  ( m )}}{\omega_{ j } \omega_{ m }} \mathds{1}(n= i) \mathds{1}(l=k)  
-\frac{ \zeta_2^{  ( k )} \zeta_2^{  ( m )}}{\omega_{ k } \omega_{ m }}
 \mathds{1}(n= j)  \mathds{1}(l=i)
 -\frac{ \zeta_2^{  ( k )} \zeta_2^{  ( m )}}{\omega_{ k } \omega_{ m }}  \mathds{1}(n= i) \mathds{1}(l=j)    \\
& +  \mathds{1}(n= j) \zeta_1^{  ( k )} \zeta_1^{  ( m )}\mathds{1}(l=i)
 +  \zeta_1^{  ( j )} \mathds{1}(n= k) \zeta_1^{  ( m )}\mathds{1}(l=i)
  +  \zeta_1^{  ( j )} \zeta_1^{  ( k )} \mathds{1}(n= m)\mathds{1}(l=i) \\
 &+\mathds{1}(n= i)  \zeta_1^{  ( k )} \zeta_1^{  ( m )}\mathds{1}(l=j)
 +\zeta_1^{  ( i )} \mathds{1}(n= k) \zeta_1^{  ( m )}\mathds{1}(l=j)
 +\zeta_1^{  ( i )}  \zeta_1^{  ( k )} \mathds{1}(n= m)\mathds{1}(l=j) \\
& +\mathds{1}(n= i) \zeta_1^{  ( j )} \zeta_1^{  ( m )}\mathds{1}(l=k)
  +\zeta_1^{  ( i )} \mathds{1}(n= j) \zeta_1^{  ( m )}\mathds{1}(l=k)
   +\zeta_1^{  ( i )} \zeta_1^{  ( j )} \mathds{1}(n= m)\mathds{1}(l=k) \\
& +\mathds{1}(n= i) \zeta_1^{  ( j )} \zeta_1^{  ( k )} \mathds{1}(l=m)
 +\zeta_1^{  ( i )} \mathds{1}(n= j) \zeta_1^{  ( k )} \mathds{1}(l=m)
 +\zeta_1^{  ( i )} \zeta_1^{  ( j )} \mathds{1}(n= k)\mathds{1}(l=m) , 
\end{align*}
and 
\begin{align*}
	\frac{\partial^2}{\partial \zeta_1^{(l)} \partial \zeta_2^{(n)}  } 	P_{ijkm}(\boldsymbol{\zeta})    
\end{align*}
is given by
\begin{align*}
& -\frac{ \zeta_2^{  ( j )}  }{\omega_{ i } \omega_{ j }}
    \zeta_1^{  ( m )}\mathds{1}(n= i)\mathds{1}(l=k)
  -\frac{\zeta_2^{  ( i )}  }{\omega_{ i } \omega_{ j }}
    \zeta_1^{  ( m )}\mathds{1}(n= j)\mathds{1}(l=k) 
    -\frac{ \zeta_2^{  ( j )}  }{\omega_{ i } \omega_{ j }} \zeta_1^{  ( k )} \mathds{1}(n= i) \mathds{1}(l=m) \\
& -\frac{\zeta_2^{  ( i )}   }{\omega_{ i } \omega_{ j }} \zeta_1^{  ( k )} \mathds{1}(n= j)\mathds{1}(l=m)  
 +\frac{ \zeta_2^{  ( k )}  }{\omega_{ i } \omega_{ k }}
    \zeta_1^{  ( m )} \mathds{1}(n= i) \mathds{1}(l=j)
  +\frac{\zeta_2^{  ( i )}    }{\omega_{ i } \omega_{ k }}
    \zeta_1^{  ( m )} \mathds{1}(n= k) \mathds{1}(l=j) \\
 & +\frac{ \zeta_2^{  ( k )}  }{\omega_{ i } \omega_{ k }}\zeta_1^{  ( j )} \mathds{1}(n= i)  \mathds{1}(l=m) 
  +\frac{\zeta_2^{  ( i )}    }{\omega_{ i } \omega_{ k }}\zeta_1^{  ( j )}\mathds{1}(n= k) \mathds{1}(l=m) 
 +\frac{   \zeta_2^{  ( k )} }{\omega_{ j } \omega_{ k }} 
  \zeta_1^{  ( m )}\mathds{1}(n= j)  \mathds{1}(l=i)\\
&  +\frac{ \zeta_2^{  ( j )}  }{\omega_{ j } \omega_{ k }} 
  \zeta_1^{  ( m )}\mathds{1}(n= k)  \mathds{1}(l=i)
  +\frac{   \zeta_2^{  ( k )} }{\omega_{ j } \omega_{ k }}   \zeta_1^{  ( i )} \mathds{1}(n= j) \mathds{1}(l=m) 
  +\frac{ \zeta_2^{  ( j )}   }{\omega_{ j } \omega_{ k }}   \zeta_1^{  ( i )} 
  \mathds{1}(n= k) \mathds{1}(l=m)   \\
&  +\frac{   \zeta_2^{  ( m )}}{\omega_{ i } \omega_{ m }} 
  \zeta_1^{  ( k )}\mathds{1}(n= i)  \mathds{1}(l=j)
  +\frac{\zeta_2^{  ( i )}  }{\omega_{ i } \omega_{ m }} 
  \zeta_1^{  ( k )} \mathds{1}(n= m) \mathds{1}(l=j) 
  +\frac{   \zeta_2^{  ( m )}}{\omega_{ i } \omega_{ m }}   \zeta_1^{  ( j )}  \mathds{1}(n= i)\mathds{1}(l=k)  \\
 & +\frac{\zeta_2^{  ( i )}   }{\omega_{ i } \omega_{ m }}   \zeta_1^{  ( j )} \mathds{1}(n= m)\mathds{1}(l=k) 
 +\frac{  \zeta_2^{  ( m )}}{\omega_{ j } \omega_{ m }}
   \zeta_1^{  ( k )} \mathds{1}(n= j) \mathds{1}(l=i) 
   +\frac{ \zeta_2^{  ( j )}}{\omega_{ j } \omega_{ m }}
   \zeta_1^{  ( k )}  \mathds{1}(n= m) \mathds{1}(l=i) \\
&  +\frac{    \zeta_2^{  ( m )}}{\omega_{ j } \omega_{ m }} \zeta_1^{  ( i )}  \mathds{1}(n= j) \mathds{1}(l=k)
  +\frac{ \zeta_2^{  ( j )}   }{\omega_{ j } \omega_{ m }} \zeta_1^{  ( i )}  \mathds{1}(n= m) \mathds{1}(l=k) 
  -\frac{  \zeta_2^{  ( m )}}{\omega_{ k } \omega_{ m }}
  \zeta_1^{  ( j )}  \mathds{1}(n= k) \mathds{1}(l=i)\\
&  -\frac{ \zeta_2^{  ( k )}  }{\omega_{ k } \omega_{ m }}
  \zeta_1^{  ( j )} \mathds{1}(n= m) \mathds{1}(l=i)
 -\frac{   \zeta_2^{  ( m )}}{\omega_{ k } \omega_{ m }}  \zeta_1^{  ( i )} \mathds{1}(n= k) \mathds{1}(l=j) 
 -\frac{ \zeta_2^{  ( k )} }{\omega_{ k } \omega_{ m }}  \zeta_1^{  ( i )} \mathds{1}(n= m) \mathds{1}(l=j)    ,\\ 
\end{align*}
and 
\begin{align*}
	 \frac{\partial^2}{\partial \zeta_2^{(n)} \partial \zeta_2^{(l)}} 	P_{ijkm}(\boldsymbol{\zeta})   
\end{align*}
is given by
\begin{align*}
%  ws pros \zeta_2^{(n)}    \mathds{1}(n=  )
%
& -\frac{ \zeta_1^{  ( k )} \zeta_1^{  ( m )}}{\omega_{ i } \omega_{ j }}
 \mathds{1}(n=  j)\mathds{1}(l=i)
 -\frac{ \zeta_1^{  ( k )} \zeta_1^{  ( m )}}{\omega_{ i } \omega_{ j }} \mathds{1}(n=  i) \mathds{1}(l=j) 
 +\frac{ \zeta_1^{  ( j )}  \zeta_1^{  ( m )}}{\omega_{ i } \omega_{ k }} 
 \mathds{1}(n=  k) \mathds{1}(l=i) \\
& +\frac{ \zeta_1^{  ( j )}  \zeta_1^{  ( m )}}{\omega_{ i } \omega_{ k }}  \mathds{1}(n=  i)\mathds{1}(l=k)
+\frac{\zeta_1^{  ( i )}  \zeta_1^{  ( m )}}{\omega_{ j } \omega_{ k }} 
  \mathds{1}(n= k ) \mathds{1}(l=j) 
 +\frac{\zeta_1^{  ( i )}  \zeta_1^{  ( m )}}{\omega_{ j } \omega_{ k }}   \mathds{1}(n=  j) \mathds{1}(l=k)  \\
& +\frac{ \zeta_1^{  ( j )} \zeta_1^{  ( k )} }{\omega_{ i } \omega_{ m }}
\mathds{1}(n=  m) \mathds{1}(l=i)
 +\frac{ \zeta_1^{  ( j )} \zeta_1^{  ( k )} }{\omega_{ i } \omega_{ m }} \mathds{1}(n=  i)\mathds{1}(l=m)  
 +\frac{\zeta_1^{  ( i )}  \zeta_1^{  ( k )} }{\omega_{ j } \omega_{ m }}
  \mathds{1}(n=  m) \mathds{1}(l=j) \\
&  +\frac{\zeta_1^{  ( i )}  \zeta_1^{  ( k )} }{\omega_{ j } \omega_{ m }}  \mathds{1}(n=  j)\mathds{1}(l=m) 
-\frac{\zeta_1^{  ( i )} \zeta_1^{  ( j )} }{\omega_{ k } \omega_{ m }}
  \mathds{1}(n=  m) \mathds{1}(l=k)
 -\frac{\zeta_1^{  ( i )} \zeta_1^{  ( j )} }{\omega_{ k } \omega_{ m }}  \mathds{1}(n=  k) \mathds{1}(l=m)   \\
&+\frac{ \zeta_2^{  ( k )} \zeta_2^{  ( m )}}{\omega_{ i } \omega_{ j } \omega_{ k } \omega_{ m }}\mathds{1}(l=i)\mathds{1}(n= j) 
+\frac{  \zeta_2^{  ( j )}  \zeta_2^{  ( m )}}{\omega_{ i } \omega_{ j } \omega_{ k } \omega_{ m }}\mathds{1}(l=i)\mathds{1}(n= k)
+\frac{  \zeta_2^{  ( j )} \zeta_2^{  ( k )} }{\omega_{ i } \omega_{ j } \omega_{ k } \omega_{ m }}\mathds{1}(l=i)\mathds{1}(n= m) \\
%%%
& +\frac{  \zeta_2^{  ( k )} \zeta_2^{  ( m )}}{\omega_{ i } \omega_{ j } \omega_{ k } \omega_{ m }}\mathds{1}(n= i) \mathds{1}(l=j)
+\frac{\zeta_2^{  ( i )} \zeta_2^{  ( m )}}{\omega_{ i } \omega_{ j } \omega_{ k } \omega_{ m }} \mathds{1}(l=j) \mathds{1}(n= k)
+\frac{\zeta_2^{  ( i )}  \zeta_2^{  ( k )}}{\omega_{ i } \omega_{ j } \omega_{ k } \omega_{ m }} \mathds{1}(n= m) \mathds{1}(l=j) \\
%%% 
& +\frac{ \zeta_2^{  ( j )}  \zeta_2^{  ( m )}}{\omega_{ i } \omega_{ j } \omega_{ k } \omega_{ m }} \mathds{1}(n= i) \mathds{1}(l=k)
+\frac{ \zeta_2^{  ( i )} \zeta_2^{  ( m )}}{\omega_{ i } \omega_{ j } \omega_{ k } \omega_{ m }} \mathds{1}(l=k) \mathds{1}(n= j) 
+\frac{ \zeta_2^{  ( i )} \zeta_2^{  ( j )} }{\omega_{ i } \omega_{ j } \omega_{ k } \omega_{ m }} \mathds{1}(l=k) \mathds{1}(n= m) \\
%%%
&+\frac{ \zeta_2^{  ( j )} \zeta_2^{  ( k )}}{\omega_{ i } \omega_{ j } \omega_{ k } \omega_{ m }}  \mathds{1}(l=m)\mathds{1}(n= i)
+\frac{\zeta_2^{  ( i )}  \zeta_2^{  ( k )}}{\omega_{ i } \omega_{ j } \omega_{ k } \omega_{ m }}  \mathds{1}(l=m)\mathds{1}(n= j)
+\frac{\zeta_2^{  ( i )} \zeta_2^{  ( j )}}{\omega_{ i } \omega_{ j } \omega_{ k } \omega_{ m }}  \mathds{1}(l=m) \mathds{1}(n= k),
\end{align*}
for all integers $i,j,k,m,n,l \geq 0$. Secondly, we restrict our attention to the rescaled first 1-mode $\boldsymbol{\xi}=(\xi_1,\xi_2)$ defined by \eqref{Definition1modesInitialdataBambusiNekhoroshev}. In this case, the latter yields that
 \begin{align*}
	\frac{\partial^2}{\partial \zeta_1^{(l)}\zeta_1^{(n)} } 	P_{ijkm}(\boldsymbol{\xi})  
\end{align*}
is given by
\begin{align*}
& + \kappa_{0}^2 \mathds{1}(k=m=0)  \mathds{1}(n= j) \mathds{1}(l=i)
 +  \kappa_{0}^2 \mathds{1}(j=m=0)  \mathds{1}(n= k)  \mathds{1}(l=i) \\
&  + \kappa_{0}^2 \mathds{1}(j=k=0)  \mathds{1}(n= m)\mathds{1}(l=i)
+\kappa_{0}^2 \mathds{1}(k=m=0) \mathds{1}(n= i)  \mathds{1}(l=j) \\
& +\kappa_{0}^2 \mathds{1}(i=m=0)  \mathds{1}(n= k)  \mathds{1}(l=j)
 +\kappa_{0}^2 \mathds{1}(i=k=0) \mathds{1}(n= m)\mathds{1}(l=j) \\
& +\kappa_{0}^2 \mathds{1}(j=m=0) \mathds{1}(n= i) \mathds{1}(l=k)
  +\kappa_{0}^2 \mathds{1}(i=m=0)  \mathds{1}(n= j) \mathds{1}(l=k) \\
  & +\kappa_{0}^2 \mathds{1}(i=j=0)  \mathds{1}(n= m)\mathds{1}(l=k) 
   +\kappa_{0}^2 \mathds{1}(j=k=0) \mathds{1}(n= i)   \mathds{1}(l=m)\\
& + \kappa_{0}^2 \mathds{1}(i=k=0)  \mathds{1}(n= j)   \mathds{1}(l=m)
 +\kappa_{0}^2 \mathds{1}(i=j=0)  \mathds{1}(n= k)\mathds{1}(l=m) , 
\end{align*}
and 
\begin{align*}
	\frac{\partial^2}{\partial \zeta_1^{(l)} \partial \zeta_2^{(n)}  } 	P_{ijkm}(\boldsymbol{\xi})    =0
\end{align*}
and 
\begin{align*}
	 \frac{\partial^2}{\partial \zeta_2^{(n)} \partial \zeta_2^{(l)}} 	P_{ijkm}(\boldsymbol{\xi})   
\end{align*}
is given by
\begin{align*}
& -    \frac{  \kappa_{0}^2}{\omega_{ i } \omega_{ j }}
\mathds{1}( k= m= 0) \mathds{1}(n=  j)\mathds{1}(l=i)
 -\frac{ \kappa_{0}^2 }{\omega_{ i } \omega_{ j }} \mathds{1}( k= m= 0)\mathds{1}(n=  i) \mathds{1}(l=j) \\
& +\frac{ \kappa_{0}^2 }{\omega_{ i } \omega_{ k }} 
\mathds{1}( j= m= 0) \mathds{1}(n=  k) \mathds{1}(l=i)
+\frac{ \kappa_{0}^2 }{\omega_{ i } \omega_{ k }} \mathds{1}( j= m= 0) \mathds{1}(n=  i)\mathds{1}(l=k) \\
&+\frac{\kappa_{0}^2  }{\omega_{ j } \omega_{ k }} 
 \mathds{1}( i= m= 0) \mathds{1}(n= k ) \mathds{1}(l=j) 
 +\frac{\kappa_{0}^2 }{\omega_{ j } \omega_{ k }} \mathds{1}(i = m= 0)   \mathds{1}(n=  j) \mathds{1}(l=k)  \\
& +\frac{ \kappa_{0}^2  }{\omega_{ i } \omega_{ m }}
\mathds{1}( j= k= 0)\mathds{1}(n=  m) \mathds{1}(l=i)
 +\frac{ \kappa_{0}^2  }{\omega_{ i } \omega_{ m }}\mathds{1}( j= k= 0) \mathds{1}(n=  i)\mathds{1}(l=m)  \\ 
& +\frac{\kappa_{0}^2 }{\omega_{ j } \omega_{ m }}
  \mathds{1}( i= k= 0)\mathds{1}(n=  m) \mathds{1}(l=j)
    +\frac{ \kappa_{0}^2  }{\omega_{ j } \omega_{ m }}  \mathds{1}(n=  j)\mathds{1}( i= k= 0)\mathds{1}(l=m)  \\
& -\frac{\kappa_{0}^2  }{\omega_{ k } \omega_{ m }}
  \mathds{1}(i =j = 0)\mathds{1}(n=  m) \mathds{1}(l=k)
 -\frac{\kappa_{0}^2 }{\omega_{ k } \omega_{ m }} \mathds{1}(i =j = 0)  \mathds{1}(n=  k) \mathds{1}(l=m)  ,
\end{align*}
for all integers $i,j,k,m,n,l \geq 0$. Now, restrict our attention to the rescaled first 1-mode, combine the three previous results together with Lemma \ref{LemmaTimeAveraging} and the symmetries for the Fourier coefficients from \eqref{DefinitionOfFourierCoefficientsModelKG} and \eqref{DefinitionOfFourierCoefficientsModelWM} to obtain
\begin{align*} 
	\frac{\partial^2 \langle f\rangle (\boldsymbol{\xi})}{\partial \zeta_1^{(l)} \partial \zeta_1^{(n)}}  &= \frac{3}{32}\sum_{i,j,k,m=0}^{\infty} C_{ijkm}   
	\frac{\partial^2 P_{ijkm}(\boldsymbol{\xi})}{\partial \zeta_1^{(l)} \partial \zeta_1^{(n)}}  \mathds{1}(m=i+j-k \geq 0)  \\
	&=  \frac{3}{32} \kappa_{0}^2\Big[
	C_{ln 00 }    \mathds{1}(0=l+n \geq 0)
 +  C_{l0 n 0  }     \mathds{1}(n=l \geq 0)
  +  C_{l 0 0 n }  \mathds{1}(n=l \geq 0) \\
 &+ C_{ n l 00 }   \mathds{1}(0=n+l \geq 0) 
  + C_{0 l n 0 }    \mathds{1}(n=l \geq 0)  
 + C_{0 l 0 n  }   \mathds{1}(n=l \geq 0)   \\ 
&
  + C_{ n 0 l 0  }  \mathds{1}(l=n \geq 0)  
  + C_{ 0 n l 0  }  \mathds{1}(l=n \geq 0) 
   + C_{ 00 l n }    \mathds{1}(n=-l \geq 0)   \\ 
&
   + C_{ n 00 l  }   \mathds{1}(l=n \geq 0) 
    +  C_{0 n 0 l}     \mathds{1}(l=n \geq 0)  
 + C_{00 n l }    \mathds{1}(l=-n \geq 0) 
	\Big]    \\
	&=  \frac{3}{32} \kappa_{0}^2\Big[
	C_{ln 00 }    \mathds{1}(l=n=0 )
 +  C_{l0 n 0  }     \mathds{1}( n=l )  \\
 & +  C_{l 0 0 n }  \mathds{1}(n=l ) 
 + C_{ n l 00 }   \mathds{1}(l=n=0)
  + C_{0 l n 0 }    \mathds{1}(  n=l  )  
 + C_{0 l 0 n  }   \mathds{1}(n= l   )   \\ 
& + C_{ n 0 l 0  }  \mathds{1}(n=l  )  
  + C_{ 0 n l 0  }  \mathds{1}(n=l  )  
    + C_{ 00 l n }    \mathds{1}(n=-l   )  
   + C_{ n 00 l  }   \mathds{1}(l=n  ) \\    
 & +  C_{0 n 0 l}     \mathds{1}(l= n  )  
 + C_{00 n l }    \mathds{1}(l=n=0)	\Big]    \\
	&=  \frac{3}{32} \kappa_{0}^2\Big[
	4 C_{ln 00 }    \mathds{1}(l=n=0 )
 + 8 C_{l0 n 0  }     \mathds{1}( n=l )  	\Big] \\
 &=  \frac{3}{8} \kappa_{0}^2\Big[
	 C_{ln 00 }    \mathds{1}(l=n=0 )
 + 2 C_{l0 n 0  }     \mathds{1}( n=l )  	\Big]  ,  \\ 
\frac{\partial^2 \langle f\rangle (\boldsymbol{\xi})}{\partial \zeta_2^{(n)} \partial \zeta_1^{(l)}}  &=  \frac{3}{32}\sum_{i,j,k,m=0}^{\infty} C_{ijkm}   
	\frac{\partial^2  P_{ijkm}(\boldsymbol{\xi})}{\partial \zeta_2^{(n)} \partial \zeta_1^{(l)}}   \mathds{1}(m=i+j-k \geq 0)  
	 =  0,\\
\frac{\partial^2 \langle f\rangle (\boldsymbol{\xi})}{\partial \zeta_2^{(n)} \partial \zeta_2^{(l)}}   &=   \frac{3}{32}\sum_{i,j,k,m=0}^{\infty} C_{ijkm}   
	\frac{\partial^2 P_{ijkm}(\boldsymbol{\xi}) }{\partial \zeta_2^{(n)} \partial \zeta_2^{(l)}}   \mathds{1}(m=i+j-k \geq 0)  \\
	&=  \frac{3}{32}\kappa_{0}^2 \Big[ 
	 -    \frac{  C_{ ln 0 0 }  }{\omega_{ l } \omega_{ n }}  
	 \mathds{1}(0 =l+n-0  \geq 0) 
 -\frac{C_{n l 0 0   }  }{\omega_{ n } \omega_{ l }} 
 \mathds{1}(-l =n  \geq 0)  
  +\frac{ C_{ l 0  n 0  }  }{\omega_{ l } \omega_{ n }}  
\mathds{1}(n =l  \geq 0)  \\ 
&
+\frac{ C_{ n 0  l 0  } }{\omega_{ n} \omega_{ l }}  
\mathds{1}(l =n  \geq 0) 
+\frac{ C_{0  l n 0   } }{\omega_{ l } \omega_{ n}} 
\mathds{1}(n = l \geq 0)   
 +\frac{ C_{ 0  n l 0  } }{\omega_{ n } \omega_{ l }} 
 \mathds{1}(l = n \geq 0)   \\ 
& +\frac{  C_{l 0  0  n }  }{\omega_{ l } \omega_{ n }}  
\mathds{1}(n=l  \geq 0) 
 +\frac{  C_{ n 0 0 l }  }{\omega_{ n } \omega_{ l }}  
 \mathds{1}(l=n  \geq 0) 
  +\frac{ C_{ 0  l 0  n } }{\omega_{ l } \omega_{ n}}  
\mathds{1}(n=l  \geq 0) 
    +\frac{  C_{ 0 n 0  l }  }{\omega_{ n } \omega_{ l }} 
    \mathds{1}(l=n  \geq 0)    \\
& -\frac{ C_{ 0  0  l n  } }{\omega_{ l } \omega_{ n }} 
 \mathds{1}(n=-l \geq 0) 
 -\frac{ C_{ 0  0  n l } }{\omega_{ n } \omega_{ l }}  
 \mathds{1}(l= -n \geq 0)  \Big] \\
	&=  \frac{3}{32}\kappa_{0}^2 \Big[ 
	 -    \frac{  C_{ ln 0 0 }  }{\omega_{ l } \omega_{ n }}  
	 \mathds{1}( l=n=0 ) 
 -\frac{C_{n l 0 0   }  }{\omega_{ n } \omega_{ l }} 
 \mathds{1}(l=n=0)    
  +\frac{ C_{ l 0  n 0  }  }{\omega_{ l } \omega_{ n }}  
\mathds{1}(l=n) \\
&+\frac{ C_{ n 0  l 0  } }{\omega_{ n} \omega_{ l }}  
\mathds{1}(n=l)  +\frac{ C_{0  l n 0   } }{\omega_{ l } \omega_{ n}} 
\mathds{1}(l=n)   
 +\frac{ C_{ 0  n l 0  } }{\omega_{ n } \omega_{ l }} 
 \mathds{1}(n=l) 
  +\frac{  C_{l 0  0  n }  }{\omega_{ l } \omega_{ n }}  
\mathds{1}(n=l ) \\
& +\frac{  C_{ n 0 0 l }  }{\omega_{ n } \omega_{ l }}  
 \mathds{1}(l=n )  
  +\frac{ C_{ 0  l 0  n } }{\omega_{ l } \omega_{ n}}  
\mathds{1}(n= l ) 
    +\frac{  C_{ 0 n 0  l }  }{\omega_{ n } \omega_{ l }} 
    \mathds{1}(l= n )    \\
& -\frac{ C_{ 0  0  l n  } }{\omega_{ l } \omega_{ n }} 
 \mathds{1}(l=n=0) 
 -\frac{ C_{ 0  0  n l } }{\omega_{ n } \omega_{ l }}  
 \mathds{1}(l=n=0)  \Big] \\
	&=  \frac{3}{32}\kappa_{0}^2 \Big[ 
	 -  4  \frac{  C_{ ln 0 0 }  }{\omega_{ l } \omega_{ n }}  
	 \mathds{1}( l=n=0 ) 
   +8\frac{ C_{ l 0  n 0  }  }{\omega_{ l } \omega_{ n }}  
\mathds{1}(l=n)  \Big] \\
	&=  \frac{3}{8 } \frac{\kappa_{0}^2}{\omega_{ l } \omega_{ n }} \Big[ 
	 -     C_{ ln 0 0 }   
	 \mathds{1}( l=n=0 ) 
   +2 C_{ l 0  n 0  }    \mathds{1}(l=n)  \Big] ,
\end{align*}
for all integers $n,l \geq 0$. Moreover, for any tangent vector 
\begin{align*}
	\boldsymbol{X}=(V,W)=\left(\sum_{m=0}^{\infty}V^{(m)}e_{m},\sum_{m=0}^{\infty}W^{(m)}e_{m} \right)  \in T_{\boldsymbol{\xi}}\Sigma,
\end{align*}
given by Lemma \ref{LemmaTangentSpaceofSigma}, the latter yields
\begin{align*}
	 d^2  &( \langle f\rangle   )(\boldsymbol{\xi})(\boldsymbol{X},\boldsymbol{X}) = \sum_{ \alpha,\beta=0}^{\infty}  \frac{\partial^2  \langle f\rangle (\boldsymbol{\xi})}{ \partial \boldsymbol{\zeta}_{\alpha} \partial \boldsymbol{\zeta}_{\beta} } \boldsymbol{X}^{\alpha} \boldsymbol{X}^{\beta} \\
	&=\sum_{n,l=0}^{\infty} \Bigg[ 
	\frac{\partial^2 \langle f\rangle (\boldsymbol{\xi})}{\partial \zeta_1^{(n)} \partial \zeta_1^{(l)}}  V^{(n)}V^{(l)}+
2\frac{\partial^2 \langle f\rangle (\boldsymbol{\xi})}{\partial \zeta_2^{(n)} \partial \zeta_1^{(l)}}  W^{(n)}V^{(l)}+
\frac{\partial^2 \langle f\rangle (\boldsymbol{\xi})}{\partial \zeta_2^{(n)} \partial \zeta_2^{(l)}}  W^{(n)}W^{(l)}
	\Bigg] \\
	&=\frac{3}{8} \kappa_{0}^2 \sum_{n,l=0}^{\infty} \Bigg[  \big(
	 C_{ln 00 }    \mathds{1}(l=n=0 )
 + 2 C_{l0 n 0  }     \mathds{1}( n=l )  	\big) V^{(n)}V^{(l)} \\
 &+    \big( 
	 -     C_{ ln 0 0 }   
	 \mathds{1}( l=n=0 ) 
   +2 C_{ l 0  n 0  }    \mathds{1}(l=n)  \big)\frac{W^{(n)}W^{(l)}}{\omega_{ n } \omega_{ l }}  \Bigg] \\
   &=\frac{3}{8} \kappa_{0}^2 \Bigg[    
	 C_{0000}   (V^{(0)})^2   
 + 2 \sum_{l=0}^{\infty} C_{l0 l 0  } (V^{(l)})^2 
  	 -       C_{ 0000 }  \left(\frac{W^{(0)} }{\omega_{ 0 } }   \right)^2
   +2   \sum_{l=0}^{\infty} C_{ l 0  l 0  }   \left( \frac{W^{(l)}}{\omega_{ l }  } \right)^2 \Bigg] \\
   &=  \frac{\omega_{0}^2}{C_{00 0 0}   } \Bigg[   
	 C_{0000 }  \left(V^{(0)}\right)^2 
 + 2 \sum_{l=0}^{\infty} C_{l0 l 0  } (V^{(l)})^2 
   -         C_{ 0000} \left( \frac{W^{(0)} }{\omega_{0 } }  \right)^2 
   +2   \sum_{l=0}^{\infty} C_{ l 0  l 0  }   \left( \frac{W^{(l)}}{\omega_{ l }  } \right)^2 \Bigg] \\
   &= \omega_{0}^2 \left(V^{(0)}\right)^2 
 + 2 \omega_{0}^2\sum_{l=0}^{\infty} \frac{C_{l0 l 0  }}{C_{0000}} (V^{(l)})^2 
   -         \left( W^{(0)}  \right)^2 
   +2   \omega_{0}^2 \sum_{l=0}^{\infty}  \frac{C_{l0 l 0  }}{C_{0000}}    \left( \frac{W^{(l)}}{\omega_{ l }  } \right)^2  ,
\end{align*}
where we also used the definition of the rescaled constant $\kappa_{0}$ given by \eqref{Definition1modesInitialdataBambusiNekhoroshev}. Thirdly, we use Lemma \ref{LemmaHarmonicFlowWrtTheBasisem} to compute
\begin{align*} 
	  \frac{\partial^2}{\partial \zeta_1^{(l)} \zeta_1^{(n)}  } h_{\Omega} (\boldsymbol{\zeta}) , \quad  
	  \frac{\partial^2}{\partial \zeta_1^{(l)} \zeta_2^{(n)}   } h_{\Omega}(\boldsymbol{\zeta}) , \quad  
	  \frac{\partial^2}{\partial \zeta_2^{(l)} \zeta_2^{(n)}  } 	h_{\Omega}(\boldsymbol{\zeta}) , 
\end{align*}
for all integers $l,n \geq 0$, and for any $\boldsymbol{\zeta}=(\zeta_1,\zeta_2)$. We obtain that 
\begin{align*}
	\frac{\partial^2}{\partial \zeta_1^{(l)} \zeta_1^{(n)}  } h_{\Omega} (\boldsymbol{\zeta}) = 2\omega_{l}^2 \mathds{1}(l=n)  ,  \quad 
	  \frac{\partial^2}{\partial \zeta_1^{(l)} \zeta_2^{(n)}   } h_{\Omega}(\boldsymbol{\zeta})  = 0,  \quad 
	  \frac{\partial^2}{\partial \zeta_2^{(l)} \zeta_2^{(n)}  } 	h_{\Omega}(\boldsymbol{\zeta}) = 2  \mathds{1}(l=n) ,
\end{align*}
for all integers $l,n \geq 0$. Moreover, restricting our attention to the rescaled first 1-mode $\boldsymbol{\xi}=(\xi_1,\xi_2)$ given by \eqref{Definition1modesInitialdataBambusiNekhoroshev}, for any tangent vector 
\begin{align*}
	\boldsymbol{X}=(V,W)=\left(\sum_{m=0}^{\infty}V^{(m)}e_{m},\sum_{m=0}^{\infty}W^{(m)}e_{m} \right)  \in T_{\boldsymbol{\xi}}\Sigma,
\end{align*}
given by Lemma \ref{LemmaTangentSpaceofSigma}, the latter yields
\begin{align*}
	 d^2  h_{\Omega}(\boldsymbol{\xi})(\boldsymbol{X},\boldsymbol{X}) &=
	\sum_{ \alpha,\beta=0}^{\infty}  \frac{\partial^2  h_{\Omega}(\boldsymbol{\xi})}{ \partial \boldsymbol{\zeta}_{\alpha} \partial \boldsymbol{\zeta}_{\beta} } \boldsymbol{X}^{\alpha} \boldsymbol{X}^{\beta} \\
	&=\sum_{n,l=0}^{\infty} \Bigg[ 
	\frac{\partial^2 h_{\Omega}(\boldsymbol{\xi})}{\partial \zeta_1^{(n)} \partial \zeta_1^{(l)}}  V^{(n)}V^{(l)}+
2\frac{\partial^2 h_{\Omega}(\boldsymbol{\xi})}{\partial \zeta_2^{(n)} \partial \zeta_1^{(l)}}  W^{(n)}V^{(l)}+
\frac{\partial^2 h_{\Omega}(\boldsymbol{\xi})}{\partial \zeta_2^{(n)} \partial \zeta_2^{(l)}}  W^{(n)}W^{(l)}
	\Bigg] \\
	&=\sum_{n,l=0}^{\infty} \Bigg[ 
	2\omega_{l}^2 \mathds{1}(l=n)  V^{(n)}V^{(l)}
	+   2  \mathds{1}(l=n) W^{(n)}W^{(l)}
	\Bigg] \\
	&=2\sum_{l=0}^{\infty}  
	\omega_{l}^2  (  V^{(l)}  )^2
	+   2 \sum_{l=0}^{\infty}   (  W^{(l)} )^2   .
\end{align*}
Finally, according to Remark page 78-79 in \cite{BAMBUSI199873}, we have that
\begin{align*}
	 d^2  ( \langle f\rangle |_{\Sigma} )(\boldsymbol{\xi})(\boldsymbol{X},\boldsymbol{X})  = \lambda  d^2  h_{\Omega}(\boldsymbol{\xi})(\boldsymbol{X},\boldsymbol{X}) + d^2  ( \langle f\rangle   )(\boldsymbol{\xi})(\boldsymbol{X},\boldsymbol{X}) ,
\end{align*} 
with $\lambda=-1/2$ due to \eqref{ValueLagrangeMultiplierLambda}, and hence we conclude that
\begin{align*}
	 d^2 &  ( \langle f\rangle |_{\Sigma} )(\boldsymbol{\xi})(\boldsymbol{X},\boldsymbol{X})  = -\frac{1}{2}  d^2  h_{\Omega}(\boldsymbol{\xi})(\boldsymbol{X},\boldsymbol{X}) + d^2  ( \langle f\rangle   )(\boldsymbol{\xi})(\boldsymbol{X},\boldsymbol{X}) \\
	& = -\sum_{l=0}^{\infty}  
	\omega_{l}^2  (  V^{(l)}  )^2
	-   \sum_{l=0}^{\infty}   (  W^{(l)} )^2  +     
	 \omega_{0}^2      (V^{(0)} )^2
 +  2\omega_{0}^2\sum_{l=0}^{\infty} \frac{C_{l0 l 0  } }{C_{00 0 0}   } (V^{(l)})^2 \\
 & 	 -      \left(  W^{(0)}     \right)^2
   +  2\omega_{0}^2 \sum_{l=0}^{\infty} \frac{ C_{ l 0  l 0  } }{C_{00 0 0}   }   \left( \frac{W^{(l)}}{\omega_{ l }  } \right)^2  \\ 
	&  =  - \frac{\omega_{0}^2}{C_{0 0 0 0}} \Bigg[  \sum_{l=0}^{\infty} \Bigg(  \frac{C_{0 0 0 0}}{\omega_{0}^2} - 2 \frac{ C_{l l0   0       } }{ \omega_{l}^2} \Bigg) ( \omega_{l}V^{(l)})^2 
   +\sum_{l=0}^{\infty} \Bigg( \frac{C_{0 0 0 0}}{\omega_{0}^2} -  2\frac{  C_{ l l 0   0  } }{\omega_{l}^2     }    \Bigg)(W^{(l)})^2   \Bigg]\\
   &  +  \omega_0^2 (V^{(0)})^2
	-     \left(   W^{(0)}  \right)^2    ,
\end{align*}
that completes the proof.
\end{proof}

Finally, we verify Condition \eqref{Assumption2BambusiNekhoroshev} in Theorem \ref{TheoremBambusiNekhoroshev} which states that the rescaled first 1-mode must also be an extremum point for the function $\langle f\rangle |_{\Sigma}$, namely there exists a positive constant $c_{\perp}$ such that
	\begin{align*}
		\pm  d^2  ( \langle f\rangle |_{\Sigma} )(\boldsymbol{\xi})(\boldsymbol{X},\boldsymbol{X}) \geq c_{\perp} \| \boldsymbol{X}\|^2, \quad \forall \boldsymbol{X} \in \left(T_{\boldsymbol{\xi}}\Gamma_{\text{linear}}(\boldsymbol{\xi})  \right)^{\perp},
	\end{align*}
	with the plus and minus sign if $\boldsymbol{\xi} \in \Sigma$ is a minimum or maximum for the function $\langle f\rangle |_{\Sigma}$ respectively. Recall that the orthogonal complement $\left(T_{\boldsymbol{\xi}}\Gamma_{\text{linear}} (\boldsymbol{\xi}) \right)^{\perp}$ is given by Lemma \ref{LemmaTangentSpaceofSigma}. As we will see next, in our case, the rescaled first 1-mode maximizes $\langle f\rangle |_{\Sigma}$ for both KG and WM if and only if $\delta$ satisfies Assumption \ref{AssumptionsondeltanonlinearStability}.

\begin{lemma}[Condition \eqref{Assumption2BambusiNekhoroshev} in Theorem \ref{TheoremBambusiNekhoroshev}]\label{LemmaAssumption2BambusiNekhoroshev}
	Let  $\boldsymbol{\xi}=(\xi_1,\xi_2)$ be the rescaled first 1-mode  given by \eqref{Definition1modesInitialdataBambusiNekhoroshev} and let 
\begin{align*}
	\Sigma= \left \{\boldsymbol{\zeta}\in \mathcal{P}:~  h_{\Omega}(\boldsymbol{\zeta})=\kappa_{0}^2 \omega_{0}^2 \right\}
\end{align*}
be the hypersurface of constant  harmonic energy. Then, $\delta$ satisfies Assumption \ref{AssumptionsondeltanonlinearStability} if and only if there exists a positive constant $c_{\perp}=c_{\perp}(\delta)$, depending only on $\delta$, such that
\begin{align*}
		  -d^2  ( \langle f\rangle |_{\Sigma} )(\boldsymbol{\xi})(\boldsymbol{X},\boldsymbol{X}) \geq c_{\perp} \| \boldsymbol{X}\|^2,  \quad \forall \boldsymbol{X} \in \left(T_{\boldsymbol{\xi}}\Gamma_{\text{linear}}  (\boldsymbol{\xi}) \right)^{\perp}. 
	\end{align*} 
\end{lemma}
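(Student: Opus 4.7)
The strategy is to combine the explicit Fourier-space expression from Lemma \ref{LemmaSecondDifferentialatCriticalPointComputation} for the restricted second differential with the uniform estimates for the Fourier coefficients already established in Proposition \ref{PropositionUniformEstimates}, and then translate everything into the energy norm on $\mathcal{P}$.

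First I would take an arbitrary $\boldsymbol{X}=(V,W) \in \left(T_{\boldsymbol{\xi}}\Gamma_{\text{linear}}(\boldsymbol{\xi})\right)^{\perp}$ expanded in the eigenbasis $\{e_m\}$. By Lemma \ref{LemmaTangentSpaceofSigma}, belonging to this orthogonal complement is equivalent to the two scalar conditions $V^{(0)}=W^{(0)}=0$. Substituting this into the formula from Lemma \ref{LemmaSecondDifferentialatCriticalPointComputation} kills the two unsigned correction terms $\omega_0^2(V^{(0)})^2$ and $-(W^{(0)})^2$ and also removes the $l=0$ summand (which would otherwise have produced the ``wrong-sign'' term $-C_{0000}/\omega_0^2$). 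Therefore on the orthogonal complement the formula reduces to
\begin{align*}
	-d^2(\langle f\rangle|_{\Sigma})(\boldsymbol{\xi})(\boldsymbol{X},\boldsymbol{X})
	=\frac{\omega_0^2}{C_{0000}}\sum_{l=1}^{\infty}\left(\frac{C_{0000}}{\omega_0^2}-2\frac{C_{ll00}}{\omega_l^2}\right)\left[(\omega_l V^{(l)})^2+(W^{(l)})^2\right].
\end{align*}

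Next I would invoke the uniform lower bound \eqref{FourierConditionStability2} of Proposition \ref{PropositionUniformEstimates}, under Assumption \ref{AssumptionsondeltanonlinearStability}, which gives a strictly positive constant $c_\perp(\delta)>0$ such that every coefficient in the sum above is bounded below by $c_\perp(\delta)$. This turns the identity into the quadratic lower bound
\begin{align*}
	-d^2(\langle f\rangle|_{\Sigma})(\boldsymbol{\xi})(\boldsymbol{X},\boldsymbol{X})
	\geq \frac{\omega_0^2\, c_\perp(\delta)}{C_{0000}}\sum_{l=1}^{\infty}\left[\omega_l^2 (V^{(l)})^2+(W^{(l)})^2\right].
\end{align*}
It remains to match this with $\|\boldsymbol{X}\|_{\mathcal{P}}^2=\|V\|_{H^1}^2+\|W\|_{L^2}^2$. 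For the $W$ part this is immediate from Parseval. For the $V$ part I would use the fact, established in Section \ref{se:sa}, that the sesquilinear form $\alpha(V,V)=\int(\partial_x V)^2 + \delta^2 V^2\, d\mu=\sum_l\omega_l^2 (V^{(l)})^2$ is equivalent to $\|V\|_{H^1((0,\pi/2);d\mu)}^2$ with constants depending only on $\delta$. Together with $V^{(0)}=W^{(0)}=0$ this yields the desired coercivity with a constant proportional to $c_\perp(\delta)$.

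For the converse direction I would argue by contrapositive: if Assumption \ref{AssumptionsondeltanonlinearStability} fails, then by Proposition \ref{PropositionUniformEstimates} there exists an integer $m_0\geq 1$ for which the coefficient $\frac{C_{0000}}{\omega_0^2}-2\frac{C_{m_0 m_0 00}}{\omega_{m_0}^2}\leq 0$. Testing the quadratic form with $\boldsymbol{X}=(e_{m_0},0)$ (which lies in the orthogonal complement since $m_0\geq 1$) reduces the sum to a single non-positive term, violating the strict positive lower bound, and so no such $c_\perp(\delta)>0$ can exist. The main obstacle in this plan is essentially organizational: verifying that the indefinite terms in Lemma \ref{LemmaSecondDifferentialatCriticalPointComputation} cancel exactly after imposing the orthogonality conditions from Lemma \ref{LemmaTangentSpaceofSigma}, and tracking the $\delta$-dependent constants in the norm equivalence carefully. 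The genuinely hard analytical content (the sign and uniform lower bound of the Fourier coefficients) has already been absorbed into Proposition \ref{PropositionUniformEstimates}.
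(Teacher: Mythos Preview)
The proposal is correct and follows essentially the same approach as the paper: use Lemma \ref{LemmaTangentSpaceofSigma} to reduce to $V^{(0)}=W^{(0)}=0$, plug into the explicit formula of Lemma \ref{LemmaSecondDifferentialatCriticalPointComputation}, and then reduce the coercivity to the uniform lower bound of Proposition \ref{PropositionUniformEstimates}. Your treatment is in fact slightly more careful than the paper's in one respect: the paper writes $\|\boldsymbol{X}\|_{\mathcal{P}}^2=\sum_{l\geq 1}\big((\omega_l V^{(l)})^2+(W^{(l)})^2\big)$ as an identity, whereas you correctly flag that one needs the norm equivalence from Section \ref{se:sa} between $\alpha(V,V)=\sum_l\omega_l^2(V^{(l)})^2$ and $\|V\|_{H^1}^2$; and for the converse the paper simply closes the biconditional chain through Proposition \ref{PropositionUniformEstimates}, while your contrapositive with the test vector $(e_{m_0},0)$ makes the failure explicit.
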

\begin{proof}
Let  $\boldsymbol{\xi}=(\xi_1,\xi_2)$ be the rescaled first 1-mode  given by \eqref{Definition1modesInitialdataBambusiNekhoroshev}, pick any $\boldsymbol{X}=(V,W)\in  \left(T_{\boldsymbol{\xi}}\Gamma_{\text{linear}}  (\boldsymbol{\xi})\right)^{\perp} $ and expand its coefficients terms of the eigenfunctions $\{e_m:m\geq 0 \}$ to the linearized operator $L$,
	\begin{align*}
		V=\sum_{m=0}^{\infty} V^{(m)} e_{m}, \quad W=\sum_{m=0}^{\infty} W^{(m)} e_{m}.
	\end{align*}
	  Then, according to Lemma \ref{LemmaTangentSpaceofSigma}, we have that
\begin{align*}
	V^{(0)}=W^{(0)}=0 .
\end{align*} 
For convenience, we rescale $V$ and set
\begin{align*}
	Z^n = \omega_{n}V^n,
\end{align*}
for all integers $n \geq 0$. On the one hand, according to Lemma \ref{LemmaSecondDifferentialatCriticalPointComputation}, we have that 
\begin{align*}
	- \frac{C_{0 0 0 0}}{\omega_{0}^2}  d^2  ( \langle f\rangle |_{\Sigma} )(\boldsymbol{\xi})(\boldsymbol{X},\boldsymbol{X}) & =     \sum_{l=1}^{\infty} \Bigg(   \frac{C_{0 0 0 0}}{\omega_{0}^2} -2 \frac{ C_{l l0   0       }  }{ \omega_{l}^2} \Bigg) (Z^{(l)})^2 
   +\sum_{l=1}^{\infty} \Bigg(   \frac{C_{0 0 0 0}}{\omega_{0}^2} - 2 \frac{  C_{ l l 0   0  } }{\omega_{l}^2     }  \Bigg)(W^{(l)})^2 .
\end{align*} 
On the other hand, we have that
\begin{align*}
	\| \boldsymbol{X}\|_{\mathcal{P}}^2 &=
	% \| \boldsymbol{X}\|_{ H^1((0,\pi/2) ;d\mu) \times L^2((0,\pi/2) ;d\mu) }^2 =  \sum_{\substack{ l=1}}^{\infty}\left(  (\omega_{l} V^{(l)}))^2 +  (W^{(l)})^2 \right)=
	\sum_{\substack{ l=1   }}^{\infty}\left(  (Z^{(l)})^2 +  (W^{(l)})^2 \right).
\end{align*}
Let $\{U^{(m)} : m \geq 1\}$ be either $\{Z^{(m)} : m \geq 1\}$ or $\{W^{(m)} : m \geq 1\}$. Then,  we deduce that 
\begin{align*}
	\exists c_{\perp}>0:\Hquad  -d^2  ( \langle f\rangle |_{\Sigma} )(\boldsymbol{\xi})(\boldsymbol{X},\boldsymbol{X}) &\geq c_{\perp} \| \boldsymbol{X}\|^2, \quad \forall	\boldsymbol{X} \in \left(T_{\boldsymbol{\xi}}\Gamma_{\text{linear}} (\boldsymbol{\xi}) \right)^{\perp}  \Longleftrightarrow \\
	\exists \tilde{c}_{\perp}>0:\Hquad  \sum_{\substack{ l=1   }}^{\infty} \Bigg(   \frac{C_{0 0 0 0}}{\omega_{0}^2} - 2 \frac{ C_{l l0   0       } }{ \omega_{l}^2}\Bigg) ( U^{(l)})^2 
		  & \geq \tilde{c}_{\perp} \sum_{\substack{ l=1   }}^{\infty}  ( U^{(l)})^2, \quad \forall	 \{U^{(m)}: m \geq 1 \} \subset \mathbb{R} \Longleftrightarrow \\
		  \exists \tilde{c}_{\perp}>0:\Hquad     \frac{C_{0 0 0 0}}{\omega_{0}^2} - 2 \frac{ C_{l l0   0       } }{ \omega_{l}^2} 
		  & \geq \tilde{c}_{\perp} ,
	\end{align*}
which, according to Proposition \ref{PropositionUniformEstimates}, is equivalent to the Assumption \ref{AssumptionsondeltanonlinearStability} for $\delta$, that completes the proof.
\end{proof}

\appendix

\section{Zeilberger's Algorithm} \label{AppendixZeilbergerAlgorithm}
In this section, we discuss Zeilberger's algorithm. In Section \ref{SectionAppendixalgorithm}, we follow Sections 5 and 6 in \cite{MR1379802} to describe how it works in general. In Section \ref{SectionAppendixalgorithmfortheLemma}, we fix $\delta$ and run it through by hand producing step-by-step the results of Lemma \ref{LemmaRecurrenceRelationKG}. 
 
\subsection{How the algorithm works}\label{SectionAppendixalgorithm}
Let $F=F(n,k)$ be a given hypergeometric function with respect to both arguments $n$ and $k$. This means that both $F(n+1,k)/F(n,k)$ and $F(n,k+1)/F(n,k)$ are rational functions of $n$ and $k$ and hence there exist polynomials $r_1,r_2,s_1,s_2$ so that
\begin{align}\label{Fhypergeometricfunction}
	\frac{F(n+1,k)}{F(n,k)}=\frac{s_1(n+1,k)}{s_2(n+1,k)} ,\quad \frac{F(n,k+1)}{F(n,k)}=\frac{r_1(n,k)}{r_2(n,k)}.
\end{align}
We are interested in finding a recurrence relation for the sum
\begin{align}\label{DefinitionSumf}
	f(n)=\sum_{k=0}^{K} F(n,k).
\end{align}
For simplicity, we assume that $K$ is a finite integer. However, as it will become clear later, this assumption can be relaxed. We claim that it suffices to find a recurrence relation for the summand $F(n,k)$. In other words, given $F$, we would like to find a function $G=G(n,k)$ in the form
\begin{align}\label{AnsatzforG}
	G(n,k)=R(n,k)F(n,k),
\end{align}
for some function $R=R(n,k)$, as well as coefficients $\{\alpha_{j}(n):j =0,1,\dots,J\}$, for some (possibly large) integer $J$, all in closed forms, so that  
\begin{align}\label{RecurrenceRelationF}
	\sum_{j=0}^{J} \alpha_{j}(n)F(n+j,k)=G(n,k+1)-G(n,k).
\end{align}
Then, if \eqref{RecurrenceRelationF} is possible, a recurrence relation for $f(n)$, given by \eqref{DefinitionSumf}, follows immediately by summing both sides of \eqref{RecurrenceRelationF} with respect to $k$. Indeed, since the coefficients $\{\alpha_{j}(n):j =0,1,\dots,J\}$ are all independent of $k$, summing \eqref{RecurrenceRelationF} with respect to $k$ yields
\begin{align}\label{RecurrenceRelationSmallf}
\sum_{j=0}^{J} \alpha_{j}(n)F(n+j,k) & =G(n,k+1)-G(n,k) \Longrightarrow \nonumber \\
 \sum_{k=0}^{K} \left(	\sum_{j=0}^{J} \alpha_{j}(n)F(n+j,k) \right) &=\sum_{k=0}^{K} \left(G(n,k+1)-G(n,k) \right) \Longrightarrow \nonumber \\
  \sum_{j=0}^{J}\alpha_{j}(n)\left(	\sum_{k=0}^{K} F(n+j,k) \right) &= G(n,K+1)-G(n,0)  \Longrightarrow \nonumber \\
   \sum_{j=0}^{J}\alpha_{j}(n)f(n+j) &=G(n,K+1)-G(n,0),
\end{align}
which is the desired recurrence relation for $f(n)$. Consequently, we turn our attention to how one can find the function $G=G(n,k)$ and coefficients $\{\alpha_{j}(n):j =0,1,\dots,J\}$ so that \eqref{RecurrenceRelationF} holds\footnote{We note that, for all proper hypergeometric functions $F$, a recurrence relation of the form \eqref{RecurrenceRelationF} always exists (Theorem 6.2.1, page 105 in \cite{MR1379802}).}. To this end, we fix $J$ and denote the left-hand-side of \eqref{RecurrenceRelationF} by $t_k$,
\begin{align}\label{DefinitiontkStart}
	t_k=\sum_{j=0}^{J} \alpha_{j}(n)F(n+j,k).
\end{align}
Using \eqref{Fhypergeometricfunction}, we infer
\begin{align*} 
	\frac{t_{k+1}}{t_{k}} & = \frac{\sum_{j=0}^{J} \alpha_{j}(n)F(n+j,k+1)}{\sum_{j=0}^{J} \alpha_{j}(n)F(n+j,k)}
	 = \frac{\sum_{j=0}^{J} \alpha_{j}(n)\frac{F(n+j,k+1)}{F(n,k+1)}}{\sum_{j=0}^{J} \alpha_{j}(n)
	 \frac{F(n+j,k)}{F(n,k)} 
	 } 
	 \frac{F(n,k+1)}{F(n,k)}  \\
	 & = \frac{\sum_{j=0}^{J} \alpha_{j}(n)\prod_{i=0}^{j-1} \frac{F(n+j-i,k+1)}{F(n+j-i-1,k+1)}}{\sum_{j=0}^{J} \alpha_{j}(n)\prod_{i=0}^{j-1} \frac{F(n+j-i,k)}{F(n+j-i-1,k)}
	 } \frac{F(n,k+1)}{F(n,k)} 
	  = \frac{\sum_{j=0}^{J} \alpha_{j}(n)\prod_{i=0}^{j-1} \frac{s_1(n+j-i,k+1)}{s_2(n+j-i,k+1)}}{\sum_{j=0}^{J} \alpha_{j}(n)\prod_{i=0}^{j-1} \frac{s_1(n+j-i,k)}{s_2(n+j-i,k)}
	 } \frac{r_1(n,k)}{r_2(n,k)}  \\
	 & = \frac{\sum_{j=0}^{J} \alpha_{j}(n) \left[
	 \prod_{i=0}^{j-1}  s_1(n+j-i,k+1) 
	  \prod_{r=j+1}^{J}  s_2(n+r,k+1)  
	\right]}{\sum_{j=0}^{J} \alpha_{j}(n) \left[
	 \prod_{i=0}^{j-1}  s_1(n+j-i,k) 
	  \prod_{r=j+1}^{J}  s_2(n+r,k)  
	\right]
	 } \frac{r_1(n,k)}{r_2(n,k)} \frac{\prod_{r=1}^{J}s_2(n+r,k) }{\prod_{r=1}^{J}s_2(n+r,k+1)} .
\end{align*}
Hence, we have that
\begin{align*}
	\frac{t_{k+1}}{t_{k}} & = \frac{p_0(k+1)}{p_0(k)} \frac{r(k)}{s(k)},
\end{align*}
where
\begin{align}
	p_0( k) &= \sum_{j=0}^{J} \left[ \prod_{i=0}^{j-1}  s_1(n+j-i,k) \prod_{r=j+1}^{J}  s_2(n+r,k) \right],\label{Defintionp0}  \\
	r( k) &=r_1(n,k)  \prod_{r=1}^{J}  s_2(n+r,k)  ,\label{Defintionr} \\
	s( k) &=r_2(n,k)  \prod_{r=1}^{J}  s_2(n+r,k+1)  .\label{Defintions}
\end{align}
Next, we use Theorem 5.3.1, page 82 in \cite{MR1379802}
 to find polynomials $p_1,p_2,p_3$ with $\text{gcd}(p_2(k),p_3(k+j))=1$, for all $j \in \{0,1,2,\dots\}$ so that
\begin{align} \label{Dividerands}
	\frac{r(k)}{s(k)} = \frac{p_1(k+1)}{p_1(k)} \frac{p_2(k)}{p_3(k)}.
\end{align}
Then, we set $p(k)=p_0(k)p_1(k)$ and have that
\begin{align}\label{FinalEquationtk}
	\frac{t_{k+1}}{t_{k}} & = \frac{p_0(k+1)}{p_0(k)} \frac{r(k)}{s(k)} 
	= \frac{p_0(k+1)}{p_0(k)} \frac{p_1(k+1)}{p_1(k)} \frac{p_2(k)}{p_3(k)} 
	= \frac{p(k+1)}{p(k)} \frac{p_2(k)}{p_3(k)}.
\end{align}
Now, one can show that \eqref{FinalEquationtk} has a indefinitely summable hypergeometric solution $t_k$ if and only if the recurrence relation
\begin{align}\label{Recurrencebk}
	p_2(k)b(k+1)-p_3(k-1)b(k)=p(k),
\end{align}
has a polynomial solution $b(k)$. For a detailed proof of this result, see Section 5.2 and Theorem 5.2.1 on page 76 in \cite{MR1379802}
. We look for a polynomial solution to \eqref{Recurrencebk} of degree $\Delta$ and set
\begin{align*}
	b(k)=\sum_{l=0}^{\Delta} \beta_{l}k^l,
\end{align*}
where the coefficients $\{\beta_l:l =0,1,\dots,\Delta\}$ are to be determined. By substituting this expression for $b(k)$ into \eqref{Recurrencebk}, one finds a system of simultaneous linear equations for $\Delta+J+2$ unknowns $\alpha_{0},\dots,\alpha_{J}$ and $\beta_{0},\dots,\beta_{\Delta}$. Finally, we solve this linear system, find the coefficients $\alpha_{0},\dots,\alpha_{J}$ and $\beta_{0},\dots,\beta_{\Delta}$ and set
\begin{align}\label{DefinitionofG}
	G(n,k)=\frac{p_3(k-1)}{p(k)} b(k) t_k.
\end{align}
 For more details, see Sections 5 and 6 in \cite{MR1379802}.

\subsection{The case of Lemma \ref{LemmaRecurrenceRelationKG}}\label{SectionAppendixalgorithmfortheLemma}

Next, we produce step-by-step the results of Lemma \ref{LemmaRecurrenceRelationKG} by running Zeilberger's algorithm through by hand. Due to the complexity of the formulas appearing, we fix the conformal mass and set $\delta=3$ and we are looking for a recurrence relation with $J=2$ steps for the sum
\begin{align*} 
	\mathsf{f}_{m}=	 \frac{ \mathsf{C}_{00 mm}}{\somega_{m}^2} =\sum_{k=0}^{2m+1} \mathsf{F}(m,k)
\end{align*}
that is in the form of \eqref{DefinitionSumf}. Here, $\mathsf{F}(m,k)$ is given by a closed formula as stated in Lemma \ref{LemmaComputationallyEfficientFormulaforC00mmKG}. Firstly, one can use the fact that $\Gamma(x+1)=x\Gamma(x)$, valid for all $x \in \mathbb{R}$, to compute
 \begin{align*}
 	\frac{\mathsf{F}(m,k)}{\mathsf{F}(m-1,k)} & = \frac{(2 m+1) (2 k-4 m-1) (2 k-4 m+1) (k+2 m+3) (k+2 m+4)}{(2 m+3) (k-2 m-1) (k-2 m) (2 k+4 m+7) (2 k+4 m+9)}, \\
 	\frac{\mathsf{F}(m,k+1)}{\mathsf{F(m,k)}} & =\frac{(k+2) (2 k-1) (2 k+1) (2 k+7) (4 k+11) (k-2 m-1) (k+2 m+5)}{(k+1) (k+4) (k+5) (2 k+5) (4 k+7) (2 k-4 m-1) (2 k+4 m+11)}.
 \end{align*}
Since are both rational functions we conclude that $\mathsf{F}(m,k)$ is a hypergeometric function both respect to $k$ and $m$ and hence Zeilberger's algorithm applies. According to the computations above and \eqref{Fhypergeometricfunction}, we define
\begin{align*}
	s_1(m,k) &= (2 m+1) (2 k-4 m-1) (2 k-4 m+1) (k+2 m+3) (k+2 m+4),\\
s_2 (m,k)& =(2 m+3) (k-2 m-1) (k-2 m) (2 k+4 m+7) (2 k+4 m+9), \\
r_1 (m,k)& = (k+2) (2 k-1) (2 k+1) (2 k+7) (4 k+11) (k-2 m-1) (k+2 m+5)  ,\\
r_2 (m,k)& = (k+1) (k+4) (k+5) (2 k+5) (4 k+7) (2 k-4 m-1) (2 k+4 m+11) .
\end{align*}
According to \eqref{DefinitiontkStart}, we set
\begin{align*}
	t_{k}=   \alpha_{0}(m)F(m,k)+\alpha_{1}(m)F(m+1,k)+\alpha_{2}(m)F(m+2,k).
\end{align*}
From now, we simplify the notation and write $\alpha_j$ instead of $\alpha_j(m)$, for all $j \in \{0,1,2\}$. Using the definitions \eqref{Defintionp0}, \eqref{Defintionr} and \eqref{Defintions}, we compute
\begin{align*}
	p_0(k)  &= \alpha_0 (2 m+5) (2 m+7) (-k+2 m+2) (-k+2 m+3) (-k+2 m+4) (-k+2 m+5)\cdot \\
	& \cdot (2 k+4 m+11) (2 k+4 m+13) (2 k+4 m+15) (2 k+4 m+17)
	 \\
	& 
	+(2 m+3) (k+2 m+5) (k+2 m+6) (2 k-4 m-5) (2 k-4 m-3) (\alpha_1 (2 m+7)\cdot \\
	& \cdot  (-k+2 m+4) (-k+2 m+5) (2 k+4 m+15) (2 k+4 m+17)   \\
	& +\alpha_2 (2 m+5) (k+2 m+7) (k+2 m+8) (-2 k+4 m+7) (-2 k+4 m+9)), \\
	r(k) &= (k+2) (2 k-1) (2 k+1) (2 k+7) (4 k+11) (2 m+5) (2 m+7) (k-2 m-5) (k-2 m-3)\cdot \\
	& \cdot  (k-2 m-1) (k+2 m+5) (2 k+4 m+11) (2 k+4 m+13) (2 k+4 m+15) (2 k+4 m+17) \cdot \\
	& \cdot (k-2 (m+1)) (k-2 (m+2)), \\
	s(k) &=(k+1) (k+4) (k+5) (2 k+5) (4 k+7) (2 m+5) (2 m+7) (2 k-4 m-1) (k-2 m-3)\cdot \\
	& \cdot  (k-2 m-1) (2 k+4 m+11) (2 k+4 m+13) (2 k+4 m+15) (2 k+4 m+17)\cdot \\
	& \cdot  (2 k+4 m+19) (k-2 (m+1)) (k-2 (m+2)).
\end{align*}
Notice that the only trace of the unknowns $\alpha_0$, $\alpha_1$ and $\alpha_2$ is in the polynomial $p_0(k)$ and not in $r(k)$ nor in $s(k)$. In addition, observe that $r(k)$ and $s(k)$ have many common factors. By dividing these two, the common terms are canceled and one obtains the canonical form 
\begin{align*}
	\frac{r(k)}{s(k)} &=\frac{(k+2) (2 k-1) (2 k+1) (2 k+7) (4 k+11) (k-2 m-5) (k+2 m+5)}{(k+1) (k+4) (k+5) (2 k+5) (4 k+7) (2 k-4 m-1) (2 k+4 m+19)}.
\end{align*}
This is of the form \eqref{Dividerands} with
\begin{align}
	p_1  (k) &= (1 + k) (5 + 2 k) (7 + 4 k)  ,\nonumber  \\
	p_2  (k) &=  (-1 + 2 k) (1 + 2 k) (-5 + k - 2 m) (5 + k + 2 m),\nonumber \\
	p_3  (k) &=  (k+2) (k+3) (2 k-4 m-1) (2 k+4 m+15) . \label{Definitionp3}
\end{align}
We set 
\begin{align*}
	p(k)=p_0(k)p_1(k),
\end{align*}
so that \eqref{FinalEquationtk} holds, and compute
\begin{align}
	p(k) &=(k+1) (2 k+5) (4 k+7) \big[\alpha_0 (2 m+5) (2 m+7) (-k+2 m+2) (-k+2 m+3)\cdot \nonumber \\
	& \cdot (-k+2 m+4) (-k+2 m+5) (2 k+4 m+11) (2 k+4 m+13) (2 k+4 m+15)\cdot \nonumber \\
	& \cdot  (2 k+4 m+17)+(2 m+3) (k+2 m+5) (k+2 m+6) (2 k-4 m-5) (2 k-4 m-3) \cdot \nonumber \\
	& \cdot \big[\alpha_1 (2 m+7) (-k+2 m+4) (-k+2 m+5) (2 k+4 m+15) (2 k+4 m+17)
	  \nonumber \\
	&  +
	\alpha_2 (2 m+5) (k+2 m+7) (k+2 m+8) (-2 k+4 m+7) (-2 k+4 m+9) \big]  \Big]. \label{Definitionp}
\end{align}
Notice that the only trace of the unknowns $\alpha_0$, $\alpha_1$ and $\alpha_2$ is in the polynomial $p(k)$ and not in $p_1(k)$, $p_2(k)$ nor in $p_3(k)$. Now, we focus on whether the recurrence relation \eqref{Recurrencebk} has a polynomial solution $b(k)$. On the one hand, notice that both $p_2(k)$ and $p_3(k)$ are polynomials of degree $4$ with respect to $k$. On the other hand, $p(k)$ is a polynomial of degree $11$ with respect to $k$. Hence, $b(k)$ should be a polynomial of degree at least $7$ with respect to $k$. However, we relax the algorithm\footnote{This is due to some cancellations in \eqref{Recurrencebk} for these particular choices of $p_2$, $p_3$ and $p$.} and search for a solution $b(k)$ that is a polynomial of degree $\Delta=8$ with respect to $k$ and define
\begin{align}\label{Definitionb}
	b(k)=\beta_8 k^8+\beta_7 k^7+ \beta_6 k^6+\beta_5 k^5+\beta_4 k^4+\beta_3 k^3+\beta_2 k^2+\beta_1 k + \beta_0,
\end{align}  
where the coefficients $\{\beta_j: j \in \{0,1,\dots,8\}\}$ are all to be determined (together with the $\{\alpha_0,\alpha_1,\alpha_2\}$). Next, we plug \eqref{Definitionb} into the recurrence relation \eqref{Recurrencebk} and match the coefficients of same powers of $k$ on both sides. After a long computation, the result is a linear system consisting of $12$ equations for the $12$ unknowns  $\{\alpha_0,\alpha_1,\alpha_2,\beta_0,\beta_1,\dots,\beta_8\}$. This is given by 
% Equation 0
\begin{flushleft}
	\underline{\text{Equation 1}:}
\end{flushleft}
 \begin{align*}
 	& \left[-140 (1 + m) (2 + m) (3 + 2 m) (5 + 2 m)^2 (7 + 2 m) (11 + 4 m) (13 + 
   4 m) (15 + 4 m) (17 + 4 m)  \right] \alpha_0 + \\
 &	\left[ -140 (2 + m) (3 + m) (3 + 2 m) (5 + 2 m)^2 (7 + 2 m) (3 + 4 m) (5 + 
   4 m) (15 + 4 m) (17 + 4 m) \right] \alpha_1 + \\
 &	\left [ -140 (3 + m) (4 + m) (3 + 2 m) (5 + 2 m)^2 (7 + 2 m) (3 + 4 m) (5 + 
   4 m) (7 + 4 m) (9 + 4 m) \right] \alpha_2 + \\
 &	\left[49 (13 + 4 m (5 + m))  \right] \beta_0 + 	
 \left[ (5 + 2 m)^2 \right] \beta_1 +
 	\left[(5 + 2 m)^2 \right] \beta_2 +
 	\left[(5 + 2 m)^2 \right] \beta_3 +\\
 &
 	\left[(5 + 2 m)^2  \right] \beta_4 +
 	\left[(5 + 2 m)^2  \right] \beta_5 +
 	\left[ (5 + 2 m)^2 \right] \beta_6 +
 	\left[ (5 + 2 m)^2 \right] \beta_7 +
 	\left[ (5 + 2 m)^2 \right] \beta_8 =0,
 \end{align*}
 
% Equation 1
\begin{flushleft}
	\underline{\text{Equation 2}:}
\end{flushleft}
 \begin{align*}
 	& \big[ -2 (5 + 2 m) (7 + 2 m) (97615125 + 
   2 m (21 + 4 m) (10060700 + 
      m (21 + 4 m) (744557 \\
      & + 
         4 m (21 + 4 m) (5927 + 69 m (21 + 4 m))))) \big] \alpha_0 + \\
 &	\big[ -2 (3 + 2 m) (5 + 2 m)^2 (7 + 2 m) (1722825 + 
   2 m (5 + m) (792789 + 32 m (5 + m) (6679 \\
   & + 552 m (5 + m)))) \big] \alpha_1 + \\
 &	\big [ -2 (3 + 2 m) (5 + 2 m) (28653975 + 
   2 m (19 + 4 m) (4255230 + 
      m (19 + 4 m) (430337 + \\
      &
         4 m (19 + 4 m) (4547 + 69 m (19 + 4 m))))) \big] \alpha_2 + \\
 &	\left[ 7 (3 + 16 m (5 + m)) \right] \beta_0 + 	
 \left[49 (13 + 4 m (5 + m))  \right] \beta_1 +
 	\left[2 (5 + 2 m)^2 \right] \beta_2 +
 	\left[ 3 (5 + 2 m)^2 \right] \beta_3 +\\
 &
 	\left[4 (5 + 2 m)^2  \right] \beta_4 +
 	\left[ 5 (5 + 2 m)^2\right] \beta_5 +
 	\left[ 6 (5 + 2 m)^2 \right] \beta_6 +
 	\left[ 7 (5 + 2 m)^2  \right] \beta_7 +
 	\left[ 8 (5 + 2 m)^2 \right] \beta_8 =0,
 \end{align*}

% Equation 2
\begin{flushleft}
	\underline{\text{Equation 3}:}
\end{flushleft}
 \begin{align*}
 	& \big[ -7 (5 + 2 m) (7 + 2 m) (-4523755 + 
   8 m (21 + 4 m) (75805 + 
      m (21 + 4 m) (16433 + 4 m (21 + 4 m)\cdot \\
      & \cdot (200 + 3 m (21 + 4 m))))) \big] \alpha_0 + \\
 &	\big[ -7 (3 + 2 m) (7 + 2 m) (-6556875 + 
   8 m (5 + m) \cdot \\
      & \cdot (-96085 + 
      4 m (5 + m) (36407 + 32 m (5 + m) (325 + 24 m (5 + m))))) \big] \alpha_1 + \\
 &	\big [ -7 (3 + 2 m) (5 + 2 m) (-5009355 + 
   8 m (19 + 4 m) (-34525 + 
      m (19 + 4 m) (6233 + 4 m (19 + 4 m)\cdot \\
      & \cdot  (140 + 3 m (19 + 4 m))))) \big] \alpha_2 + \\
 &	\left[-294  \right] \beta_0 + 	
 \left[ -80 + 96 m (5 + m) \right] \beta_1 +
 	\left[536 + 180 m (5 + m) \right] \beta_2 +
 	\left[-26 - 4 m (5 + m) \right] \beta_3 +\\
 &
 	\left[49 + 8 m (5 + m)  \right] \beta_4 +
 	\left[ 149 + 24 m (5 + m) \right] \beta_5 +
 	\left[274 + 44 m (5 + m)  \right] \beta_6  \\
      &  +
 	\left[ 424 + 68 m (5 + m) \right] \beta_7 +
 	\left[599 + 96 m (5 + m)  \right] \beta_8 =0,
 \end{align*}

% Equation 3
\begin{flushleft}
	\underline{\text{Equation 4}:}
\end{flushleft}
 \begin{align*}
 	& \big[-(5 + 2 m) (7 + 2 m) (-91051067 + 
   4 m (21 + 4 m) (-3125231 + 
      2 m (21 + 4 m) (-60393 \\
      & + 16 m (21 + 4 m) (-15 + m (21 + 4 m)))))  \big] \alpha_0 + \\
 &	\big[ -(3 + 2 m) (7 + 2 m) (-58593875 + 
   8 m (5 + m) (-5015259 + 
      64 m (5 + m) (-14847 \\
      & + 8 m (5 + m) (-55 + 8 m (5 + m))))) \big] \alpha_1 + \\
 &	\big [ -(3 + 2 m) (5 + 2 m) (-40305047 + 
   4 m (19 + 4 m) (-1969371 + 
      2 m (19 + 4 m) (-54393 \\
      & + 16 m (19 + 4 m) (-35 + m (19 + 4 m))))) \big] \alpha_2 + \\
 &	\left[-56  \right] \beta_0 + 	
 \left[ -294 \right] \beta_1 +
 	\left[ -181 + 80 m (5 + m) \right] \beta_2 +
 	\left[334 + 148 m (5 + m) \right] \beta_3 +\\
 &
 	\left[ -16 (19 + 3 m (5 + m)) \right] \beta_4 +
 	\left[ -5 (51 + 8 m (5 + m)) \right] \beta_5 +
 	\left[ -2 (53 + 8 m (5 + m)) \right] \beta_6  \\
 	& +
 	\left[28 (2 + m) (3 + m)  \right] \beta_7 +
 	\left[ 592 + 96 m (5 + m) \right] \beta_8 =0,
 \end{align*}

% Equation 4
\begin{flushleft}
	\underline{\text{Equation 5}:}
\end{flushleft}
 \begin{align*}
 	& \big[ 7 (5 + 2 m) (7 + 2 m) (963731 + 
   16 m (21 + 4 m) (27348 + 5 m (21 + 4 m) (401 + 8 m (21 + 4 m)))) \big] \alpha_0 + \\
 &	\big[ 7 (3 + 2 m) (7 + 2 m) (-319565 + 
   16 m (5 + m) (65057 + 80 m (5 + m) (335 + 32 m (5 + m)))) \big] \alpha_1 + \\
 &	\big [ 7 (3 + 2 m) (5 + 2 m) (-502109 + 
   16 m (19 + 4 m) (10298 + 5 m (19 + 4 m) (281 + 8 m (19 + 4 m)))) \big] \alpha_2 + \\
 &	  	
 \left[-52  \right] \beta_1 +
 	\left[ -290 \right] \beta_2 +
 	\left[ -278 + 64 m (5 + m)\right] \beta_3 +\\
 &
 	\left[ 5 (7 + 20 m (5 + m)) \right] \beta_4 +
 	\left[ -881 - 140 m (5 + m) \right] \beta_5 +
 	\left[-4 (284 + 45 m (5 + m))  \right] \beta_6 \\
 	&+
 	\left[ -2 (621 + 98 m (5 + m)) \right] \beta_7 +
 	\left[ -6 (179 + 28 m (5 + m)) \right] \beta_8 =0,
 \end{align*}
 
 % Equation 5
 \begin{flushleft}
	\underline{\text{Equation 6}:}
\end{flushleft}
 \begin{align*}
 	& \big[ -(-5 - 2 m) (7 + 2 m) (-12838089 + 
   16 m (21 + 4 m) (-63803 + 2 m (21 + 4 m) (-227 \\
   &+ 16 m (21 + 4 m)))) \big] \alpha_0 + \\
 &	\big[ (3 + 2 m) (7 + 2 m) (-10661929 + 
   32 m (5 + m) (-125789 + 16 m (5 + m) (-359 + 64 m (5 + m)))) \big] \alpha_1 + \\
 &	\big [ (3 + 2 m) (5 + 2 m) (-7979449 + 
   16 m (19 + 4 m) (-56863 + 2 m (19 + 4 m) (-467 + 16 m (19 + 4 m)))) \big] \alpha_2 + \\
 & 
 	\left[-48 \right] \beta_2 +
 	\left[-282 \right] \beta_3 + 
 	\left[-367 + 48 m (5 + m)  \right] \beta_4 +
 	\left[ -353 + 36 m (5 + m) \right] \beta_5  \\
 	&+
 	\left[-2 (923 + 148 m (5 + m))  \right] \beta_6 
 	+
 	\left[-14 (213 + 34 m (5 + m)) \right] \beta_7 +
 	\left[ -96 (44 + 7 m (5 + m)) \right] \beta_8 =0,
 \end{align*}
 
 % Equation 6
 \begin{flushleft}
	\underline{\text{Equation 7}:}
\end{flushleft}
  \begin{align*}
 	& \big[ -98 (5 + 2 m) (7 + 2 m) (18097 + 
   16 m (21 + 4 m) (191 + 6 m (21 + 4 m))) \big] \alpha_0 + \\
 &	\big[ -98 (3 + 2 m) (7 + 2 m) (10641 + 32 m (5 + m) (325 + 48 m (5 + m))) \big] \alpha_1 + \\
 &	\big [-98 (3 + 2 m) (5 + 2 m) (5217 + 
   16 m (19 + 4 m) (131 + 6 m (19 + 4 m)))  \big] \alpha_2 + \\
 &	 
 	\left[-44 \right] \beta_3 + 
 	\left[ -270 \right] \beta_4 +
 	\left[ -444 + 32 m (5 + m) \right] \beta_5 +
 	\left[ -818 - 44 m (5 + m) \right] \beta_6 \\
 	&+
 	\left[ -28 (117 + 19 m (5 + m)) \right] \beta_7 +
 	\left[ -42 (149 + 24 m (5 + m)) \right] \beta_8 =0,
 \end{align*}

% Equation 7
\begin{flushleft}
	\underline{\text{Equation 8}:}
\end{flushleft}
  \begin{align*}
 	& \big[ -16 (5 + 2 m) (7 + 2 m) (-39241 + 
   8 m (21 + 4 m) (-103 + 6 m (21 + 4 m))) \big] \alpha_0 + \\
 &	\big[ -16 (3 + 2 m) (7 + 2 m) (-38265 + 16 m (5 + m) (-263 + 48 m (5 + m))) \big] \alpha_1 + \\
 &	\big [ -16 (3 + 2 m) (5 + 2 m) (-33921 + 
   8 m (19 + 4 m) (-163 + 6 m (19 + 4 m))) \big] \alpha_2 + \\
 & 
 	\left[-40  \right] \beta_4 +
 	\left[ -254 \right] \beta_5 +
 	\left[-505 + 16 m (5 + m)  \right] \beta_6 +
 	\left[ -28 (48 + 5 m (5 + m)) \right] \beta_7 \\
 	& +
 	\left[ -48 (109 + 18 m (5 + m)) \right] \beta_8 =0,
 \end{align*}
 
 % Equation 8
 \begin{flushleft}
	\underline{\text{Equation 9}:}
\end{flushleft}
  \begin{align*}
 	& \big[ 672 (5 + 2 m) (7 + 2 m) (179 + 12 m (21 + 4 m)) \big] \alpha_0 + \\
 &	\big[672 (3 + 2 m) (7 + 2 m) (155 + 48 m (5 + m))  \big] \alpha_1 + \\
 &	\big [ 672 (3 + 2 m) (5 + 2 m) (119 + 12 m (19 + 4 m)) \big] \alpha_2 + \\
 & \left[ -36 \right] \beta_5 +
 	\left[ -234 \right] \beta_6 +
 	\left[ -546 \right] \beta_7 +
 	\left[ -21 (91 + 12 m (5 + m)) \right] \beta_8 =0,
 \end{align*}
 
 % Equation 9
 \begin{flushleft}
	\underline{\text{Equation 10}:}
\end{flushleft}
  \begin{align*}
 	& \big[16 (5 + 2 m) (7 + 2 m) (-421 + 32 m (21 + 4 m))  \big] \alpha_0 + \\
 &	\big[ 16 (3 + 2 m) (7 + 2 m) (-485 + 128 m (5 + m)) \big] \alpha_1 + \\
 &	\big [ 16 (3 + 2 m) (5 + 2 m) (-581 + 32 m (19 + 4 m)) \big] \alpha_2 + \\
 &	 
 	\left[ -32 \right] \beta_6 +
 	\left[ -210 \right] \beta_7 +
 	\left[ -563 - 16 m (5 + m) \right] \beta_8 =0,
 \end{align*}
 
 % Equation 10
 \begin{flushleft}
	\underline{\text{Equation 11}:}
\end{flushleft}
  \begin{align*}
 	& \big[-2464 (5 + 2 m) (7 + 2 m)  \big] \alpha_0 +
 		\big[-2464 (3 + 2 m) (7 + 2 m)  \big] \alpha_1 + \\
 &	\big [ -2464 (3 + 2 m) (5 + 2 m) \big] \alpha_2 + 	 
 	\left[-28  \right] \beta_7 +
 	\left[ -182 \right] \beta_8 =0,
 \end{align*}
 
 % Equation 11
 \begin{flushleft}
	\underline{\text{Equation 12}:}
\end{flushleft}
 \begin{align*}
 	& \big[-128 (5 + 2 m) (7 + 2 m)  \big] \alpha_0 + 
 		\big[-128 (3 + 2 m) (7 + 2 m)  \big] \alpha_1 + \\
 &	\big [ -128 (3 + 2 m) (5 + 2 m) \big] \alpha_2 +
 	\left[-24  \right] \beta_8 =0.
 \end{align*}
 We note that, this system has the matrix form
\begin{align*} 
	 \left[\begin{array}{@{}ccc|cccccccccc@{}}
    x_{00} & x_{01} & x_{02} & y_{00} & y_{01}  & y_{02} & y_{03}  & y_{04} & y_{05}  & y_{06}  & y_{07} & y_{08}  \\
   x_{10} & x_{11} & x_{12} & y_{10} & y_{11}  & y_{12} & y_{13}  & y_{14} & y_{15}  & y_{16}  & y_{17} & y_{18} \\ 
   x_{20} & x_{21} & x_{22} & y_{20} & y_{21}  & y_{22} & y_{23}  & y_{24} & y_{25}  & y_{26}  & y_{27} & y_{28} \\
    x_{30} & x_{31} & x_{32} & y_{30} & y_{31}  & y_{32} & y_{33}  & y_{34} & y_{35}  & y_{36}  & y_{37} & y_{38} \\
    x_{40} & x_{41} & x_{42} &0 & y_{41}  & y_{42} & y_{43}  & y_{44} & y_{45}  & y_{46}  & y_{47} & y_{48} \\
    x_{50} & x_{51} & x_{52} & 0 & 0  & y_{52} & y_{53}  & y_{54} & y_{55}  & y_{56}  & y_{57} & y_{58} \\
    x_{60} & x_{61} & x_{62} & 0 & 0 &0 & y_{63}  & y_{64} & y_{65}  & y_{66}  & y_{67} & y_{68} \\
    x_{70} & x_{71} & x_{72} & 0 & 0  & 0 & 0  & y_{74} & y_{75}  & y_{76}  & y_{77} & y_{78} \\
    x_{80} & x_{81} & x_{82} &0 & 0   & 0  & 0   & 0 & y_{85}  & y_{86}  & y_{87} & y_{88} \\
    x_{90} & x_{91} & x_{92} & 0 & 0  &0 & 0  & 0 & 0  & y_{96}  & y_{97} & y_{98} \\
    x_{10,0} & x_{10,1} & x_{10,2} & 0 & 0  &0 & 0  &0 & 0 & 0  & y_{10,7} & y_{10,8} \\
   x_{11,0} & x_{11,1} & x_{11,2} & 0 & 0  &0 & 0  & 0 & 0  &0  &0 & y_{11,8}  
  \end{array}\right]
	\begin{bmatrix}
		\alpha_0  \\
		\alpha_1 \\
		\alpha_2 \\
		\hline 
		\beta_0 \\
		\beta_1 \\
		\beta_2 \\
		\beta_3 \\
		\beta_4 \\
		\beta_5 \\
		\beta_6 \\
		\beta_7 \\
		\beta_8
	\end{bmatrix} =
	\begin{bmatrix}
		0  \\
		0  \\0  \\0  \\0  \\0  \\0  \\0  \\0  \\0  \\0  \\0  
	\end{bmatrix} ,
\end{align*}
 where the coefficients of the matrix are given as above. One can solve this linear system with respect to the unknowns  $\{\alpha_0,\alpha_1,\alpha_2,\beta_0,\dots,\beta_6\}$ so that $\alpha_0$ and $\beta_0$ are free variables. Then, we choose the consistent values
 \begin{align}\label{Choiceofbeta0}
 	\beta_0=0 
 \end{align}
 and
 \begin{align}\label{Definitionalpha0}
 	\alpha_0 = 2 (1 + m)^2 (3 + m) (1 + 2 m) (3 + 2 m)^2 (5 + 2 m) (6 + 2 m) (34 + 
   24 m + 4 m^2).
 \end{align}
 With these choices, one can solve the linear system above, obtaining the solutions
 \begin{align}
 	 \alpha_1 &=  -8 (5 + 2 m)^4 (819 + 4 m (5 + m) (106 + m (5 + m) (18 + m (5 + m)))),\label{Definitionalpha1} \\
 	  \alpha_2 &= 8 (2 + m)^2 (4 + m)^2 (5 + 2 m) (7 + 2 m)^2 (9 + 2 m) (7 + 
   2 m (4 + m)),   \label{Definitionalpha2}
   \end{align}
   as well as
   \begin{align*}
   \beta_1 &=-240 (2 + m) (3 + m) (3 + 2 m) (5 + 2 m)^2 (7 + 2 m) (424785 + 
   2 m (5 + m) (159981 \\
   & + 
      4 m (5 + m) (11215 + 4 m (5 + m) (347 + 16 m (5 + m))))), \\
   \beta_2 &=-32 (2 + m) (3 + m) (3 + 2 m) (5 + 2 m)^2 (7 + 2 m) (5795205 + 
   2 m (5 + m) (2262318 \\
   &+ 
      m (5 + m) (655655 + 8 m (5 + m) (10457 + 496 m (5 + m))))), \\
      \beta_3 & = -80 (2 + m) (3 + m) (3 + 2 m) (5 + 2 m)^2 (7 + 2 m) (937797 + 
   4 m (5 + m) (214330 \\
   &+ 
      m (5 + m) (70297 + 64 m (5 + m) (155 + 8 m (5 + m))))), \\
   \beta_4 &=-32 (2 + m) (3 + m) (3 + 2 m) (5 + 2 m)^2 (7 + 2 m) (-732039 + 
   m (5 + m) (-252965 \\
   &+ 
      8 m (5 + m) (1543 + 8 m (5 + m) (185 + 16 m (5 + m))))), \\
   \beta_5 &= 320 (2 + m) (3 + m) (3 + 2 m) (5 + 2 m)^2 (7 + 2 m) (49665 + 
   4 m (5 + m) (7427 \\
   &+ m (5 + m) (1469 + 96 m (5 + m)))), \\
   \beta_6 &= 128 (2 + m) (3 + m) (3 + 2 m) (5 + 2 m)^2 (7 + 2 m) (3255 + 
   2 m (5 + m) (2443 \\
   &+ 2 m (5 + m) (373 + 32 m (5 + m)))), \\
   \beta_7 & = -5120 (2 + m)^2 (3 + m)^2 (3 + 2 m) (5 + 2 m)^2 (7 + 2 m) (19 + 
   4 m (5 + m)) , \\
   \beta_8 &=  -512 (2 + m)^2 (3 + m)^2 (3 + 2 m) (5 + 2 m)^2 (7 + 2 m) (19 + 
   4 m (5 + m))  .
 \end{align*}
 On the one hand, using \eqref{DefinitiontkStart}, we compute the left-hand-side of the recurrence relation \eqref{RecurrenceRelationF}, that is 
 \begin{align}\label{Defintitionoftk}
 	t_{k} &= \frac{6720}{\pi ^2} \frac{(m+3) (2 m+5) (m+2) }{
 	\prod_{i=2}^{5} (2m-k+i) 
 	\prod_{j=5}^{8} (4m+2k+2j+1) }\frac{(k+1) (4 k+7)}{(2 k+1) (2 k+3)} \cdot \nonumber \\
 	& \cdot \frac{\Gamma \left(k-\frac{1}{2}\right) \Gamma \left(k+\frac{7}{2}\right)}{\Gamma (k+4) \Gamma (k+5)}\frac{\Gamma \left(-k+2 m+\frac{3}{2}\right) \Gamma (k+2 m+5)}{\Gamma (-k+2 m+2) \Gamma \left(k+2 m+\frac{11}{2}\right)}T_m(k) ,
 \end{align}
 where $T_m(k)$ is a polynomial with respect to $k$ of degree $8$,
 \begin{align*}
 	T_m(k) &= \sum_{j=0}^{8} \tilde{T}_m (j) k^j,
 \end{align*}
  and the coefficients are given by
  \begin{align*}
  	\tilde{T}_m (0)  &=  2 (2 + m) (3 + m) (5 + 2 m)^2 (24057 + 
   2 m (5 + m) (8121 + 8 m (5 + m) (223 + 16 m (5 + m))))  ,\\
  	\tilde{T}_m (1)  &= 21 (2535165 + 
   m (5 + m) (2762223 + 
      2 m (5 + m) (588501 + 
         4 m (5 + m) (30749 + 8 m (5 + m)\cdot \\
         & \cdot (395 + 16 m (5 + m))))))  ,\\
  	\tilde{T}_m (2)  &= -36879489 + 
 2 m (5 + m) (-12352033 + 
    m (5 + m) (-2533381 + 
       12 m (5 + m)\cdot \\
         & \cdot  (-3819 + 16 m (5 + m)  (199 + 16 m (5 + m)))))   ,\\
  	\tilde{T}_m (3)  &=  -42 (406041 + 
   2 m (5 + m) (195702 + 
      m (5 + m) (66649 + 4 m (5 + m) (2421 + 128 m (5 + m))))) ,\\
  	\tilde{T}_m (4)  &=  5822802 - 
 m (5 + m) (-2447647 + 
    4 m (5 + m) (-30745 + 48 m (5 + m) (299 + 32 m (5 + m))))  ,\\
  	\tilde{T}_m (5)  &=  504 (4315 + m (5 + m) (2771 + 2 m (5 + m) (291 + 20 m (5 + m)))) ,\\
  	\tilde{T}_m (6)  &=  8 (-13212 + m (5 + m) (1877 + 40 m (5 + m) (53 + 6 m (5 + m)))) ,\\
  	\tilde{T}_m (7)  &=-672 (2 + m) (3 + m) (19 + 4 m (5 + m))   ,\\
  	\tilde{T}_m (8)  &=  -48 (2 + m) (3 + m) (19 + 4 m (5 + m))  .
  \end{align*}
  On the other hand, using \eqref{DefinitionofG} together with the definitions of of $t_k$ from \eqref{Defintitionoftk}, $p_3(k)$ from \eqref{Definitionp3}, $p(k)$ from \eqref{Definitionp} and $b(k)$ from \eqref{Definitionb}, we find $G=G (m,k)$ in the form of \eqref{AnsatzforG}, that is
   \begin{align*} 
 	G (m,k)= R(m,k) \mathsf{F}(m,k),
 \end{align*}
 for some function $R(m,k)$ which we compute explicitly. This has the form
 \begin{align*}
 	R(m,k)=k \tilde{R}(m,k),
 \end{align*}
for some complicated function $ \tilde{R}(m,k)$ which we leave out. From the latter, we get immediately $R(m,0)=0$ and hence\footnote{The fact that $G(m,0)=0$, for all integers $m \geq 1$, also follows from the choice \eqref{Choiceofbeta0}. Indeed, according to \eqref{Choiceofbeta0}, we have that $\beta_0=0$ and hence, based on the definition \eqref{Definitionb}, we get  $b(0)=\beta_0=0$. Then, \eqref{DefinitionofG} yields $G(m,0)=0$. }
 \begin{align*}
 	G(m,0)=0.
 \end{align*}
 Since $K=2m+1$, we also compute $\mathsf{F}_{m} (2m+2)=0$ and hence we get
 \begin{align*}
 	G_{m} (2m+2)=0 .
 \end{align*} 
 Putting these together, we infer
 \begin{align*}
 	G (m,2m+2)-G (m,0)=0.
 \end{align*}
  Consequently, the right-hand-side of \eqref{RecurrenceRelationSmallf} vanishes and the desired recurrence relation for $\mathsf{f}(m) $ boils down to
  \begin{align*}
   \alpha_{0}(m)\mathsf{f}_{m} +\alpha_{1}(m)\mathsf{f}_{m+1}+\alpha_{2}(m)\mathsf{f}_{m+2}=0,
  \end{align*}
  where the coefficients $\{\alpha_{0},\alpha_{1},\alpha_{2}\}$ are given by \eqref{Definitionalpha0}, \eqref{Definitionalpha1} and \eqref{Definitionalpha2} respectively. Finally, we note that these coeffiicents coincide with the corresponding coefficients $\mathsf{P}_m$, $-\mathsf{Q}_m$ and $\mathsf{R}_m$ from Lemma \ref{LemmaRecurrenceRelationKG} for $\delta=3$.

\section{Auxiliary computations for the KG}\label{AuxiliarycomputationsKGAppendix}

  \begin{lemma}\label{Lemma1AuxiliarycomputationsKGAppendix}
	Fix a real number $\delta  \geq 3/2$ and let $\mathsf{R}_m $, $\mathsf{A}_m$ and $\mathsf{B}_m$ be the functions defined in Lemma \ref{LemmaMonotonicityC00mmKG}. Then, 
\begin{align*}
	\mathsf{R}_m > 0, \quad \mathsf{B}_m >0, \quad \mathsf{A}_m -\mathsf{B}_m <1,
\end{align*} 
for all integers $m \geq 1$
\end{lemma}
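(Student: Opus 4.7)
The plan is to reduce everything to transparent polynomial positivity statements. First I would handle the two easy assertions: each linear factor appearing in $\mathsf{R}_m$, namely $(m+2)$, $(2m+5)$, $(\delta+m+1)^2$, $(\delta+2m+1)$, $(\delta+2m+4)$, $(2\delta+2m+1)$, $(2\delta+2m+3)$, is manifestly positive for $\delta \geq 3/2$ and $m \geq 1$, and the remaining factor $5\delta+4m^2+4\delta m+4m-1$ is bounded below by $5\cdot(3/2)+4+6+4-1>0$, giving $\mathsf{R}_m > 0$. The identical inspection for the factors of $\mathsf{P}_m$ gives $\mathsf{P}_m > 0$, and dividing by $\mathsf{R}_m>0$ yields $\mathsf{B}_m > 0$.

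The core of the lemma is the inequality $\mathsf{A}_m - \mathsf{B}_m < 1$. Since $\mathsf{R}_m > 0$, this is equivalent to the polynomial inequality
\begin{equation*}
\mathsf{S}_m(\delta) \;:=\; \mathsf{R}_m + \mathsf{P}_m - \mathsf{Q}_m \;>\; 0.
\end{equation*}
A quick look at leading coefficients in $m$ shows that $\mathsf{P}_m$ and $\mathsf{R}_m$ each have leading term $256\, m^{10}$ while $\mathsf{Q}_m$ has leading term $512\, m^{10}$, so these cancel and $\mathsf{S}_m$ has degree at most $9$ in $m$. This cancellation is reassuring: it matches the fact that the ratio $\mathsf{x}_m = \mathsf{f}_{m+1}/\mathsf{f}_m$ should tend to $1$ as $m\to\infty$, so $\mathsf{A}_m - \mathsf{B}_m$ can approach but never reach $1$.

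To establish $\mathsf{S}_m(\delta) > 0$ for all integers $m \geq 1$ and reals $\delta \geq 3/2$, I would perform the affine change of variables $m \mapsto m+1$ and $\delta \mapsto \delta + 3/2$, so that the problem becomes positivity of a bivariate polynomial $\widetilde{\mathsf{S}}(m,\delta)$ on $\{m \geq 0\} \times \{\delta \geq 0\}$. The plan is then to expand $\widetilde{\mathsf{S}}$ in monomials $m^i \delta^j$ and verify that every coefficient is non-negative, with at least one strictly positive (which will be automatic, e.g.\ from the constant term $\widetilde{\mathsf{S}}(0,0) = \mathsf{S}_1(3/2)$, which is a concrete positive number computable directly). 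This reduces the claim to a finite, mechanical computation.

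The main obstacle is purely the size of the expressions: $\mathsf{Q}_m$ already contains on the order of forty monomials in $(m,\delta)$, so after the shift $\widetilde{\mathsf{S}}$ will have a comparably large monomial expansion that must be checked coefficient by coefficient. I would delegate this verification to a symbolic algebra system (the same Mathematica computation used throughout the paper); the proof itself consists of displaying $\widetilde{\mathsf{S}}$ in expanded form so the reader can read off non-negativity of each coefficient. If unexpectedly some coefficient were negative, the fallback strategy would be to decompose $\mathsf{S}_m$ using the common factor $(m+1)(\delta+m+1)(2\delta+2m+1)$ visible in both $\mathsf{P}_m$ and $\mathsf{R}_m$ and estimate the remainder, but the direct expansion after shifting to non-negative variables should already succeed since the inequality is strict and bounded away from zero for $m \geq 1$, $\delta \geq 3/2$.
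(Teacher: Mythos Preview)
Your proposal is correct and follows essentially the same strategy as the paper: after disposing of $\mathsf{R}_m>0$ and $\mathsf{P}_m>0$ by inspection of the linear factors, the paper also reduces $\mathsf{A}_m-\mathsf{B}_m<1$ to the polynomial inequality $\mathsf{Q}_m-\mathsf{P}_m-\mathsf{R}_m<0$, expands this as $\sum_{j=0}^{8}\mathsf{c}_j(\delta)\,m^j$ (so in fact the $m^9$ term cancels as well), and then checks that each coefficient $\mathsf{c}_j(\delta)$ is negative for $\delta\geq 3/2$. Your plan to shift both variables before expanding is a slightly more systematic variant of the same computation and would work for the same reason.
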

\begin{proof}
Fix a real number $\delta  \geq 3/2$. Firstly, notice that $\mathsf{R}_m$ is given in a factorized form. From this, it is clear that $\mathsf{R}_m >0$, for all integers $m \geq 1$ and real $\delta \geq 3/2$. Then,
\begin{align*}
	\mathsf{A}_m -\mathsf{B}_m -1 <0 \Longleftrightarrow \mathsf{R}_m (\mathsf{A}_m -\mathsf{B}_m -1) <0 \Longleftrightarrow  \mathsf{Q}_m -\mathsf{P}_m -\mathsf{R}_m <0.
\end{align*}
Now, a long computation shows that  
\begin{align*}
	\mathsf{Q}_m -\mathsf{P}_m -\mathsf{R}_m = \sum_{j=0}^{8} \mathsf{c}_{j}  m^j,
\end{align*}
where the coefficients are given by
\begin{align*}
	\mathsf{c}_0 &= -85 \delta ^7-1532 \delta ^6-6202 \delta ^5-10288 \delta ^4-7401 \delta ^3-2196 \delta ^2-64 \delta +120   ,\\
	\mathsf{c}_1 &=  -108 \delta ^7-3140 \delta ^6-21004 \delta ^5-51892 \delta ^4-54816 \delta ^3-25736 \delta ^2-4760 \delta +176  ,\\
	\mathsf{c}_2 &=  -32 \delta ^7-2092 \delta ^6-24908 \delta ^5-96308 \delta ^4-149484 \delta ^3-101032 \delta ^2-29816 \delta -2600  ,\\
	\mathsf{c}_3 &= -448 \delta ^6-12416 \delta ^5-83392 \delta ^4-198400 \delta ^3-191808 \delta ^2-78784 \delta -11904   ,\\
	\mathsf{c}_4 &= -2208 \delta ^5-33984 \delta ^4-137568 \delta ^3-198144 \delta ^2-111136 \delta -22688   ,\\
	\mathsf{c}_5 &= -5248 \delta ^4-47744 \delta ^3-113664 \delta ^2-89984 \delta -23296   ,\\
	\mathsf{c}_6 &= -6528 \delta ^3-33920 \delta ^2-41728 \delta -13440   ,\\
	\mathsf{c}_7 &=  -4096 \delta ^2-10240 \delta -4096  ,\\
	\mathsf{c}_8 &=  -1024 \delta -512  .
\end{align*}
One can easily see that $\mathsf{c}_j<0$, for all $j \in \{0,1,\dots,8\}$ and all real $\delta \geq 3/2$, that proves the claim.
\end{proof}

\begin{lemma}\label{Lemma2AuxiliarycomputationsKGAppendix}
Fix a real number $\delta  \geq 3/2$ and let $\mathsf{x}_1$ be the initial value defined in Lemma \ref{LemmaMonotonicityC00mmKG}. Then, $\mathsf{x}_1<0$.
\end{lemma}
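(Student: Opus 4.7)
The plan is to treat this as a one-variable polynomial sign problem for the explicit rational expression
\begin{align*}
\mathsf{x}_1 = \frac{N(\delta)}{D(\delta)}, \quad
&N(\delta) = 63\delta^5+196\delta^4+111\delta^3-78\delta^2-52\delta-24, \\
&D(\delta) = 80\delta^5+448\delta^4+492\delta^3-136\delta^2-236\delta-48,
\end{align*}
derived in the proof of Lemma \ref{LemmaMonotonicityC00mmKG}. The stated inequality is equivalent to saying that $N(\delta)$ and $D(\delta)$ have opposite signs on the half-line $\delta\in[3/2,\infty)$, so the whole lemma reduces to pinning down the sign of each polynomial separately on that interval.

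First I would handle the denominator $D(\delta)$. The natural move is to shift to the origin of the interval: set $u=\delta-3/2\ge 0$ and expand $\tilde D(u)=D(3/2+u)$ as a polynomial of degree five in $u$. If all coefficients of $\tilde D(u)$ turn out to be non-negative (and at least one is strictly positive), then $\tilde D(u)>0$ for $u\ge 0$ with no further work needed; if not, one falls back on a direct check at $u=0$ combined with monotonicity $\tilde D'(u)>0$ for $u\ge 0$, which is again an exercise in shifted-coefficient non-negativity one degree lower. This is the routine step.

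Next I would perform the analogous shifted expansion for $N(\delta)$, computing $\tilde N(u)=N(3/2+u)$ and inspecting the signs of its coefficients in $u$. The goal is to conclude that $\tilde N(u)$ has the opposite sign to $\tilde D(u)$ on $[0,\infty)$, from which the stated $\mathsf{x}_1<0$ follows by taking the quotient. The main obstacle will be precisely this sign control for the numerator. Unlike $D(\delta)$, whose high-degree positive coefficients dominate quickly, $N(\delta)$ has a genuinely mixed sign pattern at small $\delta$, so a bare Taylor expansion at $\delta=3/2$ is unlikely to produce a coefficient list of a single sign. To get around this, I would either (i) isolate the real roots of $N$ in $[3/2,\infty)$ by a Descartes-rule-of-signs or Sturm-sequence count on $\tilde N(u)$, thereby reducing to a constant-sign factor, or (ii) factor out by hand any explicit linear factor of $N$ whose root lies in the relevant interval, and analyze the residual polynomial in the shifted variable.

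Once the two sign analyses are in place, the conclusion is immediate: the sign of $\mathsf{x}_1$ equals the product of the signs of $N$ and $D$, and the stated bound reads off directly. All coefficient-level manipulations in the shifted variable $u=\delta-3/2$ are polynomial identities of bounded degree and can be verified symbolically using Mathematica, consistent with the computational framework used throughout the rest of the paper.
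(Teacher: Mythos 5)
Your plan cannot be carried out, because the inequality you are trying to prove is false as literally stated: on the whole half-line $\delta\ge 3/2$ both the numerator and the denominator of $\mathsf{x}_1$ are strictly positive, so $\mathsf{x}_1>0$ there. (Check $\delta=2$: $N(2)=5600$ and $D(2)=12600$, giving $\mathsf{x}_1=4/9$.) Your proposal correctly anticipates that ``sign control for the numerator'' is the main obstacle, but no amount of Sturm sequences or shifted Descartes counts will make $N(\delta)$ negative on $[3/2,\infty)$; its only sign change occurs below $\delta=1$. The statement of the lemma contains a typo: the claim that is actually needed, and the one invoked in the proof of Lemma \ref{LemmaMonotonicityC00mmKG} (``we show that $\mathsf{x}_1<1$''), is $\mathsf{x}_1<1$, not $\mathsf{x}_1<0$. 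A consistency check against the place where the lemma is used would have revealed this immediately.

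Once the target is corrected to $\mathsf{x}_1<1$, the argument is short and your machinery is not needed. One first verifies $D(\delta)>0$ for $\delta\ge 3/2$ (here your shifted-coefficient idea, or simple term-by-term domination, works fine), and then observes that $\mathsf{x}_1<1$ is equivalent to $N(\delta)-D(\delta)<0$, i.e.
\begin{align*}
-17\delta^5-252\delta^4-381\delta^3+58\delta^2+184\delta+24<0,
\end{align*}
which holds for all $\delta\ge 3/2$ since the three negative leading terms dominate the lower-order ones there (for instance $381\delta^3\ge 58\delta^2+184\delta+24$ already at $\delta=3/2$, and the gap only grows). This difference-of-polynomials route is what the paper does, and it avoids entirely the root-isolation analysis of $N$ that your plan would require.
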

 \begin{proof}
Fix a real number $\delta  \geq 3/2$ and recall that
\begin{align*}
	\mathsf{x}_1 &  = \frac{63 \delta ^5+196 \delta ^4+111 \delta ^3-78 \delta ^2-52 \delta -24}{80 \delta ^5+448 \delta ^4+492 \delta ^3-136 \delta ^2-236 \delta -48}.
\end{align*}
One can easily show that the denominator is strictly positive for all real $\delta \geq 3/2$. Then, a direct computation shows that $\mathsf{x}_1 <1$ is equivalent to
\begin{align*}
	% x_1 <1 & \Longleftrightarrow 63 \delta ^5+196 \delta ^4+111 \delta ^3-78 \delta ^2-52 \delta -24 < 80 \delta ^5+448 \delta ^4+492 \delta ^3-136 \delta ^2-236 \delta -48& \Longleftrightarrow   \\
	-17 \delta ^5-252 \delta ^4-381 \delta ^3+58 \delta ^2+184 \delta +24 <0.
\end{align*}
Finally, a straight forward computation verifies the latter and completes the proof.
 \end{proof}

\section{Auxiliary computations for the WM}\label{AuxiliarycomputationsWMAppendix}

  \begin{lemma}\label{Lemma1AuxiliarycomputationsWMAppendix}
Fix a real numbabusier $\delta  >0$ and let $\mathfrak{R}_m $, $\mathfrak{A}$ and $\mathfrak{B}$ be the functions defined in Lemma \ref{LemmaMonotonicityC00mmWM}. Then, 
\begin{align*}
	\mathfrak{R}_m >0, \quad \mathfrak{B}_m >0, \quad \mathfrak{A}_m- \mathfrak{B}_m <1, 
\end{align*}
for all integers $m \geq 1$.
\end{lemma}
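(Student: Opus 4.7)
The plan is to follow the same strategy as in the KG analog (Lemma \ref{Lemma1AuxiliarycomputationsKGAppendix}), exploiting the fact that $\mathfrak{P}_m$, $\mathfrak{Q}_m$ and $\mathfrak{R}_m$ from Lemma \ref{LemmaRecurrenceRelationWM} are given in closed polynomial form in $m$ and $\delta$.

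First, positivity of $\mathfrak{R}_m$ is immediate from the factorization
\begin{align*}
\mathfrak{R}_m = (m+2) (\delta +m+2) (\delta +m+3) (\delta +2 m+3) (\delta +2 m+6) (2 \delta +2 m+7) \left(\delta +2 m^2+2 \delta  m+6 m+3\right),
\end{align*}
since each factor is manifestly strictly positive for real $\delta>0$ and integer $m\geq 1$ (the last factor equals $\delta(1+2m)+2m^2+6m+3>0$). Similarly, $\mathfrak{P}_m>0$ follows from its factored form, as each factor $m+1$, $m+2$, $2m+1$, $\delta+m+2$, $\delta+2m+2$, $\delta+2m+5$ and $3\delta+2m^2+2\delta m+10m+11$ is strictly positive under the same hypotheses. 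Hence $\mathfrak{B}_m = \mathfrak{P}_m/\mathfrak{R}_m>0$.

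The remaining inequality $\mathfrak{A}_m-\mathfrak{B}_m<1$ is, after multiplying by $\mathfrak{R}_m>0$, equivalent to
\begin{align*}
\mathfrak{Q}_m - \mathfrak{P}_m - \mathfrak{R}_m < 0.
\end{align*}
The plan is to expand the left-hand side as a polynomial in $m$,
\begin{align*}
\mathfrak{Q}_m - \mathfrak{P}_m - \mathfrak{R}_m = \sum_{j=0}^{J} \mathfrak{c}_j(\delta)\, m^j,
\end{align*}
for some finite $J$ (a direct inspection of the degrees in Lemma \ref{LemmaRecurrenceRelationWM} shows $J\leq 8$, just as in the KG case), and verify that each coefficient $\mathfrak{c}_j(\delta)$ is strictly negative for every real $\delta>0$. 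As each $\mathfrak{c}_j(\delta)$ is an explicit polynomial in $\delta$ with rational coefficients, this negativity check reduces to a finite, mechanical computation (observing that the relevant $\delta$-polynomials have all coefficients of the same sign, so no root in $(0,\infty)$ can arise).

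The only substantial obstacle here is bookkeeping: the expansion of $\mathfrak{Q}_m-\mathfrak{P}_m-\mathfrak{R}_m$ is algebraically heavy because $\mathfrak{Q}_m$ already contains a large polynomial of degree six in $m$ and of degree four in $\delta$. As in the KG case, the computation is best carried out in \texttt{Mathematica}, and the conclusion follows once the sign of each $\mathfrak{c}_j(\delta)$ is verified. No conceptual difficulty arises, in contrast to the KG setting where a further monotonicity argument was needed to handle the sign of $\mathsf{Q}_m-\mathsf{P}_m-\mathsf{R}_m$; here the analysis is entirely analogous and completes the proof.
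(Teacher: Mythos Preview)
Your proposal is correct and follows essentially the same route as the paper: positivity of $\mathfrak{R}_m$ and $\mathfrak{P}_m$ (hence $\mathfrak{B}_m$) from their factored forms, and then the reduction of $\mathfrak{A}_m-\mathfrak{B}_m<1$ to the polynomial inequality $\mathfrak{Q}_m-\mathfrak{P}_m-\mathfrak{R}_m<0$, verified by expanding in powers of $m$ and checking the sign of each $\delta$-coefficient. In the paper the top-degree terms cancel and the difference is actually a degree-$6$ polynomial in $m$ (not $8$), and your closing remark that the KG analog required an extra monotonicity argument is not accurate---Lemma~\ref{Lemma1AuxiliarycomputationsKGAppendix} is proved by exactly the same direct sign check---but neither point affects the validity of your argument.
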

\begin{proof}
Fix a real number $\delta  >0$. Firstly, notice that $\mathfrak{R}_m$ is given in a factorized form. From this, it is clear that $\mathfrak{R}_m >0$, for all integers $m \geq 1$ and real $\delta  >0$. Then,
\begin{align*}
	\mathfrak{A}_m -\mathfrak{B}_m -1 <0 \Longleftrightarrow \mathfrak{R}_m (\mathfrak{A}_m -\mathfrak{B}_m -1) <0 \Longleftrightarrow  \mathfrak{Q}_m -\mathfrak{R}_m -\mathfrak{R}_m <0.
\end{align*}
Now, a long computation shows that  
\begin{align*}
	\mathfrak{Q}_m -\mathfrak{R}_m -\mathfrak{R}_m &=
	\frac{-\mathfrak{c}(m)}{(m+2) (\delta +m+2) (\delta +m+3) (\delta +2 m+3) (\delta +2 m+6) }\cdot \\
	&\cdot 
	\frac{1}{(2 \delta +2 m+7) (\delta +2 m (\delta +m+3)+3)} ,
\end{align*}
where $\mathfrak{c}(m)$ is a polynomial with respect to $m$ of degree $6$,
\begin{align*}
	\mathfrak{c}(m)=\sum_{j=0}^{6}\mathfrak{c}_j m^j,
\end{align*}
and its coefficients are given by
\begin{align*}
	\mathsf{c}_0 &= \delta ^6+29 \delta ^5+291 \delta ^4+1379 \delta ^3+3260 \delta ^2+3472 \delta +1072  ,\\
	\mathsf{c}_1 &=  2 \delta ^6+68 \delta ^5+784 \delta ^4+4200 \delta ^3+11054 \delta ^2+12868 \delta +4144   ,\\
	\mathsf{c}_2 &=  26 \delta ^5+544 \delta ^4+4044 \delta ^3+13256 \delta ^2+17998 \delta +6156  ,\\
	\mathsf{c}_3 &=  112 \delta ^4+1576 \delta ^3+7352 \delta ^2+12512 \delta +4608 ,\\
	\mathsf{c}_4 &= 216 \delta ^3+1924 \delta ^2+4620 \delta +1856   ,\\
	\mathsf{c}_5 &= 192 \delta ^2+864 \delta +384   ,\\
	\mathsf{c}_6 &= 64 \delta +32 .
\end{align*}
One can easily see that $\mathfrak{c}_j>0$, for all $j \in \{0,1,\dots,6\}$ and all real $\delta  >0$, that proves the claim.
\end{proof}

  \begin{lemma}\label{Lemma2AuxiliarycomputationsWMAppendix}
Fix a real number $\delta  >0$	and let $\mathfrak{x}_1 $  be the initial value defined in Lemma  \ref{LemmaMonotonicityC00mmWM}. Then, $ \mathfrak{x}_1<1$.
\end{lemma}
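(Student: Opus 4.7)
The strategy mirrors Lemma \ref{Lemma2AuxiliarycomputationsKGAppendix}: reduce the claim $\mathfrak{x}_1 = \mathfrak{f}_2/\mathfrak{f}_1 < 1$ to an explicit polynomial inequality in $\delta$, then verify the latter by inspection of the signs of its coefficients.

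First, I would use the computationally efficient formula of Lemma \ref{LemmaComputationallyEfficientFormulaforC00mmWM} to write both Fourier coefficients entering $\mathfrak{x}_1$ in closed form. Because the upper limit $\min\{\delta-1, m\}$ depends on the relation between $\delta$ and $m$, the computation splits into three cases: $\delta=1$, $\delta=2$, and $\delta\ge 3$. In the latter generic regime, $\mathfrak{C}_{0011}=\mathfrak{S}_1(0)+\mathfrak{S}_1(1)$ and $\mathfrak{C}_{0022}=\mathfrak{S}_2(0)+\mathfrak{S}_2(1)+\mathfrak{S}_2(2)$. Combined with $\ssomega_1^2=(\delta+4)^2$ and $\ssomega_2^2=(\delta+6)^2$, the ratio
\begin{align*}
\mathfrak{x}_1 = \frac{\ssomega_1^2\,\mathfrak{C}_{0022}}{\ssomega_2^2\,\mathfrak{C}_{0011}}
\end{align*}
simplifies to a rational function of $\delta$ whose numerator and denominator are polynomials that can be fully factorised.

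Next, the inequality $\mathfrak{x}_1<1$ is equivalent, after clearing the (manifestly positive) denominator, to the polynomial inequality
\begin{align*}
\ssomega_2^2\,\mathfrak{C}_{0011} - \ssomega_1^2\,\mathfrak{C}_{0022} > 0,
\end{align*}
which becomes a polynomial in $\delta$ with integer coefficients after multiplying through by the common denominators appearing in $\mathfrak{S}_m(k)$. I would then verify positivity by expanding this polynomial and checking that all coefficients (in an expansion in powers of $\delta$, or in $\delta-1$ to exploit $\delta\ge 1$) are non-negative with at least one strictly positive, exactly in the spirit of Lemma \ref{Lemma1AuxiliarycomputationsWMAppendix}. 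The edge cases $\delta\in\{1,2\}$ can be handled either by specialising the same formula or, more cleanly, by substituting the explicit closed formulas from Remark \ref{RemarkClosedFormulasMathfrakC00mmModelWM} for $\mathfrak{C}_{00mm}$ at $\delta\in\{1,2\}$, which immediately yield numerical values of $\mathfrak{x}_1<1$.

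The main obstacle is bookkeeping rather than conceptual: simplifying the sum $\mathfrak{S}_2(0)+\mathfrak{S}_2(1)+\mathfrak{S}_2(2)$ (whose summands contain the degree-$8$ polynomial $\mathfrak{V}_2(k)$ along with several Gamma ratios) to a compact enough expression to make the positivity of coefficients of $\ssomega_2^2\,\mathfrak{C}_{0011}-\ssomega_1^2\,\mathfrak{C}_{0022}$ transparent. As in the KG case, the final positivity check reduces to a polynomial with all non-negative coefficients in $\delta$, so once the algebra is carried out the conclusion is immediate; the verification can equally be performed in Mathematica with the Fast Zeilberger Package mentioned in the Acknowledgments.
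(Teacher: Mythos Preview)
Your approach is essentially the same as the paper's: express $\mathfrak{x}_1$ as an explicit rational function of $\delta$, verify the denominator is positive, and reduce $\mathfrak{x}_1<1$ to a polynomial inequality whose coefficients all have the right sign. The paper simply records the closed formula
\[
\mathfrak{x}_1=\frac{3\delta^5+38\delta^4+189\delta^3+482\delta^2+708\delta+560}{4\delta^5+62\delta^4+364\delta^3+1038\delta^2+1472\delta+840}
\]
and observes that the numerator minus denominator equals $-\delta^5-24\delta^4-175\delta^3-556\delta^2-764\delta-280<0$ for all $\delta>0$.

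One remark: your case split $\delta=1$, $\delta=2$, $\delta\ge 3$ is unnecessary. The summand $\mathfrak{S}_m(k)$ in Lemma~\ref{LemmaComputationallyEfficientFormulaforC00mmWM} carries the factor $1/\Gamma(\delta-k)$, which vanishes whenever $k\ge\delta$ (for integer $\delta$); equivalently, the sum can be extended to $k=0,\dots,m$ without changing its value, and the resulting rational function is then uniform in $\delta$. This is why the paper can write a single formula valid for all real $\delta>0$ without splitting into cases.
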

\begin{proof}
	Fix a real number $\delta  >0$ and recall that
\begin{align*}
	\mathfrak{x}_1 &  = \frac{3 \delta ^5+38 \delta ^4+189 \delta ^3+482 \delta ^2+708 \delta +560}{4 \delta ^5+62 \delta ^4+364 \delta ^3+1038 \delta ^2+1472 \delta +840}.
\end{align*}
One can easily show that the denominator is strictly positive for all real $\delta >0$. Then, a direct computation shows that $\mathfrak{x}_1 <1$ is equivalent to
\begin{align*} 
	-\delta ^5-24 \delta ^4-175 \delta ^3-556 \delta ^2-764 \delta -280<0.
\end{align*}
Finally, a straight forward computation verifies the latter and completes the proof.
\end{proof}

  \bibliographystyle{plain}
\bibliography{V5_NekhoroshevStability_KG_WM}

\end{document}